\documentclass{amsart}
\usepackage{latexsym,amsfonts,amsmath,amssymb}
\usepackage{hyperref}
\usepackage{graphicx}

\allowdisplaybreaks

% Theorem-like environments
%
%
\newtheorem{theorem}{Theorem}[section]
\newtheorem{lemma}[theorem]{Lemma}
\newtheorem{proposition}[theorem]{Proposition}
\newtheorem{corollary}[theorem]{Corollary}
\newtheorem{conjecture}[theorem]{Conjecture}
\theoremstyle{definition}
\newtheorem{definition}[theorem]{Definition}
\newtheorem{remark}[theorem]{Remark}
\theoremstyle{remark}

%
%  An unspaced list
%
\newcounter{smalllist}
\newenvironment{SL}{\begin{list}{{\hss\rm(\roman{smalllist})\hss}}{\usecounter{smalllist}%
\setlength{\topsep}{0mm}\setlength{\parsep}{0mm}\setlength{\itemsep}{0mm}%
\setlength{\labelwidth}{2.2em}\setlength{\leftmargin}{2.5em}\setlength{\itemindent}{0em}%
}}{\end{list}}

% Math operators
%
%
\let\Re=\undefined\DeclareMathOperator*{\Re}{Re}
\let\Im=\undefined\DeclareMathOperator*{\Im}{Im}
\DeclareMathOperator*{\dist}{dist}

\DeclareMathOperator*{\wlim}{w-lim}

\newcommand{\qtq}[1]{\quad\text{#1}\quad}
\newcommand{\Fm}{{\mathcal{F}}}% Symbol for feasible energies
\newcommand{\Em}{{E_\text{\upshape min}}}% Symbol for infimal feasible energy
\newcommand{\Et}{{E^V_\text{\upshape min}}}% Symbol for infimal energy with V=0

%Roman I

%Roman II

%Roman III

\newcommand{\Z}{\mathbb{Z}}
\newcommand{\R}{\mathbb{R}}
\newcommand{\C}{\mathbb{C}}

\newcommand{\RR}{\mathcal{R}}

\newcommand{\eps}{\varepsilon}

\newcommand{\g}{\gamma}

\newcommand{\ld}{\lambda}

\newcommand{\Lw}{\mathcal{L}_\omega}
\newcommand{\Pw}{P_\omega}
\newcommand{\Rw}{R_\omega}
\newcommand{\rad}{\text{rad}}

\newcommand{\jb}[1]{\langle #1 \rangle}

\numberwithin{equation}{section}
\numberwithin{theorem}{section}
\numberwithin{figure}{section}
\numberwithin{table}{section}

\newcommand{\les}{\lesssim}
\newcommand{\ges}{\gtrsim}

\begin{document}
\title[The cubic-quintic NLS on $\R^3$]{Solitons and scattering for the cubic-quintic nonlinear Schr\"odinger equation on $\R^3$}

\author{Rowan Killip, Tadahiro Oh, Oana Pocovnicu, and Monica Vi\c{s}an}

\address{%
Rowan Killip\\
Department of Mathematics\\
University of California, Los Angeles\\
%Math Sciences Building 6167\\
Los Angeles, CA 90095, USA}
\email{killip@math.ucla.edu}

\address{%
Tadahiro Oh\\
School of Mathematics\\
The University of Edinburgh,
and The Maxwell Institute for the Mathematical Sciences\\
James Clerk Maxwell Building\\
The King's Buildings\\
Mayfield Road\\
Edinburgh\\
EH9 3JZ, United Kingdom}
\email{hiro.oh@ed.ac.uk}

\address{%
Oana Pocovnicu,
School of Mathematics, Institute for Advanced Study, Einstein Dr., Princeton, NJ 08540, USA
and Department of Mathematics, Princeton University, Fine Hall, Washington Rd., Princeton, NJ 08544, USA
}
\email{opocovnicu@math.princeton.edu}

\address{%
Monica Vi\c{s}an\\
Department of Mathematics\\
University of California, Los Angeles\\
%Math Sciences Building 6167\\
Los Angeles, CA 90095, USA}
\email{visan@math.ucla.edu}

\subjclass[2010]{35Q55}

\keywords{Energy-critical NLS; cubic-quintic NLS; scattering; soliton; Gagliardo--Nirenberg inequality}

\date{\today}

\begin{abstract}
We consider the cubic-quintic nonlinear Schr\"odinger equation:
\[i\partial_t u = -\Delta u - |u|^2u + |u|^4u.\]

In the first part of the paper, we analyze the one-parameter family of ground-state solitons associated to this equation with particular attention to the shape of the associated mass/energy curve.
Additionally, we are able to characterize the kernel of the linearized operator about such solitons and to demonstrate that they occur as optimizers for a one-parameter family of inequalities of Gagliardo--Nirenberg type.
Building on this work, in the latter part of the paper we prove that scattering holds for solutions belonging to the region $\mathcal{R}$ of the mass/energy plane where the virial is positive.  We show this region is partially bounded by solitons but also by rescalings of solitons (which are \emph{not} soliton solutions in their own right).  The discovery of rescaled solitons in this context is new and highlights an unexpected limitation
of any virial-based methodology.
\end{abstract}

\maketitle

\tableofcontents

\section{Introduction}
In this paper,  we consider the Cauchy problem for the cubic-quintic nonlinear Schr\"odinger equation (NLS) on $\R^3$:
\begin{align}\label{3-5}
\begin{cases}
i\partial_t u = - \Delta u -|u|^2u + |u|^4u, \\
u(0)=u_0\in H^1(\R^3),
\end{cases}
\end{align}
which arises as the natural Hamiltonian evolution associated to the energy
\begin{equation}\label{eq: E}
E(u):=\int_{\R^3} \tfrac{1}{2}|\nabla u|^2-\tfrac{1}{4}|u|^4+\tfrac{1}{6}|u|^6\, dx.
\end{equation}
Here $u(t,x)$ is a complex-valued function of $(t,x)\in\R\times\R^3$.

This model appears in numerous problems in physics, including field theory, nonlinear optics,
the mean-field theory of superconductivity, Langmuir waves in plasma physics, and
the motion of Bose--Einstein condensates; see, for example, \cite{Anderson,Grikurov,Desyatnikov,Ginzburg,Mihalache2000,Mihalache2002,Sulem,SSulem}.  This ubiquity is strongly connected
to the particular signs appearing in \eqref{3-5} and \eqref{eq: E}.  As we hope to convince the reader, the combination of focusing cubic nonlinearity and defocusing quintic
nonlinearity is physically very natural and leads to interesting mathematics.

For systems at low densities, nonlinear effects should be weak and hence it is natural to perform a Taylor expansion on the associated part of the energy
and keep only the lowest nonlinear term.  This produces the cubic term in \eqref{3-5}, at least in a gauge invariant setting.  Such a term with a positive coefficient (i.e.,
the defocusing case) represents an inherent repulsion of the constituents of the system and leads to global solutions that disperse.  A mathematically
rigorous proof of this assertion has been available for some time; see, for example, \cite{Cazenave}.

A cubic term with a negative coefficient (the focusing case) is physically a very natural scenario.  Atoms/molecules do experience an
attractive (van der Waals) force at moderate densities;  indeed, such attraction underlies the condensation of gases into liquids at low temperatures.
Similarly, the focusing cubic NLS is also commonly used to model the self-focusing of laser beams in certain nonlinear materials.

As might be expected, the three-dimensional focusing cubic NLS has also received considerable mathematical attention.
It was proved in the 1970's that blowup occurs for an open set of large initial data; see \cite{Glassey,VlasovPT}.  For a more
up-to-date view of this phenomenon, see \cite{DR10,MerleRaphael} and the references therein.

In the physical systems alluded to above, no true singularity occurs.  The very concentration that mathematics predicts degrades the accuracy of the model; new
physics comes into play, preventing further collapse.  From a mathematical point of view, the simplest way to incorporate such phenomenology is to introduce a defocusing
nonlinearity of the next higher power.  A deeper understanding of the underlying physics is, of course, needed to predict the appropriate coupling constant.

By making an appropriate choice of units for space, time, and the solution values, it is possible to scale away any coupling constants in front of the nonlinearities;
only the signs remain.  Unlike the case of a single power nonlinearity, this exhausts all scaling symmetries.  Indeed, the dynamics of solutions living at disparate length
scales are inherently different; the amplitude and spatial scale of a solution affects the relative strengths of the linear dispersion and each of the two nonlinearities.

We will exploit two conservation laws associated to the flow \eqref{3-5} in addition to the energy \eqref{eq:  E}, namely, the mass and momentum;
these are given by
\begin{equation}\label{eq: M,P}
M(u):=\int_{\R^3}|u|^2\,dx \qtq{and} P(u):= \int_{\R^3}  2\Im( \bar{u}\nabla u) \,dx,
\end{equation}
respectively.  Through the rescaling used to normalize the equation, information about the relative strength of the two nonlinearities is transfered to the mass and energy.
This behooves us to consider initial data for as broad a mass/energy region as we are able.  In Section~\ref{SEC:ME}, we determine precisely which mass/energy pairs
are actually possible; Figure~\ref{F:ME} summarizes our results.  Ultimately, we would like to produce a `phase diagram'
for \eqref{3-5}, indicating which dynamical behaviours are possible in which regions of the mass/energy plane.  This is the long-term dream that guides the investigations
in this paper.

There is no reason to introduce momentum as a third axis in this phase diagram.  The Galilei symmetry can be exploited to normalize the momentum to zero,
while leaving the mass unchanged and modifying the energy in the obvious manner (see the proof of Proposition~\ref{prop:  P(u)=0}).  Indeed, this transformation
simply amounts to passing to the rest frame of the centre of mass.

The physical intuition espoused earlier suggests that \eqref{3-5} should have global solutions.  This has been proved rigorously:

\begin{theorem}[Global well-posedness of cubic-quintic NLS in $\R^3$, \cite{Zhang}]\label{T:Matador}
The initial value problem \eqref{3-5} admits a unique global solution $u$ in the class $C_t^{} H^1_x$ and the solution depends continuously
\textup(in $C_t^{} H^1_x$\textup) on the initial data $u_0\in H^1(\R^3)$.  Furthermore, the solution obeys conservation of mass, energy, and momentum.
\end{theorem}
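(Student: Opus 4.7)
The plan is to execute the standard three-stage program: local well-posedness in $H^1(\R^3)$, an a priori $H^1$ bound from the conservation laws, and a globalization step based on stability theory for the energy-critical NLS.

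\emph{Local well-posedness.} First I would set up the Duhamel formulation
\[
u(t) = e^{it\Delta}u_0 + i\int_0^t e^{i(t-s)\Delta}\bigl(|u|^2u - |u|^4u\bigr)(s)\,ds
\]
and run a Cazenave--Weissler fixed-point argument in a space of $L^{10}_{t,x}\cap \dot S^1$ type, using the Strichartz pair $(10,30/13)$ for the $H^1$-critical quintic and any admissible pair for the $H^1$-subcritical cubic. This yields a unique solution $u\in C_t H^1_x$ on a maximal interval $I\ni 0$ together with continuous dependence on $u_0$; conservation of mass, momentum, and energy then pass from smooth data to $H^1$ by approximation.

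\emph{A priori $H^1$ bound.} H\"older interpolation between $L^2$ and $L^6$ (with parameter $\theta=1/4$) gives $\|u\|_{L^4}^4\le \|u\|_{L^2}\|u\|_{L^6}^3$, and Young's inequality then produces $\tfrac14\|u\|_{L^4}^4\le \tfrac18 M(u)+\tfrac18\|u\|_{L^6}^6$. Substituting into \eqref{eq: E} yields the coercivity bound
\[
\tfrac{1}{2}\|\nabla u(t)\|_{L^2}^2+\tfrac{1}{24}\|u(t)\|_{L^6}^6 \le E(u_0)+\tfrac{1}{8}M(u_0),
\]
so $\|u(t)\|_{H^1}$ is controlled on $I$ purely in terms of $M(u_0)$ and $E(u_0)$.

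\emph{Globalization.} The main obstacle is that the quintic nonlinearity is $H^1$-critical, so the local existence time coming from the contraction argument depends on the profile of $u_0$ and not just on $\|u_0\|_{H^1}$; in particular, the a priori bound does not close the argument by naive iteration. To finish I would invoke the global spacetime bound for the purely defocusing energy-critical NLS $i\partial_t v=-\Delta v+|v|^4 v$ (Colliander--Keel--Staffilani--Takaoka--Tao) and treat the focusing cubic $-|u|^2u$ as a perturbation. Concretely, I would partition $[0,\infty)$ into finitely many subintervals $[t_j,t_{j+1}]$ (in number depending only on $M(u_0)$ and $E(u_0)$) on which the quintic-only flow starting from $u(t_j)$ has small $L^{10}_{t,x}$ norm, and apply the long-time perturbation lemma for the energy-critical NLS, controlling $|u|^2u$ in the dual Strichartz space by means of the a priori $H^1$ bound together with the Sobolev embedding $H^1\hookrightarrow L^6$. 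Iterating across these subintervals extends $u$ to a global solution in $C_tH^1_x$.
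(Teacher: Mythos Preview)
Your approach is essentially the one the paper attributes to Zhang: obtain local well-posedness by contraction mapping, derive an a priori $H^1$ bound from mass and energy conservation (the paper records the equivalent identity \eqref{E3/32}), and then globalize by invoking the CKSTT spacetime bounds for the defocusing quintic NLS and treating the cubic term as a perturbation via a stability lemma. The paper does not give its own proof of this theorem; it simply cites \cite{Zhang} and \cite{TaoVisanZhang}, so your outline is faithful to the intended argument.

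One point you should address explicitly: the theorem asserts uniqueness in the full class $C_t H^1_x$, i.e.\ \emph{unconditional} uniqueness, whereas your contraction-mapping argument only yields uniqueness within the auxiliary Strichartz space $L^{10}_{t,x}\cap \dot S^1$. The paper itself flags this gap, noting that \cite{Zhang} proves uniqueness only among solutions that also lie in $L^{10}_{t,x}$, and that upgrading to unconditional uniqueness requires a separate endpoint-Strichartz argument as in \cite{CKSTT} or \cite{KOPV2011}. You should add a sentence indicating how this step is handled.
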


The cited paper presents a proof of uniqueness only within the subclass of solutions that also belong to $L^{10}_{t,x}$.  However, the ideas
needed to upgrade this to the `unconditional uniqueness' formulated above appear already in \cite{CKSTT}; see also \cite{KOPV2011} for an adaptation of
the argument to an equation very similar to \eqref{3-5}.

The global well-posedness of \eqref{3-5} cannot be proved by the same simple direct arguments that apply to the defocusing cubic NLS.
This stems from the fact that the quintic nonlinearity is energy-critical in three spatial dimensions.  To better explain this point and its effects
on the analysis, let us first redirect our discussion to the defocusing quintic NLS
\begin{equation}\label{5NLS}
i\partial_tu = -\Delta u + |u|^4u
\end{equation}
in $\R^3$, whose energy functional is given by
\begin{equation}\label{H5NLS}
u\mapsto \int_{\R^3} \tfrac12|\nabla u|^2 + \tfrac16|u|^6\,dx.
\end{equation}

The rescaling $u(t,x)\mapsto u^\ld (t, x) := \ld^\frac{1}{2} u(\ld^2 t, \ld x)$ is a symmetry of solutions to \eqref{5NLS}.  This scaling also preserves the energy
functional \eqref{H5NLS}, which is why \eqref{5NLS} is termed energy-critical.

For defocusing nonlinearities with power smaller than five, energy conservation prevents concentration.  This combined with simple contraction mapping arguments
yields a proof of global well-posedness.  For the energy-critical problem \eqref{5NLS}, this is not the case; indeed, the scaling symmetry shows that concentration
is perfectly consistent with energy conservation.  Note that we cannot use other NLS conservation laws to prevent concentration; energy has the highest regularity among
all known conservation laws.

The development of methods to tackle this problem of criticality constitutes one of the major breakthroughs in the study of nonlinear dispersive equations.  In the case of
NLS, it was precisely for the equation \eqref{5NLS} that this breakthrough was first made:

\begin{theorem}[GWP of the defocusing quintic NLS in $\R^3$, \cite{BO99,CKSTT}]\label{T:quintic}
Equation \eqref{5NLS} admits a unique global $C_t^{} \dot H^1_x$ solution for every initial data $u_0\in\dot H^1(\R^3)$. This
solution obeys
\begin{equation}\label{1010}
\int_\R\! \int_{\R^3} |u(t,x)|^{10}\,dx\,dt \leq C\bigl( \|u_0\|_{\dot H^1_x}\bigr).
\end{equation}
\end{theorem}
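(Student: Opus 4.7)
The plan is to follow the concentration-compactness / induction-on-energy blueprint combined with Strichartz-based perturbation theory. The first ingredient is a critical local well-posedness theory: using the endpoint Strichartz estimates on $\R^3$ and Sobolev embedding, I would set up a contraction mapping in a suitable Strichartz space on any interval $I$ where $\|e^{it\Delta}u_0\|_{L^{10}_{t,x}(I\times\R^3)}$ is small. This yields local existence, uniqueness in the Strichartz class, continuous dependence, small-data global well-posedness with scattering, and the blowup criterion that a $C_t\dot H^1_x$ solution extends globally unless its $L^{10}_{t,x}$-norm blows up. The same machinery delivers a stability/long-time-perturbation lemma: any approximate solution with a priori bounded $L^{10}_{t,x}$-norm and sufficiently small error can be shadowed by an exact solution of \eqref{5NLS}. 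All of this is essentially soft.

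The heart of the proof is the a priori spacetime bound \eqref{1010}. I would argue by contradiction. Defining
\[
L(E):=\sup\bigl\{\|u\|_{L^{10}_{t,x}(\R\times\R^3)}:u\text{ solves \eqref{5NLS} with }E(u_0)\leq E\bigr\},
\]
the small-data theory gives $L(E)<\infty$ for $E\ll 1$, so the alleged failure of \eqref{1010} means $E_c:=\inf\{E:L(E)=\infty\}$ is finite and positive. Taking a minimizing sequence $u_n$ of solutions with $E(u_n(0))\to E_c$ and $\|u_n\|_{L^{10}_{t,x}}\to\infty$, I would apply a linear profile decomposition in $\dot H^1$ adapted to the scaling $u\mapsto\ld^{1/2}u(\ld^2 t,\ld x)$ and to spacetime translations. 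Combined with the stability lemma and the minimality of $E_c$, this forces exactly one profile to carry all the energy in the limit, yielding a non-trivial minimal blowup solution $u_c$ whose orbit is precompact in $\dot H^1$ modulo the scaling and translation symmetries; equivalently, there exist $\ld(t)>0$ and $x(t)\in\R^3$ such that
\[
\bigl\{\ld(t)^{-1/2}u_c\bigl(t,\ld(t)^{-1}(\cdot-x(t))\bigr):t\in I_{\max}\bigr\}
\]
is precompact in $\dot H^1(\R^3)$.

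The final step is a rigidity theorem asserting such a $u_c$ cannot exist. The main tool is a Morawetz-type inequality obtained from the virial identity with weight $a(x)=|x|$, giving the a priori bound
\[
\int_I\int_{\R^3}\frac{|u(t,x)|^6}{|x|}\,dx\,dt\les\sup_{t\in I}\|u(t)\|_{\dot H^{1/2}}^2.
\]
Combined with the compactness of the symmetrized orbit, this forces $|u_c|^6$ to concentrate in a shrinking spatial region, after which one analyzes the possible asymptotic behaviors of $\ld(t)$ — finite-time blowup with self-similar scaling, soliton-like $\ld\sim 1$, and low-to-high frequency cascade — and derives a contradiction in each case. The main obstacle lies precisely here: the Morawetz inequality above is centered at the spatial origin and becomes useless if $x(t)$ escapes to infinity, while on an infinite interval the quantity $\sup_t\|u(t)\|_{\dot H^{1/2}}$ is not obviously bounded. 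Overcoming this requires either tight control on $x(t)$ and $\ld(t)$ forced by the minimality of $u_c$, or, as in the original CKSTT approach, a frequency-localized interaction Morawetz inequality, which is where the substantial new analysis sits. Once \eqref{1010} is established, global existence and continuous dependence in $C_t\dot H^1_x$ follow by gluing local solutions, and uniqueness in the $L^{10}_{t,x}$-class is a standard Strichartz argument.
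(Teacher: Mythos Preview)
The paper does not prove Theorem~\ref{T:quintic}; it is quoted as a known result from \cite{BO99,CKSTT} and used as a black box (the paper explicitly remarks that ``the proof of Theorem~\ref{T:quintic} is long and subtle'' and refers to \cite{KV:Gopher} for a streamlined version). So there is no ``paper's own proof'' to compare your proposal against.

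That said, your outline is a recognizable and essentially correct high-level sketch, but it is worth noting that it follows the Kenig--Merle concentration-compactness road map (profile decomposition $\Rightarrow$ minimal almost-periodic blowup solution $\Rightarrow$ rigidity) rather than the original direct induction-on-energy scheme of \cite{BO99,CKSTT}. In particular, the trichotomy you mention (finite-time blowup / soliton-like / low-to-high cascade) and the reduction to an almost-periodic minimal counterexample are features of the later \cite{KenigMerle, Berbec, ClayNotes, KV:Gopher} treatments, not of \cite{CKSTT} itself. The original \cite{CKSTT} argument works directly with a near-minimal-energy blowup solution and its frequency-localized interaction Morawetz estimate, without ever extracting a limiting almost-periodic object. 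You correctly identify the rigidity step as the hard part and correctly flag that the centered Morawetz inequality alone is inadequate; the actual resolution in the defocusing quintic case (either via \cite{CKSTT}'s frequency-localized interaction Morawetz or via the long-time Strichartz / double Duhamel machinery in \cite{KV:Gopher}) is where all the real work lies, and your sketch does not supply it --- but you acknowledge this honestly.
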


The spacetime bound \eqref{1010} provides an explicit expression that solutions disperse, something that we previously intuited must hold in
the defocusing case.  In fact, this bound leads to the following conclusion: there exist asymptotic states $u_\pm\in\dot H^1_x$ so
that
$$
\|u(t) - e^{it\Delta}u_\pm\|_{\dot H^1_x}\to 0 \qtq{as} t\to \pm\infty.
$$
This says that the solution scatters (to the linear flow) as $t\to\pm\infty$.  More formally, this is the statement of asymptotic completeness
of wave operators.  The question of existence of wave operators is easily settled by contraction mapping arguments, even in this scaling-critical case.

The proof of Theorem~\ref{T:quintic} is long and subtle.  This remains true, even after incorporating the extensive developments spawned by this breakthrough
over the intervening decade (cf. \cite{KV:Gopher}).

As will be evident from the arguments in Section~\ref{SEC:6}, any solution to \eqref{5NLS} can be embedded as a solution to \eqref{3-5} in a certain scaling regime.
Correspondingly, any reasonably quantitative proof of Theorem~\ref{T:Matador} must automatically imply Theorem~\ref{T:quintic}.  The key theme underlying the proof
of Theorem~\ref{T:Matador} in \cite{Zhang} is to argue conversely, namely, to start with Theorem~\ref{T:quintic} and treat the cubic term as a perturbation.  (This idea is further expanded upon in \cite{TaoVisanZhang}).  Moreover, it is shown that if one considers the cubic-quintic NLS with \emph{both} nonlinearities defocusing, then scattering holds (in $H^1_x$) even for large initial data $u_0\in H^1_x$.

The papers \cite{TaoVisanZhang,Zhang} also prove a scattering result for our equation \eqref{3-5}.  Specifically, they show that if the mass of the initial data is
sufficiently small, depending on the $\dot H^1_x$ norm of the initial data, then scattering holds.  To be more precise, sufficiently small mass means smaller than the
reciprocal of a tower of exponentials in the $\dot H^1_x$ norm.  This is the best one can achieve via these perturbative methods, because of the best known quantitative
bounds in \eqref{1010}.

One of the main results of this paper is a proof of scattering for initial data whose mass and energy belong to the larger region $\RR$ in the mass/energy plane:

\begin{theorem}\label{thm: main}
Let $u_0\in H^1(\R^3)$ be such that $(M(u_0),E(u_0))$ belongs to the region $\RR$ defined in Section~\ref{SEC:Virial}.
Then, the unique global solution $u \in C(\R; H^1(\R^3))$ to the cubic-quintic NLS \eqref{3-5} satisfies
\begin{equation*}
\|u\|_{L^{10}_{t,x}(\R\times\R^3)}\leq C\big(M(u_0),E(u_0)\big).
\end{equation*}
In particular, the solution $u$ scatters in $H^1(\R^3)$
both forward and backward in time.
\end{theorem}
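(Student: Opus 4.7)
The plan is to adapt the Kenig--Merle concentration-compactness/rigidity scheme to the cubic-quintic setting. Assume toward contradiction that Theorem~\ref{thm: main} fails and set
\[
L(m,e) := \sup\bigl\{\|u\|_{L^{10}_{t,x}(\R\times\R^3)} : u \text{ solves } \eqref{3-5},\ M(u)=m,\ E(u)=e\bigr\}.
\]
Let $(m_*,e_*)\in\overline{\RR}$ be a minimal pair for which $L$ becomes infinite. The aim is to extract a non-scattering solution $u_*$ with $(M(u_*),E(u_*))=(m_*,e_*)$ whose orbit is precompact in $H^1(\R^3)$ modulo spatial translations, and then to use a localized virial identity, leveraging positivity of the virial functional on $\RR$, to force $u_*\equiv 0$.

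The first step rests on a linear profile decomposition in $H^1(\R^3)$ that separates profiles living at the natural (fixed) scale from those living at vanishing scale. Small-scale profiles are propagated by the energy-critical defocusing quintic NLS \eqref{5NLS}, for which Theorem~\ref{T:quintic} supplies global spacetime bounds, so by a stability argument for \eqref{3-5} their contribution does not obstruct scattering. Profiles at bounded scale are evolved by the full cubic-quintic flow. Decoupling of mass and energy along the decomposition, combined with the minimality of $(m_*,e_*)$, forces all but one profile to scatter; extracting and propagating the remaining profile produces $u_*$. A Galilei boost normalizes $P(u_*)=0$, and the resulting mass/energy pair still lies in $\overline{\RR}$.

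For the rigidity step I would apply the truncated virial identity
\[
\partial_t^2 \int_{\R^3}\phi_R(x)\,|u_*(t,x)|^2\,dx \ge c > 0
\]
on long time intervals, where $\phi_R$ is a smooth truncation of $|x|^2$. The lower bound on the right-hand side comes from combining precompactness of the orbit with strict positivity of the virial functional on $\RR$. Integrating in time contradicts the a priori boundedness of the left-hand side given by precompactness, and hence $u_*\equiv 0$.

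The principal obstacle is the structure of $\partial\RR$. Unlike in single-nonlinearity problems, $\partial\RR$ here is composed of two families: the ground-state solitons themselves, and certain \emph{rescalings} of solitons which are \emph{not} solutions of \eqref{3-5}. At a rescaled soliton the virial vanishes without the function being stationary, so precompactness alone does not upgrade virial positivity on $\RR$ to a uniform positive lower bound if $u_*$ sits on that portion of $\partial\RR$. To close the argument I would invoke the variational characterization of solitons as optimizers of the Gagliardo--Nirenberg family established earlier in the paper, together with a careful degeneration analysis of these optimizers, to show that $u_*$ cannot actually realize the rescaled-soliton portion of $\partial\RR$; only the genuine soliton stratum is potentially attained, and there standard modulation and virial arguments rule out $u_*$.
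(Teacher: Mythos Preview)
Your overall architecture---concentration-compactness plus a truncated virial rigidity---matches the paper's, and several ingredients (profile decomposition in $H^1$, treatment of small-scale profiles via Theorem~\ref{T:quintic}, perturbation theory, zero momentum via Galilei boost) are the same. But there is a genuine gap in how you set up the induction and, correspondingly, in your final paragraph about $\partial\RR$.

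First, the notion of a ``minimal pair'' $(m_*,e_*)$ is not well-defined: there is no canonical total order on mass/energy pairs, and the decoupling you need in the profile step requires that if a nontrivial profile is peeled off then the \emph{remaining} solution falls strictly below the critical threshold in some monotone quantity. The paper resolves this by building a single scalar functional $D:H^1\to[0,\infty]$ (Proposition~\ref{P:all about D}) with the properties (a) $0<D(u)<\infty$ iff $(M(u),E(u))\in\RR$, (b) $D$ is monotone in both $M$ and $E$, and (c) $D$ is continuous and conserved. Induction is then on $D$ alone.

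Second, and more importantly, this construction makes your worry about rescaled solitons on $\partial\RR$ evaporate. The Palais--Smale step produces a minimal blowup solution $u$ with $D(u)=D_c<\infty$, which by property~(a) forces $(M(u),E(u))\in\RR$, the \emph{open} region. Hence $V(u(t))>0$ for all $t$ is automatic, and almost periodicity plus continuity of $V$ immediately gives a uniform lower bound $V(u(t))\geq\delta>0$ (this is Proposition~\ref{prop: virial bounded away from zero}). No degeneration analysis, no modulation near rescaled solitons, no appeal to the Gagliardo--Nirenberg--H\"older characterization is needed in the rigidity step. The rescaled solitons are relevant only to the \emph{variational} problem of determining where $\RR$ stops; they play no role once you are inside $\RR$. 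Your proposed mechanism for excluding the rescaled-soliton stratum is both unnecessary and, as sketched, would not actually work: those functions are not solutions of \eqref{3-5}, so there is no orbit of \eqref{3-5} that ``realizes'' them in the first place.
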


The region $\RR$ is depicted in Figure~\ref{F:R preview}.  We will give a precise description of $\RR$ later in the introduction, once we have covered the necessary
prerequisites.  For now, we note the following consequence of Theorem~\ref{thm: main}: there is a mass threshold $M_*$ so that scattering holds for all initial data with $M(u)<M_*$, irrespective of the energy (or $\dot H^1_x$-norm).

\begin{figure}[h]
\noindent
\begin{center}
\fbox{
\setlength{\unitlength}{1mm}
% 1mm  = 2.84pt
\begin{picture}(95,37)(-7,-7)
\put(0,-5){\vector(0,1){32}}\put(-5,25){$E$}
\put(-5,0){\vector(1,0){87}}\put(82,-3){$M$}
%\put(65,0){\line(0,-1){2}}
%\put(65,-5){\hbox to 0mm{\hss$M(Q_1)$\hss}}
%\put(30,0){\line(0,-1){2}}\put(30,-5){\hbox to 0mm{\hss$\tfrac{4}{3\sqrt{3}}M(Q_1)$\hss}}
\put(30,0){\line(0,-1){2}}
\put(30,-5){\hbox to 0mm{\hss$M_*$\hss}}
\qbezier(40,10)(60,5)(65,0)
\qbezier(30,15)(30,10)(40,10)
%\put(45,15){\vector(-1,-1){4}}
%\put(45.5,14){kink}
\put(15.7,7){$\RR$}
\linethickness{0.05mm}
\qbezier(03,0)(16,13)(30,27)\qbezier(06,0)(18,12)(30,24)
\qbezier(09,0)(13,4)(15.6,6.6)\qbezier(18.7,9.7)(25,16)(30,21)% gap for R
\qbezier(12,0)(21,09)(30,18)\qbezier(15,0)(22,07)(30,15)\qbezier(18,0)(23,05)(30.7,12.7)\qbezier(21,0)(26,05)(32.3,11.3)
\qbezier(24,0)(29,05)(34.5,10.5)\qbezier(27,0)(32,05)(37.1,10.1)\qbezier(30,0)(35,05)(40,10)\qbezier(33,0)(38,05)(42.3,9.3)
\qbezier(36,0)(41,05)(44.7,8.7)\qbezier(39,0)(44,05)(47.1,8.1)\qbezier(42,0)(45,03)(49.4,7.4)\qbezier(45,0)(49,04)(51.7,6.7)
\qbezier(48,0)(51,03)(53.9,5.9)\qbezier(51,0)(54,03)(56.1,5.1)\qbezier(54,0)(56,02)(58.2,4.2)\qbezier(57,0)(59,02)(60.2,3.2)
\qbezier(60,0)(61,1)(62.2,2.2)\qbezier(63,0)(63.5,0.5)(63.9,0.9)
\qbezier(0,00)(15,15)(27,27)\qbezier(0,03)(12,15)(24,27)\qbezier(0,06)(10,16)(21,27)
\qbezier(0,09)(9,18)(18,27)\qbezier(0,12)(07,19)(15,27)\qbezier(0,15)(06,21)(12,27)\qbezier(0,18)(04,22)(09,27)\qbezier(0,21)(03,24)(06,27)
\qbezier(0,24)(01,25)(03,27)\qbezier( 0, 3)( 9,12)(17,20)\qbezier( 0, 6)( 7,13)(14,20)\qbezier( 0, 9)( 6,15)(11,20)
\end{picture}
}
\end{center}
\caption{Schematic diagram of the open set $\RR$ in Theorem~\ref{thm:  main}.}
%based on numerics.  Note: the scale of the various features has been drastically altered in order to make them all visible on one plot.}
\label{F:R preview}
\end{figure}
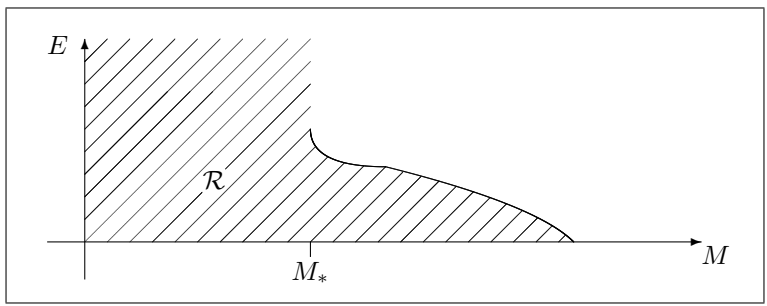

Theorem~\ref{thm: main} is not a perturbative result. We will be able to show that scattering fails for certain solutions whose mass and energy lie on the boundary of $\RR$.
Moreover, the exact value of the mass threshold $M_*$ noted above is dictated by the optimizers of a certain Gagliardo--Nirenberg-type inequality
(see Section~\ref{SEC:GNH}).  We show that these optimizers are radially symmetric solutions to a certain elliptic PDE.  This reduces matters to an ODE problem
that is readily susceptible to numerical investigation via the shooting method.  In this way, we are lead to the assertion that scattering holds for $M(u)<185.10$.

Scattering does not hold for all initial data $u_0\in H^1(\R^3)$; our equation admits solitons.  In this paper, we will use the word `soliton' to refer to
solutions to \eqref{3-5} of the form $u(t,x) = e^{i\omega t} P(x)$.  Naturally, the Galilei symmetry can be exploited to introduce (or remove) translational motion.

As well as limiting the region where scattering can occur, solitons also constitute an essential ingredient in any purported phase diagram of the dynamical behaviour
of \eqref{3-5}.  Section~\ref{SEC:Solitons} is devoted to the study of solitons for our equation.

Pohozaev identities show that solitons can only exist when $0<\omega<3/16$; see Lemma~\ref{L:Poho}.  That solitons do indeed exist for all such frequencies $\omega$
is then deduced from the general results of \cite{Berestycki}.  Note that the soliton profile is necessarily different for differing $\omega$; recall that in the case
of a single focusing nonlinearity there is a scaling symmetry, which means that solitons have the same profile for all $\omega\in(0,\infty)$.

The paper \cite{Berestycki} actually shows that \eqref{3-5} admits infinitely many radially symmetric solitons for each $0<\omega<3/16$.  We focus our attention on what
are known as ground state solitons, that is, solitons for which $P(x)$ is non-negative.  As discussed in Section~\ref{SEC:Solitons}, these exist for all
$0<\omega<3/16$; indeed, there is a unique ground state soliton $P_\omega$ for each such $\omega$.  We curtail our investigation in this particular way because the ground state
solitons are the smallest solitons in a certain mass/energy sense; see Theorem~\ref{T:solitons}(i).  In particular, it is reasonable to predict that it is
these solitons which mark the boundary of the region in the mass/energy plane where only scattering occurs.  Note that pure scattering behaviour will not
hold in any region where both mass and energy exceed the mass and energy of a soliton.  This is easily seen by constructing solutions containing a soliton and a radiation term.

As part of our analysis of the ground state solitons $P_\omega$, we are able to determine the kernel of the linearized operator about $\Pw$:
$$
\Lw : u \mapsto -\Delta u + 5 \Pw^4 u - 3 \Pw^2 u + \omega u.
$$
Specifically, we prove that it is spanned by the components of $\nabla P_\omega$.  This spectral condition is an invaluable stepping stone for any
subsequent rigourous analysis of the stability or asymptotic stability of soliton solutions, at least by the usual methods.  In particular, our verification
of the spectral condition allows one to apply the arguments in \cite{Shatah} to see that solitons are stable wherever the mass/energy curve is concave and unstable where it is convex.

Figures~\ref{F:Pw}, \ref{F:numerics1} and~\ref{F:numerics2} depict the mass/energy curve for the solitons $P_\omega$, based on numerics.  We note
that there is an upper branch of solitons where the curve is convex (signaling instability) and a lower branch where it is concave (indicating stability).
Note that from our result on the linearized operator, we know that $\omega\mapsto P_\omega$ is a real-analytic $H^1_x$-valued function; the explanation for
the cusp is that $\partial_\omega M(\Pw)$ and $\partial_\omega E(\Pw)$ both vanish at the same point.  Indeed, by \eqref{E:dEdM}, both vanish to the same order.

Our numerics show a single cusp, corresponding to the vanishing of $\partial_\omega M(\Pw)$ at a single point $\omega_*$, which in turn is the global minimum of
$\omega\mapsto M(\Pw)$ and the global maximum of $\omega\mapsto E(\Pw)$.  Unfortunately, we have not been able to prove this; it constitutes our Conjecture~\ref{Conj:mass}.

The solitons $P_{\omega}$ have been the subject of several numerical studies in the physics literature \cite{Desyatnikov,
Mihalache2000, Mihalache2002}, in their role as ``light bullets" or ``3D spatiotemporal optical solitons".
These numerics confirm our independent investigations and should be consulted by readers interested in plots of mass or energy against the frequency parameter $\omega$.

The behavior of unstable (upper branch) soliton solutions to \eqref{3-5} has been investigated numerically in lower dimensions \cite{Grikurov,Sulem}.  These authors observe that solutions
beginning near the unstable branch of solitons approach a soliton on the lower branch, shedding their excess mass/energy in the form of radiation.

For NLS with a single focusing nonlinearity, ground state solitons can be characterized as optimizers of certain Gagliardo--Nirenberg inequalities.  This has played an important
role in many investigations, beginning with the seminal work \cite{Weinstein}.  Section~\ref{SEC:GNH} is devoted to the discussion of an analogue for our problem.
Because there is a one-parameter family of ground state soliton profiles (unlike the scale-invariant case of a single power nonlinearity), we will need a one-parameter family of inequalities.  Our candidates are the following:
\begin{align}\label{E:I:GNH}
\|u\|_{L^4(\R^3)}^4 \lesssim \|u\|_{L^2(\R^3)}\|u\|_{L^6(\R^3)}^{\frac{3\alpha}{1+\alpha}} \|\nabla u\|_{L^2(\R^3)}^{\frac{3}{1+\alpha}}
\end{align}
where $0<\alpha<\infty$.  The veracity of \eqref{E:I:GNH} is easily deduced by interpolating between the classical Gagliardo--Nirenberg inequality
(which corresponds to $\alpha =0$) and the H\"older inequality (which corresponds to $\alpha=\infty$).

In Proposition~\ref{prop:GN}, we show that optimizers of \eqref{E:I:GNH} are ground state solitons, up to scaling and translation.  Moreover, optimizing solitons
$\Pw$ have the property that $\beta(\omega) = \alpha$, where $\beta(\omega)$ is defined via
$$
 \int_{\R^3} |\Pw(x)|^6\,dx = \beta(\omega) \int_{\R^3} |\nabla\Pw(x)|^2\,dx.
$$
In particular, no soliton occurs as the optimizer in \eqref{E:I:GNH} for more than one value of~$\alpha$.  However, we must acknowledge one short-comming of our results
in this direction.  Numerics show that $\omega\mapsto\beta(\omega)$ is strictly increasing, which would imply that each soliton occurs as the unique optimizer for the
corresponding value of $\alpha$.  We have been unable to prove this; see Conjecture~\ref{Conj:beta}.

As mentioned earlier, Section~\ref{SEC:ME} is devoted to determining all feasible mass/energy pairs; see Figure~\ref{F:ME}.  A transition occurs at mass $M(Q_1)$, where
$Q_1$ is a ground state soliton optimizing \eqref{E:I:GNH} in the case $\alpha=1$.  For masses below $M(Q_1)$, the energy is necessarily positive; the infimal mass
is zero, but this is not achieved.
At the mass $M(Q_1)$, zero energy is still the infimal energy, but is now achieved; moreover, it is achieved only by certain solitons.  For masses strictly greater
than $M(Q_1)$, the infimal energy is now negative and is achieved precisely by some soliton.  Our region $\RR$ is wholly contained in the mass strip where $M<M(Q_1)$.
Indeed, in the region $M>M(Q_1)$ it is impossible, using only mass and energy variables, to segregate solutions that scatter from those that converge to a soliton plus radiation.

Beginning with Section~\ref{SEC:Virial} we focus more tightly toward the proof of Theorem~\ref{thm: main}.  Naturally, to obtain a non-perturbative proof of scattering,
one needs some intrinsically nonlinear information about the equation.  This is true even in the defocusing case, where this role is invariably fulfilled by (traditional
or interaction) Morawetz identities.  For focusing equations, sharp thresholds are typically determined via the virial identity (suitably truncated); see, for example, \cite{AkahoriNawa,DHR08,KenigMerle,Berbec}.  One exception is the work \cite{Dodson} on the mass-critical NLS, which uses a hybrid of the virial and interaction Morawetz identities.  Dodson-style variants are not advantageous for our problem.  We will use the virial identity.

The virial identity stems from the behaviour of a system under scaling; however, it is not essential that the system has a scaling symmetry.
As is easily verified, the operator
$$
\mathcal{A} = \tfrac{1}{i} \bigl( x\cdot\nabla + \nabla \cdot x \bigr)
$$
is the generator of unitary dilations on $L^2(\R^3)$.  An elementary computation shows that if $u$ is a solution to \eqref{3-5}, then
\begin{equation*}%\label{E:Veq}
\frac{d\ }{dt} \langle u(t),\, \mathcal{A} u(t)\rangle = 4V(u(t)) \ \ \ \text{where}\ \ \ V(f) := \int_{\R^3} |\nabla f(x)|^2 + |f(x)|^6 - \tfrac34  |f(x)|^4 \,dx.
\end{equation*}
This is the virial identity for our equation; correspondingly, we will refer to the functional $V:H^1(\R^3)\to\R$ as the virial.

For soliton solutions $u(t,x)=e^{i\omega t} P(x)$, we have $\langle u(t),\, \mathcal{A} u(t)\rangle=\langle P,\, \mathcal{A} P\rangle$ and correspondingly,
$V(P)=0$.  On the other hand, for solutions $v(t,x)=e^{it\Delta}v_0$ of the linear Schr\"odinger equation, we have
$$
\frac{d\ }{dt} \langle v(t),\, \mathcal{A} v(t)\rangle = 4\int_{\R^3} |\nabla v_0|^2\,dx \qtq{and}  \lim_{t\to\pm\infty} V(v(t)) = \int_{\R^3} |\nabla v_0|^2\,dx.
$$
In particular, the virial of linear solutions is positive for large times.

The way that the virial identity is employed in the proof of scattering is rather subtle.  One of the key properties of the region $\RR$ is that
\begin{equation}\label{E:V>0 on R}
\bigl(M(u),E(u)\bigr) \in \RR \ \ \ \implies\ \ \   V(u(t)) > 0 \ \text{ for all $t\in\R$}
\end{equation}
for any solution $u$ to \eqref{3-5}.  Nevertheless, it is \emph{not} true that solutions whose virial is positive for all times must
necessarily scatter.  Imagine, for example, a solution which consists of a soliton together with radiation.  By suitable construction, one may decouple the contributions
of the two parts of the solution to the virial; indeed, decoupling will be automatic in the $t\to\pm\infty$ limits.  As solitons have zero virial and radiation has positive
virial, such a solution would have positive virial for all times, but definitely does not scatter.

Our example highlights a central problem that arises when applying conservation laws and similar identities: one only observes the aggregate of all the parts of the solution.
Unwanted behaviour of one part of the solution cannot be ruled out if it can be compensated for by unreasonable behaviour of another part of the
solution.  While sometimes space and/or frequency localization techniques can be applied, this is untenable for generic large data due to the resulting combinatorial
complexity.

In \cite{BO99}, Bourgain introduced the induction on energy technique to overcome an obstruction of precisely this type.  His solution to the combinatorial morass is
to inductively exclude unreasonable behaviour at ever increasing energies. (The energy is a coercive conserved quantity for the problem treated in \cite{BO99}.)

The guiding principle, advanced significantly in \cite{CKSTT}, is the following: Assume all solutions with energy $E\leq E_0$ have been shown to scatter;
this is the inductive hypothesis.  We wish to show that this remains true for all solutions $u$ with $E(u)\leq E_0 + \eta$, where $\eta$ is a very small parameter.
Suppose that at some time, such a solution $u$ can be written as two well-separated parts, each with energy at least $\eta$.   Then, we may apply the inductive hypothesis
to approximate $u$ by the sum of two solutions, each of which scatters.  That the parts are well-separated is essential.  Our equation is nonlinear; it is only in such a
regime that one may obtain an approximate solution by summing two solutions. For the models studied in \cite{BO99,CKSTT} and in this paper, two solutions are well-separated if they live in different spatial locations or at differing length scales.

As consequence of the preceding, we see that the inductive step is reduced to proving scattering for solutions $u$ with $E(u)\leq E_0 + \eta$ that consist of a single
piece (up to errors of size $\eta$), with a clearly-defined location in space and spatial/frequency scale.  This substantially enhances the efficacy of conservation
laws and monotonicity formulae.

More recent applications of the induction on energy technique employ a contradiction argument, closer in spirit to the well-ordering principle.
Some of these ideas are already hinted at by the language used in \cite{CKSTT}, for example, the phrase ``minimal energy blowup solution''.
However, the true mathematical realization of this belongs to Keraani \cite{keraani-l2} and to Kenig--Merle \cite{KenigMerle}.  This is the variant we will describe
below.  It leads to proofs that are more modular and simpler to understand; it has also fueled an explosion of applications of the underlying paradigm.

Next we will explain the construction of a minimal blowup solution, which captures the essence of the induction on energy argument.  After that, we will return to our discussion of Section~\ref{SEC:Virial} and
the role of the virial identity.

Until now, the size of solutions has been determined by two variables: mass and energy.  For a workable notion of
minimality, we need to combine them into a single quantity.  To this end, we introduce a continuous map $D: \RR \to [0,\infty)$ in Subsection~\ref{SS: exhaustion of R}.
We will also regard $D$ as a function of solutions via $D(u)=D(M(u),E(u))$.  For expositional clarity, we will arrange that $D(u)=0$ if and only if $u\equiv 0$ and
define $D(u)=\infty$ when $(M(u),E(u))\in\RR^c$.  Proposition~\ref{P:all about D} describes further properties of this function.

Given $0<D<\infty$, we define
\begin{align}\label{QL}
L(D):=\sup \big\{\|u\|_{L^{10}_{t,x}(\R\times\R^3)}: \text{ $u$ solves \eqref{3-5} and $D(u)\leq D$}\big\},
\end{align}
In this way, Theorem~\ref{thm: main} becomes the statement that $L(D)<\infty$ for all $0<D<\infty$.

Suppose now that Theorem~\ref{thm:  main} were to fail and let $D_c$ be the supremum of all values of $D$ for which $L(D)$ is finite.  Failure of Theorem~\ref{thm:  main}
is the assertion that $D_c<\infty$.  Note that $L(0)=0$, so $D_c\geq0$.
In fact, Proposition~\ref{thm: Small data scattering} shows that $D_c>0$.  This proposition is a refinement of the small-data theory developed in \cite{TaoVisanZhang}.
The positivity of $D_c$ corresponds to the `base step' of the induction.

Under our contradiction hypothesis, there must be a sequence of solutions $\{u_n\}$ so that
$$
D(u_n)\to D_c \qtq{and} \int_\R \int_{\R^3} |u_n(t,x)|^{10} \,dx\,dt \to \infty.
$$
If we could conclude that a subsequence of $\{u_n\}$ converges in $H^1_x$, perturbation theory (see Proposition~\ref{Perturbation of {3-5}}) would guarantee
that the limit $u_\infty$ is a minimal blowup solution. It would be minimal because $D(u_\infty)=D_c$ and a blowup solution because
$\iint |u_\infty(t,x)|^{10}\,dx\,dt = \infty$.

The assertion that bounded sequences of solutions converge (subsequentially) to a solution is a narrow form of the well-known Palais--Smale condition in the
calculus of variations.  (Our sequence $\{u_n\}$ is bounded in $L^\infty_t H^1_x$ because $\{D(u_n)\}$ is bounded.)  The usual Palais--Smale condition is immediately broken
by the presence of non-compact symmetries.  For our problem, the pertinent symmetries are spatial and temporal translations, as well as a vestigial/broken scaling symmetry.
The first step in handling this issue is to prove an appropriate concentration-compactness principle for the \emph{linear} equation.
This is the task of proving a linear profile decomposition; it is discharged in Section~\ref{SEC:5}.

Our concentration-compactness principle has strong similarities to earlier work on the energy-critical case \cite{bahouri-gerard,keraani-h1}; however, our proofs follow
the slightly different path laid out in \cite{ClayNotes,Visan:Oberwolfach}.  For our problem, we must work in $H^1_x$ rather than the homogeneous space $\dot H^1_x$ and
prove decoupling of $L^4_x$-norms as well as $L^6_x$-norms.  The techniques needed to adapt the argument have appeared before in the context of other problems with broken
symmetries, \cite{IonPaus,KKSV:KdV,KSV:2DKG,KVZ:Ob}.  Nevertheless, as a service to the reader we provide full details.

The linear profile decomposition, Theorem~\ref{thm:profile decomposition}, decomposes a subsequence of initial data $u_n(t=0)$ into linear combinations of asymptotically
orthogonal (as $n\to\infty$) bubbles of concentration.  Each bubble $\phi^j$ appears at an $n$-dependent location in space-time $(t_n^j,x_n^j)$ at some characteristic
length scale $\lambda_n^j$.  As noted earlier, for solutions at very small length scales, the quintic nonlinearity is dominant --- they behave as solutions to
the quintic NLS \eqref{5NLS}.  The role of Section~\ref{SEC:6} is to treat bubbles living at such small length scales by exploiting Theorem~\ref{T:quintic}.

In Section~\ref{SEC:7} we combine the results of Sections~\ref{SEC:5} and~\ref{SEC:6} to prove that a version of the Palais--Smale condition holds for our problem
(see Proposition~\ref{prop: Palais-Smale}) and then that failure of Theorem~\ref{thm: main} would imply the existence of a minimal blowup solution (see 
Theorem~\ref{prop: existence min blowup sol}).
It is here that we perform the `inductive step': the sequence of initial data cannot split into more than one bubble of concentration, because
this would violate the minimality of $D_c$.  In the language of \cite{Lions}, \emph{dichotomy} cannot occur.   This argument requires
that $D(m,e)$ strictly decreases if either the mass or the energy is decreased (cf. Proposition~\ref{P:all about D}(v) and~(vi)).

Because they are exact symmetries of our equation, space and time translations can be employed directly to tackle these sources of non-compactness.  However, the problem of scaling seems to have simply
disappeared in the statements of Proposition~\ref{prop: Palais-Smale} and Theorem~\ref{prop: existence min blowup sol}.
Let us explain.  Both nonlinearities are mass supercritical; correspondingly, solutions with bounded mass but living at very large length scales behave essentially linearly.
By the Strichartz inequality, linear solutions obey the spacetime bound \eqref{1010}.
Thus solutions living at very large length scales cannot arise as minimal blowup solutions.  On the other hand,
by the analysis of Section~\ref{SEC:6}, solutions living at very small length scales cannot blow up either  and so cannot occur as minimal blowup solutions.  Naturally, if the sequence $\{u_n\}$ of solutions all
live at intermediate length scales, there is no need to rescale in order to achieve subsequential convergence.

Theorem~\ref{prop: existence min blowup sol} gives more than just the existence of a minimal blowup solution (under the assumption that Theorem~\ref{thm: main} fails).
It shows that any such minimal counter-example $u$ consists of a single well-localized bump.  More precisely, it shows that such a solution
$u$ is almost periodic modulo translations, that is, there is an $\R^3$-valued function $x(t)$ of time so that
$$
\bigl\{u(t,x-x(t)) : t\in \R \bigr\} \quad \text{is precompact in $H^1(\R^3)$.}
$$
In fact, this very strong property of such minimal blowup solutions is a rather trivial consequence of our strong Palais--Smale condition.

It is now that we finally see the true power of the induction on energy paradigm.  It tells us where to center our virial identity
and at what radius we can safely truncate; the latter is dictated by compactness.   There is one additional subtlety, however: we need to control how much $x(t)$ moves.
In Proposition~\ref{prop: P(u)=0} we observe that minimal blowup solutions have zero momentum.  Otherwise, one could apply a Galilei boost, which
preserves the mass and the blowup property, but reduces the energy.  In Proposition~\ref{prop: control x(t)} we use this to constrain the motion of $x(t)$; specifically, we show that $x(t)=o(t)$ as $|t|\to\infty$.  This is sufficient to exploit a truncated version of the virial identity to show that \eqref{E:V>0 on R} is inconsistent with the region $\RR$ admitting an almost periodic solution.  This is done in Section~\ref{SEC:8}.  As our previous arguments show that failure of Theorem~\ref{thm: main}
guarantees the existence of such a solution, this completes the proof of Theorem~\ref{thm: main}.

By realizing the arguments laid out above, we will be able to show that scattering occurs on any region $\RR$ of the mass/energy plane with the following property:
\begin{equation}\label{E:Rcond}
\textit{\small If $(m,e) \in \RR$ and $u\in H^1(\R)$ obeys $M(u)\leq m$ and $E(u)\leq e$, then $V(u)>0$.}
\end{equation}
Note that non-vanishing of the virial on such a larger set is needed to perform the induction argument we described.

In Section~\ref{SEC:Virial} we determine the largest $\RR$ region obeying \eqref{E:Rcond}; this is the region appearing in Theorem~\ref{thm: main}.  As we will see, one may
determine this region by finding for each mass $m>0$ the least energy $\Et(m)$ at which the virial vanishes.  This is formalized in Definition~\ref{D:RR defn}.

From prior investigations, it is reasonable to imagine that the boundary of the region $\RR$ is marked by solitons; that is, the functions that achieve minimal energy among those
with fixed mass and zero virial are solitons.  We will prove that this is \emph{not} the case for our model.  We find this a startling new observation.  It places a formidable limitation
on existing technology and raises curious questions for further investigation; most notably, what is the true extent of the mass/energy region where only scattering holds?

We are able to prove that for some values of the mass, functions with zero virial that achieve energy $\Et(m)$ are precisely solitons, but for other values of the mass such functions
are non-trivial rescalings of soliton profiles.  These rescalings are \emph{not} solitons in their own right!  Such non-soliton virial obstructions do not appear in earlier work.  See Theorem~\ref{T:Rbdry} for further information.  Theorem~\ref{T:MMR} gives general information on the shape of the curve $m\mapsto\Et(m)$.

Let us now discuss the relation to the works \cite{Akahori,Akahori',Miao}, which considered the case where the highest nonlinearity is energy-critical and focusing.  In that setting, the scattering threshold is dictated by the radial soliton $W$ associated to the purely energy-critical problem, which is unstable to finite-time blowup.  In particular, the lower order nonlinearity does not alter the threshold determined earlier in \cite{KenigMerle,Berbec}.  The authors of the papers \cite{Akahori,Akahori',Miao} consider variational problems based on minimizing a free energy of the form
$$
F_\lambda(u) = E(u) + \lambda M(u)
$$
subject to vanishing of the virial, expanding on the methodology introduced by Payne and Sattinger \cite{PayneSattinger}.  (In \cite{Miao}, $\lambda=0$.)
It is not difficult to see from the results of this paper that our problem has the following properties: (a) Minimizers may exist (depending on $\lambda$), but no soliton ever occurs as such a minimizer and (b) The region $\RR$ cannot be exhausted by
sub-level sets of such functions.  More specifically, we draw the reader's attention to the concavity proved in Theorem~\ref{T:feasible} and the shape of the curve of rescaled solitons
shown in Figure~\ref{F:numerics1}.  (The facts about this curve proved in Lemma~\ref{L:Rw} are sufficient to verify (a) and (b), but the convexity shown by the
numerics makes it instantly apparent.)

In closing this introduction, we wish to express two thoughts about future directions:

\noindent
1. We believe that the discovery of the new non-soliton obstructions to the usual tools used to prove scattering is almost as important as the positive results that we prove.
It breaks the long-standing tradition that minimization subject to zero virial produces solitons (or wave collapse with a soliton profile).  In doing so, it highlights an
unexpected inadequacy of prior methods and hopefully will stimulate the investigation of substitutes for the virial identity in the treatment of large data scattering.

\noindent
2. We hope that this paper may provide some preliminary guidelines for the problem of large-data scattering for the Gross--Pitaevskii and cubic-quintic problems with non-zero boundary conditions at infinity.  These problems are much more subtle.  The gauge and Galilei symmetries are both broken and the mass (more accurately, the mass defect relative to the constant background) is no longer coercive.  On the positive side, these problems are known to be well-posed for large data \cite{PG,KOPV2011} and to scatter for small data \cite{GNT3,GNT4}.  There have also been substantial advances in understanding the structure of solitons for these equations \cite{BGS,Maris_Existence of TW}.  Nonetheless, considerable obstacles (both variational and dispersive in character) currently stand in the way of a proof of scattering below the soliton threshold.

\subsection{Notation}
\label{SUBSEC:notation}
We write $X\lesssim Y$ to indicate that there exists some constant $C>0$ so that $X \leq CY$.
If the constant $C$ depends on some parameter $r$, we write $X\les_r Y$. We write $X\sim Y$ if $X\lesssim Y \lesssim X$.

\begin{definition}
A pair  of exponents $(q, r)$ is \emph{admissible} if $2\leq q, r \leq \infty$ and $\frac{2}{q} + \frac{3}{r} = \frac{3}{2}$.
Given a spacetime slab $I\times\R^3$, we define
\begin{align*}
\|u\|_{S^0(I)} := &\sup\Bigl\{ \|u\|_{L_t^qL_x^r(I\times \R^3)} : \text{ $(q,r)$ is admissible }\Bigr\}.
\end{align*}
Analogously, $N^0(I)$ denotes the corresponding dual Strichartz spaces.
\end{definition}

These notations (introduced in \cite{CKSTT}) allow a compact expression of the Strichartz estimates for the Schr\"odinger propagator on $\R^3$:

\begin{lemma}[Strichartz estimates, \cite{GV,KeelTao,Strichartz,Yajima}]\label{LEM:Stri}
Let $I$ be an interval in $\R$ and let $u:I\times\R^3\to \C$ be a solution to
$$
i\partial_t u = -\Delta u + F \qtq{with} u(t=0) = u_0.
$$
Then,
\begin{equation}\label{ZStri1}
\| u \|_{S^0(I)} \lesssim \|u_0\|_{L^2(\R^3)} + \|F\|_{N^0(I)}.
\end{equation}
\end{lemma}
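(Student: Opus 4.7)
The plan is to reduce everything to the \emph{homogeneous} bound $\|e^{it\Delta}u_0\|_{L^q_tL^r_x}\lesssim \|u_0\|_{L^2_x}$ for admissible $(q,r)$, then pass to the full inhomogeneous statement \eqref{ZStri1}. The starting point is the pair of elementary estimates for the free propagator on $\R^3$: mass conservation $\|e^{it\Delta}f\|_{L^2_x}=\|f\|_{L^2_x}$ and the dispersive estimate $\|e^{it\Delta}f\|_{L^\infty_x}\lesssim |t|^{-3/2}\|f\|_{L^1_x}$, obtained from the explicit Gaussian convolution kernel. Riesz--Thorin interpolation between these two bounds yields $\|e^{it\Delta}f\|_{L^r_x}\lesssim |t|^{-3(\frac12-\frac1r)}\|f\|_{L^{r'}_x}$ for $2\le r\le\infty$, which is the only analytic input into the argument.

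For the homogeneous estimate I would run the $TT^*$ argument: by duality it suffices to show
\[
\Bigl\|\int_\R e^{-is\Delta}F(s)\,ds\Bigr\|_{L^2_x}^2 = \Bigl\langle \int\!\!\int e^{i(t-s)\Delta}F(s)\,ds,\,F(t)\Bigr\rangle \lesssim \|F\|_{L^{q'}_tL^{r'}_x}^2,
\]
which after applying the dispersive bound pointwise in $(t,s)$ reduces to the one-dimensional fractional-integration inequality
\[
\Bigl\|\int_\R |t-s|^{-3(\frac12-\frac1r)}\|F(s)\|_{L^{r'}_x}\,ds\Bigr\|_{L^q_t}\lesssim \|F\|_{L^{q'}_tL^{r'}_x}.
\]
For $q>2$ this is exactly Hardy--Littlewood--Sobolev on $\R$, with the admissibility condition $\tfrac{2}{q}+\tfrac{3}{r}=\tfrac32$ ensuring the correct scaling. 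The inhomogeneous estimate, with the retarded integral $\int_0^t e^{i(t-s)\Delta}F(s)\,ds$ on one side and a (possibly different) admissible dual norm on the other, then follows from the Christ--Kiselev lemma, using that $q>q'$ for any admissible pair with $q>2$. For the admissibility to be genuine we also need to include $(q,r)=(\infty,2)$, which is trivial from mass conservation, and $(q,r)=(2,6)$.

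The main obstacle, and the only nontrivial point, is the endpoint $(q,r)=(2,6)$: here Christ--Kiselev fails (since $q'=q$) and the bilinear form $\int\!\!\int \langle e^{i(t-s)\Delta}F(s),G(t)\rangle\,ds\,dt$ is logarithmically divergent under the crude pointwise dispersive bound. To handle it I would invoke the Keel--Tao endpoint argument: decompose dyadically $|t-s|\sim 2^k$ and estimate each piece using bilinear interpolation between the trivial $L^2\to L^2$ bound and off-diagonal dispersive bounds of the form $|\langle e^{i(t-s)\Delta}f,g\rangle|\lesssim 2^{-3k(\frac12-\frac1{r})}\|f\|_{L^{r'}}\|g\|_{L^{r'}}$ for $r$ near but not equal to $6$. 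Summing the resulting geometric series (rather than integrating the borderline weight $|t-s|^{-1}$) closes the endpoint. Once this is established, the full inhomogeneous estimate \eqref{ZStri1} for the $S^0$/$N^0$ norms is obtained by combining the homogeneous bound with the Christ--Kiselev reduction in the off-endpoint regime and the direct bilinear argument at the endpoint, and then taking the supremum/infimum over admissible pairs in the definitions of $S^0(I)$ and $N^0(I)$. Restriction to a subinterval $I\subset\R$ is automatic by extending $F$ by zero outside $I$.
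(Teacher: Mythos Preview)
Your outline is correct and follows the standard route to Strichartz estimates in the cited references, particularly the Keel--Tao endpoint argument. Note, however, that the paper does not give a proof of this lemma at all: it is stated with citations to \cite{GV,KeelTao,Strichartz,Yajima} and treated as a black box, so there is no ``paper's own proof'' to compare against.
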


We will often apply this estimate to the derivative of solutions to \eqref{3-5}; for example, combining \eqref{ZStri1} with Sobolev embedding, we observe that
\begin{equation*}
\|u \|_{L_{t,x}^{10}(\R\times\R^3)} \les \|u_0\|_{\dot H^1(\R^3)} + \|\nabla F\|_{N^0(\R)}.
\end{equation*}

Let us now describe our notations for Littlewood--Paley projectors.  Fix $\psi\in C^\infty_c(\R^3)$ that is non-negative, radial, and such that $\psi(x)=1$ if $|x|\leq 1$ and $\psi(x)=0$ if $|x|\geq \frac{11}{10}$.  We then define Fourier multipliers as follows:
\begin{gather*}
\widehat{P_{\leq N} f}(\xi):=\psi\big(\tfrac{\xi}{N}\big)\hat{f}(\xi), \quad \widehat{P_Nf}(\xi):=\Bigl[\psi\big(\tfrac{\xi}{N}\big)-\psi\big(\tfrac{2\xi}{N}\big)\Bigr]\hat{f}(\xi), \\
\text{and} \quad \widehat{P_{> N} f}(\xi):=\Bigl[1- \psi\big(\tfrac{\xi}{N}\big)\Bigr]\hat{f}(\xi).
\end{gather*}

\begin{lemma}[Bernstein inequalities]
For $1\leq p\leq q\leq \infty$, we have
\begin{align*}
\|P_Nf\|_{L^q(\R^3)} &\les N^{\frac{3}{p}-\frac{3}{q}} \|P_Nf\|_{L^p(\R^3)}, \\
\|P_{\geq N}f\|_{L^p(\R^3)} &\les N^{-s}\big\||\nabla|^sP_{\geq N}f\big\|_{L^p(\R^3)}.
\end{align*}
\end{lemma}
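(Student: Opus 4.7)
Both estimates are classical consequences of Young's convolution inequality applied to the convolution kernels of the Littlewood--Paley multipliers, together with a scaling computation. My plan is to reduce everything to the case $N=1$ by dilation and then check that the relevant kernels lie in the right Lebesgue spaces.

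For the first inequality, let $\phi(\xi):=\psi(\xi)-\psi(2\xi)$, so that $\widehat{P_N f}(\xi)=\phi(\xi/N)\hat f(\xi)$. Choose an auxiliary $\tilde\phi\in C_c^\infty(\R^3)$ that equals $1$ on the support of $\phi$ and is supported in an annulus slightly fatter than $\{\tfrac12\le|\xi|\le \tfrac{11}{10}\}$; set $\tilde P_N$ to be the Fourier multiplier with symbol $\tilde\phi(\xi/N)$. Then $P_N f=\tilde P_N P_N f$, and
\[
\tilde P_N g = K_N * g \qtq{with} K_N(x)=N^3\check{\tilde\phi}(Nx).
\]
Since $\tilde\phi\in C_c^\infty(\R^3)$, its inverse Fourier transform lies in every $L^r(\R^3)$, so $\|K_N\|_{L^r(\R^3)}\lesssim N^{3-3/r}$. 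Choosing $r$ by $1+\tfrac1q=\tfrac1p+\tfrac1r$ (which is in $[1,\infty]$ since $p\le q$) and applying Young's inequality gives
\[
\|P_N f\|_{L^q}=\|\tilde P_N P_N f\|_{L^q}\le \|K_N\|_{L^r}\|P_N f\|_{L^p}\lesssim N^{\frac 3p-\frac 3q}\|P_N f\|_{L^p}.
\]

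For the second inequality, I write $P_{\geq N}=P_{\geq N}\,|\nabla|^{-s}\,|\nabla|^s P_{\geq N}$ and show that the operator $T_N:=P_{\geq N}|\nabla|^{-s}$ is given by convolution with a kernel of $L^1$-norm $\lesssim N^{-s}$. Indeed, its multiplier is
\[
m_N(\xi)=\bigl[1-\psi(\xi/N)\bigr]|\xi|^{-s}=N^{-s}\,m_1(\xi/N), \qtq{where} m_1(\xi)=\bigl[1-\psi(\xi)\bigr]|\xi|^{-s}.
\]
The function $m_1$ is smooth, vanishes near the origin, and decays like $|\xi|^{-s}$ at infinity; one verifies by direct estimation of $\xi$-derivatives that its inverse Fourier transform is integrable (it is smooth on $\R^3\setminus\{0\}$, decays rapidly in $|x|$, and is locally integrable near the origin). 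Hence the kernel of $T_N$ is $N^{3-s}\check m_1(N\cdot)$, which has $L^1$-norm $\lesssim N^{-s}$. Applying Young's inequality in the form $L^1*L^p\to L^p$ gives the desired bound.

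The only step requiring genuine care is the $L^1$-integrability of $\check m_1$ used in the second estimate; away from the origin this is immediate from smoothness and the rapid decay inherited from $1-\psi$, while near the origin one must confirm that $\check m_1$ has no non-integrable singularity — this follows because $m_1$ is locally integrable with controlled decay at infinity, so $\check m_1$ is bounded (in fact continuous) on $\R^3$. Neither step requires any structure beyond what is already encoded in the definitions of $\psi$ and of the Littlewood--Paley projectors above.
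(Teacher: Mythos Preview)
The paper states this lemma without proof, treating the Bernstein inequalities as standard background; your approach via convolution kernels, scaling, and Young's inequality is the canonical one, and your treatment of the first inequality is correct.

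For the second inequality your strategy is right, but the justification in your final paragraph is wrong. You assert that $\check m_1$ is ``bounded (in fact continuous) on $\R^3$'' because ``$m_1$ is locally integrable with controlled decay at infinity.'' That implication fails for $0<s\le 3$: in this range $m_1(\xi)=[1-\psi(\xi)]\,|\xi|^{-s}\notin L^1(\R^3)$, and $\check m_1$ has a genuine singularity of order $|x|^{s-3}$ at the origin. (Your earlier parenthetical, where you said $\check m_1$ is smooth on $\R^3\setminus\{0\}$ and locally integrable near the origin, had it right; the final paragraph contradicts this.) What is actually needed---and true---is that $|x|^{s-3}$ is locally integrable in $\R^3$ for every $s>0$, together with the rapid decay of $\check m_1$ at infinity coming from the smoothness of $m_1$. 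A clean way to obtain $\check m_1\in L^1$ directly is to decompose dyadically: up to a harmless adjustment at unit scale, $m_1(\xi)=\sum_{j\ge 1}\phi(2^{-j}\xi)\,|\xi|^{-s}$, each summand rescales to a fixed Schwartz symbol and hence has $L^1$-kernel of norm $\lesssim 2^{-js}$, and the geometric series converges.
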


The Brezis-Lieb Lemma is a refinement of Fatou's Lemma that has proven invaluable in the calculus of variations:

\begin{lemma}[\cite{BL}]\label{LEM:B-L}
Fix $1\leq p < \infty$ and suppose $\{f_n\}_{n\in \mathbb{N}}$ is bounded in $L^p(\R^d)$ and converges almost everywhere to $f$.  Then 
\begin{align*}
\lim_{n\to \infty}
\Big(\|f_n\|_{L^{p}(\R^d)}^p
-\|f_n - f \|_{L^{p}(\R^d)}^{p}\Big)
=
\|f\|_{L^{p}(\R^d)}^{p}.
\end{align*}
\end{lemma}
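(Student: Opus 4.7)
The plan is to reduce the claim to a dominated convergence argument by introducing $g_n := f_n - f$ and rewriting
\[
\|f_n\|_{L^p}^p - \|f_n - f\|_{L^p}^p - \|f\|_{L^p}^p = \int_{\R^d}\bigl(|g_n + f|^p - |g_n|^p - |f|^p\bigr)\,dx.
\]
Since $\{f_n\}$ is bounded in $L^p$ and $f_n \to f$ a.e., Fatou's Lemma gives $f \in L^p(\R^d)$, and hence $\{g_n\}$ is also bounded in $L^p$, say $\|g_n\|_{L^p}^p \leq K$. Moreover, $g_n \to 0$ a.e., so the integrand above tends to $0$ almost everywhere. The task is to promote this to convergence in $L^1$.

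The key pointwise ingredient is the elementary inequality: for every $\eps > 0$ there exists $C_\eps > 0$ such that for all $a, b \in \C$,
\[
\bigl||a+b|^p - |b|^p\bigr| \leq \eps |b|^p + C_\eps |a|^p.
\]
This can be proved by treating the cases $|a| \leq \eps |b|$ (via the mean value theorem / convexity) and $|a| > \eps |b|$ (via the crude bound $|a+b|^p + |b|^p \lesssim_p |a|^p$) separately. Applied with $a = f$, $b = g_n$ and combined with the triangle inequality, it yields
\[
h_{n,\eps}(x) := \Bigl(\bigl||g_n + f|^p - |g_n|^p - |f|^p\bigr| - \eps\, |g_n|^p\Bigr)_+ \leq (C_\eps + 1)|f|^p.
\]

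Now I would argue as follows. The right-hand side is a fixed $L^1$ function, and $h_{n,\eps} \to 0$ almost everywhere, so the dominated convergence theorem gives $\int h_{n,\eps}\,dx \to 0$ as $n \to \infty$, for each fixed $\eps$. Since $\bigl||g_n + f|^p - |g_n|^p - |f|^p\bigr| \leq h_{n,\eps} + \eps |g_n|^p$, we obtain
\[
\limsup_{n\to\infty}\int_{\R^d} \bigl||g_n + f|^p - |g_n|^p - |f|^p\bigr|\,dx \leq \eps K.
\]
Letting $\eps \downarrow 0$ completes the proof.

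The only mildly delicate step is the pointwise inequality, which is a standard exercise but requires a short case analysis that handles both $1 \leq p < 2$ (where $t \mapsto |t|^p$ is not twice differentiable at $0$) and $p \geq 2$. Everything else is bookkeeping once $g_n$ has been introduced, so I expect no real obstacle beyond making the dominated convergence argument work uniformly in $n$, which is precisely what the truncation by $\eps |g_n|^p$ achieves.
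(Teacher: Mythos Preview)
Your proof is correct and is essentially the original argument of Br\'ezis and Lieb from the cited reference~\cite{BL}; the paper does not supply its own proof of this lemma, treating it as a known result. Your write-up reproduces the standard approach: introduce $g_n=f_n-f$, use the elementary pointwise inequality $\bigl||a+b|^p-|b|^p\bigr|\le \eps|b|^p+C_\eps|a|^p$ to produce a dominating function independent of $n$ after truncating by $\eps|g_n|^p$, and then apply dominated convergence followed by $\eps\downarrow 0$.
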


As one last preliminary, we remind the reader of a particular consequence of local smoothing; see \cite[Lemma~3.7]{keraani-h1}, \cite[Lemma~2.5]{Berbec}, or \cite[Corollary~4.15]{ClayNotes}.

\begin{lemma}\label{L:Keraani3.7}
Given $\phi\in \dot H^1(\R^d)$,
$$
\| \nabla e^{it\Delta} \phi \|_{L^2_{t,x}([-T,T]\times\{|x|\leq R\})}^3 \lesssim
     T^{\frac25} R^{\frac{11}5} \| e^{it\Delta} \phi \|_{L^{10}_{t,x}}\| \nabla \phi \|_{L^2_x}^2.
$$
\end{lemma}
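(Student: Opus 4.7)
The plan is to prove this by frequency decomposition and optimization. Split $u = e^{it\Delta}\phi$ as $u = P_{\leq N}u + P_{>N} u$ at a cutoff frequency $N$ to be chosen. High frequencies will be controlled by the classical Kato local smoothing estimate, while low frequencies will be controlled by Bernstein together with Hölder in both space and time against the given $L^{10}_{t,x}$ norm. Matching the two contributions by choosing $N$ optimally will produce the $1/3$--$2/3$ split of exponents that, after cubing, yields exactly the powers $T^{2/5} R^{11/5} \|u\|_{L^{10}_{t,x}} \|\nabla\phi\|_{L^2}^2$.

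\medskip

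\noindent\textbf{High-frequency piece.} I would invoke the standard local smoothing estimate
\[
\|\nabla e^{it\Delta}\psi\|_{L^2_{t,x}(\R\times\{|x|\leq R\})} \lesssim R^{1/2} \|\psi\|_{\dot H^{1/2}_x},
\]
applied to $\psi = P_{>N}\phi$. Since $|\xi|\leq N^{-1}|\xi|^2$ on the support of $P_{>N}$, Plancherel gives $\|P_{>N}\phi\|_{\dot H^{1/2}_x}^2 \lesssim N^{-1}\|\nabla\phi\|_{L^2_x}^2$, so this piece contributes $R^{1/2}N^{-1/2}\|\nabla\phi\|_{L^2_x}$.

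\medskip

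\noindent\textbf{Low-frequency piece.} For each fixed $t$, Hölder on the ball $B_R \subset \R^3$ with the pair $(2,10)$ yields
\[
\|\nabla P_{\leq N} u(t)\|_{L^2_x(B_R)} \lesssim R^{6/5} \|\nabla P_{\leq N} u(t)\|_{L^{10}_x},
\]
and then the Bernstein inequality bounds the right-hand side by $N\|u(t)\|_{L^{10}_x}$. A further Hölder in time on $[-T,T]$ with the pair $(2,10)$ produces the factor $T^{2/5}$, giving a total contribution $T^{2/5} R^{6/5} N \|u\|_{L^{10}_{t,x}}$.

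\medskip

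\noindent\textbf{Optimization.} Combining the two pieces,
\[
\|\nabla u\|_{L^2_{t,x}([-T,T]\times B_R)} \lesssim AN + BN^{-1/2}, \qquad A := T^{2/5}R^{6/5}\|u\|_{L^{10}_{t,x}},\ \ B := R^{1/2}\|\nabla\phi\|_{L^2}.
\]
Choosing $N \sim (B/A)^{2/3}$ balances the two terms and gives
\[
\|\nabla u\|_{L^2_{t,x}([-T,T]\times B_R)} \lesssim A^{1/3}B^{2/3},
\]
which upon cubing reproduces the claimed inequality. The one point that calls for a bit of care is the rigorous verification of the local smoothing bound with the exact $R^{1/2}$ constant (this is classical: integration by parts against a multiplier like $x/|x|$ after Fourier reduction to a single frequency), and ensuring the cutoff parameter $N$ is allowed to be chosen freely; neither presents genuine difficulty, since all constants are independent of $N$, $T$, and $R$.
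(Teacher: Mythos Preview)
Your proof is correct and follows exactly the standard approach taken in the references the paper cites (Keraani, Killip--Vi\c{s}an, and the Clay notes): frequency split at level $N$, local smoothing for the high piece, Bernstein plus H\"older in space and time for the low piece, then optimize in $N$. The paper itself does not give a proof but simply points to these sources, so there is nothing further to compare.
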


%%%%%%%%%%%%%%%%%%%%%%%%%%%%%%%%%%%%%%%%%%%%%%%%%%55
%%%%%%%%%%%%%%%%%%%%%%%%%%%%%%%%%%%%%%%%%%%%%%%%%%%5
\section{Solitons}\label{SEC:Solitons}

The purpose of this section is to discuss soliton solutions of the cubic-quintic NLS, that is, solutions of the form $u(t,x)=e^{i\omega t}P(x)$
with $P\in H^1(\R^3)$ and $\omega\in\R$.    Evidently, this corresponds to the analysis of the elliptic equation
\begin{align}\label{eq:soliton}
-\Delta P + |P|^4P - |P|^2P + \omega P=0.
\end{align}
(Elliptic regularity guarantees that distributional solutions in $H^1(\R^3)$ are actually classical solutions, so we need not quibble about the
appropriate notion of solution.)

In this paper, we are primarily interested in understanding thresholds for scattering.  As we will see, this allows us to focus our attention on positive radial solutions to \eqref{eq:soliton};
these are typically known as ground states.  Incidentally, by the well-known results of Gidas, Ni, and Nirenberg \cite{GNN}, positive solutions to \eqref{eq:soliton} are automatically
radial (about some point).

Before stating our main results about ground state solitons, we pause to recall some fundamental identities valid for all $H^1_x$ solutions:

\begin{lemma}[Pohozaev Identites]\label{L:Poho}
Let $P\in H^1(\R^3)$ obey \eqref{eq:soliton} for some $\omega\in\C$.  Then
\begin{align}\label{E:Poh1}
\int |\nabla P|^2+|P|^6-|P|^4+\omega |P|^2\, dx &=0 \\
	\label{E:Poh2}
\text{and} \qquad \int \tfrac16|\nabla P|^2+\tfrac16|P|^6-\tfrac14|P|^4 +\tfrac\omega2 |P|^2 \, dx&=0.
\end{align}
In particular, if $P\not\equiv 0$ then $\omega\in(0,\tfrac3{16})$.
\end{lemma}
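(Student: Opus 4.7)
The plan is to derive both identities by testing \eqref{eq:soliton} against two carefully chosen multipliers, and then extract the range of $\omega$ by a short algebraic manipulation combined with Hölder's inequality.

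To obtain \eqref{E:Poh1}, I multiply \eqref{eq:soliton} by $\bar P$, integrate over $\R^3$, and take the real part; the Laplacian term yields $\int|\nabla P|^2\,dx$ after integration by parts, which is legitimate for $P\in H^1(\R^3)$. To obtain \eqref{E:Poh2}, I test against $x\cdot\nabla\bar P$ and again take the real part. The standard scaling identities then give
\begin{align*}
\Re\!\int (-\Delta P)\,x\cdot\nabla\bar P\,dx &= -\tfrac12\!\int|\nabla P|^2\,dx,\\
\Re\!\int |P|^{2k}P\,x\cdot\nabla\bar P\,dx &= -\tfrac{3}{2k+2}\!\int|P|^{2k+2}\,dx,\\
\Re\!\int \omega P\,x\cdot\nabla\bar P\,dx &= -\tfrac{3\omega}{2}\!\int|P|^2\,dx,
\end{align*}
so summing these produces (twice) the identity \eqref{E:Poh2}. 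The one subtlety here is justifying the integration by parts against the unbounded weight $x$: this is handled by noting that elliptic regularity and standard bootstrap arguments (e.g.\ Agmon estimates) ensure that any $H^1$ solution to \eqref{eq:soliton} is smooth and decays exponentially, so all boundary terms at infinity vanish; alternatively, one cuts off by $\chi(x/R)$ and lets $R\to\infty$, estimating the error using $P\in H^1$.

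Once \eqref{E:Poh1} and \eqref{E:Poh2} are in hand, write $A=\int|\nabla P|^2$, $B=\int|P|^4$, $C=\int|P|^6$, $M=\int|P|^2$. The two identities read $A+C-B+\omega M=0$ and $2A+2C-3B+6\omega M=0$. Eliminating $B$ (subtract $3$ times the first from the second, divided appropriately) gives
\begin{equation*}
A+C=3\omega M, \qquad B=4\omega M.
\end{equation*}
If $P\not\equiv 0$ then $M>0$ and $A+C\ge 0$, so $\omega\ge 0$; and $\omega=0$ would force $A=C=0$, hence $\nabla P\equiv 0$ and so $P\equiv 0$ by $L^2$-integrability, a contradiction. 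Thus $\omega>0$.

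For the upper bound, apply Hölder's inequality $B=\int|P|^4\le M^{1/2}C^{1/2}$, so $B^2\le MC$. Substituting $B=4\omega M$ and $C\le A+C=3\omega M$ yields $16\omega^2 M^2\le M\cdot 3\omega M$, i.e.\ $\omega\le\tfrac3{16}$. Equality $\omega=\tfrac3{16}$ would force $C=3\omega M$ and therefore $A=0$, again giving $P\equiv 0$. Hence $\omega\in(0,\tfrac3{16})$. The only genuine obstacle in this proof is the rigorous justification of the Pohozaev multiplier $x\cdot\nabla\bar P$ for merely $H^1$ solutions, which I expect to dispatch via the regularity/decay argument sketched above.
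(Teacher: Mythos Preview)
Your derivation of \eqref{E:Poh1} and \eqref{E:Poh2} matches the paper's (pairing against $\bar P$ and $x\cdot\nabla\bar P$; the paper defers the justification of the second pairing to \cite[\S2.1]{Berestycki} rather than sketching Agmon/cutoff, but the content is the same). One small remark: for \eqref{E:Poh1} you should not ``take the real part'' but simply pair and integrate---all terms except $\omega\int|P|^2$ are automatically real, so \eqref{E:Poh1} holds with the a~priori complex $\omega$ and immediately forces $\omega\in\R$ when $P\not\equiv 0$. The paper makes this step explicit; your later algebra tacitly assumes it.

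The only genuine divergence is in extracting the upper bound $\omega<\tfrac3{16}$. The paper argues pointwise: for $\omega\ge\tfrac3{16}$ the polynomial $\tfrac16 t^3-\tfrac14 t^2+\tfrac\omega2 t$ (with $t=|P|^2$) is nonnegative because the quadratic factor $2t^2-3t+6\omega$ has nonpositive discriminant, so the integrand in \eqref{E:Poh2} is $\ge 0$ and the identity forces $P\equiv 0$. Your route---combining both identities to get $A+C=3\omega M$ and $B=4\omega M$, then applying H\"older $B^2\le MC\le M(A+C)$---is equally short and correct, and perhaps makes the arithmetic origin of $\tfrac3{16}=B^2/(16M^2)\cdot 3M/(A+C)$ more visible. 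The paper's argument is marginally more self-contained in that the upper bound uses only \eqref{E:Poh2}; yours needs the linear combination of both, though you have already computed it anyway for the lower bound.
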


\begin{proof}
Pairing the equation \eqref{eq:soliton} with $\bar P(x)$ yields \eqref{E:Poh1}, from which it also follows that $\omega\in\R$.

The second identity, \eqref{E:Poh2}, follows from pairing the equation with $x\cdot\nabla \bar P(x)$.
As it is not given that  $x\cdot\nabla \bar P(x)\in H^1(\R^3)$, a little extra care is needed here; complete details can be found in \cite[\S2.1]{Berestycki}.

For $\omega\geq\tfrac3{16}$, the polynomial appearing in \eqref{E:Poh2} is non-negative;
this then forces $P\equiv 0$.  Lastly, taking a linear combination of \eqref{E:Poh1} and \eqref{E:Poh2} yields
\begin{equation}\label{eq:w4}
\int |P|^4\,dx = 4\omega \int |P|^2\, dx,
\end{equation}
which shows that $\omega>0$ unless $P\equiv 0$.
\end{proof}

\begin{theorem}[Basic properties of ground state solitons]\label{T:solitons}
For each $0<\omega<\tfrac3{16}$, there is a unique non-negative radially symmetric solution $\Pw\in H^1(\R^3)$ to
\begin{equation}\label{E:Pdefn}
-\Delta \Pw^{ }+\Pw^5-\Pw^3+\omega \Pw^{ }=0.
\end{equation}
In fact, $\Pw$ is strictly positive and a decreasing function of $|x|$.  Moreover,
\begin{SL}
\item $\Pw$ is a non-degenerate saddle point of $u \mapsto E(u)+\tfrac\omega2M(u)$, when viewed as a functional on $H^1_\textup{rad}(\R^3)$.  The Morse index is equal to one.
Furthermore among all non-zero solutions $u$ to \eqref{eq:soliton}, $P_\omega$ achieves the minimal value of $E(u)+\tfrac\omega2M(u)$.
\item The map $\omega\mapsto \Pw$ is $C^1;$ indeed, it is real analytic.
\item The $\dot H^1_x$ norm of $\Pw$ is strictly increasing; indeed,
\begin{equation}\label{dGdw}
\frac{d\ }{d\omega} \int |\nabla \Pw(x)|^2\, dx = \tfrac32 M(\Pw).
\end{equation}
Using the notation $\beta(\omega):=\int |\Pw(x)|^6\, dx \big/ \int |\nabla\Pw(x)|^2\, dx$, we also have
\begin{equation}\label{dMdw}
\frac{d\ }{d\omega} M(\Pw) <  \tfrac{3\beta(\omega)-1}{2\omega} M(\Pw).
\end{equation}
\item As $\omega\to0$ we have $\beta(\omega)=\omega \beta(g) + O(\omega^2)$,
$$
M( P_{\omega} ) = \tfrac{1}{\sqrt{\omega}} \int g(x)^2\,dx + \tfrac{\sqrt{\omega}}2 \int g(x)^6\,dx + O(\omega^{3/2} ),
$$
and
$$
E( P_{\omega} ) = \tfrac{\sqrt{\omega}}2 \int g(x)^2\,dx - \tfrac{\omega^{3/2}}{12} \int g(x)^6\,dx + O(\omega^{5/2} ),
$$
where $g$ is the unique positive radially symmetric solution to $-\Delta g - g^3 + g =0$.
\item As $\omega\to\frac3{16}$ we have $\beta(\omega)\to \infty$, $M(\Pw)\to\infty$, and $E(\Pw)\to-\infty$; indeed,
$$
\beta(\omega) \sim (\tfrac3{16}-\omega)^{-1}, \quad  (M( P_{\omega} ) \sim (\tfrac3{16}-\omega)^{-3}, \qtq{and} |E( P_{\omega} )| \sim (\tfrac3{16}-\omega)^{-3}.
$$
\end{SL}
\end{theorem}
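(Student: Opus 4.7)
The plan is to tackle the components of the theorem roughly in the order stated, leveraging the Pohozaev identities of Lemma~\ref{L:Poho} throughout. For existence of a non-negative radial solution to \eqref{E:Pdefn}, I would invoke the Berestycki--Lions machinery from \cite{Berestycki}: the nonlinearity $f(t) = t^3 - t^5 - \omega t$ has $F(t) = \int_0^t f > 0$ for some $t > 0$ precisely when the polynomial $\omega - \tfrac12 t^2 + \tfrac13 t^4$ dips below zero somewhere, which is exactly the condition $0 < \omega < \tfrac3{16}$ from Lemma~\ref{L:Poho}. Strict positivity is forced by the strong maximum principle, monotonicity in $|x|$ follows from the Gidas--Ni--Nirenberg moving-plane argument, and uniqueness follows from the Serrin--Tang-type ODE uniqueness results for nonlinearities of cubic-quintic shape (where $f(t)/t$ crosses zero exactly once on $(0,\infty)$).

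For (i) and (ii), I would compute the second variation of $E + \tfrac{\omega}{2}M$ at $P_\omega$ to obtain $\Lw$. Non-degeneracy on $H^1_{\text{rad}}$ is the crux: any radial $v$ with $\Lw v = 0$ generates a one-parameter family of radial solutions to \eqref{E:Pdefn} at the same $\omega$ (via the ODE flow for the radial profile), but uniqueness from the previous step forces $v \equiv 0$. To pin down the Morse index, I would exhibit the single negative direction using standard radial Sturm oscillation theory applied to the Sturm--Liouville operator $\Lw^{\text{rad}}$: since $P_\omega$ is a strictly decreasing ground state, $\partial_r P_\omega$ is negative and provides the node-counting input that yields Morse index exactly one. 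That $P_\omega$ is the minimizer of $E + \tfrac\omega2 M$ among all non-zero solitons should come from a rearrangement argument combined with the characterization of the radial minimizer. Once (i) is established, the map $\omega \mapsto P_\omega$ is obtained from the implicit function theorem applied to $F(P,\omega) := -\Delta P + P^5 - P^3 + \omega P$ on the radial space, with invertibility of $D_P F = \Lw^{\text{rad}}$ supplied by non-degeneracy. Real analyticity follows because $F$ is polynomial in $P$ and linear in $\omega$.

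For (iii), I would differentiate the identity $\Lw P_\omega = -2 P_\omega^3 + 4 P_\omega^5$ (obtained by plugging \eqref{E:Pdefn} into $\Lw P_\omega$), pair with $\partial_\omega P_\omega$, and combine with the linear relations among $\int |\nabla P_\omega|^2$, $\|P_\omega\|_{L^4}^4$, $\|P_\omega\|_{L^6}^6$, and $M(P_\omega)$ derived from \eqref{E:Poh1}, \eqref{E:Poh2}, and \eqref{eq:w4}. The cleanest route uses the functional-derivative relation $\nabla E(P_\omega) = -\omega P_\omega$, which gives $\tfrac{d}{d\omega}E(P_\omega) = -\tfrac{\omega}{2} M'(P_\omega)$; combined with the three Pohozaev-type constraints, this yields $\tfrac{d}{d\omega}\|P_\omega\|_{L^6}^6 = \tfrac32 M + 3\omega M'$ and therefore \eqref{dGdw}. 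The strict inequality \eqref{dMdw} is what I expect to be the main obstacle: it cannot come from the linear algebra above alone and must encode a genuine spectral fact about $\Lw$. My plan is to test $\Lw^{-1}$ against $P_\omega$ in two ways---once via the relation $\Lw(\partial_\omega P_\omega) = -P_\omega$ obtained by differentiating \eqref{E:Pdefn}, and once via a trial function built from $P_\omega$ and $x\cdot\nabla P_\omega$---and to compare the resulting quadratic forms using the Morse-index-one information from (i).

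For the asymptotic analysis (iv), I would introduce the rescaling $P_\omega(x) = \sqrt{\omega}\, Q_\omega(\sqrt{\omega}\, x)$; then $Q_\omega$ solves $-\Delta Q_\omega + \omega^2 Q_\omega^5 - Q_\omega^3 + Q_\omega = 0$ which converges as $\omega \to 0$ to the cubic ground-state equation $-\Delta g - g^3 + g = 0$. Uniform control via the non-degeneracy of the cubic ground state and the implicit function theorem at $\omega = 0$ yields $Q_\omega = g + O(\omega)$ in $H^1$, and changing variables back produces the stated expansions of $M(P_\omega)$ and $E(P_\omega)$ after expanding to the next order. For (v), the novelty is that as $\omega \uparrow \tfrac3{16}$ the polynomial $F(t) = \omega - \tfrac12 t^2 + \tfrac13 t^4$ develops a double zero at $t_* = 1/\sqrt{2}$, forcing $P_\omega$ to behave like a plateau at height $t_*$ on an enormous set. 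I would perform a matched-asymptotic analysis: an interior region where $P_\omega \approx t_*$, a thin transition layer where $P_\omega$ decays to $0$, and a standard exponential tail; the radius $R_\omega$ of the plateau is determined by the Pohozaev identities and should scale as $(\tfrac3{16}-\omega)^{-1}$, giving $M(P_\omega) \sim R_\omega^3$ and $|E(P_\omega)| \sim R_\omega^3$. This compacton-style analysis is where I expect the most careful work, and likely requires supplementing the variational bounds with direct ODE shooting estimates to justify the matched expansion.
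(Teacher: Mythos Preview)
Your overall architecture is sound and closely parallels the paper's, but there is a genuine gap in your treatment of part~(i), and it is precisely the hardest step.

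You write that ``any radial $v$ with $\Lw v = 0$ generates a one-parameter family of radial solutions to \eqref{E:Pdefn} at the same $\omega$, but uniqueness forces $v\equiv0$.'' This inference is not valid. A radial $L^2$ null vector of $\Lw$ is the derivative $\delta(r)=\partial_\alpha P(r;\alpha)|_{\alpha=P_\omega(0)}$ of the shooting family with respect to the initial height, but the nearby shooting solutions $P(\cdot;\alpha)$ for $\alpha\neq P_\omega(0)$ need not lie in $H^1$; typically they fail to decay or change sign. Uniqueness of the ground state therefore does \emph{not} preclude $\delta\in L^2$. The Coffman--Kolodner philosophy runs in the opposite direction (non-degeneracy $\Rightarrow$ uniqueness), and since the paper obtains uniqueness from Serrin--Tang by an independent method, non-degeneracy must be established separately. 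The paper does this in Proposition~\ref{P:2.5} via a delicate Sturm comparison between $\delta$ and the family $v_\lambda(r)=rP_\omega'(r)+\lambda P_\omega(r)$, choosing $\lambda>0$ so that the forcing term $I(\lambda,P_\omega(r))$ changes sign exactly at the unique zero $r_1$ of $\delta$; the argument hinges on the new observation that $P_\omega(r_1)<1/\sqrt2$, proved by a direct Sturm comparison of $\delta$ against $P_\omega$ itself. Without this step, part~(ii) (analyticity via the implicit function theorem) and your strategy for \eqref{dMdw} (which relies on $\Lw$ having trivial radial kernel to apply Sylvester's law of inertia) both collapse.

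Two smaller points. In (iv) your rescaled equation should read $-\Delta Q_\omega + \omega Q_\omega^5 - Q_\omega^3 + Q_\omega = 0$, with coefficient $\omega$ rather than $\omega^2$; and in (v) the plateau height is $t_*=\sqrt3/2$, not $1/\sqrt2$ (the polynomial $\tfrac13 t^4-\tfrac12 t^2+\tfrac3{16}=\tfrac1{48}(4t^2-3)^2$). Your matched-asymptotics picture for (v) is heuristically correct and gives the right scales, though the paper instead extracts the bounds directly from the variational characterization \eqref{E:VP}: an explicit piecewise-linear trial function furnishes the upper bound on $\|\nabla\tilde P_\omega\|_{L^2}^2$, and the matching lower bound comes from comparing $\tilde P_\omega$ on the transition shell to the harmonic function with the same boundary values. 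This variational route avoids having to justify the matched expansion rigorously.
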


The remainder of this section is devoted to the proof of this theorem.

Existence of solitons for $\omega\in(0,\frac3{16})$ follows from the main theorem in \cite{Berestycki}.  We remind the reader of the construction, since we will use it
in verifying other parts of the theorem.  Defining $p(u)=\tfrac16|u|^6-\tfrac14|u|^4+\tfrac\omega2|u|^2$, one first shows that the variational problem
\begin{equation}\label{E:VP}
\text{minimize}\ \ \int\! |\nabla P(x)|^2\,dx \ \ \text{over}\ \  P\in H^1(\R^3) \ \ \text{with}\ \ \!\int\! p(P(x))\,dx=-1
\end{equation}
admits an optimizer  $\tilde P_\omega$ that is non-negative, radially symmetric, and non-increasing (as a function of radius).
This is a standard argument, using rearrangement inequalities and the Strauss Lemma (compactness of
the embedding  $H^1_x\hookrightarrow L^4_x$ for radial functions).  As an optimizer, $\tilde P_\omega$ obeys the Euler--Lagrange equation
\begin{equation}\label{E:var char}
-\Delta \tilde P_\omega + \lambda \bigl[ \tilde P_\omega^5 - \tilde P_\omega^3 + \omega \tilde P_\omega\bigr] =0 \qtq{with} \lambda=\tfrac16\int |\nabla \tilde P_\omega|^2\,dx,
\end{equation}
the value of the Lagrange multiplier $\lambda$ being determined via the Pohozaev identities for this equation.  Lastly, observe that
$P_\omega(x)=\tilde P_\omega(x/\sqrt{\lambda})$ is the sought-after soliton.

The variational argument gives a bound on the supremum of the solution, which we will use later:
\begin{equation}\label{E:b0 defn}
\Pw(x) \leq b_0(\omega):= \sqrt{\smash[b]{\tfrac12}+\smash[b]{\tfrac12}\smash[t]{\sqrt{1-4\omega}}} \quad\text{for all $\omega\in(0,\tfrac3{16})$ and $x\in\R^3$.}
\end{equation}
To see this, we consider a generic trial function $P\in H^1_x$ for \eqref{E:VP} and the competitor
$Q(x) = \min( b_0, |P(\rho x)|)$, where $\rho>0$ is chosen so that $Q$ obeys the constraint $\int p(Q)\,dx=-1$.  As $p(u) > p(b_0)$ whenever $|u|>b_0$, it
is not difficult to verify that if $|P|$ exceeds $b_0$ at some point, then $\rho>1$ and so $Q$ will be a better trial function.  This proves \eqref{E:b0 defn}.

Uniqueness of non-negative radial $H^1_x$ solutions to the equation \eqref{E:Pdefn} follows from \cite[Theorem~$1'$]{SerrinTang2000}.
As discussed in that paper, there is a lengthy history of proving uniqueness of such ground state solutions; however,  \cite{SerrinTang2000}
is the earliest paper we have been able to find which covers the cubic-quintic equation.

That $\Pw$ is strictly positive, rather than merely being non-negative, follows from the fact that it solves a second-order ordinary differential equation (when viewed as a function of radius).
Specifically, at any point $x$ where $\Pw(x)=0$ one must also have $\nabla \Pw(x)=0$ to avoid a sign change, but then uniqueness for the ODE forces $\Pw\equiv 0$.
A similar argument also shows that $\Pw$ is a \emph{strictly} decreasing function of $|x|$.  The significance of this is that the strict rearrangement inequalities of Brothers and Ziemer \cite{BrothersZiemer}, then guarantee that any optimizer of \eqref{E:VP} must automatically be spherically symmetric (about some point) and
thus (by the uniqueness of radially symmetric solutions) a translate of $\tilde P_\omega$.  This proves that $\Pw$ is uniquely characterized by the variational problem \eqref{E:VP}.

Incidentally, ODE methods also allow one to see that $\Pw(x)$ is a real-analytic function of $x$ and that as $|x|\to\infty$,
\begin{equation}\label{E:P asym}
|x| \exp\bigl\{\sqrt{\omega}\,|x| \} \Pw(x) \to c \qtq{and} \exp\bigl\{\sqrt{\omega}\,|x| \} \, x\cdot\nabla\Pw(x) \to - c\sqrt{\omega}
\end{equation}
for some $c=c(\omega)>0$.  For these assertions, see Theorems~1.8.1 and~3.8.1 in \cite{CodLev}.

We have now verified all the assertions at the beginning of Theorem~\ref{T:solitons}.  To aid the reader in navigating the proofs of the numbered parts of the theorem,
each is given its own subsection below.

Theorem~\ref{T:solitons} gives considerable insight into the nature of the mass/energy curve for the system of solitons $\Pw$,
which is depicted in Figure~\ref{F:Pw}.  Some further results are included in Subsection~\ref{SS:II}; however, at least one conspicuous question remains open:

\begin{figure}
\noindent
\begin{center}
\fbox{
\setlength{\unitlength}{1mm}
% 1mm  = 2.84pt
\begin{picture}(85,37)(-7,-7)
\put(0,-5){\vector(0,1){32}}\put(-5,25){$E$}
\put(-5,10){\line(1,0){7}}\qbezier(2,10)(2.5,11)(3,12)\qbezier(3,12)(4,10)(5,8)\qbezier(5,8)(5.5,9)(6,10)\put(6,10){\vector(1,0){65}}\put(72,7){$M$}
\put(-3,6){$0$}
\qbezier(50,10)(40,15)(25,20)
\qbezier(25,20)(40,15)(70,12)
\qbezier(50,10)(60,5)(70,-5)
\end{picture}
}
\end{center}
\caption{Schematic mass/energy curve $(M(\Pw),E(\Pw))$ for ground state solitons, based on numerics.}\label{F:Pw}
\end{figure}
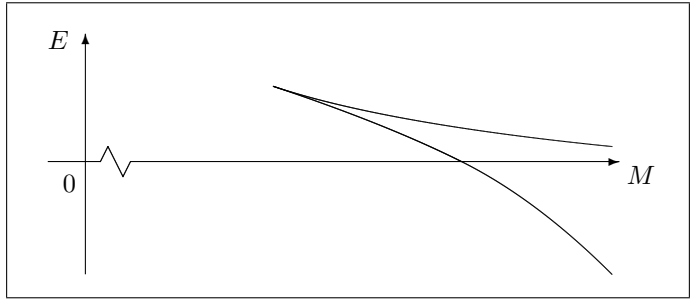

\begin{conjecture}\label{Conj:mass}
There is an $\omega_*\in(0,\tfrac3{16})$ so that $\omega\mapsto M(\Pw)$ is strictly decreasing for $\omega<\omega_*$ and strictly increasing for $\omega>\omega_*$.
\end{conjecture}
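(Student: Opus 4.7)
The plan is to combine implicit differentiation of the soliton equation with the Pohozaev identities to reduce the conjecture to a one-variable statement about $\beta(\omega)$, and then analyze the resulting equation.

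First I would record that parts (iv)--(v) of Theorem~\ref{T:solitons} give $M(\Pw)\to\infty$ at both endpoints of $(0,\tfrac{3}{16})$, so $M$ attains its infimum at some interior $\omega_*$ and the content of the conjecture is that this is the only critical point of $M$. To produce a usable formula for $M'(\omega)$, I combine the Pohozaev identities in Lemma~\ref{L:Poho} with \eqref{eq:w4} to obtain the algebraic identity $M(\Pw) = \tfrac{1+\beta(\omega)}{3\omega}\int|\nabla\Pw|^2\,dx$. Differentiating in $\omega$ and eliminating $\tfrac{d}{d\omega}\int|\nabla\Pw|^2\,dx$ via \eqref{dGdw} produces the clean relation
\begin{equation*}
\frac{M'(\omega)}{M(\Pw)} \;=\; \frac{\beta(\omega)-1}{2\omega} + \frac{\beta'(\omega)}{1+\beta(\omega)}.
\end{equation*}
Hence $M'(\omega)=0$ is equivalent to the first-order condition $2\omega\,\beta'(\omega)=1-\beta(\omega)^2$; in particular a critical point can only occur where $\beta(\omega)<1$ and $\beta'(\omega)>0$.

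Uniqueness of $\omega_*$ then reduces to showing that $G(\omega):=2\omega\beta'(\omega)-\bigl(1-\beta(\omega)^2\bigr)$ has a unique zero in $(0,\tfrac{3}{16})$. To access $G'$ and pin down the sign of $M''$ at any critical point, I would differentiate \eqref{E:Pdefn} in $\omega$: since the paper establishes that $\Lw$ has trivial kernel on the radial subspace, the equation $\Lw(\partial_\omega\Pw) = -\Pw$ inverts uniquely to give $\partial_\omega\Pw = -\Lw^{-1}\Pw$. Expanding $\Pw$ in the spectral decomposition of $\Lw$ (one negative eigendirection on the radial subspace, the rest positive) then produces explicit integral expressions for $\beta'(\omega)$ and $G'(\omega)$. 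The aim is to show $G'(\omega_*)>0$ at every critical point, which by the endpoint behaviour of $M$ would force exactly one such $\omega_*$.

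The main obstacle will be extracting that definite sign. The spectral formula for $G'(\omega)$ is a signed combination of contributions from the one negative eigendirection and the positive ones, and forcing it positive requires quantitative control of the ratio $\beta$ itself. Apart from its endpoint values essentially nothing is known about $\beta(\omega)$; indeed, even its monotonicity is the content of the separate Conjecture~\ref{Conj:beta}. I therefore expect that any resolution of Conjecture~\ref{Conj:mass} must treat $\beta$ and its derivative simultaneously, most plausibly via a shooting-method analysis of the ODE satisfied by $\Pw$ that tracks the joint $\omega$-dependence of the normalization integrals $\int\Pw^{2k}\,dx$ --- precisely the kind of delicate dependence that so far has resisted the purely variational tools available in this paper.
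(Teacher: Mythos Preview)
The statement you are addressing is a \emph{conjecture}, not a theorem: the paper offers no proof whatsoever, remarking explicitly that ``we have not been able to prove this; it constitutes our Conjecture~\ref{Conj:mass}.'' So there is no proof in the paper against which to compare your proposal.

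That said, your reduction is correct and in fact essentially present in the paper. Your identity
\[
\frac{M'(\omega)}{M(\Pw)} = \frac{\beta(\omega)-1}{2\omega} + \frac{\beta'(\omega)}{1+\beta(\omega)}
\]
is exactly a rearrangement of \eqref{ER:db}, which the authors derive in the proof of Lemma~\ref{L:Rw} by the same route (differentiate \eqref{eq:w24} and substitute \eqref{dGdw}). Your observation that critical points of $M$ correspond to $2\omega\beta'(\omega)=1-\beta(\omega)^2$, hence require $\beta(\omega)<1$ and $\beta'(\omega)>0$, is also correct and is consistent with the paper's inequality \eqref{dMdw}.

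Your final paragraph is an accurate self-assessment rather than a gap: you correctly recognize that the spectral expansion of $\partial_\omega\Pw=-\Lw^{-1}\Pw$ produces a sign-indefinite expression for $G'(\omega)$ (one negative eigendirection competing against the positive spectrum), and that resolving this would require quantitative information about $\beta$ that is itself conjectural (Conjecture~\ref{Conj:beta}). The paper is in the same position --- it supports the conjecture only by numerics (Table~\ref{Table:num}, Figures~\ref{F:Pw}--\ref{F:numerics2}). In short, your proposal is not a proof, but neither is there one in the paper; your outline is a reasonable articulation of why the problem is hard, and it recovers the relevant identities the authors themselves use.
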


This conjecture is strongly supported by numerics.  By \eqref{E:dEdM}, we see that the map $\omega\mapsto E(\Pw)$ would have the opposite monotonicities,
so giving rise to the cusp seen in Figure~\ref{F:Pw}.  By \eqref{E:d2EdM2} we see that this conjecture would also demonstrate the convexity and concavity of the mass/energy curve when $\omega<\omega_*$ and
$\omega>\omega_*$, respectively, that is depicted in Figure~\ref{F:Pw}.

\subsection{Solitons as critical points of $E+\frac\omega2M$}\label{SS:Morse}
Recalling the definitions of mass and energy shows that
$$
E(u)+\tfrac\omega2M(u) = \int_{\R^3} \tfrac12|\nabla u(x)|^2 + \tfrac16|u(x)|^6 - \tfrac14|u(x)|^4 + \tfrac\omega2|u(x)|^2\,dx;
$$
thus, critical points of this functional are precisely solutions to \eqref{eq:soliton}.  A proof that the solution $\Pw$ constructed by the
variational argument minimizes $E(u)+\tfrac\omega2M(u)$ over the class of all non-zero solutions to \eqref{eq:soliton} can be found in \cite[\S4.3]{Berestycki}.

The Hessian of $u\mapsto E(u)+\tfrac\omega2M(u)$ at $\Pw$ is given by the operator
$$
\Lw = -\Delta + 5 \Pw^4 - 3 \Pw^2 + \omega.
$$
Note that this operator is also important as the linearization of \eqref{E:Pdefn} around $\Pw$.  Thirdly, this operator appears when seeking ground states via the
shooting method; specifically, it gives the equation for the derivative of the solution with respect to the shooting parameter, which is the value of the solution
at the origin.

As $\Pw$ is radially symmetric, the operator $\Lw$ decomposes as a direct sum of operators, one in each angular momentum eigenspace.
Moreover, separation of variables associates to each such restricted operator an ODE that can be studied by Sturm--Liouville methods.  With this
in mind, we regard $\Pw$ as a function of $r=|x|$, rather than $x$, whenever this is more convenient.
Note also that  $\Lw$ is a relatively compact perturbation of $-\Delta+\omega$ and so has only discrete spectrum on $(-\infty,\omega)$.

The statement of Theorem~\ref{T:solitons} focuses attention on the zero angular momentum component of $\Lw$ because standard simple arguments of
general applicability (see below) yield the structure at higher angular momentum.  Indeed, the central subtle question is whether zero is an eigenvalue of the
restriction of $\Lw$ to the space of radially symmetric functions.  This question arises, for example, in understanding the stability/instability
of the ground state solitons; see \cite{Shatah,Weinstein'}.  It is also a key step in the influential Coffman--Kolodner approach to uniqueness of the ground state
(cf. \cite{Coffman,Kolodner,Kwong,McLeod}).

While a number of general uniqueness theorems have been proven by the Coffman--Kolodner method, we have not found an instance that
is applicable to the cubic-quintic problem of interest to us.  Our proof of Theorem~\ref{T:solitons}(i) is strongly influenced by these papers, particularly \cite{McLeod},
and our success here perhaps leads to further generalization this method.  We do not pursue this since the competing approach used in \cite{SerrinTang2000} provides the uniqueness statement we need.

Proposition~\ref{P:ell} in this subsection reduces the study to the key question identified above; Proposition~\ref{P:2.5} settles it.
Note that the presence of $\nabla\Pw$ in the null space of
$\Lw$ is not surprising: from a PDE point of view it corresponds to the translation invariance of \eqref{E:Pdefn}, while from a variational
perspective it represents the same invariance for $u\mapsto E(u)+\frac\omega2M(u)$.

\begin{proposition}\label{P:ell}
Fix $\ell=0,1,2,\ldots$ and consider the restriction of $\Lw$ to functions of the form $f(|x|)Y(x/|x|)$ where $Y$ is a spherical harmonic
of degree $\ell$.
\begin{SL}
\item When $\ell=0$ the operator has exactly one negative eigenvalue; it is simple.
\item When $\ell=1$ there are no negative eigenvalues.  Zero is an eigenvalue and its eigenspace
is spanned by the three components of $\nabla \Pw$.
\item When $\ell\geq2$ the operator is positive definite.
\end{SL}
\end{proposition}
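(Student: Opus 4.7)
The plan is to exploit the spherical symmetry of $P_\omega$ to reduce $\Lw$ in each angular-momentum sector to a one-dimensional Schr\"odinger operator on the half-line and then apply classical Sturm--Liouville theory, with one variational input for part (i). Writing $\phi = f(|x|)Y(x/|x|)$ with $Y$ a degree-$\ell$ spherical harmonic and substituting $g(r)=rf(r)$, the eigenvalue problem $\Lw\phi=\mu\phi$ becomes
\[
L_\ell g = \mu g, \qquad L_\ell g := -g''(r) + \Bigl[\tfrac{\ell(\ell+1)}{r^2} + W(r)\Bigr] g, \qquad W(r):=5P_\omega^4 - 3P_\omega^2 + \omega,
\]
on $L^2((0,\infty),dr)$ with Dirichlet boundary at $r=0$. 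Since $W(r)\to\omega$ exponentially by \eqref{E:P asym}, the essential spectrum of each $L_\ell$ is $[\omega,\infty)$, so eigenvalues below $\omega$ are discrete, simple, and ordered by the number of interior zeros of the associated eigenfunction.

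For (ii), differentiating \eqref{E:Pdefn} in $x_i$ gives $\Lw(\partial_i P_\omega)=0$. Since $\partial_i P_\omega=(x_i/r)P_\omega'(r)$, the function $g_\star(r):=rP_\omega'(r)$ lies in $\ker L_1$, and by the strict monotonicity of $P_\omega$ in $r$ (established earlier in the section) $g_\star<0$ on $(0,\infty)$ --- in particular, nodeless. Sturm--Liouville theory therefore identifies $g_\star$ as the ground state of $L_1$ and $0$ as its lowest, simple eigenvalue, giving $L_1\ge 0$ with $\ker L_1=\R\cdot g_\star$. Reassembling against the three linearly independent degree-one spherical harmonics $x_i/r$ yields $\ker\Lw|_{\ell=1}=\mathrm{span}\{\partial_1P_\omega,\partial_2P_\omega,\partial_3P_\omega\}$. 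For (iii), write $L_\ell = L_1 + [\ell(\ell+1)-2]/r^2$ with $\ell(\ell+1)-2\ge 4>0$ for $\ell\ge 2$; then for any $g\not\equiv 0$ in the form domain,
\[
\langle g,L_\ell g\rangle = \langle g,L_1 g\rangle + [\ell(\ell+1)-2]\int_0^\infty \tfrac{|g(r)|^2}{r^2}\,dr > 0,
\]
since $L_1\ge 0$ by (ii) and the Hardy-type integral vanishes only for $g\equiv 0$.

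For (i), I would first produce one negative eigenvalue via the radial trial function $\phi(x):=x\cdot\nabla P_\omega(x)=rP_\omega'(r)$. The commutator identity $\Delta(x\cdot\nabla u)=x\cdot\nabla(\Delta u)+2\Delta u$ combined with $\Delta P_\omega=P_\omega^5-P_\omega^3+\omega P_\omega$ collapses $\Lw\phi$ to $-2\Delta P_\omega$, and a short integration by parts (using $\partial_j x_j = 3$ in $\R^3$) yields $\langle\phi,\Lw\phi\rangle = -\|\nabla P_\omega\|_{L^2}^2<0$, so $L_0$ has at least one negative eigenvalue. To bound this number by one, I would leverage the Berestycki--Lions construction recalled in the section: $\tilde P_\omega$ minimizes $T(u):=\int|\nabla u|^2$ subject to $\mathcal{V}(u):=\int p(u)\,dx=-1$ over radial $u$, where $p(u)=\tfrac16u^6-\tfrac14u^4+\tfrac{\omega}{2}u^2$. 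The second-order necessary condition at this constrained minimum reads
\[
\bigl\langle \tilde v,\bigl[-\Delta + \lambda p''(\tilde P_\omega)\bigr]\tilde v\bigr\rangle \ge 0 \qquad \text{for all radial }\tilde v \text{ with }\int p'(\tilde P_\omega)\tilde v\,dx=0,
\]
with $\lambda=\tfrac16\|\nabla\tilde P_\omega\|_{L^2}^2$ the Lagrange multiplier. Undoing the rescaling $P_\omega(x)=\tilde P_\omega(x/\sqrt\lambda)$ transports this inequality --- with sign preserved up to the positive scalar $\lambda^{-1/2}$ --- to $\langle v,\Lw v\rangle\ge 0$ for all radial $v$ satisfying $\int p'(P_\omega)v\,dx=0$. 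This is a codimension-one condition, so by the min--max principle the second eigenvalue of $L_0$ is non-negative; combined with the trial-function computation, $L_0$ has exactly one negative eigenvalue, which is simple by Sturm--Liouville.

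The main obstacle I anticipate is the careful bookkeeping in the rescaling step of (i): one must verify that the change of variables $x\leftrightarrow x/\sqrt\lambda$ transports the constrained Hessian inequality from $\tilde P_\omega$ to $P_\omega$ with the correct operator (a positive multiple of $\Lw$) and the codimension-one radial constraint correctly identified. This is elementary but error-prone; the remaining arguments are standard Sturm--Liouville, Hardy, and min--max.
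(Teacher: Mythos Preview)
Your proof is correct and follows essentially the same strategy as the paper: Sturm oscillation for (ii), centrifugal comparison with the $\ell=1$ sector for (iii), and the variational characterization \eqref{E:VP} of $P_\omega$ to bound the Morse index in (i). The only cosmetic differences are that the paper uses the trial vector $\tfrac{x}{|x|}\cdot\nabla P_\omega$ rather than $x\cdot\nabla P_\omega$ for the negative direction, and carries out the constrained second-variation argument by an explicit Implicit Function Theorem construction of a two-dimensional surface in the constraint set, whereas you invoke the second-order necessary condition and min--max directly; your rescaling bookkeeping is exactly right.
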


\begin{proof}
We treat the parts in order, beginning with $\ell=0$.  First observe that $v=\tfrac{x}{|x|}\cdot\nabla \Pw(x)$ belongs to $H^1_x$ and is radially symmetric.  Using this
as a trial vector shows that $\Lw$ has negative spectrum; indeed, a little computation shows
$$
\langle v, \Lw v\rangle = -2 \int_{\R^3} \frac{|x\cdot\nabla \Pw(x)|^2}{|x|^4}\,dx < 0.
$$

Next we show that $\Lw$ has at most one negative eigenvalue (counting multiplicity).  This is to be expected from the variational characterization \eqref{E:VP} of $\Pw$: it is
defined by a singly constrained minimization problem and one constraint can only counteract one concave direction.  To make this heuristic rigorous we argue by contradiction;
specifically, we suppose that there is a two-dimensional subspace $\mathcal M\subset H^1_x$ for which
\begin{equation}\label{downward dog}
\langle u, \Lw u\rangle  < 0 \quad\text{whenever $u\in\mathcal M\setminus\{0\}$}.
\end{equation}
Next, we apply the Implicit Function Theorem to construct a two-dimensional surface $\mathcal M\ni u\mapsto Q(u)\in H^1_x$ of functions obeying the constraint
\begin{equation}\label{Q constraint}
\int\!\tfrac16|Q|^6-\tfrac14|Q|^4+\tfrac\omega2|Q|^2\,dx=
\int\!\tfrac16|\Pw|^6-\tfrac14|\Pw|^4+\tfrac\omega2|\Pw|^2\,dx.
\end{equation}
To be precise, having chosen a real-valued $w\in H^1_x$ with
$
\int \bigl[\Pw^5 - \Pw^3 + \omega \Pw\bigr] w\,dx = 1,
$
there is an $\eps>0$ and a smooth real-valued function $h$ defined on $B(0,\eps)\subseteq\mathcal M$ so that $h(0)=0$ and
$
Q(u) := \Pw + u + h(u) w
$
obeys \eqref{Q constraint} for all $u\in B(0,\eps)$.

Recall that $\tilde\Pw(x)=\Pw(x\sqrt\lambda)$ where $\lambda$ is as in \eqref{E:var char}.  Applying the same rescaling $\tilde Q(u)(x) = Q(u)(x\sqrt\lambda)$, we see that the variational
characterization of $\Pw$ guarantees that $\int |\nabla Q(u)|^2\,dx \geq \int |\nabla \Pw|^2\,dx$.  Combining this with \eqref{Q constraint} implies
\begin{equation}\label{min at P}
E(Q(u))+\tfrac\omega2 M(Q(u)) \geq E(\Pw)+\tfrac\omega2 M(\Pw) \quad\text{for all $u\in B(0,\eps)\subseteq \mathcal M$.}
\end{equation}

We will now reach a contradiction by examining the behaviour of $E(Q)+\tfrac\omega2M(Q)$ near the point $\Pw$ on the surface.  As $\Pw$ is a soliton, $dE+\tfrac\omega2dM=0$
at $\Pw$.  Thus,
$$
\tfrac{d^2\ }{dt^2}\Bigr|_{t=0} E(Q(tu))+\tfrac\omega2 M(Q(tu)) = \bigl\langle \bigl[u + (u\cdot\nabla h(0))w\bigr] , \Lw \bigl[u + (u\cdot\nabla h(0))w\bigr]\bigr\rangle.
$$
As $\mathcal M$ is two-dimensional, we can choose $u\in \mathcal M\setminus\{0\}$ with $u\perp \nabla h(0)$.  By \eqref{downward dog}, this choice makes the above expression negative, which contradicts \eqref{min at P}.  This completes the proof that $\Lw$ has at most one negative eigenvalue (counting multiplicity).

It will be easier to understand the case of higher angular momentum if we separate variables.  To this end, let $Y$ be a spherical harmonic of degree $\ell$
and $u(x)=f(|x|)Y(x/|x|)$, then $\Lw u = \lambda u$ if and only if
\begin{equation}\label{E:Lw in Y}
 - f''(r) - \tfrac2rf'(r) + \tfrac{\ell(\ell+1)}{r^2} f(r) + [5\Pw^4(r)-3\Pw^2(r)+\omega] f(r) = \lambda f(r) .
\end{equation}

Consider first the case $\ell=1$.  An easy calculation (take the gradient of \eqref{E:Pdefn}) shows that the components of $\nabla\Pw$ are eigenvectors
of $\Lw$ with eigenvalue $0$.  Moreover, if we write $\nabla\Pw(x) = -\frac{x}{|x|}f(|x|)$ then the function $f$ is strictly positive; this is because
$\Pw$ is a strictly decreasing function of radius.  By the Sturm Oscillation Theorem  (cf. \cite[Ch. 8]{CodLev}), positivity of $f$ guarantees that zero
is at the bottom of the spectrum.
Lastly, the differential equation obeyed by the Wronskian shows that the ODE \eqref{E:Lw in Y} cannot admit a second zero-energy eigenfunction in $L^2(r^2dr)$.

Assume now $\ell\geq 2$.  Suppose, toward a contradiction, that $u(x)=f(|x|)Y(x/|x|)\in H^1$ obeys $\Lw u = \lambda u$ with $\lambda\leq 0$ and $Y$ a spherical harmonic
of degree $\ell$. Then $v(x)=f(|x|)\frac{x}{|x|}$ obeys $\langle v, \Lw v\rangle<0$ in contradiction to the preceding paragraph.
\end{proof}

\begin{proposition}\label{P:2.5}
Let $\delta$ be the solution to
\begin{equation}\label{E:delta}
 - \delta''(r) - \tfrac2r\delta'(r) + [5\Pw^4(r)-3\Pw^2(r)+\omega] \delta(r) = 0
\end{equation}
obeying $\delta(0)=1$.  Then $\delta(r)\to-\infty$ as $r\to\infty$.  Correspondingly,
zero is not an eigenvalue of $\Lw$ restricted to radial functions.
\end{proposition}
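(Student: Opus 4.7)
The plan is to interpret $\delta$ as a shooting derivative, use a phase-plane argument to show $\delta$ is strictly negative at arbitrarily large radii, and then apply the asymptotic theory of the linear ODE at infinity to upgrade this to divergence. For each $b>0$, let $P(r;b)$ denote the unique smooth solution of the radial ODE $-P''-(2/r)P'+\omega P-P^3+P^5=0$ with $P(0)=b$, $P'(0)=0$. Smooth dependence on $b$ identifies $\delta(r)$ with $\partial_b P(r;b^\star)$, where $b^\star:=\Pw(0)$, since the latter satisfies \eqref{E:delta} with initial data $\delta(0)=1$, $\delta'(0)=0$. The mechanical energy $H(r):=\tfrac12 P'(r)^2-U(P(r))$, $U(P):=\tfrac{\omega}{2}P^2-\tfrac14 P^4+\tfrac16 P^6$, satisfies $H'=-(2/r)(P')^2\leq 0$ and tends to $0$ along the ground-state orbit, so $U(b^\star)=\int_0^\infty (2/r)\Pw'(r)^2\,dr>0$.

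Using this setup together with the uniqueness of $\Pw$ among positive radial $H^1$ solutions, I would carry out a phase-plane argument in $(P,P')$ to establish that for $b$ slightly above $b^\star$ the orbit $P(\cdot;b)$ crosses zero at a finite radius $r^+(b)$, with $r^+(b)\to+\infty$ as $b\downarrow b^\star$, while for $b$ slightly below $b^\star$ the orbit stays positive without decaying. Expanding $P(r^+(b);b)=0$ about $b=b^\star$ then yields $\delta(r^+(b))=-\Pw(r^+(b))/(b-b^\star)+O(b-b^\star)<0$, so $\delta$ takes strictly negative values at arbitrarily large radii.

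By \eqref{E:P asym}, $V(r):=5\Pw^4(r)-3\Pw^2(r)+\omega\to\omega>0$ as $r\to\infty$, so \eqref{E:delta} admits two independent solutions with asymptotics $e^{\pm\sqrt\omega\, r}/r$; write $\delta(r)\sim c_+ e^{\sqrt\omega\, r}/r+c_- e^{-\sqrt\omega\, r}/r$ at infinity. If $c_+=0$ then $\delta\in H^1_{\mathrm{rad}}(\R^3)$ would be a zero-eigenfunction of $\Lw$; by Sturm oscillation combined with Proposition~\ref{P:ell}(i), zero would then be the second radial eigenvalue, forcing $\delta$ to have exactly one zero on $(0,\infty)$ and decay to $0$ at infinity---which contradicts the previous step. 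Hence $c_+\neq 0$, and negativity of $\delta$ at large radii forces $c_+<0$, so $\delta(r)\to-\infty$. The corollary that $0$ is not an eigenvalue of $\Lw$ on radial functions is then immediate: any $H^1_{\mathrm{rad}}$ solution of $\Lw u=0$ is proportional to the unique regular-at-origin solution $\delta$ (the other ODE solution blows up like $1/r$ and fails to lie in $L^2(r^2\,dr)$), and $\delta\notin L^2(r^2\,dr)$.

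The main obstacle is the phase-plane step: rigorously tracking the transition of $P(\cdot;b)$ from bounded-positive behavior to zero-crossing as $b$ crosses $b^\star$, despite the damping coefficient $2/r$ vanishing at infinity, which disables standard autonomous dynamical-systems tools. This is a delicate shooting argument in the spirit of Coffman--Kolodner and~\cite{McLeod}; the uniqueness of $\Pw$ from~\cite{SerrinTang2000} is compatible with this trichotomy but does not by itself furnish it, so the asymptotic vector-field analysis must be carried out carefully using $H$ and the sign of $U$ on the relevant level curves.
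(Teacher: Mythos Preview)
Your proposal has a genuine logical gap, and separately, the step you flag as the main obstacle is in fact the entire content of the proposition.

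First, the gap. Even granting your phase-plane trichotomy, the conclusion that ``$\delta$ takes strictly negative values at arbitrarily large radii'' does \emph{not} contradict the scenario $c_+=0$. If $c_+=0$ then $\delta$ has exactly one zero $r_1$, is negative on $(r_1,\infty)$, and decays to zero from below; such a $\delta$ is negative at every large radius, so your claimed contradiction does not materialise. The expansion $\delta(r^+(b))=-\Pw(r^+(b))/(b-b^\star)+O(b-b^\star)$ gives only a sign, not a quantitative lower bound: as $b\downarrow b^\star$ both numerator and denominator tend to zero, and the ratio could perfectly well tend to zero. To rule out $c_+=0$ this way you would need to control the rate at which $r^+(b)\to\infty$, which is equivalent to knowing the asymptotic behaviour of $\delta$ --- precisely what you are trying to determine.

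Second, the phase-plane step you defer is not a technicality but the whole difficulty. In the Coffman--Kolodner--McLeod paradigm the logic runs in the \emph{opposite} direction: one first proves non-degeneracy (this proposition) by direct ODE comparison, and \emph{then} uses it to make the shooting argument rigorous. Uniqueness from \cite{SerrinTang2000} does not supply the trichotomy you want; the proof there proceeds by a different route (monotone separation) and gives no information about nearby shooting orbits.

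The paper's proof avoids shooting entirely. It argues by contradiction: assume $\delta\in L^2(r^2\,dr)$, so $\delta$ has exactly one zero $r_1$. The key comparison function is $v_\lambda(r):=r\Pw'(r)+\lambda\Pw(r)$, which satisfies $\Lw v_\lambda=-I(\lambda,\Pw(r))$ with $I(\lambda,u)=2(u^5-u^3+\omega u)-2\lambda(2u^5-u^3)$. One first shows $a_0<\Pw(r_1)<1/\sqrt{2}$ (the upper bound is the new ingredient, via Sturm comparison of $\Pw$ against $\delta$ on $[0,r_1]$), which allows a choice of $\lambda>0$ with $I(\lambda,\Pw(r_1))=0$ and $I$ changing sign only at $r_1$. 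A sequence of Sturm separation arguments comparing $\delta$ and $v_\lambda$ on $[0,r_1]$ and $[r_1,\infty)$ then yields the contradiction. This is a self-contained ODE argument requiring no control of neighbouring shooting orbits.
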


\begin{proof}
First we should explain why such a solution $\delta(r)$ exists and is unique.  The ODE \eqref{E:delta} has a regular singular point at $r=0$; however, changing variables to
$\sigma(r)=r\delta(r)$ transforms it to
\begin{equation}\label{E:r delta}
- \sigma''(r) + [5\Pw^4(r)-3\Pw^2(r)+\omega] \sigma(r) = 0.
\end{equation}
Thus $\delta$ corresponds (uniquely!) to initial data $\sigma(0)=0$, $\sigma'(0)=1$.

The basic question we need to answer is whether $\delta(r)\to 0$ as $r\to\infty$.  Applying Theorem~3.8.1 from \cite{CodLev} to \eqref{E:r delta} shows that
$\delta$ either grows or decays exponentially as $r\to\infty$.  Proposition~\ref{P:ell}(i) and the Sturm Oscillation Theorem guarantee that $\delta$ changes sign exactly
once. Thus, in the growing case we must have $\delta(r)\to-\infty$.  In the decaying case, $\delta\in L^2(r^2\,dr)$, which makes zero an eigenvalue.  Thus, we may prove the proposition by assuming that $\delta(r)\to 0$ as $r\to\infty$ and reaching a contradiction.

To obtain the contradiction, we will be following the arguments in \cite{McLeod}, which become applicable with the addition of one new observation.
The main part of the argument uses the Sturm Separation Theorem to compare $\delta$ with a function of the form $v_\lambda(r):=r\Pw'(r)+\lambda\Pw(r)$, with $\lambda\in\R$.
Note that $v_\lambda$ obeys
\begin{equation}\label{E:v_lambda}
 - v_\lambda''(r) - \tfrac2r v_\lambda'(r) + [5\Pw^4(r)-3\Pw^2(r)+\omega] v_\lambda(r) =  - I(\lambda,\Pw(r))
\end{equation}
with
\begin{equation}\label{E:I of u and lambda}
 I(\lambda,u) := 2[u^5 - u^3 + \omega u] -  2\lambda[2u^5-u^3].
\end{equation}

We wish to choose the parameter $\lambda$ so that $\lambda>0$ and $I(\lambda,\Pw(r_1))=0$, where $r_1\in(0,\infty)$ denotes the sole zero of $\delta(r)$ noted above.
A little effort shows that the possibility of finding such a $\lambda$ follows from
\begin{equation}\label{P of r1}
a_0 < \Pw(r_1) < \tfrac1{\sqrt{2}} \qtq{where} a_0 = \sqrt{\smash[b]{\tfrac12}-\smash[b]{\tfrac12}\smash[t]{\sqrt{1-4\omega}}}.
\end{equation}

Verification of the lower bound on can be found in the proof of \cite[Lemma 4]{McLeod}; the argument is as follows: From the equations obeyed by $\Pw(r)$ and $\delta(r)$, we have
$$
\tfrac{d\ }{dr} [ r^4 \Pw'(r) \delta'(r) ] = r^4 [(\Pw^5-\Pw^3+\omega \Pw)\delta]'(r)
$$
and so integrating over $[r_1,\infty)$ and using \eqref{E:P asym} yields
\begin{equation}\label{McL 4}
- r_1^4 \Pw'(r_1) \delta'(r_1)  = - \int_{r_1}^\infty 4r^3 [\Pw(r)^5-\Pw(r)^3+\omega \Pw(r)]\delta(r)\,dr.
\end{equation}
Now if $\Pw(r_1)\leq a_0$ then $[\Pw(r)^5-\Pw(r)^3+\omega \Pw(r)]> 0$ for $r>r_1$ which means that RHS\eqref{McL 4} is strictly positive.  (Recall that $\delta$ changes sign
at $r_1$ and remains negative there-after.)  This yields a contradiction because $\Pw'(r_1)<0$ and $\delta'(r_1)<0$ making LHS\eqref{McL 4} negative.

Verifying the upper bound on $\Pw(r_1)$ in \eqref{P of r1} is the new input.  Suppose $\Pw(r_1)\geq2^{-1/2}$ and so $\Pw(r)>2^{-1/2}$ on $[0,r_1)$.  Comparing the equations for $\delta$ and $\Pw$
we see that this implies that $\Pw$ oscillates faster than $\delta$ on the interval $[0,r_1]$, in the sense of the Sturm Comparison Theorem.  This gives a contradiction
since $\delta(r_1)=0$, while $\Pw$ is non-vanishing.

Having verified \eqref{P of r1} we may now choose $\lambda>0$ so that $I(\lambda,\Pw(r_1))=0$.  We claim that $I(\lambda,\Pw(r))<0$ for $r\in[0,r_1)$
and $I(\lambda,\Pw(r))>0$ for $r\in(r_1,\infty)$.  To check this, we first rewrite \eqref{E:I of u and lambda} as
$$
I(\lambda,u) = 2u(u^2-a_0^2)(u^2-b_0^2) - 2 \lambda u^3(2u^2-1),
$$
where $a_0$ is as in \eqref{P of r1} and $b_0$ is given in \eqref{E:b0 defn}.  Note that $a_0^2<\frac12<b_0^2$.  When $2^{-1/2}\leq u\leq b_0$ we see quickly that  $I(\lambda,u)<0$. On the other hand, $u\mapsto(u^5 - u^3 + \omega u)(2u^5-u^3)^{-1}$ is increasing on $0<u<2^{-1/2}$.  Recalling that $\Pw(r)$ is
a decreasing function bounded by $b_0$, see \eqref{E:b0 defn}, this proves the claim.

Next we claim that $v_\lambda$ has exactly one zero in $[0,r_1]$.  To see this note that $v_\lambda(0)>0$ and while $v_\lambda$ remains positive, it oscillates faster than $\delta$.
Thus $v_\lambda$ has at least one zero in $[0,r_1]$.  Let $r_0$ be the smallest such zero and note that $v_\lambda$ changes sign there. Indeed, if
$v_\lambda(r_0)=v_\lambda'(r_0)=0$, then $v_\lambda''(r_0)=I(\lambda,\Pw(r_0))<0$, which would be inconsistent with the fact that $v_\lambda$ is positive on $[0,r_0)$.  On the other hand, after changing sign, $v_\lambda$ oscillates more slowly than $\delta$ and so cannot vanish on $(r_0,r_1]$.

We now compare $\delta$ and $v_\lambda$ on $[r_1,\infty)$.  First note that $v_\lambda(r_1)<0$.   If $v_\lambda$ has no zeros on  $(r_1,\infty)$, then it oscillates faster than
$\delta$ because $I>0$ on this interval.  This is of course self-contradictory, since $\delta$ vanishes at both ends of this interval (this was our contradiction hypothesis).

To reach a contradiction, we will show that $v_\lambda$ does not have a zero in $(r_1,\infty)$.
To this end, suppose that $v_\lambda$ did vanish there and let $r_2$ denote the smallest
such zero.  A repetition of the second derivative argument above (now with reversed signs) shows that $v_\lambda$ must change sign at $r_2$.   However, by \eqref{E:P asym} we also
have $v_\lambda(r)<0$ for $r$ sufficiently large.  Thus there is a point $r_3\in(r_2,\infty)$ at which $v_\lambda$ also vanishes and such that $v_\lambda>0$ on $(r_2,r_3)$.
Now, on this smaller interval, $v_\lambda$ oscillates more slowly than $\delta$.  This yields a contradiction by forcing $\delta$ to have a zero in $(r_2,r_3)$.\end{proof}

\subsection{Smoothness of $\omega\mapsto\Pw$}\label{SS:C1}

Fix $0<\omega_0<\frac3{16}$.  First, we observe that $P_{\omega_0}$ belongs to a real-analytic branch $\omega\mapsto u\in H^1_\textup{rad}(\R^3)$ of solutions to
\begin{equation}\label{u branch}
-\Delta u + u^5 - u^3 + \omega u =0
\end{equation}
and subsequently identify this branch as consisting of ground state solitons $\Pw$.

As $\mathcal{L}_{\omega_0}$ is a relatively compact perturbation of $-\Delta+\omega_0$, Theorem~\ref{T:solitons}(i) together with the Fredholm Theorem guarantees that
it is an isomorphism of $H^{1}_\rad(\R^3)$ onto $H^{-1}_\rad(\R^3)$. (Recall that the subscript indicates radially symmetric functions.)
Thus, applying the Implicit Function Theorem to the mapping $u\mapsto\text{LHS\eqref{u branch}}$, we see that there is a real analytic curve $\omega\mapsto u(\omega)\in H^1_\rad$
of solutions to \eqref{u branch} defined in a neighbourhood of $\omega_0$ and obeying $u(\omega_0) = P_{\omega_0}$.

Note that the Implicit Function Theorem (or the contraction mapping argument used to prove it) also guarantees that $\omega\mapsto u(\omega)$ is smooth with values in $H^2(\R^3)$
and so (by Sobolev embedding) also smooth when viewed as a map into $L^\infty(\R^3)$.

To conclude that $u(\omega)=\Pw$, it suffices to show that $u(\omega)$ is non-negative, which we will do via a spectral theory argument.  For each $\omega$, the
Schr\"odinger operator
$$
H_\omega f:= -\Delta f + u(\omega)^4 f - u(\omega)^2 f + \omega f
$$
has a zero eigenvalue, as witnessed by $u(\omega)$ itself.  Moreover, when $\omega=\omega_0$ this is an isolated simple eigenvalue at the bottom of the spectrum (as follows,
for example, from the Weyl criterion and Sturm comparison).  Thus, by eigenvalue/vector perturbation theory, the zero eigenvalue of $H_\omega$ must also be at the bottom of the spectrum for $|\omega-\omega_0|$ small.  This in turn guarantees that $u(\omega)$ must be positive (cf. \cite[Theorem~XIII.46]{RS4}).

This completes the proof of part (ii) of Theorem~\ref{T:solitons}.\qed

\subsection{Further identities and inequalities}\label{SS:II}
Using just \eqref{E:Pdefn} and the definitions of mass and energy of $P_\omega$, we obtain
\begin{align}\label{E:dEdM}
\frac{dE}{d\omega} = \Bigl\langle\frac{d\Pw}{d\omega},\ -\Delta\Pw+\Pw^5-\Pw^3\Bigr\rangle_{L^2} = \Bigl\langle\frac{d\Pw}{d\omega},\ - \omega \Pw\Bigr\rangle_{L^2}
	= - \frac{\omega}{2}\frac{dM}{d\omega},
\end{align}
where $E(\omega)=E(\Pw)$ and $M(\omega)=M(\Pw)$.
Thus at any point $\omega_0$ where $\tfrac{dM}{d\omega}\neq0$,
\begin{align}\label{E:d2EdM2}
\frac{d^2E}{dM^2} = \Bigl(\frac{dM}{d\omega}\Bigr)^{-1} \frac{d\ }{d\omega}\Bigl[ \frac{dE}{d\omega} \div \frac{dM}{d\omega}\Bigr] =- \frac{1}{2} \Bigl(\frac{dM}{d\omega}\Bigr)^{-1}.
\end{align}

For much of what follows, it is convenient to introduce the notation
\begin{equation}\label{eq:Zbeta}
\beta (u):=\frac{\int |u|^6dx}{\int |\nabla u|^2 dx } \quad\text{for $u\in H^1(\R^3)$},
\end{equation}
which generalizes the notion introduced in Theorem~\ref{T:solitons}, namely, $\beta(\omega) = \beta(\Pw)$.

By exploiting the Pohozaev identities from Lemma~\ref{L:Poho}, we obtain very compact expressions
for the key quantities associated to our solitons:
\begin{alignat}{2}
\int P_{\omega}^2\, dx &=\tfrac{\beta(\omega)+1}{3\omega }\int |\nabla P_{\omega}|^2\,dx,
	  & \int P_{\omega}^4\,dx  &=\tfrac{4[\beta(\omega)+1]}{3}\int |\nabla P_{\omega}|^2\, dx, \label{eq:w24} \\
\int P_{\omega}^6\, dx&= \beta(\omega) \int |\nabla P_{\omega}|^2\,dx,
	\qtq{and} & E(P_{\omega}) &= \tfrac{1-\beta(\omega)}{6} \int |\nabla P_{\omega}|^2\,dx. \label{eq:w6E}
\end{alignat}

Numerical investigations give compelling evidence for the following:

\begin{conjecture}\label{Conj:beta}
The mapping $\omega\mapsto\beta(\omega)$ is injective; that is, ground state solitons are uniquely identified by the ratio $\beta(\omega)$.
\end{conjecture}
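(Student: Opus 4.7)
The plan is to upgrade the conjecture to the stronger statement that $\omega\mapsto\beta(\omega)$ is strictly monotone (specifically, strictly increasing) on $(0,\tfrac{3}{16})$. By real-analytic continuity (Theorem~\ref{T:solitons}(ii)) together with the boundary values $\beta(\omega)\to 0$ as $\omega\to 0^+$ and $\beta(\omega)\to\infty$ as $\omega\to\tfrac{3}{16}^-$ supplied by Theorem~\ref{T:solitons}(iv)--(v), strict monotonicity is equivalent to the claimed injectivity. Moreover, differentiating the expansions in (iv) and (v) immediately certifies $\beta'>0$ in neighborhoods of both endpoints, so the real content is to exclude an interior sign change of $\beta'$.

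The natural vehicle is the linearization. Differentiating \eqref{E:Pdefn} in $\omega$ gives $\Lw\partial_\omega\Pw=-\Pw$, and since the radial kernel of $\Lw$ is trivial by Proposition~\ref{P:2.5}, this uniquely determines $\partial_\omega\Pw\in H^1_\rad$. I would then differentiate the Pohozaev identities \eqref{eq:w24}--\eqref{eq:w6E} in $\omega$ and pair $\Lw\partial_\omega\Pw=-\Pw$ against $\Pw$, $\Pw^3$, and $\Pw^5$ (as well as against the generators of the mass- and energy-critical scalings $(\tfrac{3}{2}+x\cdot\nabla)\Pw$ and $(\tfrac{1}{2}+x\cdot\nabla)\Pw$), computing each $\Lw\Pw^k$ directly from the ground-state equation and exploiting self-adjointness of $\Lw$. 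The goal is to express $\beta'(\omega)$ as a quadratic form in $\partial_\omega\Pw$ whose sign can be read off from the spectral picture of Proposition~\ref{P:ell}(i): exactly one negative radial eigenvalue, realized by the explicit trial vector $\tfrac{x}{|x|}\cdot\nabla\Pw$, and no zero eigenvalue by Proposition~\ref{P:2.5}.

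The principal obstacle is that these self-adjoint manipulations collapse into a single new identity, namely $(\beta G)'=\tfrac{3}{2}M+3\omega M'$ with $G=\int|\nabla\Pw|^2\,dx$; combined with \eqref{dGdw} this only reconstructs the algebraic relation $\beta=\tfrac{3\omega M}{G}-1$, while \eqref{dMdw} translates into the one-sided bound $\beta'<\beta(\beta+1)/\omega$, which is useless for ruling out $\beta'\leq 0$. Progress therefore appears to require a genuinely non-linear input about $\Pw$ itself, such as the pointwise bound $\Pw\leq b_0(\omega)$ from \eqref{E:b0 defn}, the strict radial monotonicity of $\Pw$ (established in the paragraphs after Theorem~\ref{T:solitons}), or quantitative use of the companion function $\delta$ of Proposition~\ref{P:2.5}, whose sign structure encodes the spectral count of $\Lw|_{\rad}$. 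A fallback strategy is to reparametrize the ground-state branch by the shooting value $\sigma=\Pw(0)\in(a_0(\omega),b_0(\omega))$, compute $d\omega/d\sigma$ and $d\beta/d\sigma$ by ODE sensitivity applied to \eqref{E:delta}, and deduce the sign of $d\beta/d\omega$ from a Sturm-type comparison. That the authors leave this as a conjecture suggests all of these routes stall with the tools currently assembled, and closing the argument will likely require a new algebraic identity tailored to the specific cubic-quintic structure.
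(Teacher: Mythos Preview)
The statement is labeled a \emph{conjecture} in the paper and is explicitly left open; the authors state in several places (e.g., the remark after Proposition~\ref{prop:GN} and the discussion surrounding \eqref{E:dGda}) that they have been unable to prove it. There is therefore no proof in the paper to compare your proposal against.

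Your proposal is not a proof either, and you essentially acknowledge this in the final paragraph. You correctly identify the natural lines of attack: differentiating the Pohozaev relations, pairing $\Lw\partial_\omega\Pw=-\Pw$ against various test functions, and exploiting the spectral information in Propositions~\ref{P:ell} and~\ref{P:2.5}. You also correctly diagnose the obstruction: these manipulations yield only the identity $\beta=\tfrac{3\omega M}{G}-1$ and the one-sided bound \eqref{dMdw}, neither of which excludes $\beta'\leq 0$ at an interior point. The paper arrives at exactly the same impasse --- see the discussion surrounding Table~\ref{Table:Lw} and equation~\eqref{ER:db}, which produce the same identities you describe. Your assessment that a new, structure-specific ingredient is needed matches the authors' own conclusion.

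In short: you have not proved the conjecture, the paper does not prove it either, and your analysis of why the obvious approaches fail is accurate and aligns with what the paper records.
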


Several properties of solitons are singled out by their $\beta$ ratio.  For example, a soliton has zero energy if and only if it has $\beta=1$.  In subsequent
sections, several results have cumbersome formulations because we have not yet been able to verify this conjecture.

By Theorem~\ref{T:solitons}(iv,v), injectivity of $\beta(\omega)$ is equivalent to its strict monotonicity.
There is another equivalent formulation of Conjecture~\ref{Conj:beta}, relating it directly to the variational characterization \eqref{E:VP} of $\Pw$, namely,
$$
\log(\omega)\mapsto\log\Bigl[\tfrac16\!\int |\nabla \tilde P_\omega|^2\,dx \Bigr] = \tfrac23\log\Bigl[\tfrac16\!\int |\nabla P_\omega|^2\,dx \Bigr]
	\ \ \text{is a strictly convex function.}
$$
This equivalence is easily verified using \eqref{dGdw} and \eqref{eq:w24}.  The identity stated here follows immediately from \eqref{E:var char}.

We turn next to the proof of \eqref{dGdw}.  It is very simple: By \eqref{eq:w24} and \eqref{eq:w6E} we have $\int |\nabla P_\omega|^2\,dx = 3( E+\tfrac\omega2 M)$
and so \eqref{E:dEdM} yields
\begin{align}\label{E:dH1doemga}
  \frac{d\ }{d\omega} \int |\nabla P_\omega|^2\,dx = \frac{d\ }{d\omega} 3\bigl( E+\tfrac\omega2 M \bigr) = \tfrac32 M.
\end{align}

The proof of \eqref{dMdw} is rather more complicated.  We begin by computing the matrix elements of $\mathcal{L}_\omega$ given in Table~\ref{Table:Lw}.
Differentiating \eqref{E:Pdefn} with respect to $\omega$ yields $\mathcal{L}_\omega \tfrac{\partial \Pw}{\partial\omega} = - \Pw$ from which we immediately
deduce the top row of matrix elements in Table~\ref{Table:Lw}.  For the second row, we use the relation
\begin{align}
\mathcal{L}_\omega \Pw &= 4\Pw^5 - 2 \Pw^3,
\end{align}
which is a direct consequence of \eqref{E:Pdefn}, followed by \eqref{eq:w24} and \eqref{eq:w6E}.

\begin{table}[h]
\begin{center}
\def\strt{\vrule width 0em depth 1.2ex height 2.5ex}%
\begin{tabular}{|c||c|c|c|}
\hline
\strt$\langle\cdot,\mathcal{L}_\omega\,\cdot\rangle$ & $\tfrac{\partial \Pw}{\partial\omega}$ & $\Pw$ & $x\cdot \nabla \Pw + \tfrac32 \Pw$ \\
\hline&&&\\[-0.85\linespacing]\hline
\strt$\tfrac{\partial \Pw}{\partial\omega}$ & $-\tfrac12 M'(\omega)$ &  $-M(\omega)$ & $0$ \\
\hline
\strt$\Pw$  & $-M(\omega)$ & $\tfrac43[\beta(\omega)-2]G(\omega)$ & $2[\beta(\omega) - 1]G(\omega)$\\
\hline
\strt$x\cdot \nabla \Pw + \tfrac32 \Pw$ & $0$ & $2[\beta(\omega) - 1]G(\omega)$ & $[3\beta(\omega)-1]G(\omega)$ \\
\hline
\end{tabular}
\end{center}
\caption{Key matrix elements of $\mathcal{L}_\omega$.  Here $G(\omega):=\int |\nabla \Pw|^2\,dx$.}\label{Table:Lw}
\end{table}

For the last row in Table~\ref{Table:Lw} we use the basic identity
\begin{equation}\label{E:441}
\Delta (x\cdot \nabla u) = (x_k u_k)_{jj} = 2 \delta_{jk} u_{kj} + x_k u_{kjj} = 2 \Delta u + x\cdot\nabla \Delta u
\end{equation}
to deduce
\begin{align}
\mathcal{L}_\omega (x\cdot \nabla \Pw + \tfrac32 \Pw) &= - 2 \Delta \Pw + 6 \Pw^5 - 3 \Pw^3 = 4 \Pw^5 - \Pw^3 - 2\omega \Pw
\end{align}
and then apply \eqref{eq:w24} and \eqref{eq:w6E} to simplify the expressions for the resulting inner products.

Observe that the determinant of the bottom right $2\times2$ block in Table~\ref{Table:Lw} is
$$
 \tfrac43[\beta(\omega)-2][3\beta(\omega)-1]G(\omega)^2 - 4[\beta(\omega) - 1]^2G(\omega)^2 = -\tfrac{4}3[\beta(\omega)+1]G(\omega)^2,
$$
which is always negative.  From Subsection~\ref{SS:Morse} we know that $\mathcal{L}_\omega$ has exacly one negative eigenvalue and no zero eigenvalue.
Thus by Sylvester's law of inertia, the full $3\times 3$ determinant must be negative, that is,
\begin{align*}
\tfrac{2}3 M'(\omega) [\beta(\omega)+1]G(\omega)^2 - M(\omega)^2 [3\beta(\omega)-1]G(\omega) < 0.
\end{align*}
Employing \eqref{eq:w24} we may rewrite this as
\begin{align*}
\bigl\{2\omega M'(\omega) - [3\beta(\omega)-1]M(\omega)\bigr\} M(\omega) G(\omega) < 0,
\end{align*}
which proves \eqref{dMdw}.

One consequence of \eqref{dMdw} is that the mass is strictly decreasing for $\beta(\omega)\leq 1/3$.  Note that by Conjecture~\ref{Conj:beta} and the results of
Subsection~\ref{SS:0} below, this corresponds to an interval of the form $\omega\in(0,\omega_{1/3}]$.  The significance of this is heightened by the numerical
observation that the mass $M(\omega)=\int  \Pw^2\,dx$ at $\omega_{1/3}$ is extremely close to the minimal value of the mass; see the numerical data in Table~\ref{Table:num}.
Indeed, the masses differ by about one part in a thousand.

\begin{table}[h]%%%%%%%%%%%%%%%%%%%%%%%%%%%%% Table of numerical values %%%%%%%%%%%%%%%%%%%%%%%%%%%%%%%%%%%%%%%%%%
\begin{center}
\def\strt{\vrule width 0em depth 1.2ex height 2.5ex}%
\begin{tabular}{|c|c|c|c|c|}
\hline
\strt$\omega$ & $\beta(\omega)$ & $\int \Pw^2$ & $\int |\nabla\Pw|^2$  & Significance \\
\hline\hline
0.023926 & 0.33333 & 189.68  &  10.211 & $\beta=1/3$ \\
\hline
0.025544 & 0.36054 & 189.46  &  10.671 & Minimal M \\
\hline
0.054735 & 1.00000 & 240.45  &  19.741 & $E = 0$  \\
\hline
\end{tabular}
\end{center}
\caption{Selected numerical data}\label{Table:num}
\end{table}

The minimum of the mass of $\Pw$ as $\omega$ varies is represented by the cusp point in Figure~\ref{F:Pw}.  Note that \eqref{E:dEdM} combined with the analyticity
of $E(\omega)$ and $M(\omega)$ guarantees rigorously that this will indeed be a cusp.

In Section~\ref{SEC:Virial} we will find that $\beta=1/3$ has a further significance: the curves of solitons and rescaled solitons meet at this point.

From \eqref{E:dEdM} we see that the mass/energy curve (see Figure~\ref{F:Pw}) steepens as $\omega$ grows.  Our last result for this subsection captures this phenomenon
in a manner that will be used twice in the proof of Theorem~\ref{T:Rbdry}.

\begin{lemma}\label{L:cuspy}
Fix $0<\omega_0<\omega_1<3/16$ with $E(P_{\omega_0})=E(P_{\omega_1})$.  If $E(\Pw)\geq E(P_{\omega_0})$ for all $\omega\in(\omega_0,\omega_1)$,
then $M(\omega_1)<M(\omega_0)$.  If on the other hand, $E(\Pw)\leq E(P_{\omega_0})$ for all $\omega\in(\omega_0,\omega_1)$,
then $M(\omega_1)>M(\omega_0)$.
\end{lemma}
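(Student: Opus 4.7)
The plan is to exploit the identity \eqref{E:dEdM} established earlier, which says that along the soliton branch $E'(\omega) = -\tfrac{\omega}{2} M'(\omega)$, and so
$$M'(\omega) = -\frac{2}{\omega}E'(\omega).$$
Integrating this from $\omega_0$ to $\omega_1$ and performing an integration by parts should convert the assumption on $E$ (via its value at the endpoints plus its sign in the interior) directly into the desired inequality for $M$. Specifically, I would write
$$M(\omega_1)-M(\omega_0) = -\int_{\omega_0}^{\omega_1}\frac{2}{\omega}E'(\omega)\,d\omega = -\frac{2E(\omega)}{\omega}\bigg|_{\omega_0}^{\omega_1} - \int_{\omega_0}^{\omega_1}\frac{2E(\omega)}{\omega^2}\,d\omega,$$
and then use the hypothesis $E(\omega_0)=E(\omega_1)=:E_*$ to absorb the boundary term into the integral, giving the clean formula
$$M(\omega_1)-M(\omega_0) = \int_{\omega_0}^{\omega_1}\frac{2\bigl[E_*-E(\omega)\bigr]}{\omega^2}\,d\omega.$$

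With this identity in hand, each of the two cases is immediate up to strictness: under $E(\omega)\geq E_*$ on $(\omega_0,\omega_1)$ the integrand is non-positive, so $M(\omega_1)\leq M(\omega_0)$; under the reversed inequality, $M(\omega_1)\geq M(\omega_0)$. The only non-trivial point is upgrading these to strict inequalities, and for that I would invoke the real-analyticity of $\omega\mapsto P_\omega$ from Theorem~\ref{T:solitons}(ii) (and hence of $\omega\mapsto E(P_\omega)$): if $E$ were identically equal to $E_*$ on the open interval $(\omega_0,\omega_1)$, analyticity would force $E$ to be constant on the whole interval $(0,\tfrac{3}{16})$, contradicting the asymptotic blow-up of $E(\Pw)$ from Theorem~\ref{T:solitons}(iv)--(v). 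So on some open subinterval the hypothesized inequality is strict, which makes the integral strictly signed.

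I do not anticipate a serious obstacle here: the whole argument is a one-line application of \eqref{E:dEdM} together with integration by parts, and the strictness is a standard real-analyticity remark. The only thing to double-check is that one may legitimately differentiate $E(\Pw)$ and $M(\Pw)$ in $\omega$ and apply the fundamental theorem of calculus on $[\omega_0,\omega_1]$; this is justified by the $C^1$ (indeed real-analytic) dependence of $\Pw$ on $\omega$ proved in Subsection~\ref{SS:C1}.
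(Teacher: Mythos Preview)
Your proposal is correct and follows essentially the same route as the paper: both use the relation $M'(\omega)=-\tfrac{2}{\omega}E'(\omega)$ from \eqref{E:dEdM}, integrate by parts to obtain the identity $M(\omega_0)-M(\omega_1)=\int_{\omega_0}^{\omega_1}\tfrac{2}{\omega^2}[E(\Pw)-E_*]\,d\omega$, and then invoke real-analyticity of $\omega\mapsto E(\Pw)$ to upgrade the resulting inequality to a strict one. Your justification of strictness via the asymptotics in Theorem~\ref{T:solitons}(iv)--(v) is slightly more explicit than the paper's, but the argument is the same.
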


\begin{proof}
From \eqref{E:dEdM} and integration by parts, we have
\begin{align*}
-\!\int_{\omega_0}^{\omega_1} \!\tfrac{d\ }{d\omega} M(\Pw)\,d\omega = \int_{\omega_0}^{\omega_1}\! \tfrac{d\ }{d\omega} E(\Pw)\,\tfrac{2d\omega}{\omega}
&=\tfrac{2}{\omega_1} E(P_{\omega_1}) - \tfrac{2}{\omega_0}E(P_{\omega_0}) + \!\int_{\omega_0}^{\omega_1} E(P_{\omega}) \,\tfrac{2d\omega}{\omega^2}.
\end{align*}
Thus using $E(P_{\omega_0})=E(P_{\omega_1})$ we deduce
\begin{align*}
- M(\omega_1) + M(\omega_0) &=  \int_{\omega_0}^{\omega_1} \bigl[E(P_{\omega})-E(P_{\omega_0})\bigr] \,\tfrac{2d\omega}{\omega^2}.
\end{align*}
This immediately proves the lemma.  Note that we obtain strict inequalities because $\omega\mapsto E(\Pw)$ is a non-constant
analytic function; in particular, it is not constant on the interval $(\omega_0,\omega_1)$.
\end{proof}

\subsection{Asymptotics as $\omega\to0$}\label{SS:0}
The key idea in our analysis of this case is to change variables to
\begin{equation}\label{E:P/root}
u(x;\omega):= \tfrac{1}{\sqrt{\omega}}\Pw\bigl(\tfrac{x}{\sqrt{\omega}}\bigr).
\end{equation}
This new unknown obeys
\begin{equation}\label{E:P/root eq}
-\Delta u + \omega u^5 - u^3 + u =0,
\end{equation}
which we regard as a perturbation of the well-studied problem with $\omega=0$.  To this end, let $g$ denote the unique non-negative radially symmetric solution to
\begin{equation}\label{E:g eqn}
-\Delta g - g^3 + g =0.
\end{equation}
The function $g$ is strictly positive, smooth, and exponentially decaying.  Most important for us is the fact that the linearized operator
$$
L : f \mapsto -\Delta f - 3 g^2 f +f
$$
has been shown to be an isomorphism of $H^1_\rad$ onto $H^{-1}_\rad$; see \cite{Coffman}.  Thus, by the Implicit Function Theorem, there is a real-analytic family
$v(x;\omega)\in H^1_\rad$ of solutions to \eqref{E:P/root eq} defined in a neighbourhood of $\omega=0$  that obeys
\begin{equation}\label{u of om}
v(x;\omega) = g(x) - \omega [ L^{-1} g^5](x) + O(\omega^2) \qquad\text{in $H^1(\R^3)$ sense.}
\end{equation}

That $v(x;\omega)$ is non-negative and so must equal $u(x;\omega)$ follows by the ground state argument presented in Subsection~\ref{SS:C1}.

We now move on to studying asymptotics of the mass and energy.  The leading terms are almost immediate from the above; see below.
The second order terms require the observation that
\begin{equation}\label{E:L inv g}
L ( g + x\cdot \nabla g ) = -2 g,
\end{equation}
which follows from \eqref{E:g eqn}, \eqref{E:441}, and direct computation.  Indeed, using this, \eqref{E:P/root}, and \eqref{u of om} we have
\begin{align*}
M(\Pw) = \omega^{-1/2} \| u \|_{L^2_x}^2  &= \omega^{-1/2} \bigl[ \| g \|_{L^2_x}^2 - 2 \omega \langle g, L^{-1} g^5 \rangle + O(\omega^2 ) \bigr] \\
&= \omega^{-1/2} \| g \|_{L^2_x}^2  + \omega^{1/2} \langle g + x\cdot \nabla g , g^5 \rangle + O(\omega^{3/2} ) \\
&= \omega^{-1/2} \| g \|_{L^2_x}^2  + \tfrac12 \omega^{1/2} \| g \|_{L^6_x}^6  + O(\omega^{3/2} ).
\end{align*}
Note that this also gives asymptotics for the $L^4$-norm of $\Pw$ via the identity \eqref{eq:w4}.

Using the Pohozaev identities and the asymptotics for the mass we deduce
\begin{align*}
E(\Pw) &= \tfrac\omega2 M(\Pw) - \tfrac13 \| \Pw \|_{L^6_x}^6  \\
&= \tfrac12\omega^{1/2} \| g \|_{L^2_x}^2  + \tfrac14 \omega^{3/2} \| g \|_{L^6_x}^6
	- \tfrac13 \omega^{3/2} \| g \|_{L^6_x}^6 + O(\omega^{5/2} ) \\
&= \tfrac12 \omega^{1/2} \| g \|_{L^2_x}^2 - \tfrac{1}{12} \omega^{3/2} \| g \|_{L^6_x}^6 + O(\omega^{5/2} ).
\end{align*}
We also used here the leading term asymptotic for the $L^6$-norm of $\Pw$, which is evident from \eqref{u of om}.

A similar analysis yields $\beta(\omega)=\omega \beta(g) + O(\omega^2)$.

\subsection{Asymptotics as $\omega\to\frac3{16}$}\label{SS:3/16}
We will exploit the variational characterization \eqref{E:VP} of solitons. As there, we write $p(u)=\frac16|u|^6-\frac14|u|^4+\frac\omega2 |u|^2$.

First we consider an explicit trial function $v$ defined by $v(x) = \sqrt{3}/2$ when $|x|<R$,  $v(x) = 0$ when $|x|>R+h$, and $v(x)$ is the interpolating linear function
of $|x|$ for intermediate values of $|x|$.  The parameters $R$ and $h$ are chosen so that
$$
\int_{|x|<R} p(v(x)) \,dx = -2 \qtq{and then} \int_{\R^3} p(v(x)) \,dx = -1.
$$
As $p(\sqrt{3}/2) = -\tfrac38(\tfrac3{16}-\omega)$ we have $R\sim (\tfrac3{16}-\omega)^{-1/3}$; a slightly longer computation reveals $h\sim (\tfrac3{16}-\omega)^{2/3}$
and therefore $\|\nabla v\|_{L^2}^2 \sim (\tfrac3{16}-\omega)^{-4/3}$.

By the variational characterization of $\Pw$, via the rescaled function $\tilde P_\omega$, we then deduce that
$$
\|\nabla\tilde \Pw\|_{L^2}^2 \lesssim (\tfrac3{16}-\omega)^{-4/3}
	\qtq{and so} \|\nabla \Pw\|_{L^2}^2 \lesssim (\tfrac3{16}-\omega)^{-2}.
$$

The remainder of the argument is based on a direct analysis of the optimizer $\tilde \Pw$, which is necessarily positive, radial, and decreasing.

We define new lengths $R$ and $h$ so that
$$
|\tilde \Pw(x)| \geq \tfrac12  \qtq{when $|x|\leq R$ and}   |\tilde \Pw(x)| \leq \tfrac14  \quad\text{when $|x|\geq R+h$.}
$$
As $p(u)\gtrsim - (\tfrac3{16}-\omega)$ throughout $\C$ and $p(u)\gtrsim |u|^2 \geq 0$ when $|u|\leq\frac12$,  the constraint $\int p(\tilde \Pw) =-1$ guarantees that
$R \gtrsim (\tfrac3{16}-\omega)^{-1/3}$ and that
\begin{equation}\label{E:L2 beyond R}
 \int_{|x|>R} |\tilde \Pw(x)|^2\,dx \lesssim 1 + (\tfrac3{16}-\omega) R^3 \lesssim (\tfrac3{16}-\omega) R^3.
\end{equation}
In particular, $h R^2 \lesssim  (\tfrac3{16}-\omega) R^3$, which is to say, $h\lesssim (\tfrac3{16}-\omega) R$.

We can now get a lower bound on $\int|\nabla \tilde \Pw|^2$ by comparing it to the least value taken by any function that equals $\frac12$ on the sphere of
radius $R$ and equals $\frac14$ on the sphere of radius $R+h$.  This minimum is achieved by a harmonic function, specifically, by
$U(x)=\tfrac{R(R+h)}{4h|x|} - \tfrac{R-h}{4h}$. Thus,
\begin{equation*}
\int_{R<|x|<R+h} |\nabla \tilde \Pw(x)|^2\,dx \geq \tfrac{R(R+h)}{16h} \gtrsim R (\tfrac3{16}-\omega)^{-1} \gtrsim  (\tfrac3{16}-\omega)^{-4/3}.
\end{equation*}
From this and our earlier upper bound on $\int|\nabla \tilde \Pw|^2$, we deduce that
\begin{equation*}
\int_{\R^3} |\nabla \tilde \Pw(x)|^2\,dx \sim (\tfrac3{16}-\omega)^{-4/3} \qtq{and so} R \sim (\tfrac3{16}-\omega)^{-1/3}.
\end{equation*}
From the size of $R$, \eqref{E:L2 beyond R}, and \eqref{E:b0 defn}, we also obtain $\int |\tilde \Pw|^2 \sim (\tfrac3{16}-\omega)^{-1}$.

Rescaling we obtain
$$
\int_{\R^3} |\nabla \Pw(x)|^2\,dx \sim (\tfrac3{16}-\omega)^{-2} \qtq{and} M(\omega)=\int_{\R^3} |\Pw(x)|^2\,dx \sim (\tfrac3{16}-\omega)^{-3}
$$
and then by employing the Pohozaev identities,
$$
|E(\omega)|\sim\int_{\R^3} |\Pw(x)|^6\,dx \sim \int_{\R^3} |\Pw(x)|^4\,dx \sim (\tfrac3{16}-\omega)^{-3} \qtq{and} \beta(\omega) \sim (\tfrac3{16}-\omega)^{-1}.
$$

This completes the proof of the final part of Theorem~\ref{T:solitons}.\qed

%%%%%%%%%%%%%%%%%%%%%%%%%%%%%%%%%%%%%%%%%%%%%%%%%%%%%%%%%%%%%%%%%%%%%%%%%%%%%%%%%%%%%%%%%%%%%%%%%%%%%%%%%%%%%%
\section{Solitons as Gagliardo--Nirenberg--H\"older optimizers}\label{SEC:GNH}
%%%%%%%%%%%%%%%%%%%%%%%%%%%%%%%%%%%%%%%%%%%%%%%%%%%%%%%%%%%%%%%%%%%%%%%%%%%%%%%%%%%%%%%%%%%%%%%%%%%%%%%%%%%%%%

The appearance of solitons as optimizers in Sobolev and Gagliardo--Nirenberg inequalities has played a key
role in the analysis of the focusing NLS with a single power nonlinearity.  In particular, this is an essential
ingredient in the determination of thresholds for well-posedness and for scattering in these cases; see \cite{AkahoriNawa,DHR08,FXC11,KenigMerle,Weinstein}.

In this section we discuss an analogous characterization of solitons for the cubic-quintic problem in terms of a
one-parameter family of inequalities we dub Gagliardo--Nirenberg--H\"older inequalities; see \eqref{eq:GN} below.
We suggest this name because they interpolate between the Gagliardo--Nirenberg and H\"older inequalities
\begin{equation}\label{E:GN&H}
\|u\|_{L^4}^4 \lesssim \|u\|_{L^2} \|\nabla u\|_{L^2}^3 \qtq{and} \|u\|_{L^4}^4 \leq\|u\|_{L^2}\|u\|_{L^6}^{3}
\end{equation}
as the parameter $0<\alpha<\infty$ varies.  While the veracity of the inequality \eqref{eq:GN} below for some constant $C_\alpha>0$ is
easily seen by taking a (weighted) geometric mean of these classical inequalities, this argument tells us little about
the optimal constant or any optimizing functions.

\begin{proposition}[$\alpha$-Gagliardo--Nirenberg--H\"older inequality]\label{prop:GN}
\ Fix $0<\alpha<\infty$ and let $C_\alpha>0$ denote the optimal (i.e. infimal) constant so that
\begin{align}\label{eq:GN}
\|u\|_{L^4(\R^3)}^4\leq C_{\alpha}\|u\|_{L^2(\R^3)}\|u\|_{L^6(\R^3)}^{\frac{3\alpha}{1+\alpha}} \|\nabla u\|_{L^2(\R^3)}^{\frac{3}{1+\alpha}}
	\qquad\text{for all $u\in H^1(\R^3)$.}
\end{align}
Then $C_\alpha<\infty$ and the inequality admits optimizers; moreover, every optimizer $v$ is of the form
$v(x) = \lambda P_\omega (\rho(x-x_0))$ where $\lambda\in\C$, $x_0\in\R^3$, $\rho>0$, and $P_\omega$ is a ground state soliton with
$\beta(P_\omega)=\alpha$.
\end{proposition}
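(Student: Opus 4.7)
The plan is to realize \eqref{eq:GN} as a minimization problem for the scale-invariant quotient
$$
J_\alpha(u) := \frac{\|u\|_{L^2}\,\|u\|_{L^6}^{3\alpha/(1+\alpha)}\,\|\nabla u\|_{L^2}^{3/(1+\alpha)}}{\|u\|_{L^4}^4},
$$
whose infimum over $H^1(\R^3)\setminus\{0\}$ is $1/C_\alpha$. The quotient is invariant under both amplitude scaling $u\mapsto\mu u$ and spatial dilations $u\mapsto u(\rho\cdot)$, so I may normalize a minimizing sequence $\{u_n\}$ by $\|u_n\|_{L^2}=\|\nabla u_n\|_{L^2}=1$. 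Finiteness of $C_\alpha$ is immediate from interpolating between the two inequalities in \eqref{E:GN&H}, while $C_\alpha>0$ follows from testing against any non-trivial function. Classical Gagliardo--Nirenberg and Sobolev embeddings give uniform upper bounds on $\|u_n\|_{L^4}$ and $\|u_n\|_{L^6}$, and the fact that $J_\alpha(u_n)\to 1/C_\alpha\in(0,\infty)$, together with the Hölder bound $\|u_n\|_{L^4}^4\leq\|u_n\|_{L^2}\|u_n\|_{L^6}^3$, prevents either norm from collapsing to zero.

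To extract a limit I would apply Schwarz symmetrization: since it preserves all $L^p$-norms and only decreases $\|\nabla u\|_{L^2}$ (Pólya--Szegő), replacing $u_n$ by $|u_n|^*$ keeps the sequence minimizing and now inside $H^1_\rad(\R^3)$. The Strauss compact embedding $H^1_\rad\hookrightarrow L^4(\R^3)$ delivers a subsequence converging strongly in $L^4$ and weakly in $H^1$ to some $u^*$. Weak lower semicontinuity of the $L^2$, $L^6$, and $\dot H^1$ norms combined with the minimality of $J_\alpha(u^*)$ upgrades all three to strong convergences, producing a non-negative radial optimizer.

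Testing $J_\alpha$ against real variations of $u^*$ yields the Euler--Lagrange equation
$$
-\Delta u^* + A\,u^* - B\,(u^*)^3 + C\,(u^*)^5 = 0,
$$
where $A,B,C>0$ are explicit in terms of $\|u^*\|_{L^2}^2$, $\|u^*\|_{L^4}^4$, $\|u^*\|_{L^6}^6$, $\|\nabla u^*\|_{L^2}^2$, and $\alpha$. Substituting $u^*(x)=\lambda v(\rho x)$ reduces this to the ground-state equation $-\Delta v+v^5-v^3+\omega v=0$ exactly when $\rho^2=B^2/C$, $\lambda^2=B/C$, and $\omega=AC/B^2$. Plugging in the explicit values and invoking Hölder's inequality $\|u^*\|_{L^4}^4\leq\|u^*\|_{L^2}\|u^*\|_{L^6}^3$ gives $\omega\leq\tfrac{3\alpha}{16(1+\alpha)}<\tfrac{3}{16}$, so the uniqueness statement in Theorem~\ref{T:solitons} identifies $v$ with $P_\omega$. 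A general, possibly complex, optimizer $v$ is handled via $\||v|\|_{L^p}=\|v\|_{L^p}$ and $\|\nabla|v|\|_{L^2}\leq\|\nabla v\|_{L^2}$: equality throughout forces $v=e^{i\theta}|v|$, and strict rearrangement (Brothers--Ziemer, as used in the existence proof of $\Pw$) then gives that $|v|$ is a translate of a radial rearrangement, i.e.\ of $P_\omega$.

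The identification $\beta(P_\omega)=\alpha$ is built into the rescaling. From $u^*=\lambda P_\omega(\rho\cdot)$ one obtains
$$
\beta(u^*)=\lambda^4\rho^{-2}\,\beta(P_\omega),
$$
while the matching condition $C\lambda^4/\rho^2=1$, combined with $C=\alpha\|\nabla u^*\|_{L^2}^2/\|u^*\|_{L^6}^6$, reads $\lambda^4/\rho^2=\beta(u^*)/\alpha$; substituting yields $\beta(P_\omega)=\alpha$ in one line. I expect the main obstacle to be controlling a minimizing sequence simultaneously in all three scaling-invariant norms — the rearrangement+Strauss combination is the clean device that bypasses a full concentration-compactness analysis and keeps the existence argument self-contained.
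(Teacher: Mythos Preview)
Your approach is essentially the same as the paper's: define the scale-invariant quotient, normalize via the two-parameter scaling, symmetrize, use Strauss compactness for $L^4$, weak lower semicontinuity for the numerator norms, derive the Euler--Lagrange equation, rescale to the soliton equation, and invoke Brothers--Ziemer for the general optimizer. Your computation of $\omega$ and the verification $\beta(P_\omega)=\alpha$ match the paper line for line.

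One small overstatement: you write that weak lower semicontinuity ``upgrades all three to strong convergences.'' This is more than you need and does not follow automatically---equality in $J_\alpha(u^*)=C_\alpha^{-1}$ only tells you the product in the numerator attains its $\liminf$, not that each factor does. The paper simply observes that $J_\alpha(u^*)\leq\liminf J_\alpha(u_n)=C_\alpha^{-1}$ (from weak lower semicontinuity above and strong $L^4$ convergence below) together with $J_\alpha(u^*)\geq C_\alpha^{-1}$ forces $u^*$ to be an optimizer; no strong convergence is claimed or required. Your handling of complex optimizers via the diamagnetic equality case is a legitimate alternative to the paper's direct Brothers--Ziemer route and is arguably cleaner about why the phase must be constant.
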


\begin{remark}
Theorem~\ref{T:solitons} shows that $\omega\mapsto\beta(\omega)$ is real-analytic with $\beta(\omega)\to0$ as $\omega\to 0$ and
$\beta(\omega)\to\infty$ as $\omega\to \frac3{16}$.  Thus for any $\alpha$, there are only finitely many $\omega$ so that $\beta(\omega)=\alpha$.
\end{remark}

One shortcoming in our result is that we are unable (at this time) to guarantee that all solitons appear as optimizers of \eqref{eq:GN}
nor are we able to preclude that two different solitons are both optimizers for the same inequality.  As will be apparent from what
follows, a positive resolution of Conjecture~\ref{Conj:beta} would obviate these concerns.

\begin{proof}
That $C_\alpha<\infty$ follows from \eqref{E:GN&H} as noted above.  It can also be deduced in a similar manner from Sobolev embedding
(specifically, $\dot{H}^1(\R^3)\subset L^6(\R^3)$).

Next, we prove that the optimal constant $C_\alpha$ is achieved.  To this end, let
\begin{align}\label{eq:ZGN1}
F(u):&=\frac{\|u\|_{L^2}\|u\|_{L^6}^{\frac{3\alpha}{1+\alpha}}\|\nabla u\|_{L^2}^{\frac{3}{1+\alpha}}}{\|u\|_{L^4}^4},
\qtq{so that} C_{\alpha}^{-1}=\inf_{u\in H^1(\R^3)\setminus\{0\}}F(u),
\end{align}
and let $\{u_n\}_{n\in\mathbb{N}}\subset H^1_x$ be a sequence realizing the infimum (i.e. $F(u_n)\to C_{\alpha}^{-1}$).
By the usual rearrangement inequalities, we may assume that $u_n$ are non-negative, radially symmetric, and non-increasing.  By
exploiting the fact that $F(u)$ is invariant under the rescaling $u(x)\mapsto \lambda u(x/r)$, we may also arrange that
$\|u_n\|_{L^2}=1$ and $\|\nabla u_n\|_{L^2}=1$ for all $n$.

Passing to a subsequence, if necessary, we may assume that $\{u_n\}$ converges to some $u_\infty \in H^1_x$ in both the
weak topology on $H^1_x$ and the norm topology on $L^4_x$.  The latter assertion relies on the fact that $u_n$ are
radially symmetric and the well-known compactness of the embedding $H^1_{\textup{rad}}(\R^3)\hookrightarrow L^4(\R^3)$;
see, for example, \cite[p.~570]{Weinstein}.  Sobolev embedding further guarantees that $u_n\rightharpoonup u_\infty$ weakly
in $L^6_x$.

Next we verify that $u_\infty\not\equiv 0$.  By the normalizations $\|u_n\|_{L^2}=\|\nabla u_n\|_{L^2}=1$ and H\"older's inequality,
we have
$$
C_\alpha^{-1} = \lim_{n\to\infty} F(u_n) = \lim_{n\to\infty}\frac{\|u_n\|_{L^2}^{\frac\alpha{1+\alpha}}\|u_n\|_{L^6}^{\frac{3\alpha}{1+\alpha}}}{\|u_n\|_{L^4}^4}
	\geq \lim_{n\to\infty} \frac{\|u_n\|_{L^4}^{\frac{4\alpha}{1+\alpha}}}{\|u_n\|_{L^4}^4}
	= \|u_\infty\|_{L^4}^{-\frac{4}{1+\alpha}},
$$
which shows that indeed $u_\infty\not\equiv 0$.

Recall that the norm on any Banach space is weakly lower semicontinuous.  Applying this in the numerator of $F(u_n)$ and using that
$\|u_n\|_{L^4}\to\|u_\infty\|_{L^4}\neq0$ in the denominator yields
$$
C_\alpha^{-1} = \lim_{n\to\infty} F(u_n) \geq F(u_\infty) \geq \inf_{u\in H^1(\R^3)\setminus\{0\}}F(u) = C_\alpha^{-1}.
$$
Thus equality holds throughout this line and $u_\infty$ is an optimizer for \eqref{eq:GN}.

It remains to demonstrate that all optimizers $v$ are ground state solitons up to the obvious symmetries.
We begin by treating the case that $v$ is non-negative and radially symmetric; at the end of the proof we will reduce
to this case via rearrangement inequalities.

As $v$ is a minimizer of $F$, it must satisfy the corresponding Euler-Lagrange equation:
$\frac{d}{d\eps}|_{\eps=0} F(v+\eps\phi)=0$ for all $\phi\in C^{\infty}_c(\R^3; \R)$.  Thus, direct computation
shows that $v$ is a distributional solution to the following equation:
\begin{align*}
-\Delta v +\alpha \, \frac{\int |\nabla v|^2dx}{\int v^6\,dx}v^5
-\frac{4(1+\alpha)}{3} \frac{\int |\nabla v|^2dx}{\int v^4\,dx}v^3
+\frac{1+\alpha}{3} \frac{\int|\nabla v|^2dx}{\int v^2\,dx}v=0.
\end{align*}

A little further computation shows that if we define $Q(x)=\lambda^{-1} v(x/\rho)$ where $\lambda,\rho>0$ are given by
\begin{equation*}
\lambda^2=\frac{4(1+\alpha)}{3\alpha}\frac{\int v^6\,dx}{\int v^4\,dx} \qtq{and} \rho^2=\frac{16(1+\alpha)^2}{9\alpha}\frac{\int v^6\,dx\int |\nabla v|^2\,dx}{\big(\int v^4\,dx\big)^2},
\end{equation*}
then,  $Q$ satisfies
\begin{align}\label{E:Q is P}
-\Delta Q+Q^5-Q^3 +\omega Q=0 \qtq{with} \omega=\frac{3\alpha}{16(1+\alpha)}\frac{\big(\int v^4\,dx\big)^2}{\int v^2\,dx\int v^6\,dx}.
\end{align}

As $Q$ is non-negative and radially symmetric, Theorem~\ref{T:solitons} shows that \eqref{E:Q is P} uniquely identifies $Q(x)$ as being $\Pw(x)$ for this value of $\omega$;
moreover,
$$
\beta(Q) = \tfrac{\rho^2}{\lambda^4} \beta(v) = \alpha.
$$

To complete the discussion of optimizers $v$ for \eqref{eq:GN} we now need to consider the case that $v$ is not non-negative and radial.  Standard rearrangement inequalities guarantee
that the symmetric decreasing rearrangement $v^*$ of $v$ will also be an optimizer.  By what we have just proven, $v^*$ must then be the rescaling of a ground state soliton $\Pw$.  Consequently, all level sets
of $v^*$ have zero measure.  Thus by the strict rearrangement inequalities of \cite{BrothersZiemer}, it follows that $v$ agrees with $v^*$ up to
translation and multiplication by a unimodular complex number.  This shows that $v(x)=\lambda P_\omega (\rho (x-x_0))$ for some $\lambda\in\C$, $x_0\in\R^3$, $\rho>0$, and $P_\omega$ a ground state soliton with
$\beta(P_\omega)=\alpha$, thereby completing the proof of Proposition \ref{prop:GN}.
\end{proof}

\begin{lemma}\label{lemma:varitions of M,E}
Fix $\alpha>0$ and let $Q_{\alpha}$ denote a ground state soliton that
optimizes the $\alpha$-Gagliardo--Nirenberg--H\"older inequality \eqref{eq:GN}.  Then, the optimal constant $C_\alpha$ in that inequality is given by
\begin{align}\label{eq:6}
C_{\alpha}&=\frac{4(1+\alpha)}{3\alpha^{\frac{\alpha}{2(1+\alpha)}}}\frac{1}{\|Q_{\alpha}\|_{L^2(\R^3)}
\|\nabla Q_{\alpha}\|_{L^2(\R^3)}^{\frac{1-\alpha}{1+\alpha}}}.
\end{align}
Moreover, if \ $0<\gamma<\alpha$ and $Q_\gamma$ is a ground state soliton that
optimizes the $\gamma$-Gagliardo-Nirenberg-H\"older inequality, then
\begin{align}
\frac{\|\nabla Q_{\alpha}\|_{L^2(\R^3)}}{\|\nabla Q_{\gamma}\|_{L^2(\R^3)}}&> \bigg(\frac{\alpha}{\gamma}\bigg)^{\frac14}  > 1, \label{eq:comparison_kinetic}\\
\frac{M(Q_{\gamma})}{M(Q_{\alpha})}&> \frac{(1+\gamma)^2\alpha^{\frac12}}{(1+\alpha)^2\gamma^{\frac12}},
\quad  \text{for } 0 <  \g < \alpha \leq 1,
\label{eq:comparison_mass}\\
\frac{M(Q_{\gamma})}{M(Q_{\alpha})}&< \frac{(1+\gamma)^2\alpha^{\frac12}}{(1+\alpha)^2\gamma^{\frac12}},
\quad
\text{for } 1 \leq  \g < \alpha,
\label{eq:comparison_mass2}\\
\frac{E(Q_{\alpha})}{E(Q_{\gamma})}&> \frac{( \alpha-1)\alpha^{\frac12}}{(\gamma-1)\gamma^{\frac12}},
\quad  \text{for }0< \g < \alpha <  1 \text{ or } 1 < \g < \alpha.
\label{eq:comparison_energy}
\end{align}
\end{lemma}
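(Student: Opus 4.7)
The plan is to first derive \eqref{eq:6} directly from the fact that $Q_\alpha$ is an optimizer, and then obtain the four comparison inequalities by testing each soliton in the GNH inequality associated to the \emph{other} parameter, exploiting the fact that Proposition~\ref{prop:GN} forces this to be a \emph{strict} inequality whenever $\gamma\neq\alpha$. The essential inputs are the Pohozaev identities packaged as \eqref{eq:w24} and \eqref{eq:w6E}: writing $G_\alpha := \|\nabla Q_\alpha\|_{L^2}^2$, I have $\|Q_\alpha\|_{L^4}^4 = \tfrac{4(1+\alpha)}{3} G_\alpha$, $\|Q_\alpha\|_{L^6}^6 = \alpha\, G_\alpha$, and $E(Q_\alpha) = \tfrac{1-\alpha}{6} G_\alpha$, and likewise with $\gamma$ in place of $\alpha$.

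For \eqref{eq:6}, I substitute these identities into the equality version of \eqref{eq:GN} for $Q_\alpha$ and solve for $C_\alpha$; the exponents on $G_\alpha$ combine to $(\alpha-1)/(2(1+\alpha))$, which matches the stated $\|\nabla Q_\alpha\|^{(1-\alpha)/(1+\alpha)}$ factor in the denominator. Next, since Proposition~\ref{prop:GN} identifies optimizers of the $\alpha$-GNH inequality (up to symmetries) as ground-state solitons with $\beta$-ratio equal to $\alpha$, the soliton $Q_\gamma$ with $\beta(Q_\gamma)=\gamma\neq\alpha$ is \emph{not} an optimizer of the $\alpha$-GNH inequality. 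Testing $Q_\gamma$ in \eqref{eq:GN} with optimal constant $C_\alpha$ as given by \eqref{eq:6}, and using the identities above for $Q_\gamma$, I obtain after rearrangement the strict inequality
\begin{equation}\label{eq:key1}
\frac{1+\gamma}{1+\alpha}\left(\frac{\alpha}{\gamma}\right)^{\!\frac{\alpha}{2(1+\alpha)}}\left(\frac{M_\alpha}{M_\gamma}\right)^{\!\frac12}\left(\frac{G_\gamma}{G_\alpha}\right)^{\!\frac{\alpha-1}{2(1+\alpha)}}<1,
\end{equation}
and the symmetric inequality obtained by swapping $\alpha\leftrightarrow\gamma$ (testing $Q_\alpha$ in the $\gamma$-GNH inequality).

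Multiplying the two strict inequalities eliminates the mass ratios and the $(1+\cdot)$ prefactors, and the resulting exponents on $\alpha/\gamma$ and $G_\alpha/G_\gamma$ simplify (using $\tfrac{\alpha}{2(1+\alpha)}-\tfrac{\gamma}{2(1+\gamma)}=\tfrac{\alpha-\gamma}{2(1+\alpha)(1+\gamma)}$) to give exactly $\left(\tfrac{\alpha}{\gamma}\right)^{\!1/2}<G_\alpha/G_\gamma$, which is \eqref{eq:comparison_kinetic}. Inserting this kinetic bound back into \eqref{eq:key1}: when $\alpha\leq 1$ the exponent $(\alpha-1)/(2(1+\alpha))$ is non-positive, so $(G_\gamma/G_\alpha)^{(\alpha-1)/(2(1+\alpha))}\leq (\gamma/\alpha)^{(1-\alpha)/(4(1+\alpha))\cdot(-1)\cdot 2}$ — more precisely the kinetic inequality yields $(G_\gamma/G_\alpha)^{(\alpha-1)/(2(1+\alpha))}<(\alpha/\gamma)^{(\alpha-1)/(4(1+\alpha))}$, and the exponents in \eqref{eq:key1} combine to the clean power $1/2$, producing \eqref{eq:comparison_mass}. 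The case $1\leq\gamma<\alpha$ is handled identically starting from the swapped inequality, where now $(\gamma-1)/(2(1+\gamma))\geq 0$ pairs correctly with $G_\alpha/G_\gamma>(\alpha/\gamma)^{1/2}$ to give \eqref{eq:comparison_mass2}.

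Finally, the energy identity rewrites $E(Q_\alpha)/E(Q_\gamma) = \tfrac{1-\alpha}{1-\gamma}\cdot G_\alpha/G_\gamma$; in each of the two allowed regimes ($\gamma<\alpha<1$ or $1<\gamma<\alpha$) the factor $(1-\alpha)/(1-\gamma)$ is positive, so multiplying by \eqref{eq:comparison_kinetic} preserves the direction of the inequality and yields \eqref{eq:comparison_energy}. The main technical obstacle is purely bookkeeping: keeping track of the signs of the various exponents $(\alpha-1)/(2(1+\alpha))$, $(\gamma-1)/(2(1+\gamma))$, etc.\ across the four regimes, and verifying that the algebraic simplifications of the exponents (which are fractions of polynomials in $\alpha,\gamma$) genuinely collapse to the clean form $(\alpha/\gamma)^{1/2}$ required by the statement. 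Once these identities are checked, strictness in every bound descends directly from the strict inequality supplied by the optimizer characterization in Proposition~\ref{prop:GN}.
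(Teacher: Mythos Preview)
Your proposal is correct and follows essentially the same route as the paper: derive \eqref{eq:6} by substituting the Pohozaev relations \eqref{eq:w24}--\eqref{eq:w6E} into the equality case of \eqref{eq:GN}, then test $Q_\gamma$ in the $\alpha$-inequality and $Q_\alpha$ in the $\gamma$-inequality (strictly, by Proposition~\ref{prop:GN}) to obtain two inequalities whose product gives \eqref{eq:comparison_kinetic}, and finally feed the kinetic bound back into one of the two inequalities (choosing which according to the sign of $\tfrac{\alpha-1}{1+\alpha}$ or $\tfrac{\gamma-1}{1+\gamma}$) to obtain \eqref{eq:comparison_mass}--\eqref{eq:comparison_mass2}, with \eqref{eq:comparison_energy} following from $E(Q_\alpha)=\tfrac{1-\alpha}{6}G_\alpha$. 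The paper packages the two test inequalities as a single double inequality on $\|Q_\gamma\|_{L^2}/\|Q_\alpha\|_{L^2}$ before extracting the same consequences, but the logic is identical; your parenthetical remark on the direction of the bound for $(G_\gamma/G_\alpha)^{(\alpha-1)/(2(1+\alpha))}$ is slightly garbled, but the substitution you actually perform (replacing that factor by its \emph{lower} bound inside the strict inequality \eqref{eq:key1}) is the correct one and yields the stated exponent $-1/4$.
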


\begin{proof}
As $Q_\alpha$ is a ground state soliton with $\beta(Q_\alpha)=\alpha$, it obeys the relations \eqref{eq:w24} and \eqref{eq:w6E}; in particular,
$\|Q_\alpha\|_{L^4}^4 = \frac{4(\alpha+1)}3 \|\nabla Q_\alpha\|_{L^2}^2$ and $\|Q_\alpha\|_{L^6(\R^3)}^6=\alpha \|\nabla Q_\alpha\|_{L^2}^2$.
Substituting these into
$$
C_{\alpha} = \frac{\|Q_\alpha\|_{L^4(\R^3)}^4}{\|Q_\alpha\|_{L^2(\R^3)}\|Q_\alpha\|_{L^6(\R^3)}^{\frac{3\alpha}{1+\alpha}} \|\nabla Q_\alpha\|_{L^2(\R^3)}^{\frac{3}{1+\alpha}}}
$$
leads immediately to \eqref{eq:6}.

As $\gamma\neq\alpha$, Proposition~\ref{prop:GN} guarantees that $Q_\gamma$ is not an optimizer for the $\alpha$-Gagliardo-Nirenberg-H\"older inequality.
Using \eqref{eq:6} and the fact that $Q_\gamma$ obeys the relations \eqref{eq:w24} and \eqref{eq:w6E}, this yields
\begin{align}\label{B1}
\frac{\|Q_{\gamma}\|_{L^2}\|\nabla Q_{\gamma}\|_{L^2}^{\frac{1-\alpha}{1+\alpha}}}{\|Q_{\alpha}\|_{L^2}\|\nabla Q_{\alpha}\|_{L^2}^{\frac{1-\alpha}{1+\alpha}}}
>\frac{(1+\gamma)\gamma^{-\frac{\alpha}{2(1+\alpha)}}}{(1+\alpha)\alpha^{-\frac{\alpha}{2(1+\alpha)}}}.
\end{align}

Reversing the roles of $\alpha$ and $\gamma$, we also obtain
\begin{align}\label{B2}
\frac{\|Q_{\alpha}\|_{L^2}\|\nabla Q_{\alpha}\|_{L^2}^{\frac{1-\gamma}{1+\gamma}}}{\|Q_{\gamma}\|_{L^2}\|\nabla Q_{\gamma}\|_{L^2}^{\frac{1-\gamma}{1+\gamma}}}
>\frac{(1+\alpha)\alpha^{-\frac{\gamma}{2(1+\gamma)}}}{(1+\gamma)\gamma^{-\frac{\gamma}{2(1+\gamma)}}}.
\end{align}

Combining \eqref{B1} and \eqref{B2} gives
\begin{align}\label{eq:double_ineq_masses}
\frac{1+\gamma}{1+\alpha}
\bigg(\frac{\gamma}{\alpha}\bigg)^{\!-\frac{\alpha}{2(1+\alpha)}}
\bigg(\frac{\|\nabla Q_{\alpha}\|_{L^2}}{\|\nabla Q_{\gamma}\|_{L^2}}\bigg)^{\!\frac{1-\alpha}{1+\alpha}}
& <\frac{\|Q_{\gamma}\|_{L^2}}{\|Q_{\alpha}\|_{L^2}} \notag \\
& <\frac{1+\gamma}{1+\alpha}\bigg(\frac{\gamma}{\alpha}\bigg)^{\!-\frac{\gamma}{2(1+\gamma)}}
\bigg(\frac{\|\nabla Q_{\alpha}\|_{L^2}}{\|\nabla Q_{\gamma}\|_{L^2}}\bigg)^{\!\frac{1-\gamma}{1+\gamma}},
\end{align}
from which we will derive the remaining assertions of the lemma.

Skipping over the middle term in \eqref{eq:double_ineq_masses} and rearranging gives
\begin{align*}
\bigg(\frac{\|\nabla Q_{\alpha}\|_{L^2}}{\|\nabla Q_{\gamma}\|_{L^2}}\bigg)^{\frac{2(\alpha-\gamma)}{(1+\gamma)(1+\alpha)}}
>\bigg(\frac{\alpha}{\g}\bigg)^{\frac{\alpha - \gamma}{2(1+\gamma)(1+\alpha)}},
\end{align*}
which then implies \eqref{eq:comparison_kinetic} because $\alpha>\gamma>0$.

From \eqref{eq:comparison_kinetic} and the first inequality in \eqref{eq:double_ineq_masses} we deduce \eqref{eq:comparison_mass}
while \eqref{eq:comparison_mass2} follows from \eqref{eq:comparison_kinetic} and the second inequality in \eqref{eq:double_ineq_masses}.
Note that when $\alpha = 1$ or $\g = 1$, \eqref{eq:comparison_mass} and \eqref{eq:comparison_mass2}
follow directly from \eqref{eq:double_ineq_masses}. Finally, \eqref{eq:comparison_energy}
follows from  \eqref{eq:comparison_kinetic} and \eqref{eq:w6E}.
\end{proof}

In view of \eqref{eq:comparison_kinetic} we may say colloquially that
\begin{equation}\label{E:dGda}
\alpha \mapsto \|\nabla Q_{\alpha}\|_{L^2(\R^3)}^2 \qquad\text{is strictly increasing}.
\end{equation}
However, we have not proved that $\alpha$ uniquely determines a ground state soliton $Q_\alpha$.  Correspondingly, the proper formulation
is that \eqref{E:dGda} holds for \emph{any system} of optimizing solitons $\{Q_\alpha\}_{\alpha\in(0,\infty)}$.

As a counterpoint, recall from \eqref{dGdw} that $\omega\mapsto \|\nabla P_{\omega}\|_{L^2(\R^3)}^2$ is strictly increasing.
Combining this with \eqref{E:dGda} yields the following:

\begin{corollary}
If $P_\omega$ and $P_{\omega'}$ are optimizers for the Gagliardo--Nirenberg--H\"older inequalities with parameters $\alpha>\gamma$, respectively,
then $\omega > \omega'$.
\end{corollary}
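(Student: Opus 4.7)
The plan is to chain together the two monotonicity statements that precede the corollary. Since $P_\omega$ optimizes the $\alpha$-Gagliardo--Nirenberg--H\"older inequality and $P_{\omega'}$ optimizes the $\gamma$-one, Proposition~\ref{prop:GN} yields $\beta(P_\omega)=\alpha$ and $\beta(P_{\omega'})=\gamma$, so the hypothesis $\alpha>\gamma$ places both $P_\omega$ and $P_{\omega'}$ into the same system of optimizing solitons indexed by the corresponding $\beta$-values.

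First I would apply the strict inequality \eqref{eq:comparison_kinetic} from Lemma~\ref{lemma:varitions of M,E} with $Q_\alpha=P_\omega$ and $Q_\gamma=P_{\omega'}$ to conclude
\[
\|\nabla P_\omega\|_{L^2(\R^3)} > \bigl(\tfrac{\alpha}{\gamma}\bigr)^{1/4}\|\nabla P_{\omega'}\|_{L^2(\R^3)} > \|\nabla P_{\omega'}\|_{L^2(\R^3)}.
\]
This is exactly the content of \eqref{E:dGda} applied to our particular pair of optimizing solitons, and it does not require the still-open Conjecture~\ref{Conj:beta} because we have \emph{given} soliton witnesses at the two parameters.

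Next I would invoke the identity \eqref{dGdw} from Theorem~\ref{T:solitons}(iii), which asserts
\[
\frac{d}{d\omega}\int |\nabla P_\omega(x)|^2\,dx = \tfrac{3}{2}M(P_\omega) > 0,
\]
so $\omega\mapsto \|\nabla P_\omega\|_{L^2(\R^3)}^2$ is strictly increasing on $(0,\tfrac{3}{16})$. Combining the two displays forces $\omega > \omega'$, which is the desired conclusion.

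There is no real obstacle: the work has already been done in Lemma~\ref{lemma:varitions of M,E} (which provides the $\alpha$-monotonicity of the kinetic energy among optimizing solitons) and in Theorem~\ref{T:solitons}(iii) (which provides the $\omega$-monotonicity of the kinetic energy along the full soliton family). The only subtle point worth highlighting in the write-up is that, although we do not know that a given $\alpha$ determines $P_\omega$ uniquely, the two inequalities nevertheless compose, because each uses only the comparison of kinetic energies between two already-chosen solitons.
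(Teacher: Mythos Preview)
Your proof is correct and follows essentially the same route as the paper: combine the kinetic-energy monotonicity in $\alpha$ from \eqref{eq:comparison_kinetic} (equivalently \eqref{E:dGda}) with the kinetic-energy monotonicity in $\omega$ from \eqref{dGdw}. The paper states exactly this combination in the sentence immediately preceding the corollary.
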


This does not resolve Conjecture~\ref{Conj:beta} because we do not know that every $P_\omega$ occurs as an optimizer in \eqref{eq:GN}
for some $\alpha$.

As $\alpha\mapsto\alpha^{-1/2}(1+\alpha)^2$ is increasing for $\alpha >\frac{1}{3}$ and decreasing for $\alpha < \frac{1}{3}$, the inequalities
\eqref{eq:comparison_mass} and \eqref{eq:comparison_mass2} imply
\begin{equation}\label{E:dMda}
\alpha \mapsto M(Q_{\alpha}) \quad\text{is decreasing on $(0,\tfrac13]$ and increasing on $[1,\infty)$.}
\end{equation}
(This is to be interpreted in the same sense as \eqref{E:dGda}.)  Analogously,
\begin{equation}\label{E:dEda}
\alpha \mapsto E(Q_{\alpha}) \quad\text{is increasing on $(0,\tfrac13]$ and decreasing on $[1,\infty)$.}
\end{equation}
Note that the energy $E(Q_{\alpha})$ is negative for $\alpha>1$; see \eqref{eq:w6E}.

%%%%%%%%%%%%%%%%%%%%%%%%%%%%%%%%%%%%%%%%%%%%%%%%%%%%%%%%%%%%%%%%%%%%%%%%%%%%%%%%%%%%%%%%%%%%%%%%%%%%%5
\section{Feasible mass/energy pairs}\label{SEC:ME}

The purpose of this section is to give an essentially complete description of the possible pairs $(M(u),E(u))$ for $u\in H^1(\R^3)$.
Recall that
\begin{equation*}
E(u)=\int_{\R^3} \tfrac12|\nabla u(x)|^2 - \tfrac14|u(x)|^4 + \tfrac16 |u(x)|^6 \,dx \qtq{and} M(u) = \int_{\R^3} |u(x)|^2\,dx.
\end{equation*}
Our results are summarized in Figure~\ref{F:ME}.  The symbol $Q_1$ appearing in this figure represents a ground state soliton that optimizes the Gagliardo-Nirenberg-H\"older
inequality with parameter $\alpha=1$, as in Lemma~\ref{lemma:varitions of M,E}.  We will continue to use this notation throughout the section.

Numerically, we find that $Q_1$ is unique; its vital statistics can be found in Table~\ref{Table:num}.  Although we do not have a proof that $Q_1$ is unique, the
masses of all such minimizers are identical.  Indeed, \eqref{eq:6} shows that the mass of $Q_1$ can be expressed in terms of the optimal constant $C_1$ in \eqref{eq:GN}
via $9 M(Q_1)=64 C_1^{-2}$.

We will describe the possible mass/energy pairs by characterizing the possible energies for each fixed value of the mass.  To this end, it is convenient to introduce
the notations
\begin{align}\label{eq:Zvar1}
\Fm(m) := \{E(u): \ u\in H^1(\R^3)\text{ and }  M(u)=m\} \ \ \text{and}\ \ \Em(m):=\inf \Fm(m).\!\!\!
\end{align}
This infimium is always finite; indeed, \eqref{E3/32} below shows that $E(u)\geq - \tfrac3{32} M(u)$.

\begin{figure}[bh]
\noindent
\begin{center}
\fbox{
\setlength{\unitlength}{1mm}
% 1mm  = 2.84pt
\begin{picture}(85,45)(-8,-20)
\put(0,-17){\vector(0,1){38}}\put(-5,19){$E$}
\put(-5,0){\vector(1,0){73}}\put(68,-3){$M$}
\put(-3,-4){$0$}
\linethickness{0.05mm}
\qbezier( 0, 3)( 9,12)(17,20)\qbezier( 0, 6)( 7,13)(14,20)\qbezier( 0, 9)( 6,15)(11,20)
\qbezier( 0,12)( 4,16)( 8,20)\qbezier( 0,15)( 3,18)( 5,20)\qbezier( 0,18)( 1,19)( 2,20)
\qbezier( 0, 0)(10,10)(20,20)\qbezier( 3, 0)(13,10)(23,20)\qbezier( 6, 0)(16,10)(26,20)
\qbezier( 9, 0)(19,10)(29,20)\qbezier(12, 0)(22,10)(32,20)\qbezier(15, 0)(25,10)(35,20)
\qbezier(18, 0)(28,10)(38,20)\qbezier(21, 0)(31,10)(41,20)\qbezier(24, 0)(34,10)(44,20)
\qbezier(27, 0)(37,10)(47,20)\qbezier(30, 0)(40,10)(50,20)\qbezier(33, 0)(43,10)(53,20)
\qbezier(36, 0)(46,10)(56,20)\qbezier(39, 0)(49,10)(59,20)\qbezier(42, 0)(52,10)(62,20)
\qbezier(45, 0)(55,10)(65,20)
% now following the curve
\qbezier(47, -1)(58, 10)(65, 17)\qbezier(49, -2)(58,  7)(65, 14)\qbezier(51, -3)(59,  5)(65, 11)
\qbezier(52, -5)(59,  2)(65,  8)\qbezier(54, -6)(62,  2)(65,  5)\qbezier(56, -7)(63,  0)(65,  2)
\qbezier(57, -9)(63, -3)(65, -1)\qbezier(59,-10)(63, -6)(65, -4)\qbezier(60,-12)(63, -9)(65, -7)
\qbezier(62,-13)(63,-12)(65,-10)\qbezier(63,-15)(64,-14)(65,-13)
%\multiput(1,1)(3,0){15}{\line(1,1){19}}
\linethickness{0.75mm}
\qbezier(45,0)(55,-5)(65,-17)
\multiput(3,0)(6,0){7}{\line(1,0){3}}
\multiput(0,3)(0,6){3}{\line(0,1){3}}
\put( 0,0){\circle*{1.5}}
\put(45,0){\circle*{1.5}}\put(37,-5){$M(Q_1)$}
\end{picture}
}
\end{center}
\caption{The shaded area indicates feasible mass/energy pairs.  Dots and solid lines indicate boundary points that are achieved; broken lines indicate
boundary points that are not achieved.}\label{F:ME}
\end{figure}
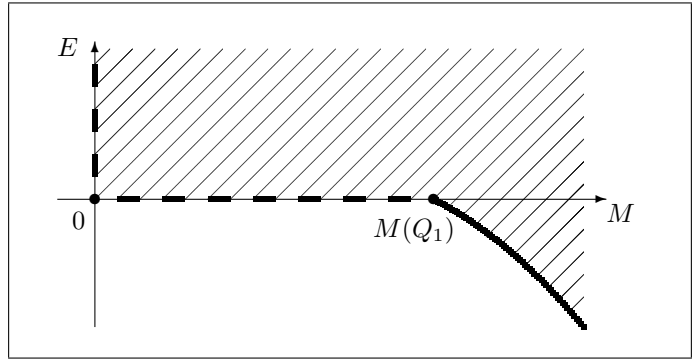

\begin{theorem}[Feasible mass/energy pairs]\label{T:feasible}
\hspace*{0em plus 0.5em minus 0em}$\Em(m)$ is continuous, concave, non-increasing, and  non-positive.  Furthermore,
\begin{SL}
\item If $m = 0$, then $\Fm(m)=\{0\}$ and $\Em(m)=0$.
\item If $0<m<M(Q_1)$, then $\Fm(m)=(0,\infty)$.  In particular, $\Em(m)=0$.
\item If $m = M(Q_1)$, then $\Fm(m)=[0,\infty)$.  Note that $\Em(m)=0$ is now achieved.
\item If $m > M(Q_1)$, then $\Fm(m)=[\Em(m),\infty)$ and $\Em(m)<0$.
\end{SL}

When $m \geq M(Q_1)$ we see that $\Em(m)$ is achieved.  All such minimizers are ground state solitons $P_\omega$ with $\beta(\omega)\geq 1$, up to translation
and multiplication by a unimodular constant.
\end{theorem}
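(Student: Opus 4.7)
My plan has four parts.

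\textbf{Part 1. Basic bounds and shape of $\Fm(m)$.} The pointwise Young inequality $\tfrac14 t^4 \leq \tfrac16 t^6 + \tfrac{3}{32}t^2$ gives the a priori lower bound $E(u) \geq \tfrac12\|\nabla u\|_2^2 - \tfrac{3}{32}M(u)$, so $\Em(m) \geq -\tfrac{3}{32}m$ is finite. The mass-preserving dilation $u_\lambda(x) := \lambda^{3/2}u(\lambda x)$ satisfies $E(u_\lambda) \to 0^+$ as $\lambda \to 0^+$ and $E(u_\lambda) \to +\infty$ as $\lambda \to \infty$, yielding $\Em(m) \leq 0$ and showing that $\Fm(m)$ is an interval with supremum $+\infty$. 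Placing two approximate minimizers at far-apart spatial translates yields subadditivity $\Em(m_1+m_2) \leq \Em(m_1)+\Em(m_2)$; combined with $\Em \leq 0$, this forces $\Em$ to be non-increasing.

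\textbf{Part 2. The threshold $m = M(Q_1)$.} I apply the $\alpha=1$ Gagliardo--Nirenberg--H\"older inequality, noting that $C_1 = 8/\bigl(3\sqrt{M(Q_1)}\bigr)$ by \eqref{eq:6}, and split the cross term via the weighted Young estimate $\|\nabla u\|_2^{3/2}\|u\|_6^{3/2} \leq \tfrac{3t^{4/3}}{4}\|\nabla u\|_2^2 + \tfrac{1}{4t^4}\|u\|_6^6$ to obtain, for each $t > 0$,
\[
E(u) \geq \Bigl[\tfrac12 - \tfrac{3C_1 t^{4/3}}{16}\sqrt{M(u)}\Bigr]\|\nabla u\|_2^2 + \Bigl[\tfrac16 - \tfrac{C_1}{16t^4}\sqrt{M(u)}\Bigr]\|u\|_6^6.
\]
Optimizing over $t$, both brackets are simultaneously non-negative precisely when $M(u) \leq M(Q_1)$, with strict positivity when $M(u) < M(Q_1)$. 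Hence $E(u) > 0$ for every nonzero $u$ with $M(u) < M(Q_1)$; at the threshold itself, $E(Q_1) = 0$ by \eqref{eq:w6E} since $\beta(Q_1) = 1$.

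\textbf{Part 3. Strict negativity and attainment for $m > M(Q_1)$ (the main obstacle).} The Pohozaev identity \eqref{E:Poh1} yields
\[
\tfrac{d}{d\lambda}\Big|_{\lambda=1} E(\lambda Q_1) = \|\nabla Q_1\|_2^2 + \|Q_1\|_6^6 - \|Q_1\|_4^4 = -\omega_1 M(Q_1) < 0,
\]
where $\omega_1$ is the frequency of $Q_1$; so $E(\lambda Q_1) < 0$ for $\lambda > 1$ close to $1$, and monotonicity extends $\Em(m) < 0$ to all of $(M(Q_1), \infty)$. For attainment I run the standard variational scheme: take a minimizing sequence, replace by $|u_n|$ followed by its symmetric-decreasing rearrangement (preserving $M$, $\|u\|_4$, $\|u\|_6$ and not increasing $\|\nabla u\|_2$), and extract a non-trivial weak $H^1$-limit $u_\infty$ via the $H^1$-boundedness from Part 1 and the compact embedding $H^1_\rad \hookrightarrow L^4$; weak lower semicontinuity then gives $E(u_\infty) \leq \Em(m)$ and $M(u_\infty) \leq m$. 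The main obstacle---ruling out mass loss---I handle by contradiction: if $m' := M(u_\infty) < m$, then the chain $\Em(m) \leq \Em(m') \leq E(u_\infty) \leq \Em(m)$ collapses, forcing $u_\infty$ to be a minimizer at mass $m'$ and hence, via the Euler--Lagrange equation combined with the uniqueness statement of Theorem \ref{T:solitons}, equal to some soliton $P_{\omega'}$. Applying the same derivative computation to $P_{\omega'}$ gives $\tfrac{d}{d\lambda}|_{\lambda=1}E(\lambda P_{\omega'}) = -\omega' m' < 0$; for $\lambda > 1$ close enough to $1$ that $\lambda^2 m' \leq m$, monotonicity then yields $\Em(m) \leq E(\lambda P_{\omega'}) < \Em(m') = \Em(m)$, a contradiction.

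\textbf{Part 4. Identification and remaining properties.} Any non-negative radial minimizer is thus some $P_\omega$, and the strict rearrangement inequality of Brothers--Ziemer identifies a general minimizer with $P_\omega$ up to translation and unimodular multiplicative constant; $E(P_\omega) \leq 0$ forces $\beta(\omega) \geq 1$ via \eqref{eq:w6E}. The descriptions of $\Fm(m)$ in (i)--(iv) follow by combining attainment with the upper unboundedness from Part 1. Continuity of $\Em$ follows from monotonicity together with upper semicontinuity via trial functions (small continuous deformations of a near-minimizer realize nearby masses with nearby energies). Finally, concavity on $[M(Q_1), \infty)$ comes from the parametric description $\Em(m) = E(P_{\omega(m)})$: by \eqref{E:dEdM}, $d\Em/dm = -\omega(m)/2$, so concavity reduces to showing that $\omega(m)$ is non-decreasing along the minimizing branch, which I expect to establish by comparing the competitor solitons $P_{\omega_1}, P_{\omega_2}$ as rescaled trial functions at each other's masses via their respective Lagrange multiplier identities.
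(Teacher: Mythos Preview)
Your Parts 1--3 are essentially correct and parallel the paper closely, with a few cosmetic differences: you use amplitude scaling $\lambda Q_1$ together with the Pohozaev identity \eqref{E:Poh1} to get $\Em(m)<0$ just above $M(Q_1)$, whereas the paper uses the spatial rescaling $u_\lambda(x)=\lambda^{-1/2}u(x/\lambda)$, which satisfies $M(u_\lambda)=\lambda^2 M(u)$ and $E(u_\lambda)=E(u)-\tfrac{\lambda-1}{4}\|u\|_4^4$. This same rescaling also gives the paper a one-line resolution of the mass-loss scenario in the attainment proof: if the weak limit $v$ has $M(v)<m$, rescale $v$ up to mass $m$ and strictly decrease the energy below $\Em(m)$, contradicting the definition of $\Em$. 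Your detour through identifying $u_\infty$ as a soliton and then perturbing it works, but is unnecessarily long.

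The genuine gap is your concavity argument in Part~4. Your approach presupposes that minimizers are unique (so that $\omega(m)$ is a well-defined function) and that $m\mapsto\Em(m)$ is differentiable on $(M(Q_1),\infty)$; neither is established here --- indeed, the paper explicitly notes in Remark~\ref{R:strictly} that smoothness of $\Em$ on that interval would follow from Conjectures~\ref{Conj:mass} and~\ref{Conj:beta}, which remain open. The vague plan to ``compare competitor solitons as rescaled trial functions'' does not close this gap either: without monotonicity of $\omega\mapsto M(P_\omega)$ (Conjecture~\ref{Conj:mass}) you cannot control which solitons occur as minimizers at a given mass, so there is no well-defined ``minimizing branch'' along which to argue. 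The paper sidesteps all of this with a direct moving-hyperplane argument: given $u$ with $M(u)=\tfrac12(m_1+m_2)$, choose a hyperplane splitting the mass into pieces $m_1/2$ and $m_2/2$, reflect each half-space to obtain even functions $v_1,v_2$ with $M(v_i)=m_i$ and $\tfrac12E(v_1)+\tfrac12E(v_2)=E(u)$, then take the infimum over $u$. This yields midpoint concavity (hence concavity, hence continuity) with no regularity or uniqueness input whatsoever.
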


\begin{remark}\label{R:strictly}
In the proof below, we also see that $\Em$ is strictly decreasing on the interval $[M(Q_1),\infty)$, with negative slope at $M(Q_1)$, as depicted in Figure~\ref{F:ME}.
This figure also shows $\Em$ to be smooth on this interval.  We do not currently have a proof of this; it would, however, follow from Conjectures~\ref{Conj:mass}
and~\ref{Conj:beta}.
\end{remark}

\begin{proof}[Proof of Theorem~\ref{T:feasible}]
As $M(u)=0$ enforces $u\equiv 0$, item (i) follows immediately.

Given a non-zero $\phi\in H^1(\R^3)$ and $\xi\in\R^3$, consider $\phi_\xi(x):=e^{i|\xi||x|}\phi(x)$.  Clearly $M(\phi_\xi)=M(\phi)>0$;
moreover, $E(\phi_\xi)=E(\phi)+\tfrac12|\xi|^2 M(\phi)$.  This shows that for $m>0$, the set $\Fm(m)$ of possible energies is a semi-infinite
interval.

It remains only to investigate the curve $\Em(m)$ and to determine when this infimium is achieved.  The analysis below will show that minimizing sequences (and minima,
when they occur) can be made radially symmetric.  We chose to use the multiplier $e^{i|\xi||x|}$ above, rather than $e^{i\xi\cdot x}$, because it preserves this radial symmetry.
As a consequence, we see that all feasible mass/energy pairs can be realized by radial functions and hence, by functions with zero total momentum.

Given $\phi\in H^1(\R^3)$ and $\lambda>0$, let $\phi^{\ld}(x):=\ld^{3/2}\phi (\ld x)$.  Then
\begin{align*}
M(\phi^{\ld})=M(\phi) \qtq{and} E(\phi^{\ld})=\int \tfrac{\ld^2}{2}|\nabla \phi|^2-\tfrac{\ld^3}{4}|\phi|^4+\tfrac{\ld^6}{6}|\phi|^6\,dx.
\end{align*}
In particular, $E(\phi^{\ld})\to 0$ as $\ld\to 0$, which shows that $\Em(m)\leq 0$ for all $m\geq 0$.

The Gagliardo--Nirenberg--H\"older inequality \eqref{eq:GN} with parameter $\alpha=1$ gives
$$
\|u\|_{L^4_x}^4 \leq \tfrac83 \bigl(\tfrac{M(u)}{M(Q_1)}\bigr)^{1/2} \|u\|_{L^6_x}^{3/2} \|\nabla u\|_{L^2_x}^{3/2}
$$
when the optimal constant is written in the form \eqref{eq:6}.  Consequently, using Young's inequality (with powers $\tfrac43$ and~$4$),
\begin{equation}\label{E:Ea1}
\begin{aligned}
E(u) &\geq \tfrac12 \|\nabla u\|_{L^2_x}^2 + \tfrac{1}{6} \|u\|_{L^6_x}^{6}
		- \tfrac23 \bigl(\tfrac{M(u)}{M(Q_1)}\bigr)^{1/2} \|\nabla u\|_{L^2_x}^{3/2} \|u\|_{L^6_x}^{3/2} \\
&\geq \Bigl[ 1 - \bigl(\tfrac{M(u)}{M(Q_1)}\bigr)^{1/2} \Bigr] \Bigl[ \tfrac12 \|\nabla u\|_{L^2_x}^2 + \tfrac{1}{6} \|u\|_{L^6_x}^{6} \Bigr].
\end{aligned}
\end{equation}

From \eqref{E:Ea1} we see that $E(u)>0$ whenever $0<M(u)<M(Q_1)$.  Notice that combined with the preceding analysis, this
completes the proof of part (ii) of the theorem.

By the same analysis we see that $E(u)\geq 0$ when $M(u)=M(Q_1)$.  Moreover, $E(Q_1)=0$, as can be seen from \eqref{eq:w6E}.
This settles part (iii) of the theorem.

Next we show that $\Em(m)<0$ whenever $m>M(Q_1)$.  Observe that if $u\in H^1(\R^3)$ and $u_\lambda(x):=\lambda^{-1/2} u(x/\lambda)$, then
\begin{equation}\label{E:rescl}
M(u_\lambda) = \lambda^2 M(u) \qtq{and} E(u_\lambda) = E(u) - \tfrac{\lambda-1}4 \|u\|_{L^4_x}^4.
\end{equation}
Choosing $u=Q_1$, $\lambda^2 = m/M(Q_1) > 1$, and using that $E(Q_1)=0$, we deduce that $\Em(m)<0$.

Applying the same rescaling argument to a generic trial function shows that $m\mapsto\Em(m)$ is non-increasing, as stated in the theorem.

Incidentally, if the infimum $\Em(m)$ is achieved for some $m>0$, then applying the rescaling argument to an optimizer shows that the right derivative of $\Em$ at $m$
is strictly negative.
(This derivative exists by virtue of the concavity proved below.)  When combined with the results on the existence of extremizers proved below, this justifies
the claims made in Remark~\ref{R:strictly} .

Having shown that $\Em(m)<0$ for $m>M(Q_1)$, we will now be able to prove that the minimum is achieved for these values of $m$.  To this end,
let $\{u_n\}$ be a minimizing sequence for this variational problem (i.e., $M(u_n)=m$ and $E(u_n)\to\Em(m)$).
Using rearrangement inequalities, we see that $u_n$ may be taken radially symmetric, while the identity
\begin{equation}\label{E3/32}
E(u)+\tfrac{3}{32}M(u) = \int \tfrac12|\nabla u|^2 + \tfrac16|u|^2\bigl(|u|^2-\tfrac34\bigr)^2 \, dx
\end{equation}
shows that $u_n$ is bounded in $H^1_x$.   Thus, passing to a subsequence, we may guarantee that $u_n$ converges to some $v\in H^1(\R^3)$
weakly in $H^1_\rad$ and $L^6$ as well as strongly in $L^4$.  From weak lower semicontinuity of norms we deduce that
\begin{equation}\label{E:drpdwn}
E(v) \leq \Em(m) \qtq{and} M(v) \leq m.
\end{equation}
Note that this guarantees $E(v)<0$ and hence $v\not\equiv 0$.

Actually, equality must hold in both parts of \eqref{E:drpdwn}.  If $M(v)<m$, then by rescaling as in \eqref{E:rescl} we could produce
a function $v_\lambda$ with $E(v_\lambda)< \Em(m)$ and $M(v_\lambda)=m$, which is clearly in violation of the definition of $\Em(m)$.
The same violation would arise if $M(v)=m$ but $E(v)<\Em(m)$.  In conclusion, this $v$ demonstrates that when $m>M(Q_1)$ the infimal
energy value $\Em(m)$ is actually achieved.  This completes the verification of part (iv) of the theorem.

Having finished the proof of the numbered parts of the theorem, it remains to show that optimizers are solitons (when $m\neq 0$) and to demonstrate
the concavity and continuity of $\Em(m)$.  (We have already proved that $\Em(m)$ is non-positive and non-increasing.)

That optimizers for $\Em$ are solitons follows immediately from the Euler--Lagrange equation, $dE+\tfrac{\omega}{2}dM=0$.  Indeed, having chosen to write the
Lagrange multiplier as $\omega/2$, we precisely recover \eqref{eq:soliton}.  By the sharp rearrangement inequalities of \cite{BrothersZiemer} we see that
optimizers must be non-negative (after multiplication by a unimodular constant) and spherically symmetric (about some point).  Thus, by Lemma~\ref{L:Poho} and Theorem~\ref{T:solitons}, optimizers must
be ground state solitons, up to symmetries.  That these solitons have $\beta(\Pw)\geq 1$ follows from \eqref{eq:w6E} and the  fact that $\Em\leq 0$.

As we have shown that $\Em$ is monotone, continuity follows from mid-point concavity, which then implies concavity.  Thus it remains only to show that
\begin{equation}\label{mid pt}
\tfrac{1}{2}\Em(m_1)+\tfrac{1}{2}\Em(m_2) \leq \Em\big(\tfrac12 m_1 + \tfrac12 m_2 \big) \qquad\text{for all $m_1, m_2\geq 0$.}
\end{equation}
This will be effected by a moving hyperplane argument.

Given $u\in H^1(\R^3)$ with  $M(u)=\tfrac12 m_1 + \tfrac12 m_2$, choose a plane that partitions the mass into parts $m_1/2$ and $m_2/2$.
Reflecting one side or the other through the chosen plane yields two functions $v_1$ and $v_2\in H^1(\R^3)$ each of which is even
with respect to reflection in the plane and which obey $M(v_1)=m_1$ and $M(v_2)=m_2$.

Directly from the construction we have $\tfrac12 E(v_1)+\tfrac12 E(v_2) = E(u)$.  Taking the infimum over all $u\in H^1(\R^3)$ with  $M(u)=\tfrac12 m_1 + \tfrac12 m_2$
now yields \eqref{mid pt}.
\end{proof}

%%%%%%%%%%%%%%%%%%%%%%%%%%%%%%%%%%%%%%%%%%%%%%%%%%55
%%%%%%%%%%%%%%%%%%%%%%%%%%%%%%%%%%%%%%%%%%%%%%%%%%%5
\section{Rescaled solitons and the virial}\label{SEC:Virial}

The key new player in this section is the \emph{virial},
\begin{equation}\label{E:V defn}
V(u) := \int |\nabla u(x)|^2 + |u(x)|^6 - \tfrac34  |u(x)|^4 \,dx,
\end{equation}
to be regarded as a functional on $H^1(\R^3)$.  While not in perfect accord with the historical meaning (see \cite{Clausius}),
our use of the term `virial' is consistent with the modern usage as can be seen from the following \emph{virial identity}:
\begin{equation}\label{dynam V}
\frac{d\ }{dt} \ 2\Im\int_{\R^3} \overline{u(t,x)}\,x\cdot \nabla u(t,x)\,dx = 4 V(u(t))
\end{equation}
for any solution $u(t)$ to \eqref{3-5}.

Taking a linear combination of the Pohozaev identities \eqref{E:Poh1} and \eqref{E:Poh2} shows that $V(u)=0$ whenever $u$ is a soliton
solution to \eqref{3-5}.  Looking back to the manner in which the Pohozaev identities were derived yields an alternate expression of
the vanishing of the virial for solitons, namely, for any $u\in H^1(\R^3)$,
\begin{equation}\label{V=dE}
V(u) = dE\bigr|_u ( x\cdot \nabla u + \tfrac32 u) \qtq{and} dM\bigr|_u ( x\cdot \nabla u + \tfrac32 u) =0.
\end{equation}
As solitons are precisely the points $u\in H^1(\R^3)$ where $dE+\tfrac\omega2 dM=0$, we deduce that solitons have zero virial.

Although the early sections of this paper focus on variational and elliptic problems, it is important to remember that Theorem~\ref{thm:  main} is a dynamical
statement; we wish to describe the long-time behaviour of solutions to \eqref{3-5} for an open set of initial data.  This focuses our attention on \eqref{dynam V}
as a dynamical substitute for the Pohozaev identities.

For solutions that remain well-localized around a fixed point, such as non-moving solitons,
LHS\eqref{dynam V} will be the time derivative of a bounded function; thus, by integrating over long time intervals, we deduce that $V(u(t))$ vanishes
in an averaged sense.  We will show below that $V>0$ in a certain region $\RR$ in the mass-energy plane, which then precludes this sort of soliton-like
behaviour.

It is far from immediate to see why solutions in the region $\RR$ that merely fail to scatter should remain both in one place and sufficiently well localized
that one may apply the argument just sketched.  Indeed, much of the second half of the paper is devoted to showing that the existence of a non-scattering
solution in the region $\RR$ would guarantee the existence of \emph{another} solution in the region $\RR$ to which this argument can be applied.
This other solution will be constructed as a minimal blowup solution.

\begin{figure}
\noindent
\begin{center}
\fbox{
\setlength{\unitlength}{1mm}
% 1mm  = 2.84pt
\begin{picture}(95,37)(-7,-7)
\put(0,-5){\vector(0,1){32}}\put(-5,25){$E$}
\put(-5,0){\vector(1,0){87}}\put(82,-3){$M$}
\put(65,0){\line(0,-1){2}}\put(65,-5){\hbox to 0mm{\hss$M(Q_1)$\hss}}
\put(30,0){\line(0,-1){2}}\put(30,-5){\hbox to 0mm{\hss$\tfrac{4}{3\sqrt{3}}M(Q_1)$\hss}}
\qbezier(40,10)(60,5)(65,0)
\qbezier(30,15)(30,10)(40,10)
\put(45,15){\vector(-1,-1){4}}\put(45.5,14){kink}
\put(15.7,7){$\RR$}
\linethickness{0.05mm}
\qbezier(03,0)(16,13)(30,27)\qbezier(06,0)(18,12)(30,24)
\qbezier(09,0)(13,4)(15.6,6.6)\qbezier(18.7,9.7)(25,16)(30,21)% gap for R
\qbezier(12,0)(21,09)(30,18)\qbezier(15,0)(22,07)(30,15)\qbezier(18,0)(23,05)(30.7,12.7)\qbezier(21,0)(26,05)(32.3,11.3)
\qbezier(24,0)(29,05)(34.5,10.5)\qbezier(27,0)(32,05)(37.1,10.1)\qbezier(30,0)(35,05)(40,10)\qbezier(33,0)(38,05)(42.3,9.3)
\qbezier(36,0)(41,05)(44.7,8.7)\qbezier(39,0)(44,05)(47.1,8.1)\qbezier(42,0)(45,03)(49.4,7.4)\qbezier(45,0)(49,04)(51.7,6.7)
\qbezier(48,0)(51,03)(53.9,5.9)\qbezier(51,0)(54,03)(56.1,5.1)\qbezier(54,0)(56,02)(58.2,4.2)\qbezier(57,0)(59,02)(60.2,3.2)
\qbezier(60,0)(61,1)(62.2,2.2)\qbezier(63,0)(63.5,0.5)(63.9,0.9)
\qbezier(0,00)(15,15)(27,27)\qbezier(0,03)(12,15)(24,27)\qbezier(0,06)(10,16)(21,27)
\qbezier(0,09)(9,18)(18,27)\qbezier(0,12)(07,19)(15,27)\qbezier(0,15)(06,21)(12,27)\qbezier(0,18)(04,22)(09,27)\qbezier(0,21)(03,24)(06,27)
\qbezier(0,24)(01,25)(03,27)\qbezier( 0, 3)( 9,12)(17,20)\qbezier( 0, 6)( 7,13)(14,20)\qbezier( 0, 9)( 6,15)(11,20)
\end{picture}
}
\end{center}
\caption{Schematic depiction of the open set $\RR\subset\R^2$, based on numerics.  Note: the scale of the various features has been drastically altered in order to make them all visible on one plot.}\label{F:R}
\end{figure}
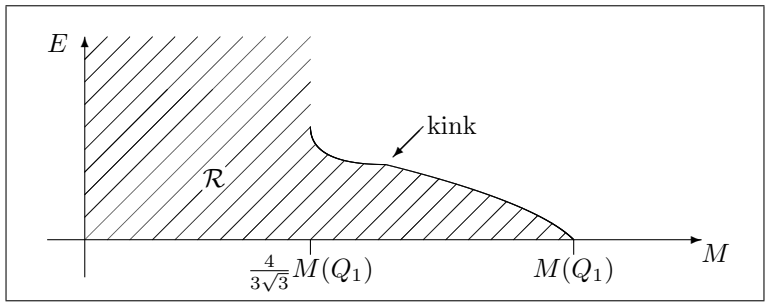

It will take considerable effort to shoehorn our problem into an application of~\eqref{dynam V}.  However, we have little choice.
This is not a small-data problem; the nonlinear effects are as strong as the linear effects.  Indeed, there are soliton
solutions on the boundary of the region $\RR$; see Theorem~\ref{T:Rbdry}.  As a consequence, we must exploit some fundamentally nonlinear information.  The virial
identity is the only truly pertinent tool that we know.

With this motivating discussion complete, we now turn to the analysis of the region $\RR$ in the mass/energy plane where the virial is strictly positive,
beginning with some definitions.

\begin{definition}\label{D:RR defn}
For $m>0$, we define
\begin{equation}\label{EV defn}
\Et(m):=\inf\bigl\{ E(u) : u\in H^1(\R^3),\ M(u)=m, \text{ and } V(u)=0\bigr\},
\end{equation}
with the understanding that $\Et(m)=\infty$ if no function $u\in H^1(\R^3)$ obeys both $M(u)=m$ and $V(u)=0$.
We then define
\begin{equation}\label{R defn}
\RR:=\bigl\{ (m,e) : 0 < m < M(Q_1) \text{ and } 0 < e < \Et(m) \bigr\},
\end{equation}
where $M(Q_1)$ is the common mass of all soliton optimizers of the $(\alpha=1)$-Gagliardo--Nirenberg--H\"older inequality, as in Section~\ref{SEC:ME}.
\end{definition}

This definition does not make it immediately apparent that functions with mass/energy belonging to the region $\RR$ have positive virial; this will be shown
in Theorem~\ref{T:MMR}.

Theorem~\ref{T:feasible} shows that that the restrictions $m<M(Q_1)$ and $e>0$ appearing in \eqref{R defn} are purely for expository clarity;
the set of $u\in H^1_x$ with $(M(u),E(u))\in\RR$ would be the same if they were omitted.  Indeed, if $m\geq M(Q_1)$, then the minimal feasible energy
with mass $m$ is achieved by a soliton, which has zero virial.  Theorem~\ref{T:feasible} also shows that $E(u)>0$ whenever $0<M(u)< M(Q_1)$.

The general features of $\RR$ are shown in Figure~\ref{F:R}.  In Subsection \ref{SS: EV}, we study the variational problem
\eqref{EV defn} and verify some of the features depicted in Figure~\ref{F:R}.
Subsection~\ref{SS: exhaustion of R} is devoted to constructing an exhaustion of $\RR$
that will allow us to prove Theorem~\ref{thm: main} by performing induction on a \emph{single} variable.

\subsection{Description of the region $\RR$}\label{SS: EV}
Some of the basic structural properties of the region $\RR$ are rigorously justified in Theorem~\ref{T:MMR} below.
Later in this subsection, we will discuss the boundary of $\RR$ more thoroughly; see Theorem~\ref{T:Rbdry}.

\begin{theorem}\label{T:MMR}
If $(M(u),E(u))\in\RR$ for some $u\in H^1(\R^3)$, then $V(u)>0$.
\begin{alignat}{3}
\Et(m)&=\infty & \qquad&\text{when}\qquad & 0&<m<\tfrac{4}{3\sqrt{3}}M(Q_1), \label{E:MMR1}\\
0<\Et(m)&<\infty & \qquad&\text{when}\qquad & \tfrac{4}{3\sqrt{3}}M(Q_1)&\leq m<M(Q_1), \quad \text{and} \label{E:MMR2}\\
\Et(m)&=\Em(m) & \qquad&\text{when}\qquad & M(Q_1)&\leq m. \label{E:MMR3}
\end{alignat}

For $m\geq \tfrac{4}{3\sqrt{3}}M(Q_1)$, the infimum $\Et(m)$ is achieved and is both strictly decreasing and lower semicontinuous as a function of $m$.
\end{theorem}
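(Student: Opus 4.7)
The key tool throughout is the mass-preserving rescaling $u_\lambda(x) := \lambda^{3/2} u(\lambda x)$, for which $M(u_\lambda)=M(u)$ and
\[
f(\lambda) := E(u_\lambda) = \tfrac{\lambda^2}{2}A - \tfrac{\lambda^3}{4}B + \tfrac{\lambda^6}{6}C, \qquad V(u_\lambda)=\lambda f'(\lambda),
\]
with $A = \|\nabla u\|_{L^2}^2$, $B = \|u\|_{L^4}^4$, $C = \|u\|_{L^6}^6$. Since $f'(\lambda)/\lambda = A - \tfrac{3\lambda}{4}B + \lambda^4 C$ is strictly convex on $(0,\infty)$ and tends to $+\infty$ at both endpoints, the equation $V(u_\lambda)=0$ has at most two positive solutions. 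To prove $V>0$ on $\RR$, I would suppose toward contradiction that $V(u)\leq 0$ and $(M(u),E(u))\in\RR$: the convexity analysis produces $\lambda_+\geq 1$ with $V(u_{\lambda_+})=0$, and since $f$ is non-increasing on $[1,\lambda_+]$, I get $E(u_{\lambda_+})\leq E(u)<\Et(M(u))$. But $V(u_{\lambda_+})=0$ and $M(u_{\lambda_+})=M(u)$ force $E(u_{\lambda_+})\geq \Et(M(u))$ by definition, a contradiction.

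Next, for \eqref{E:MMR1}, the constraint $V(u)=0$ reads $\tfrac34 B = A+C$; combining the sharp Young inequality $A+C\geq \tfrac{4}{3^{3/4}}A^{3/4}C^{1/4}$ with the Gagliardo--Nirenberg--H\"older inequality at $\alpha=1$ and the explicit constant in \eqref{eq:6} yields $M(u)\geq \tfrac{4}{3\sqrt{3}}M(Q_1)$. For \eqref{E:MMR3}, Theorem~\ref{T:feasible} provides soliton minimizers of $\Em(m)$ whenever $m\geq M(Q_1)$, and those have zero virial by Lemma~\ref{L:Poho}, so $\Et(m)=\Em(m)$. For the upper bound $\Et(m)<\infty$ in \eqref{E:MMR2}, I would introduce the explicit trial family $v_\mu(x) := \mu Q_1(\rho(\mu) x)$: using $\beta(Q_1)=1$, the virial constraint $V(v_\mu)=0$ forces $\rho(\mu)^2 = \mu^2(2-\mu^2)$, and elementary calculus shows $M(v_\mu) = \mu^{-1}(2-\mu^2)^{-3/2}M(Q_1)$ attains its global minimum of exactly $\tfrac{4}{3\sqrt{3}}M(Q_1)$ at $\mu=1/\sqrt{2}$, while taking every value in $[\tfrac{4}{3\sqrt{3}}M(Q_1),\infty)$ as $\mu$ varies. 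The positivity $\Et(m)>0$ for $m<M(Q_1)$ follows from the identity $E(u)=\tfrac16(A-C)$ (valid when $V(u)=0$), the sharp Gagliardo--Nirenberg inequality bounding $A$ below by $c(m)>0$, and the Young analysis bounding $C/A$ strictly below $1$ when $m<M(Q_1)$.

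For existence of a minimizer on $[\tfrac{4}{3\sqrt{3}}M(Q_1),\infty)$, I would take a radial minimizing sequence $(u_n)$; the bounds of the previous paragraph give $H^1$-boundedness, Strauss's radial compactness produces strong $L^4$-convergence, and the Brezis--Lieb Lemma~\ref{LEM:B-L} decouples the $L^2$, $\dot H^1$, and $L^6$ norms to yield a weak limit $u_\infty$ satisfying $V(u_\infty)\leq 0$, $M(u_\infty)\leq m$, and $E(u_\infty)\leq \Et(m)$. Applying the rescaling argument from the first paragraph to $u_\infty$ forces $V(u_\infty)=0$ (else one rescales $u_\infty$ to beat $\Et$), and a companion rescaling forces $M(u_\infty)=m$. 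Lower semicontinuity follows by combining this compactness with matching upper bounds produced by the rescaled-$Q_1$ trial family. Strict monotonicity is obtained by perturbing any minimizer $u_*$ along a virial-preserving, mass-varying two-parameter rescaling $u_*^{\mu,\rho(\mu)}(x)=\mu u_*(\rho(\mu) x)$; a direct calculation gives
\[
\tfrac{dE}{dM}\Big|_{\mu=1} = -\tfrac{\|\nabla u_*\|_{L^2}^2(1+\beta(u_*))}{6 M(u_*)}<0
\]
whenever $\beta(u_*)\neq \tfrac13$, and this local strict decrease together with the non-increasing property rules out $\Et$ being constant on any subinterval; the degenerate case $\beta(u_*)=\tfrac13$ (occurring at the threshold mass) is handled by second-order perturbation.

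The main obstacle is running the existence argument cleanly: there are two constraints, and neither is generally preserved under weak $H^1$ limits (indeed $V$ can only drop). The mass-preserving rescaling $u_\lambda$ is exactly what converts any potentially subcritical weak limit with $V(u_\infty)<0$ into a legitimate competitor with $V=0$ and strictly less energy, relying on the one-variable analysis of $f(\lambda)$ laid out in the first paragraph. A secondary subtlety is that the sharp Gagliardo--Nirenberg--H\"older/Young pairing producing the threshold $\tfrac{4}{3\sqrt{3}}M(Q_1)$ has the specific rescaling $v_{1/\sqrt{2}}$ of $Q_1$ as its unique equality case, so that extremizers of $\Et$ at masses below $M(Q_1)$ are forced to be \emph{rescaled} solitons rather than solitons themselves, matching the phenomenon highlighted in the introduction.
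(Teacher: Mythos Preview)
Your approach is essentially the paper's: the same mass-preserving rescaling $u_\lambda$ for $V>0$ on $\RR$, the same GNH-at-$\alpha=1$ plus Young pairing for \eqref{E:MMR1} (your equality case $v_{1/\sqrt2}$ is exactly the paper's $R(x)=\tfrac1{\sqrt2}Q_1(\tfrac{\sqrt3}2 x)$), and the same radial-compactness scheme for existence. Your explicit one-parameter family $v_\mu=\mu Q_1(\rho(\mu)\,\cdot\,)$ is a pleasant concrete substitute for the paper's combination of $R$ with Lemma~\ref{L:rescaling2}.

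Two points need tightening. First, your strict-monotonicity argument is only infinitesimal: the formula $\tfrac{dE}{dM}\big|_{\mu=1}=-\tfrac{(1+\beta)A}{6M}$ is correct, but both the existence step (ruling out $M(u_\infty)<m$) and global strict decrease require the \emph{integrated} statement that for any $u$ with $V(u)=0$ and any $m>M(u)$ one can reach a competitor $v$ with $V(v)=0$, $M(v)=m$, and $E(v)\le E(u)-\tfrac{m-M(u)}{6M(u)}\|\nabla u\|_2^2$. The paper isolates this as Lemma~\ref{L:rescaling2} and proves it \emph{before} existence, applied to arbitrary trial functions rather than minimizers; this decouples monotonicity, existence, and lower semicontinuity cleanly, whereas your ordering makes them look interdependent, and your lower-semicontinuity sketch via ``matching upper bounds from the trial family'' does not work as stated (upper bounds give upper, not lower, semicontinuity---the paper instead uses strict monotonicity plus compactness of minimizers). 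Second, the degenerate case $\beta(u_*)=\tfrac13$ is not confined to the threshold mass: by Theorem~\ref{T:Rbdry}, minimizers on an entire sub-range of masses are rescaled solitons $R_\omega$, \emph{all} of which have $\beta=\tfrac13$. Your second-order computation does go through there ($M$ has a strict local minimum and $E$ a strict local maximum along the curve at $\mu=1$), but you should not describe this as an isolated boundary phenomenon; the paper sidesteps the issue entirely via Lemma~\ref{L:energy+virial rescaling}(ii), which first rescales any $u$ with $\beta(u)<\tfrac13$ to one with $\beta\ge\tfrac13$ and smaller energy.
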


Recall that $\Em(m)$ denotes the infimal energy that is possible for $H^1$ functions with mass $m$.  From Theorem~\ref{T:feasible} we know that $\Em(m)=0$
for $0\leq m\leq M(Q_1)$ and $\Em(m)<0$ for $m>  M(Q_1)$.

Before beginning the proof of Theorem~\ref{T:MMR}, we first give two lemmas.  Both are based on straight-forward, but rather cumbersome, computations
of the effect of rescaling on the mass, energy, and virial of a function.

\begin{lemma}\label{L:energy+virial rescaling}
Suppose $u\in H^1(\R^3)$ is not identically zero and that either\\
\hbox to 1.5em{\hss\textup{(i)}\hss} $V(u)<0;$ or\\
\hbox to 1.5em{\hss\textup{(ii)}\hss} $V(u)=0$ and~$\beta(u) <\frac{1}{3}$.\\
Then there exists $\ld>1$ so that $u^{\ld}(x):=\ld^{\frac32}u(\ld x)$ obeys $V(u^{\ld})=0$, $\beta(u^{\ld})\geq\frac13$, and $E(u^{\ld})<E(u)$.
Note that $M(u^{\ld})=M(u)$ and $\|\nabla u^\lambda\|_{L^2}^2 =  \lambda^2 \|\nabla u\|_{L^2}^2 > \|\nabla u\|_{L^2}^2 $.
\end{lemma}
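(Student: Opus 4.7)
My strategy is to study the one-parameter family $\lambda \mapsto u^\lambda$ as an explicit polynomial problem and to pick $\lambda$ as the first zero of the virial past $\lambda=1$.

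Setting $A := \|\nabla u\|_{L^2}^2$, $B := \|u\|_{L^4}^4$, and $C := \|u\|_{L^6}^6$, a direct change of variables first gives $\|\nabla u^\lambda\|_{L^2}^2 = \lambda^2 A$, $\|u^\lambda\|_{L^4}^4 = \lambda^3 B$, $\|u^\lambda\|_{L^6}^6 = \lambda^6 C$, so $\beta(u^\lambda) = \lambda^4 \beta(u)$ and $M(u^\lambda) = M(u)$. Consequently,
$$ V(u^\lambda) = \lambda^2 A + \lambda^6 C - \tfrac{3}{4}\lambda^3 B \qtq{and} E(u^\lambda) = \tfrac{\lambda^2}{2} A + \tfrac{\lambda^6}{6} C - \tfrac{\lambda^3}{4} B, $$
which in particular yields the key identity $\tfrac{d}{d\lambda} E(u^\lambda) = \lambda^{-1} V(u^\lambda)$ (the same relation underlying \eqref{V=dE}, specialized to the dilation generator).

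Next I would factor $V(u^\lambda) = \lambda^2 g(\lambda)$ with $g(\lambda) := A + \lambda^4 C - \tfrac{3}{4}\lambda B$. Since $A>0$ (as $u \not\equiv 0$) and $g(\lambda) \to +\infty$ as $\lambda \to \infty$, it suffices to check that $g$ is strictly negative on some right-neighborhood of $1$: in case (i) this is immediate from $g(1) = V(u) < 0$, while in case (ii) we have $g(1) = 0$ and, using $B = \tfrac{4}{3}(A+C)$ extracted from $V(u)=0$, $g'(1) = 4C - \tfrac{3}{4}B = 3C - A = A(3\beta(u)-1) < 0$. Either way, I take $\lambda_0 > 1$ to be the smallest zero of $g$ past $1$; then $V(u^{\lambda_0}) = 0$, and $g < 0$ on $(1,\lambda_0)$.

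Strict negativity of $V(u^\lambda)$ on $(1,\lambda_0)$ plus the identity $\tfrac{d}{d\lambda} E(u^\lambda) = V(u^\lambda)/\lambda$ forces $E(u^{\lambda_0}) < E(u)$. Minimality of $\lambda_0$ also forces $g'(\lambda_0) \geq 0$; substituting $B = \tfrac{4(A + \lambda_0^4 C)}{3\lambda_0}$ (obtained from $g(\lambda_0)=0$) gives the closed form $g'(\lambda_0) = 3\lambda_0^3 C - A/\lambda_0$, whose non-negativity rearranges to $\lambda_0^4\beta(u) \geq 1/3$, i.e., $\beta(u^{\lambda_0}) \geq 1/3$. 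The remaining claims $M(u^{\lambda_0}) = M(u)$ and $\|\nabla u^{\lambda_0}\|_{L^2}^2 > \|\nabla u\|_{L^2}^2$ are immediate from the scaling formulas and $\lambda_0 > 1$. There is no serious obstacle here; the only delicate choice is selecting $\lambda_0$ as the \emph{smallest} root past $1$, which simultaneously secures $V \leq 0$ on $[1,\lambda_0]$ (giving the strict energy decrease) and $g'(\lambda_0) \geq 0$ (giving the $\beta$ lower bound).
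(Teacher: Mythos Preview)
Your proof is correct and follows essentially the same approach as the paper: both use the identity $\tfrac{d}{d\lambda}E(u^\lambda)=\lambda^{-1}V(u^\lambda)$, choose $\lambda_0$ as the first zero of the virial past $1$, and read off $\beta(u^{\lambda_0})\geq\tfrac13$ from the sign of the derivative at that zero. The only cosmetic difference is that the paper handles case~(ii) by first perturbing into case~(i), whereas you treat both cases uniformly via the factored polynomial $g(\lambda)$; the content is the same.
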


\begin{proof}
Direct computation shows that
\begin{align}\label{eq:deriv E(u^ld)}
V(u^{\ld}) = \ld \frac{d\ }{d\ld}E(u^{\ld}) = \ld^2 \!\!\int |\nabla u|^2\,dx - \tfrac{3}{4}\ld^3 \!\! \int |u|^4\,dx + \ld^6 \!\!\int |u|^6\,dx.
\end{align}

Suppose $V(u)<0$.  From \eqref{eq:deriv E(u^ld)} we see that $\lim_{\ld\to\infty}V(u^{\ld})=\infty$.  Thus, there is at least one $\ld>1$ so that $V(u^{\ld})=0$.
Let $\ld_0$ denote the smallest such $\ld$, which allows us to infer that $V(u^{\ld})<0$ for all $\ld\in [1,\ld_0)$.  It then follows from \eqref{eq:deriv E(u^ld)}
that $E(u^{\ld})$ is decreasing on the interval $[1,\ld_0)$, and so $E(u^{\ld_0})<E(u)$.

By construction, $\partial_\lambda V(u^\lambda)\geq 0$ at $\lambda=\lambda_0$.  Combining this with $V(u^{\lambda_0})=0$ yields
$$
0\leq 2\lambda_0 \int |\nabla u|^2 - \tfrac94 \lambda_0^2 \int |u|^4 + 6 \lambda_0^5 \int |u|^6 = - \lambda_0 \int |\nabla u|^2 + 3\lambda_0^5 \int |u|^6.
$$
Thus $\beta(u^{\lambda_0})=\lambda_0^4\beta(u)\geq \frac13$.  This proves part~(i) of the lemma.

We turn now to part~(ii) and so suppose that $V(u)=0$ and $\beta(u)<1/3$.  Then $\frac{3}{4}\!\int |u|^4\,dx=[1+\beta(u)] \int |\nabla u|^2\,dx$ and so \eqref{eq:deriv E(u^ld)} can be rewritten as
\begin{align}\label{eq:deriv E(u^ld) formula}
V(u^{\ld}) = \ld \frac{d\ }{d\ld}E(u^{\ld}) = \ld^2(1-\ld)\bigl(1-\beta(u) \ld-\beta(u)\ld^2-\beta(u)\ld^3\bigr)\!\int\! |\nabla u|^2\,dx.\! \!
\end{align}

As $\beta(u)<1/3$, RHS\eqref{eq:deriv E(u^ld) formula} has negative derivative at $\lambda=1$.  Thus for $\lambda>1$ sufficiently close to $1$, we have
$V(u^{\lambda}) < 0$ and $E(u^{\lambda}) < E(u)$.  Thus part~(ii) now follows by applying part~(i).
\end{proof}

\begin{lemma}\label{L:rescaling2}
Given $m>0$ and  $u\in H^1(\R^3)$ with $V(u)=0$ and $0<M(u)<m$, there exists $v\in H^1(\R^3)$ obeying
\begin{equation}\label{E:rescaling2}
M(v)=m, \quad E(v)\leq E(u) - \tfrac{m-M(u)}{6 M(u)}\!\int |\nabla u(x)|^2\,dx, \qtq{and} V(v)=0.
\end{equation}
\end{lemma}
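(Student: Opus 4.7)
The plan is to combine two rescalings: first a pure mass-inflating dilation $u(\cdot/r)$ to raise the mass to $m$ (which drives the virial strictly negative), then an application of Lemma~\ref{L:energy+virial rescaling}(i) to restore $V=0$ while lowering the energy further.

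Set $r := (m/M(u))^{1/3} > 1$ and define $v_0(x) := u(x/r)$. A change of variables gives $M(v_0) = r^3 M(u) = m$, together with $\int|\nabla v_0|^2 = r\int|\nabla u|^2$, $\int|v_0|^4 = r^3\int|u|^4$, and $\int|v_0|^6 = r^3\int|u|^6$. Using the hypothesis $V(u)=0$ in the form $\tfrac34\int|u|^4 = \int|\nabla u|^2 + \int|u|^6$ to eliminate the $L^4$-norm, one finds
\begin{equation*}
V(v_0) \;=\; rA - r^3(A+C) + r^3 C \;=\; r(1-r^2)\int|\nabla u|^2 \;<\; 0,
\end{equation*}
where $A,C$ abbreviate $\int|\nabla u|^2,\int|u|^6$. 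Lemma~\ref{L:energy+virial rescaling}(i) then furnishes $\lambda > 1$ so that $v(x) := \lambda^{3/2} v_0(\lambda x)$ satisfies $V(v)=0$, $M(v) = M(v_0) = m$, and $E(v) \le E(v_0)$.

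For the energy bound, a parallel computation (again substituting $\tfrac34\int|u|^4 = A + C$) collapses the three terms in $E(v_0)-E(u)$ to
\begin{equation*}
E(v_0) - E(u) \;=\; \tfrac{1}{6}(3r - 2r^3 - 1)\,A \;-\; \tfrac{r^3-1}{6}\,C.
\end{equation*}
Since $r>1$ and $C\ge0$, the second term is nonpositive. The desired bound $E(v_0) - E(u) \le -\tfrac{r^3-1}{6}A = -\tfrac{m-M(u)}{6M(u)}\int|\nabla u|^2$ therefore reduces to the scalar inequality $r^3 - 3r + 2 \ge 0$, which is immediate from the factorization $(r-1)^2(r+2)$.

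There is no real obstacle here: the proof is a two-step rescaling followed by an elementary polynomial inequality. The only modelling point worth noting is that the pure dilation (i.e.\ taking $\mu=1$ in a combined scaling $\mu\,u(\cdot/r)$) is precisely the right starting point, since the kinetic energy of $v_0$ equals $rA$ --- matching the target term $\tfrac{m-M(u)}{6M(u)}A$ exactly up to the factor $(r-1)^2(r+2)$.
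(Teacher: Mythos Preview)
Your proof is correct and noticeably simpler than the paper's. The paper proceeds by first reducing (via Lemma~\ref{L:energy+virial rescaling}(ii)) to the case $\beta(u)\ge\tfrac13$, and then constructing $v$ directly as a two-parameter rescaling $v(x)=\sqrt{\rho\lambda}\,u(\lambda x)$ with $\lambda=\rho(1+\beta(u))/(1+\rho^2\beta(u))$ and $\rho>1$ determined implicitly by the mass constraint; the energy bound then requires a separate estimate on $\rho$ coming from a derivative computation. By contrast, you decouple the two constraints: a pure dilation $u(\cdot/r)$ handles the mass in one stroke and drives the virial negative, after which Lemma~\ref{L:energy+virial rescaling}(i) restores $V=0$ at no energetic cost. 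The whole argument then rests on the elementary inequality $(r-1)^2(r+2)\ge0$, and you never need to invoke the $\beta\ge\tfrac13$ reduction or track how $\rho$ depends on $\beta(u)$. What the paper's construction buys is an explicit one-shot formula for $v$ (and an exact expression $E(v)=E(u)-\tfrac16[\beta(u)(\rho^3-1)-(\rho-1)]\int|\nabla u|^2$), whereas your $v$ is defined only implicitly through the $\lambda$ furnished by Lemma~\ref{L:energy+virial rescaling}; but for the purposes of this lemma and its applications in Theorem~\ref{T:MMR}, your route is entirely adequate and cleaner.
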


\begin{proof}
We will prove the lemma under the additional assumption that $\beta(u)\geq\tfrac13$.  This suffices because any $u$ not obeying this hypothesis can be replaced
by the function $u^\lambda$ provided by Lemma~\ref{L:energy+virial rescaling}(ii).  The relations \eqref{E:rescaling2} remain
true with the original function $u$ because $E(u^\lambda)<E(u)$ and $\|\nabla u^\lambda\|_{L^2} > \|\nabla u\|_{L^2} $.

Little effort is required to check that for $\beta(u)\geq\tfrac13$ and $1<\rho<\infty$,
\begin{equation*}
0 <  \frac{d\ }{d\rho} \; \frac{(1+\rho^2\beta(u))^2}{\rho(1+\beta(u))^2} = \frac{(1+\rho^2\beta(u))(3\beta(u)\rho^2-1)}{\rho^2(1+\beta(u))^2} \leq 3\beta(u)\rho^2-1.
\end{equation*}
Using this, we see that there is a unique $1<\rho<\infty$ so that
\begin{equation}\label{E:rho M}
m = M(u) \frac{(1+\rho^2\beta(u))^2}{\rho(1+\beta(u))^2}; \qtq{moreover} \frac{m-M(u)}{M(u)} \leq \beta(u)(\rho^3-1) - (\rho-1).
\end{equation}
Indeed, existence of such a $\rho$ follows from the intermediate value theorem (and the values at $\rho=1$ and as $\rho\to\infty$).  Uniqueness follows from the positivity of the derivative, while the second part of \eqref{E:rho M} follows from the upper bound on the derivative and the fundamental theorem of calculus.

Using this value of $\rho$ we then define
$$
v(x) := \sqrt{\rho\lambda}\,u(\lambda x) \qtq{with} \lambda=\frac{\rho(1+\beta(u))}{1+\rho^2\beta(u)}.
$$

Elementary computations show that $M(v)=m$ and $V(v)=0$; indeed, this is precisely how the parameters $\rho$ and $\lambda$ were chosen.  To verify $V(v)=0$,
we exploit the fact that $V(u)=0$, which implies $\tfrac34\int |u|^4\,dx = [1+\beta(u)]\int |\nabla u|^2\,dx$.

Similar computations give
\begin{align*}
E(v) &= E(u) - \tfrac1{6}\bigl[\beta(u)(\rho^3-1)-(\rho-1)\bigr]\int |\nabla u(x)|^2\,dx
\end{align*}
The energy bound stated in \eqref{E:rescaling2} now follows by combining this estimate with the second part of \eqref{E:rho M}.
\end{proof}

\begin{proof}[Proof of Theorem~\ref{T:MMR}]
Suppose $u\in H^1(\R^3)$ and $(M(u),E(u))\in\RR$.  We wish to show that $V(u)>0$.  That $V(u)\neq 0$ follows from the definition of $\RR$.  If, on the other hand,
$V(u)<0$, then by Lemma~\ref{L:energy+virial rescaling}(i) there is a rescaling $u^\lambda$ of $u$ with the same mass, $E(u^\lambda)<E(u)$ and $V(u^\lambda)=0$.
This implies $\Et(M(u))<E(u)$, so contradicting the original assumption $(M(u),E(u))\in\RR$.

Next we show that $\Et(m)=\infty$ when $0<m<\tfrac{4}{3\sqrt{3}}M(Q_1)$.  To do this, we define
$$
R(x) := \tfrac1{\sqrt{2}} Q_1\bigl(\tfrac{\sqrt{3}}{2}\, x\bigr).
$$
Later we will introduce a class of rescaled solitons that include this example; see Lemma~\ref{L:Rw}.  Direct computation (cf. the proof of Lemma~\ref{L:Rw}) shows that
\begin{equation}\label{E:R relations}
\beta(R)=\tfrac13, \quad V(R)=0, \qtq{and} M(R) = \tfrac{4}{3\sqrt{3}}M(Q_1).
\end{equation}

As $Q_1$ is an optimizer for the ($\alpha=1$)-Gagliardo--Nirenberg--H\"older inequality, so too is $R$.  This allows us to express the optimal constant as
$$
C_1 = \frac{\|R\|_{L^4}^4}{\|R\|_{L^2}\|R\|_{L^6}^{3/2}\|\nabla R\|_{L^2}^{3/2}} = \frac{16\cdot 3^{1/4}}{9\|R\|_{L^2}},
$$
where the second equality follows by exploiting \eqref{E:R relations}.  Thus
$$
\int |u|^4\,dx \leq \frac{16\|u\|_{L^2}}{9\|R\|_{L^2}} \biggl(3\!\!\int |u|^6\,dx\biggr)^{\!\!\frac14} \biggl(\int |\nabla u|^2\,dx\biggr)^{\!\!\frac34} \qquad\text{for all $u\in H^1(\R^3)$,}
$$
and so Young's inequality (with powers $4$ and $\frac43$) yields
\begin{equation}\label{E:V vs R}
\tfrac34\!\int |u|^4\,dx \leq \frac{\|u\|_{L^2}}{\|R\|_{L^2}} \int |\nabla u|^2 + |u|^6\,dx  \qquad\text{for all $u\in H^1(\R^3)$.}
\end{equation}
This inequality immediately shows that $V(u)>0$ whenever $0<M(u)<M(R)$.

We have now completed the discussion of $m<M(R)$.  When $m\geq M(Q_1)$ all assertions follow from Theorem~\ref{T:feasible}.  This theorem shows that for such
$m$, the minimal energy under the constraint $M(u)=m$ is achieved by a soliton, and hence by a function with zero virial.  Thus $\Et(m)=\Em(m)$.  Moreover, the proof of this
theorem shows that $\Em(m)$ is continuous and strictly decreasing for $m\geq M(Q_1)$.

We are now left to consider the middle interval $M(R)\leq m<M(Q_1)$.

Applying Lemma~\ref{L:rescaling2} with $u=R$ shows that $\Et(m)\leq E(R)<\infty$ for all such~$m$.  In particular, it is possible to achieve $V(u)=0$ with $M(u)=m$.

Using the classical Gagliardo--Nirenberg inequality from \eqref{E:GN&H}, we observe that
$$
V(u)=0 \ \implies\  \|\nabla u\|_{L^2}^2 \leq \tfrac34 \|u\|_{L^4}^4 \lesssim \|u\|_{L^2} \|\nabla u\|_{L^2}^3
\ \implies\  \|u\|_{L^2} \|\nabla u\|_{L^2} \gtrsim 1,
$$
whenever $u\not\equiv 0$.  Combining this with \eqref{E:Ea1} we deduce
$$
E(u) \gtrsim \Bigl[ 1 - \bigl(\tfrac{M(u)}{M(Q_1)}\bigr)^{1/2} \Bigr] \tfrac{1}{M(u)} \qtq{whenever}   V(u)=0 \qtq{and} M(u) < M(Q_1).
$$
This proves that $\Et(m)>0$ when $M(R)\leq m<M(Q_1)$.

Next we show that $\Et(m)$ is strictly decreasing when $M(R)\leq m< M(Q_1)$.  Given $M(Q_1)>m_1>m_2\geq M(R)$, let $\{u_n\}$ be a sequence with $M(u_n)=m_2$, $V(u_n)=0$, and $E(u_n)\to \Et(m_2)$.
As shown above, $V(u_n)=0$ guarantees that $\|\nabla u_n\|_{L^2}^2 \geq c/m_2$, for an absolute constant $c>0$.  Thus by Lemma~\ref{L:rescaling2}, there is a corresponding
sequence $\{v_n\}$ with $M(v_n)=m_1$, $V(v_n)=0$, and $E(v_n)\leq E(u_n) - c\tfrac{m_1-m_2}{6m_2^2}$.  Sending $n\to\infty$, we deduce that $\Et(m_1)<\Et(m_2)$.

Next we show that the infimum is actually achieved for $M(R)\leq m < M(Q_1)$.  To this end, let $u_n$ be a minimizing sequence at mass $m$.  Replacing $u_n$ by their symmetric decreasing rearrangement does
not affect the mass constraint, does not increase the energy, but may result in $V(u_n)<0$.  However, by applying Lemma~\ref{L:energy+virial rescaling} we may still
conclude that there is a minimizing sequence $\{u_n\}$ comprising radially symmetric functions.

By \eqref{E3/32}, our optimizing sequence $\{u_n\}$ is bounded in $H^1(\R^3)$.  Passing to a subsequence (if necessary) we may ensure weak convergence
in $H^1(\R^3)$ and $L^6(\R^3)$, as well as norm convergence in $L^4(\R^3)$, due to radial symmetry.   Let us denote the limit by $u_\infty$.

From these modes of convergence it follows that $M(u_\infty)\leq m$, $E(u_\infty)\leq \Et(m)$, and $V(u_\infty)\leq 0$.  We will show that equality holds in all three cases;
consequently, $u_\infty$ is the sought-after optimizer.

If $M(u_\infty)<m$, we could conclude that $\Et(M(u_\infty))\leq E(u_\infty) \leq \Et(m)$, which is inconsistent with the fact that $\Et$ is a strictly decreasing function, as proved above.
(Here we also invoke Lemma~\ref{L:energy+virial rescaling} if it happens that $V(u_\infty) < 0$.)  Thus $M(u_\infty)=m$.  It follows then from
Lemma~\ref{L:energy+virial rescaling} and the definition of $\Et$ as an infimium that $V(u_\infty)= 0$ and $E(u_\infty)=\Et(m)$.

The only assertion of Theorem~\ref{T:MMR} that remains unproven is that of lower semicontinuity.  This will follow readily from the ideas already presented.

As $\Et(m)$ is decreasing, lower semicontinuity is equivalent to right continuity.  In this way, we see that failure of lower semicontinuity at a point $m$ would guarantee the
existence of a sequence $\{m_n\}$ decreasing to $m$ with $\Et(m) > \lim \Et(m_n)$.  Now let $u_n$ be radially symmetric
functions with $M(u_n)=m_n$, $V(u_n)=0$, and $E(u_n)=\Et(m_n)$.  (We just proved the existence of such optimizers above.)  Passing to a subsequence we may assume that $u_n\to u_\infty$ weakly in $H^1(\R^3)$ and strongly
in $L^4(\R^3)$.  Using the lower semicontinuity of $M$, $E$, and $V$ under such convergence together with the strict monotonicity of $\Et(m)$, we then deduce that $M(u_\infty)=m$,
$V(u_\infty)=0$, and $E(u_\infty) = \lim \Et(m_n)$,
just as in the proof of the existence of optimizers.  This then falsifies the assertion that $\Et(m) > \lim \Et(m_n)$.
\end{proof}

The appearance of $R(x)$, a particular rescaling of $Q_1(x)$, in the proof of Theorem~\ref{T:MMR} suggests that we will need to consider functions other than
solitons if we wish to understand the boundary of $\RR$.  This is indeed the case, as will be borne out by Theorem~\ref{T:Rbdry}.  Indeed, it will be shown
that an optimizer in the variational description of $\Et$ is either a soliton or a special type of rescaled soliton; moreover both cases do occur.
The statement and proof of Theorem~\ref{T:Rbdry} are simpler if we present the basic properties of these special rescaled solitons first.  This is the purpose of the next lemma.

\begin{lemma}[Rescaled solitons]\label{L:Rw}
Fix $0<\omega<\tfrac3{16}$.  Among all rescalings $u(x)=a\Pw(\lambda x)$ of $\Pw$ with $a>0$ and $\lambda >0$ there is exactly one that obeys $\beta(u)=\tfrac13$ and
$V(u)=0$, namely,
\begin{equation}\label{E:Rw defn}
\Rw(x) := \sqrt{\tfrac{1+\beta(\omega)}{4\beta(\omega)}} \, \Pw\Bigl(\tfrac{3[1+\beta(\omega)]}{4\sqrt{3\smash[b]{\beta(\omega)}}} \, x\Bigr) .
\end{equation}
Moreover,
\begin{gather}
E(\Rw) = \frac{1}{9\sqrt{3\smash[b]{\beta(\omega)}}}\int |\nabla\Pw(x)|^2\,dx,
	\quad M(\Rw) = \frac{16\sqrt{3\smash[b]{\beta(\omega)}}}{9[1+\beta(\omega)]^2} M(\Pw), \label{ER:me}\\[0.5ex]
\text{ $M(\Rw)\leq M(\Pw)$ with equality iff $\beta(\omega)=\tfrac13$}, \label{ER:m}\\
\text{ $E(\Rw)\geq E(\Pw)$ with equality iff $\beta(\omega)=\tfrac13$}, \label{ER:e}\\
\tfrac{d\ }{d\omega} E(\Rw) >0, \qquad [\beta(\omega)-1] \tfrac{d\ }{d\omega} M(\Rw) \geq 0 \ \text{with equality iff $\beta(\omega)=1$,} \label{ER:dE>0} \\
\lim_{\omega\searrow 0} E(\Rw) = \tfrac{1}{3\sqrt{3\beta(g)}} M(g), \qtq{and} \lim_{\omega\searrow 0} M(\Rw) = \tfrac{16 \sqrt{3\beta(g)}}{9} M(g), \label{ER:w0}
\end{gather}
where $g$ is the unique positive radial solution to $-\Delta g - g^3 + g =0$, as in Theorem~\ref{T:solitons}.
\end{lemma}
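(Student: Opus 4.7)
The plan is to prove everything in the lemma by an ansatz-and-rescaling argument, organized in four stages. First, I set $u(x) = a\Pw(\lambda x)$ and, using the ground-state identities \eqref{eq:w24} and \eqref{eq:w6E} with $G := \int |\nabla \Pw|^2$, compute $\int u^6 = a^6\lambda^{-3}\beta(\omega) G$, $\int u^4 = \tfrac{4}{3} a^4\lambda^{-3}(1+\beta(\omega)) G$, and $\|\nabla u\|_{L^2}^2 = a^2\lambda^{-1}G$. Imposing $\beta(u)=\tfrac13$ forces $a^4 = \lambda^2/(3\beta(\omega))$, and then $V(u)=0$ forces $4\beta(\omega)\,a^2 = 1+\beta(\omega)$. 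The unique $(a,\lambda)$ with $a,\lambda>0$ reproduces the formula in \eqref{E:Rw defn}. The mass assertion in \eqref{ER:me} is immediate from $M(\Rw) = a^2\lambda^{-3}M(\Pw)$. For the energy, combining $V(\Rw)=0$ and $\beta(\Rw)=\tfrac13$ yields $\|\Rw\|_{L^4}^4 = \tfrac{16}{9}\|\nabla \Rw\|_{L^2}^2$, so
\begin{equation*}
E(\Rw) = \bigl(\tfrac12 - \tfrac49 + \tfrac{1}{18}\bigr)\|\nabla \Rw\|_{L^2}^2 = \tfrac{1}{9}\|\nabla \Rw\|_{L^2}^2 = \frac{G}{9\sqrt{3\beta(\omega)}},
\end{equation*}
using $a^2\lambda^{-1} = 1/\sqrt{3\beta(\omega)}$.

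Next, the comparisons \eqref{ER:m} and \eqref{ER:e} reduce to elementary one-variable calculus in $\beta$. The mass bound becomes $\phi(\beta) := 16\sqrt{3\beta}/[9(1+\beta)^2] \leq 1$; since $\phi'(\beta)$ is a strictly positive multiple of $1-3\beta$, the maximum sits at $\beta = \tfrac13$ with $\phi(\tfrac13)=1$. Using $E(\Pw) = (1-\beta)G/6$ from \eqref{eq:w6E}, the energy bound becomes $\psi(\beta) := 3\sqrt{3\beta}(1-\beta) \leq 2$; again $\psi'$ vanishes only at $\beta = \tfrac13$ (where $\psi = 2$), while trivially $\psi\leq 0<2$ when $\beta\geq 1$.

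The main technical content is \eqref{ER:dE>0}. The crucial auxiliary identity I will prove is
\begin{equation*}
\beta'(\omega) = \frac{1-\beta(\omega)^2}{2\omega} + \frac{(1+\beta(\omega))\,M'(\omega)}{M(\omega)},
\end{equation*}
obtained by differentiating $\beta G = \int \Pw^6$: the right-hand derivative equals $6\int \Pw^5\,\partial_\omega \Pw$, and the identities $\Lw \Pw = 4\Pw^5 - 2\Pw^3$ and $\Lw(\partial_\omega \Pw) = -\Pw$ (both used in the construction of Table~\ref{Table:Lw}) together with \eqref{eq:w4} reduce this to $\tfrac{3M}{2} + 3\omega M'$; combining with $G' = \tfrac{3}{2}M$ from \eqref{dGdw} and $M = (1+\beta)G/(3\omega)$ from \eqref{eq:w24} gives the stated formula for $\beta'$. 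Differentiating $E(\Rw) = G/(9\sqrt{3\beta})$ then reduces $E(\Rw)'>0$ to $M(3\beta-1) > 2\omega M'$, which is precisely \eqref{dMdw}. Log-differentiating $M(\Rw)$ and substituting the formula for $\beta'$ gives, after cancellation,
\begin{equation*}
\frac{M(\Rw)'}{M(\Rw)} = \frac{1-\beta(\omega)}{2\beta(\omega)\,M(\omega)}\,\Bigl[\frac{(1-3\beta(\omega))\,M(\omega)}{2\omega} + M'(\omega)\Bigr];
\end{equation*}
the bracket is strictly negative by \eqref{dMdw}, so $M(\Rw)'$ has the sign of $\beta(\omega)-1$, with vanishing exactly when $\beta(\omega)=1$.

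Finally, the limits \eqref{ER:w0} follow from the rescaling of Subsection~\ref{SS:0}: $\Pw(x) = \sqrt{\omega}\,u(\sqrt{\omega}x;\omega)$ with $u(\cdot;\omega) \to g$ in $H^1(\R^3)$, which gives $G(\omega) \sim \sqrt{\omega}\,\|\nabla g\|_{L^2}^2$. Combined with $\beta(\omega) \sim \omega\,\beta(g)$ from Theorem~\ref{T:solitons}(iv) and the Pohozaev identity $\|\nabla g\|_{L^2}^2 = 3M(g)$ (from pairing $-\Delta g - g^3 + g = 0$ with $x\cdot\nabla g$), both limits in \eqref{ER:w0} follow directly from the formulas for $E(\Rw)$ and $M(\Rw)$. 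The main obstacle throughout is the derivation of the closed-form expression for $\beta'(\omega)$; once that identity is in hand, all of the sign analysis in \eqref{ER:dE>0} is essentially forced by \eqref{dMdw}.
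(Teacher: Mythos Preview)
Your proof is correct and follows essentially the same approach as the paper's: the rescaling computation, the one-variable calculus in $\beta$ for \eqref{ER:m}--\eqref{ER:e}, the reduction of \eqref{ER:dE>0} to \eqref{dMdw} via an identity for $\beta'(\omega)$, and the use of the $\omega\to 0$ asymptotics from Theorem~\ref{T:solitons}(iv) for \eqref{ER:w0} all match. The only cosmetic difference is that the paper obtains the $\beta'$ identity by differentiating the relation $M(\Pw)=\tfrac{1+\beta}{3\omega}\int|\nabla\Pw|^2$ directly (see \eqref{ER:db}), whereas you differentiate $\int\Pw^6=\beta G$ and invoke the operator identities from Table~\ref{Table:Lw}; the two routes yield the same formula.
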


\begin{proof}
Recall that $\beta(u)$ was defined in \eqref{eq:Zbeta} and that $\beta(\omega):=\beta(\Pw)$.  After a few manipulations, this reveals
that $u(x)=a\Pw(\lambda x)$ obeys
\begin{equation}\label{vR0}
\beta(u) = \tfrac{a^4}{\lambda^2} \beta(\omega).
\end{equation}
Proceeding similarly, but also exploiting \eqref{eq:w24} and \eqref{eq:w6E}, yields
\begin{equation}\label{ER:V}
V(u) = \tfrac{a^2}{\lambda} \bigl\{ 1 + \tfrac{a^4}{\lambda^2} \beta(\omega) - \tfrac{a^2}{\lambda^2} [\beta(\omega)+1]\bigr\} \int |\nabla\Pw(x)|^2\,dx.
\end{equation}
It is now an elementary matter to verify that \eqref{E:Rw defn} gives the only solution to the system $\beta(u)=\frac13$ and $V(u)=0$.

The formula for $M(\Rw)$ given in \eqref{ER:me} follows directly from \eqref{E:Rw defn}, while the formula for $E(\Rw)$ is most easily found by mimicking \eqref{ER:V}.

Equality in \eqref{ER:m} and \eqref{ER:e} when $\beta(\omega)=\frac13$ is immediate as $\Rw\equiv\Pw$ when $\beta(\omega)=\frac13$.  That strict inequality holds in all other cases then follows from
$$
\frac{d\ }{d\beta} \frac{16\sqrt{3\smash[b]{\beta}}}{9(1+\beta)^2} = \frac{8(1-3\beta)}{3\sqrt{3\smash[b]{\beta}}(1+\beta)^3}
    \qtq{and} \frac{d\ }{d\beta} \frac{2}{3(1-\beta)\sqrt{3\smash[b]{\beta}}} = \frac{(3\beta-1)}{(3\beta)^{3/2}(\beta-1)^2}.
$$
Note that there is no need to use the second formula when $\beta\geq 1$, for if $\beta(\omega)\geq 1$, then \eqref{eq:w6E} shows that $E(\Pw)\leq 0$,
while \eqref{ER:me} shows that $E(\Rw)>0$.

It remains only to verify \eqref{ER:dE>0} and \eqref{ER:w0}.  Differentiating the first relation in \eqref{eq:w24} with respect to $\omega$ and then using \eqref{dGdw} gives
\begin{equation}\label{ER:db}
  \tfrac13 \bigl[\tfrac{d\;}{d\omega} \beta(\omega)\bigr] \!\int |\nabla \Pw|^2\,dx = - \tfrac{\beta-1}2 M(\Pw) + \omega\tfrac{d\;}{d\omega} M(\Pw).
\end{equation}
From this, \eqref{ER:me}, \eqref{dGdw}, and finally \eqref{dMdw}, we then deduce that
$$
\tfrac{d\;}{d\omega} E(\Rw) = \tfrac{1}{6\beta\sqrt{3\smash[b]{\beta}}}\Bigl[\tfrac{3\beta-1}2 M(\Pw) - \omega \tfrac{d\;}{d\omega} M(\Pw) \Bigr] > 0.
$$

Using \eqref{eq:w24} we may rewrite \eqref{ER:db} as follows:
$$
M(\Pw) \tfrac{d\;}{d\omega} \beta(\omega) = \tfrac{1+\beta(\omega)}{\omega} \Bigl[- \tfrac{\beta-1}2 M(\Pw) + \omega\tfrac{d\;}{d\omega} M(\Pw)\Bigr].
$$
Proceeding as above, we deduce that
\begin{align*}
\tfrac{d\;}{d\omega} M(\Rw) &= \tfrac{16\sqrt{3\smash[b]{\beta}}}{9(1+\beta)^2} \tfrac{d\;}{d\omega} M(\Pw)
	+ \tfrac{8(1-3\beta)}{3\sqrt{3\smash[b]{\beta}}(1+\beta)^2} \Bigl[- \tfrac{\beta-1}{2\omega} M(\Pw) + \tfrac{d\;}{d\omega} M(\Pw)\Bigr] \\
&= \tfrac{8(1-\beta)}{3\sqrt{3\smash[b]{\beta}}(1+\beta)^2} \Bigl[- \tfrac{3\beta-1}{2\omega} M(\Pw) + \tfrac{d\;}{d\omega} M(\Pw)\Bigr].
\end{align*}
That $[\beta(\omega)-1] \tfrac{d\ }{d\omega} M(\Rw) \geq 0$, as well as the cases of equality, now follow from \eqref{dMdw}, which shows that the final expression in square brackets is negative.

As we have now shown that $E(\Rw)$ is increasing in $\omega$, the limiting value as $\omega\to0$ actually provides the infimum of the energies $E(\Rw)$.
The stated value for this and for the limiting mass follow from \eqref{ER:me}, \eqref{eq:w24}, and Theorem~\ref{T:solitons}(iv).
\end{proof}

\begin{theorem}\label{T:Rbdry}
Suppose $m\geq \tfrac{4}{3\sqrt{3}}M(Q_1)$ and $u\in H^1_x\setminus\{0\}$ obeys
\begin{equation}\label{E:Rbdry}
M(u)=m, \quad E(u)=\Et(m), \qtq{and} V(u)=0.
\end{equation}
\textup{(}Theorem~\ref{T:MMR} guarantees that such a $u$ exists precisely for these values of $m$.\textup{)}
Then either $u(x)=e^{i\theta}\Rw(x+x_0)$ or $u(x)=e^{i\theta}\Pw(x+x_0)$ for some $\theta\in[0,2\pi)$, some $x_0\in \R^3$, and some $0<\omega<\tfrac3{16}$
that obeys $\beta(\omega)>\frac13$.  Recall from Theorem~\ref{T:MMR} that for all such $m$ there is a function $u$ obeying \eqref{E:Rbdry}.

Furthermore, both cases necessarily occur on the boundary of $\RR$.  Specifically, there exists $\delta>0$ so that
no soliton has mass $<\smash[b]{\tfrac{4}{3\sqrt{3}}}M(Q_1) + \delta$, while $u$ cannot be a rescaled soliton when $m> M(Q_1) - \delta$.
\end{theorem}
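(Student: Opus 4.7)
The plan has two independent parts: first classify the minimizers up to the obvious symmetries, and then locate the mass regimes in which each alternative must occur. For the classification, I start by applying Schwarz symmetrization to $|u|$. The rearrangement $u^*$ satisfies $M(u^*) = m$, $E(u^*) \leq E(u)$, and $V(u^*) \leq V(u) = 0$; if strict inequality held in the last bound, Lemma~\ref{L:energy+virial rescaling}(i) would produce $(u^*)^\lambda$ with $V = 0$ and energy strictly below $E(u)$, contradicting minimality. Hence $V(u^*) = 0$ and $u^*$ is itself a minimizer, so the strict rearrangement inequalities of Brothers--Ziemer force $u$ to coincide with $u^*$ up to translation and a unimodular phase. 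I may therefore take $u$ non-negative, radial, and radially decreasing.

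With $u$ normalized this way, the constrained variational problem produces real Lagrange multipliers $\mu_1, \mu_2$ so that
\begin{equation*}
-(1+2\mu_2)\Delta u + \mu_1 u - (1+3\mu_2)u^3 + (1+6\mu_2)u^5 = 0.
\end{equation*}
When $\mu_2 = 0$ this is exactly \eqref{E:Pdefn}, and Theorem~\ref{T:solitons} identifies $u = P_{\mu_1}$. When $\mu_2 \neq 0$, writing $u(x) = a P_{\omega'}(\lambda x)$ and matching coefficients fixes $a^2, \lambda^2, \omega'$ in terms of the multipliers; substituting into \eqref{ER:V} and imposing $V(u) = 0$ reduces to a one-line polynomial identity in $\mu_2$ and $\beta(\omega')$ whose non-trivial branch returns $a^2 = (1+\beta(\omega'))/(4\beta(\omega'))$, i.e., $u = R_{\omega'}$ by \eqref{E:Rw defn}. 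Lemma~\ref{L:energy+virial rescaling}(ii) then enforces $\beta(\omega) \geq \tfrac{1}{3}$; the degenerate case $\beta(\omega) = \tfrac{1}{3}$ merges the two alternatives ($\Rw = \Pw$), so we may consistently require $\beta(\omega) > \tfrac{1}{3}$ by choosing the appropriate label.

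For the mass gap for solitons, I apply \eqref{E:V vs R} with $u = P_\omega$: since $V(P_\omega) = 0$ the inequality collapses to $M(P_\omega) \geq M(R) = \tfrac{4}{3\sqrt{3}} M(Q_1)$. Saturation would require $P_\omega$ to be simultaneously a rescaling of $Q_1$ (equality in \eqref{eq:GN} at $\alpha = 1$) and to obey $\beta(P_\omega) = 1$ (equality in the Young step underlying \eqref{E:V vs R}); these are compatible only with $P_\omega = Q_1$, but $M(Q_1) > M(R)$. Hence $M(P_\omega) > M(R)$ strictly for every $\omega \in (0, \tfrac{3}{16})$. Because $\omega \mapsto M(P_\omega)$ is continuous (Theorem~\ref{T:solitons}(ii)) and diverges at both endpoints (Theorem~\ref{T:solitons}(iv,v)), the function $M(P_\omega) - M(R)$ attains a strictly positive minimum on $(0, \tfrac{3}{16})$, yielding the desired $\delta$.

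For the complementary statement near $m = M(Q_1)$, I combine \eqref{ER:dE>0} with \eqref{ER:w0}: $\omega \mapsto E(R_\omega)$ is strictly increasing with $\lim_{\omega \to 0^+} E(R_\omega) = \tfrac{1}{3\sqrt{3\beta(g)}} M(g) > 0$, so $\inf_\omega E(R_\omega) > 0$. On the other hand, Theorem~\ref{T:feasible}(iii) gives $\Et(M(Q_1)) = 0$, and perturbing $Q_1$ within the one-parameter family $\{a Q_1(\lambda \cdot) : V = 0\}$ furnishes trial functions showing $\Et(m) \to 0$ as $m \to M(Q_1)^-$. Thus $\Et(m) < \inf_\omega E(R_\omega)$ for $m$ in a left neighborhood of $M(Q_1)$, which excludes rescaled solitons as minimizers there. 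The main technical obstacle is the Euler--Lagrange step of the second paragraph: one must verify that the constraint $V(u) = 0$, fed back into the multiplier equation, produces exactly the two algebraic branches $\Pw$ and $\Rw$ with no extraneous solutions, and that the resulting $\omega$ is the one that labels $u$ under the alternatives stated in the theorem.
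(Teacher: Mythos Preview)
Your overall architecture matches the paper's: reduce to non-negative radial via rearrangement, analyze the Euler--Lagrange equation to get the $\Pw$/$\Rw$ dichotomy, then handle the two endpoint mass regimes separately. Several of your arguments are valid alternatives to the paper's --- your derivation of the soliton mass gap directly from \eqref{E:V vs R} (rather than via \eqref{dMdw} and \eqref{ER:m}), and your proof that $\Et(m)\to 0$ by rescaling $Q_1$ within the zero-virial manifold (rather than by locating solitons of mass just below $M(Q_1)$ and invoking Lemma~\ref{L:cuspy}), are both cleaner than what the paper does. One small confusion: in your equality analysis of \eqref{E:V vs R} you swap the roles of the two steps --- equality in Young forces $\beta(P_\omega)=\tfrac13$, while equality in \eqref{eq:GN} at $\alpha=1$ forces $\beta(P_\omega)=1$ --- but since these are incompatible your conclusion survives.

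There are, however, two genuine gaps in the classification step. First, you invoke Lagrange multipliers without checking that $dV|_u$ and $dM|_u$ are linearly independent; the paper treats this degenerate case separately (it yields $u=\Rw$ with $\beta(\omega)=1$). Second, and more seriously, you do not establish the \emph{strict} inequality $\beta(\omega)>\tfrac13$. In the rescaled case you correctly obtain $\beta(u)=\tfrac13$ and hence $u=R_{\omega'}$, but your polynomial identity gives $\beta(\omega')=\tfrac{1+6\mu_2}{3(1+2\mu_2)}$, which exceeds $\tfrac13$ only if $\mu_2>0$; this sign requires a separate variational argument (the paper produces a trial function violating minimality when $\mu<0$). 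In the soliton case Lemma~\ref{L:energy+virial rescaling}(ii) gives only $\beta(\omega)\geq\tfrac13$, and your claim that the boundary case ``can be relabeled'' is incorrect: if $u=P_{\omega_0}$ with $\beta(\omega_0)=\tfrac13$ then $P_{\omega_0}=R_{\omega_0}$, but this is the \emph{same} $\omega_0$, and neither alternative in the theorem applies. The paper rules this out by a non-trivial application of Lemma~\ref{L:cuspy}: if $\beta(\omega_0)=\tfrac13$ one finds $\omega_1>\omega_0$ with $E(P_{\omega_1})=E(P_{\omega_0})$ and $M(P_{\omega_1})<M(P_{\omega_0})$, contradicting the strict monotonicity of $\Et$.
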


Before we begin the proof of this theorem, we discuss the fuller picture provided by numerics.  See Figures~\ref{F:numerics1} and~\ref{F:numerics2}.  We find that there is an intermediate mass
$$
\frac{4}{3\sqrt{3}} M(Q_1) < m_0 < M(Q_1)
$$
so that $\Et(m)$ is achieved solely by rescaled solitons $\Rw$ when $m<m_0$, and solely by solitons $\Pw$ when $m>m_0$.  (Numerics give $m_0=189.48$.)
The transition in the type of minimizer is marked by a discontinuity in the derivative of $\Et$ and so is depicted as a kink in Figure~\ref{F:R}.
Note that this kink is essentially invisible in Figure~\ref{F:numerics1}; this is why we have included Figure~\ref{F:numerics2}.

The mass/energy curve of $\Rw$ crosses the mass/energy curve of $\Pw$ whenever $\beta(\omega)=1/3$; indeed, $\Rw=\Pw$ whenever $\beta(\omega)=1/3$.
Our numerics show that the mass/energy curves intersect exactly twice, once at $m_0$ and once at the (unique) point where $\beta(\omega)=1/3$. The intersection at $m_0$ is transverse.  At the point where $\beta(\omega)=1/3$, the two curves cross; however, the formulas in Lemma~\ref{L:Rw} can be used to show that both have the same tangent and curvature.

Excepting the point $m_0$, we observe $\Et(m)$ to be a smooth curve.  Let us briefly describe what is needed to achieve a rigorous proof of this.  It follows from Theorem~\ref{T:solitons} that both
$\Pw$ and $\Rw$ are analytic $H^1(\R^3)$-valued functions of $\omega$; thus $\Et(m)$ is smooth except where $\frac{d\;}{d\omega} M(\Pw)$ or $\frac{d\;}{d\omega} M(\Rw)$ vanish or where the two mass/energy curves touch.

From the proof of Lemma~\ref{L:Rw} and Conjecture~\ref{Conj:beta}, it would follow that $\frac{d\;}{d\omega} M(\Rw)$ vanishes only once,
namely, when $\Pw=Q_1$, which corresponds to $m=\frac{4}{3\sqrt{3}} M(Q_1)=185.10$.  Recall that Conjecture~\ref{Conj:beta} implies uniqueness of the optimizer $Q_1$.

On the other hand Conjecture~\ref{Conj:mass} implies that $\frac{d\;}{d\omega} M(\Pw)$ vanishes only once, namely, at the minimal mass soliton.  This corresponds to the cusp in Figures~\ref{F:Pw}, \ref{F:numerics1} and~\ref{F:numerics2}.  In this way, we see that the claimed shape of $\Et(m)$ follows from our two conjectures along with the following
assertion, which is supported by numerics: The mass/energy curve of $\Rw$ intersects the \emph{lower} branch of solitons at exactly one point (which then serves to define $m_0$)
and there is no intersection with the upper branch of solitons with $m<m_0$.

\begin{figure}
\noindent\hbox to \textwidth{\hss\hspace*{-5mm}\includegraphics[scale=0.2]{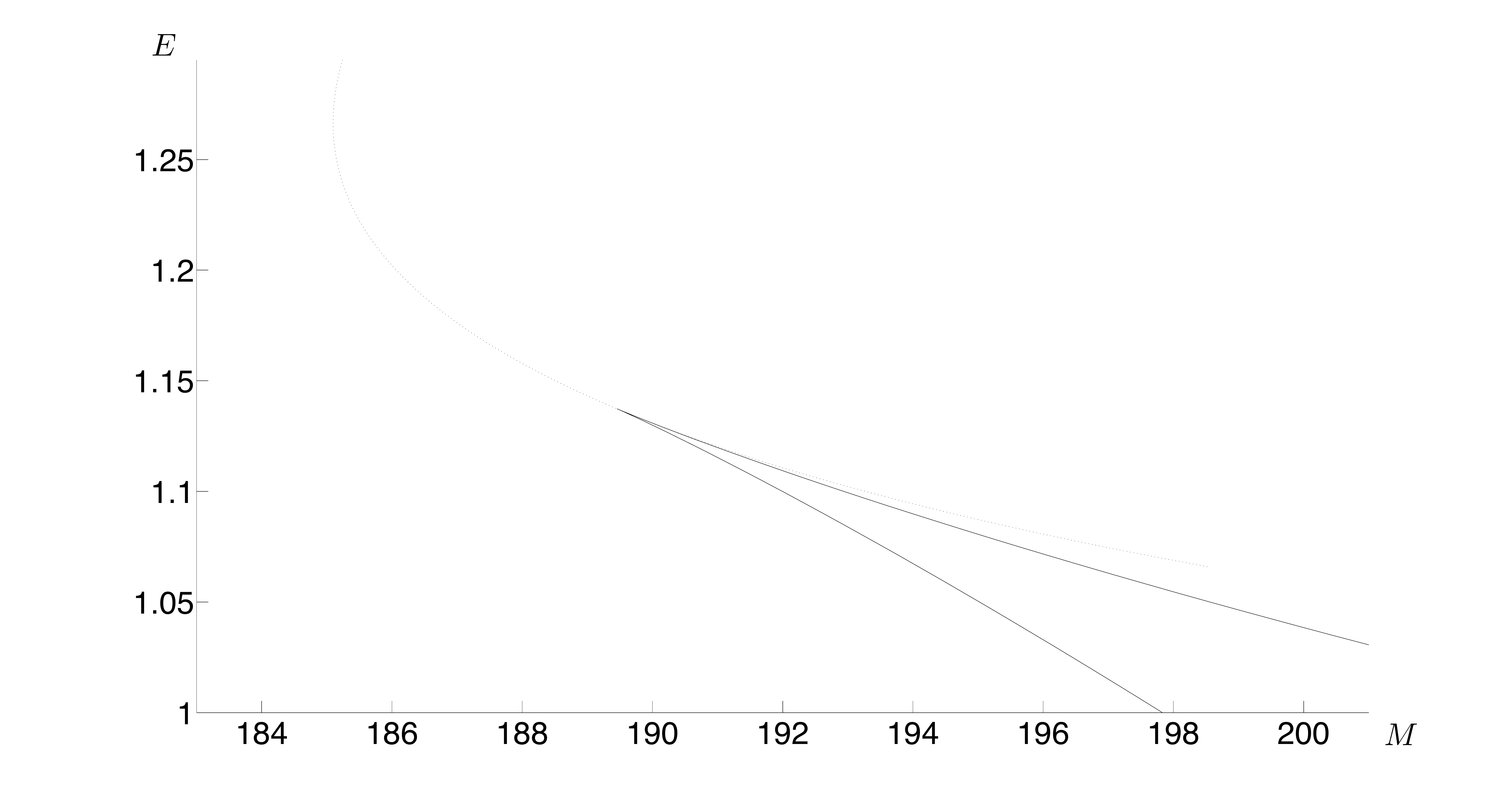}\hss}
\vspace*{-10mm}
\caption{Detail of the mass/energy curves of $\Pw$, shown solid, and $\Rw$, shown dotted.}
\label{F:numerics1}
\end{figure}

\begin{figure}
\noindent\hbox to \textwidth{\hss\hspace*{-5mm}\includegraphics[scale=0.2]{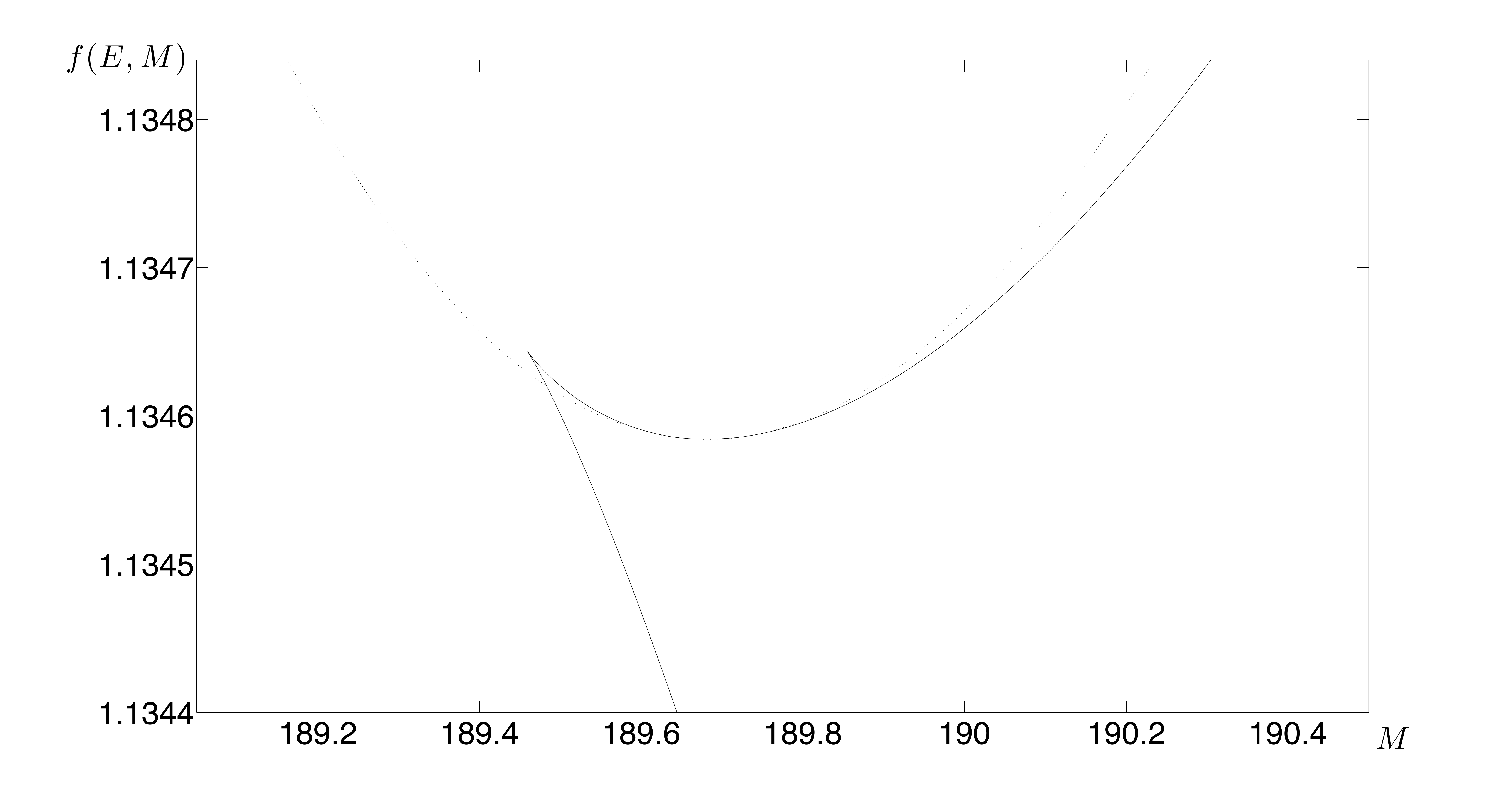}\hss}
\vspace*{-10mm}
\caption{Closer view of the mass/energy curves of $\Pw$, shown solid, and $\Rw$, shown dotted.
The horizontal axis denotes mass.  In order to reveal the fine detail of the intersection,
a shear transformation has been applied, namely, $f(E,M)=E+\tfrac12\omega_{1/3}(M-M(P_{\omega_{1/3}}))$, where $\omega_{1/3}$ is determined numerically so that
$\beta(\omega_{1/3})=1/3$ (cf. Table~\ref{Table:num}).}\label{F:numerics2}
\end{figure}

\begin{proof}[Proof of Theorem~\ref{T:Rbdry}]
If $u$ obeys \eqref{E:Rbdry}, then it follows that the radially symmetric rearrangement $u^*$ of $u$ also obeys \eqref{E:Rbdry}.  To see this, we first observe that $M(u^*)=M(u)$, $E(u^*)\leq E(u)$ and $V(u^*)\leq V(u)=0$.
By Lemma~\ref{L:energy+virial rescaling}, we see that $V(u^*)<0$ would be inconsistent with the definition of $\Et(m)$ as an infimum.  Similarly, we must have $E(u^*)= E(u)$.

Below we will show that non-negative radially symmetric functions obeying \eqref{E:Rbdry} must agree with some $\Pw$ or $\Rw$.  In particular, this can be applied to $u^*$.  As all level sets of $\Pw$ and $\Rw$ have zero (volume) measure, combining the strict rearrangement inequalities of \cite{BrothersZiemer} with the arguments in the previous paragraph, we may conclude that any $u\in H^1(\R^3)$ obeying \eqref{E:Rbdry} must agree with some $\Pw$ or some $\Rw$ up to a translation and a phase rotation.

For the remainder of the proof we consider only $u$ that are non-negative and radially symmetric.

As $u$ is a minimizer of the energy with constrained virial and mass, $u$ must satisfy the corresponding Euler--Lagrange equation,  provided
$dV$ and $dM$ are linearly independent at the point $u$.  We first consider the possibility that they are linearly dependent.  As it turns
out, this is not an illusory scenario; a little thought shows that this must occur at $m=4 M(Q_1)/(3\sqrt{3})$, which is the minimal mass at which it is possible
to achieve zero virial.

As $u\not\equiv0$, so $dM|_u\neq 0$.  Thus linear dependence of $dV$ and $dM$ at $u$ guarantees
\begin{equation}\label{E:lindep}
-\Delta u + 3u^5 - \tfrac{3}{2} u^3 = cu,
\end{equation}
for some $c\in\R$.  Pairing this equation with $x\cdot\nabla u + \tfrac32 u$ yields
$$
\int |\nabla u|^2 + 3 u^6 - \tfrac98 u^4 \,dx=0, \qtq{while} V(u):= \int |\nabla u|^2 + u^6 - \tfrac34 u^4 \,dx = 0,
$$
by hypothesis.  Combining these immediately shows $\beta(u)=\tfrac13$.

Making the change of variables $u(x)=\frac{1}{\sqrt{2}}v(\frac{\sqrt{3}x}{2})$ in \eqref{E:lindep}, we find that $v$ is a non-negative radially symmetric solution to \eqref{E:Pdefn}.
Thus by Theorem~\ref{T:solitons}, it follows that $v(x)=\Pw(x)$ for some $\omega\in (0,\frac{3}{16})$.

As $u$ is a rescaling of a soliton $\Pw$ with $\beta(u)=\frac13$ and $V(u)=0$, it follows from Lemma~\ref{L:Rw} that $u=\Rw$, with the same value
of $\omega$.  From the rescaling used above we deduce from \eqref{E:Rw defn} that $\beta(\omega)=1$.  In particular, $\beta(\omega)>\frac13$.

We now turn to the case that $dV|_u$ and $dM|_u$ are linearly independent.
In this case, $u$ satisfies the Euler--Lagrange equation $dE|_u + \mu dV|_u + \nu dM|_u =0$, that is,
\begin{equation}\label{eq: E-L rewritten}
-\Delta u+u^5-u^3 + \mu (-2\Delta u+6u^5-3u^3) + 2\nu u =0.
\end{equation}

If the Lagrange multiplier $\mu=0$, then \eqref{eq: E-L rewritten} is precisely the defining equation for a soliton (cf. \eqref{E:Pdefn}) with $\omega=2\nu$.
As $u$ is non-negative and radially symmetric, Theorem~\ref{T:solitons} guarantees that $u=P_{\omega_0}$ for some $\omega_0\in (0,\frac{3}{16})$.
That $\beta(\omega_0)\geq\frac13$ follows from Lemma~\ref{L:energy+virial rescaling} and the fact that $u$ minimizes the energy subject to $M(u)=m$ and $V(u)=0$.

To show that actually $\beta(\omega_0)>\tfrac13$, we will use Lemma~\ref{L:cuspy}.  Suppose instead that $\beta(\omega_0)=\frac13$.
Combining \eqref{dMdw} and \eqref{E:dEdM} we see that $\omega\mapsto E(\Pw)$
is increasing at $\omega_0$; however, this function converges to $-\infty$ as $\omega\to3/16$.  Thus we may choose $\omega_1>\omega_0$ so that
$E(P_{\omega_1})=E(P_{\omega_0})$ and $E(\Pw)\geq E(P_{\omega_0})$ on the intervening interval.  By Lemma~\ref{L:cuspy} it then follows that
$M(P_{\omega_1})<M(P_{\omega_0})$.  This contradicts the minimality of $u=P_{\omega_0}$ because $\Et$ is strictly decreasing, as shown in Theorem~\ref{T:MMR}.

This leaves the case $\mu\neq 0$.  We first show that $\mu>0$, arguing by contradiction.

As $dV$ and $dM$ are linearly independent at $u$, there exists $w\in H^1(\R^3)$ so that $dV|_u(w)<0$ and $dM|_u(w)=0$.  Without loss of generality,
we may assume that $M(w)=M(u)$.  If $\mu<0$ then \eqref{eq: E-L rewritten} implies $dE|_u(w)<0$.  From these values of the differentials, we see that
$
u_\theta:=\cos(\theta) u + \sin(\theta) w
$
obeys $M(u_\theta) =M(u)$, $V(u_\theta) < 0$, and $E(u_\theta)<E(u)$ for small $\theta>0$.  In view of Lemma~\ref{L:energy+virial rescaling}, this
contradicts the minimality of $u$.  Thus, either $\mu=0$, which was treated above, or $\mu>0$, which we address next.

Under the change of variables $u(x)=av(\ld x)$ with
\begin{equation}\label{eq: a,ld}
a^2=\frac{1+3\mu}{1+6\mu}, \quad \ld^2=\frac{(1+3\mu)^2}{(1+2\mu)(1+6\mu)}, \qtq{and} \omega=\frac{2\nu (1+6\mu)}{(1+3\mu)^2},
\end{equation}
the Euler--Lagrange equation \eqref{eq: E-L rewritten} becomes $-\Delta v+v^5-v^3 + \omega v=0$.  Theorem~\ref{T:solitons} then allows us to conclude that
$v=\Pw$ for some $0<\omega<\frac3{16}$.  By Lemma~ref{L:Rw}, to see that $u=\Rw$ we need to show that $\beta(u)=\frac13$.  This follows by pairing \eqref{eq: E-L rewritten} with
$x\cdot\nabla u + \tfrac32 u$ and using $V(u)=0$, as in the treatment of \eqref{E:lindep}.  Lastly, a short computation shows
\begin{equation}\label{ZZZbeta}
\beta (\omega):= \beta(\Pw) =\beta(u) \frac{1+6\mu}{1+2\mu}>\frac{1}{3}.
\end{equation}

Finally, we turn to the assertions of the second paragraph in the theorem.

Choose $\omega_*\in(0,3/16)$ so that $P_{\omega_*}$ has the least mass possible among solitons (Theorem~\ref{T:solitons} guarantees that this is possible).
By \eqref{dMdw} we know that $\beta(\omega_*)>1/3$ and consequently $M(R_{\omega_*})< M(P_{\omega_*})$, by \eqref{ER:m}.  This shows that
$M(P_{\omega_*})>4 M(Q_1)/(3\sqrt{3})$ and consequently, there exists $\delta>0$ so that
no soliton has mass smaller than $4M(Q_1)/(3\sqrt{3}) + \delta$.  Thus minimizers in this regime must be rescaled solitons.

Next we exploit the fact that $\eta:=\inf\{E(\Rw):0<\omega<3/16\}$ is positive (see \eqref{ER:w0}) to show that the possibility
$u=\Pw$ does indeed occur.

Let $P_{\omega_1}$ denote an optimizer for \eqref{eq:GN} in the case $\alpha=1$.  Thus $M(P_{\omega_1})=M(Q_1)$ and $E(P_{\omega_1})=0$.  We claim that
there exists $\eps>0$ so that $M(\Pw)<M(P_{\omega_1})$ for $\omega_1-\eps<\omega<\omega_1$.  Taking this for granted for a moment, we see from the continuity of
$\omega\mapsto \Pw$ that $\Et(m)\to 0$ as $m\nearrow M(Q_1)$.  In particular, there is some $\delta>0$ so that $\Et(m)< \eta$ for $m> M(Q_1) - \delta$,
which guarantees that optimizers $u$ of $\Et(m)$ must be solitons (rather than rescaled solitons) on this interval.

It remains to verify the claim $M(\Pw)<M(P_{\omega_1})$ for $\omega_1-\eps<\omega<\omega_1$ and some $\eps>0$.  Note that since $\omega\mapsto M(\Pw)$ is
real analytic, failure of this claim would actually imply that $\frac{d\;}{d\omega} M(\Pw)<0$ on some interval of this type.  By \eqref{E:dEdM}, this in turn
implies that $E(\Pw)$ is increasing (and so negative) on this interval.  Recall that $E(\Pw)\to 0$ from above as $\omega\to 0$.  In this way, the assumption
that our claim fails leads to the existence of an interval $[\omega_0,\omega_1]$ so that $E(\Pw)$ vanishes at the endpoints and is non-positive on the interior.
This allows us to apply Lemma~\ref{L:cuspy} and deduce that $M(P_{\omega_0})<M(P_{\omega_1})=M(Q_1)$.  This contradicts Theorem~\ref{T:feasible}, which showed
that $M(Q_1)$ is the least mass at which zero energy is feasible.
\end{proof}

\subsection{Foliation of the region $\RR$}\label{SS: exhaustion of R}
As noted earlier, this subsection is devoted to the construction of a function $D:H^1(\R^3)\to[0,\infty]$
whose sublevel sets exhaust $\RR$.  There are many ways to do this; we choose one that is convenient for the subsequent analysis.
In particular, to ensure that it is conserved by the flow, we will choose $D(u)=D(M(u),E(u))$.
Note that property (v) below relies inherently on the shape of $\RR$, specifically, the monotonicity of the function $\Et(m)$
proved in Theorem~\ref{T:MMR}.  This property cannot be relaxed; it is precisely how our induction on $D(u)$ manifests a simultaneous
induction on the quantities $M(u)$ and $E(u)$ that are conserved by the flow.

%%%%%%%%%
\begin{proposition}[Key properties of $D(u)$]\label{P:all about D}
There is a continuous function $D:H^1(\R^3)\to[0,\infty]$ with the following properties:
\begin{SL}
\item $D(u)=0$ if and only if $u\equiv 0$.
\item $0<D(u)<\infty$ if and only if $(M(u),E(u))\in\RR$.
\item $D$ is conserved under the flow of the cubic-quintic NLS.
\item If $0<D(u)<\infty$, then $V(u)> 0$.
\item If $M(u_1)\leq M(u_2)$ and $E(u_1)\leq E(u_2)$, then $D(u_1)\leq D(u_2)$.
\item If $M(u_n)\leq M_0$, $E(u_n)\leq E_0$, and $D(u_n)\to D(M_0,E_0)$, then actually $M(u_n)\to M_0$ and $E(u_n)\to E_0$.
\item Given $0<D_0<\infty$, we have
\begin{equation}\label{E:norm equiv}
\|u\|_{\dot H^1_x}^2 \sim_{D_0} E(u) \qtq{and} \|u\|_{H^1_x}^2 \sim_{D_0} E(u) + M(u) \sim_{D_0} D(u)
\end{equation}
uniformly for all $u\in H^1(\R^3)$ with $D(u)\leq D_0$.
\end{SL}
\end{proposition}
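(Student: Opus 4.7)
I plan to set $D(u):=\Phi(M(u),E(u))$, which makes (iii) automatic by conservation of mass and energy. Choose $\Phi:[0,\infty)\times\R\to[0,\infty]$ to be $0$ at the origin, $+\infty$ off $\RR\cup\{(0,0)\}$, and a finite, strictly positive, strictly monotone (in each variable) function on $\RR$ that blows up as $(m,e)$ approaches the outer boundary. Properties (i) and (ii) are then immediate from the extension, (iv) is a direct invocation of Theorem~\ref{T:MMR}, and (v) reduces to monotonicity of $\Phi$ plus a short boundary case analysis: if $M(u_1)\leq M(u_2)$ and $E(u_1)\leq E(u_2)$ with $(M(u_2),E(u_2))\in\RR$, then $(M(u_1),E(u_1))\in\RR\cup\{(0,0)\}$ because $\Et$ is non-increasing and because $E(u_1)>0$ whenever $0<M(u_1)<M(Q_1)$ by Theorem~\ref{T:feasible}.

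A convenient explicit choice on $\RR$ is
\begin{equation*}
\Phi(m,e) \;=\; (m+e)\Bigl[1+\frac{m}{M(Q_1)-m}\Bigr] \;+\; \frac{e}{\Et(m)-e},
\end{equation*}
with the convention $e/(\Et(m)-e)=0$ when $\Et(m)=\infty$. Strict monotonicity in both arguments is transparent; the only mildly subtle piece is $m$-monotonicity of the last summand, which uses that $\Et$ is strictly decreasing on $[M_*,M(Q_1))$ by Theorem~\ref{T:MMR}, where $M_*:=\tfrac{4}{3\sqrt{3}}M(Q_1)$. For (vii), the decisive input is \eqref{E:Ea1}: on the sublevel set $\{D(u)\leq D_0\}$, the first penalty forces $M(u)\leq M(Q_1)D_0/(M(Q_1)+D_0)$, a $D_0$-dependent gap below $M(Q_1)$, so the prefactor $1-(M(u)/M(Q_1))^{1/2}$ is bounded below by some $c(D_0)>0$; this yields $\|\nabla u\|_{L^2}^2\lesssim_{D_0} E(u)$. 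The reverse bound $E(u)\lesssim_{D_0}\|\nabla u\|_{L^2}^2$ follows from $E(u)\leq\tfrac12\|\nabla u\|_{L^2}^2+\tfrac16\|u\|_{L^6}^6+\tfrac14\|u\|_{L^4}^4$ and Sobolev embedding once $\|\nabla u\|_{L^2}$ is known bounded. The comparison $D\sim_{D_0}M+E$ reduces to checking that on each sublevel set every summand of $\Phi$ is at most a $D_0$-dependent multiple of $m+e$, which is elementary given the bounds already established.

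Property (vi) will follow from continuity of $\Phi$ together with its strict monotonicity: passing to a convergent subsequence, the cluster point $(M^*,E^*)$ satisfies $M^*\leq M_0$, $E^*\leq E_0$, and $\Phi(M^*,E^*)=\Phi(M_0,E_0)$, so strict monotonicity forces $(M^*,E^*)=(M_0,E_0)$. The main obstacle is the behaviour of $\Et$ at $m=M_*$, where it drops from $+\infty$ to the finite value $\Et(M_*)$, producing a potential discontinuity in $\Phi$ across the segment $\{M_*\}\cap\RR$. As a preliminary I would upgrade the lsc of $\Et$ from Theorem~\ref{T:MMR} to genuine continuity on $[M_*,M(Q_1))$ by constructing trial functions at slightly smaller masses via a controlled rescaling $u\mapsto a\,u(b\,\cdot)$ of existing optimizers, with $a,b$ chosen to match the target mass and to preserve $V=0$; this complements the lsc and gives a matching $\limsup_{m'\uparrow m}\Et(m')\leq\Et(m)$. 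The remaining jump of $\Phi$ at $m=M_*$ turns out not to obstruct (vi): a sequence $(M_n,E_n)$ with $M_n\leq M_*$ and $\Phi(M_n,E_n)\to\Phi(M_*,e^*)$ cannot have $M_n<M_*$ infinitely often, because the left limit of $\Phi$ at $(M_*,e^*)$ is strictly smaller than $\Phi(M_*,e^*)$, so $M_n=M_*$ eventually and $E_n\to e^*$ by continuity of $\Phi(M_*,\cdot)$. If literal continuity of $\Phi$ on $\RR$ is desired, a smoothly ramped regularizing term supported near $m=M_*$ can be added without disturbing the other properties or the exhaustion of $\RR$ by sublevel sets.
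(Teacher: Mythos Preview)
Your overall strategy---defining $D$ as a function of $(M,E)$ that is finite precisely on $\RR$, strictly monotone in each variable, and blowing up at the boundary---matches the paper's, and your verification of (i)--(v) and (vii) is essentially correct. However, there is a genuine gap concerning continuity. Your explicit choice
\[
\Phi(m,e) = (m+e)\Bigl[1+\frac{m}{M(Q_1)-m}\Bigr] + \frac{e}{\Et(m)-e}
\]
has an unavoidable jump along the vertical segment $\{m=M_*\}\cap\RR$: for $m<M_*$ the last summand vanishes (since $\Et=\infty$), while at $m=M_*$ it equals $e/(\Et(M_*)-e)>0$. Your proposed remedy---``a smoothly ramped regularizing term supported near $m=M_*$''---does not obviously work: any cutoff that kills the singular term near $M_*$ also prevents $\Phi$ from blowing up as $e\uparrow\Et(m)$ for $m$ just above $M_*$, so the sublevel sets no longer exhaust $\RR$. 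Since continuity of $D$ is part of the statement (and is genuinely used later, e.g.\ to conclude $D(u_0)=D_c$ for the limit of a Palais--Smale sequence), this is not a cosmetic issue.

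The paper sidesteps the problem by taking
\[
D(m,e) = e + \frac{m+e}{\dist\bigl((m,e),\RR^c\bigr)}, \qquad \RR^c:=\{m\ge M_*,\ e\ge\Et(m)\},
\]
with $D=\infty$ on $\RR^c$. Because $\RR^c$ is closed (by lower semicontinuity of $\Et$) the distance is automatically continuous, and monotonicity of $\Et$ gives the one-sided monotonicity of the distance needed for (v). This buys continuity for free without any knowledge of $\Et$ beyond what Theorem~\ref{T:MMR} already provides. Incidentally, your detour through upgrading lsc of $\Et$ to full continuity is unnecessary even for your version of (vi): lsc alone gives $\limsup_n \Phi(m_n,e_n)\le\Phi(M^*,E^*)$ along any convergent subsequence, which combined with strict monotonicity and $\Phi(M^*,E^*)\le\Phi(M_0,E_0)$ forces $(M^*,E^*)=(M_0,E_0)$.
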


\begin{proof}
As noted above, $D(u)$ will be chosen to depend only on $M(u)$ and $E(u)$, which justifies (iii).  Specifically, we define
\begin{equation}\label{eq: def RRc}
\RR^c := \{(m,e)\in\R^2 : m \geq \tfrac{4}{3\sqrt{3}}M(Q_1) \text{ and } e\geq \Et(m) \bigr\}.
\end{equation}
and then set
\begin{equation}\label{eq: def D}
D(u):=D\bigl(M(u),E(u)\bigr):=E(u)+\frac{M(u)+E(u)}{\dist\, \Big(\big(M(u),E(u)\big), \RR^c\Big)}
\end{equation}
when $(M(u),E(u))\not\in\RR^c$ and $D(u)=\infty$ otherwise. Note that $\RR^c$ is not, strictly speaking, the complement of $\RR$.  Rather, it is precisely
the set of feasible mass/energy pairs, other than $(0,0)$ and those in $\RR$.

From Theorems~\ref{T:feasible} and~\ref{T:MMR} we see that
\begin{equation}\label{E:E>0}
E(u) \geq 0 \qtq{whenever} D(u)<\infty.
\end{equation}
Properties~(i) and~(ii) follow immediately.  To obtain~(iv), we then need merely exploit the fact that $V(u)>0$ on $\RR$; see Theorem~\ref{T:MMR}.

Next we consider property (vii).  Combining \eqref{E:E>0} and $\Em(M(Q_1))=0$ shows that
\begin{equation}\label{ehd0}
D(u) \geq \tfrac{M(u)}{M(Q_1)-M(u)} \qtq{and so} 1 - \sqrt{\tfrac{M(u)}{M(Q_1)}} \geq \tfrac{1}{2D(u)+2}.
\end{equation}
From the second relation and \eqref{E:Ea1} we then deduce
\begin{equation}\label{ehd1}
\tfrac{1}{4D(u)+4} \|u\|_{\dot H^1_x}^2 \leq E(u) \leq D(u),
\end{equation}
while by this and Sobolev embedding,
\begin{equation}\label{ehd2}
E(u) \lesssim \|u\|_{\dot H^1_x}^2 + \|u\|_{\dot H^1_x}^6 \lesssim \|u\|_{\dot H^1_x}^2 \bigl[ 1 + D(u)^4 \bigr].
\end{equation}
Taken together, \eqref{ehd1} and \eqref{ehd2} show that $E(u)$ and $\|u\|_{\dot H^1_x}^2$ are comparable, uniformly for $D(u)\leq D_0$.

Next observe that \eqref{E3/32} and \eqref{ehd2} yield
$$
\|u\|_{H^1_x}^2 \leq 2E(u) + \tfrac{19}{16} M(u) \lesssim \|u\|_{H^1_x}^2 \bigl[ 1 + D(u)^4 \bigr],
$$
which shows the analogous comparability of $E(u)+M(u)$ and $\|u\|_{H^1}^2$.  To complete the proof of \eqref{E:norm equiv}, we need to show that these
are comparable to $D(u)$.  From \eqref{ehd0} and \eqref{ehd1} we have
$$
E(u)+ M(u) \leq [1+M(Q_1)] D(u) .
$$
On the other hand, if $D(u)\leq D_0 < \infty$, then
$$
D(u) \leq \frac{4 D_0 M(u)}{M(Q_1)}  + E(u) + \frac{4[E(u)+M(u)]}{M(Q_1)}.
$$
To see this we divide into two cases: If $M(u)\geq M(Q_1)/4$, then this follows from $D(u)\leq  D_0$ and $E(u)\geq 0$.  If $M(u)\leq M(Q_1)/4$, we need only observe that
$$
\dist\bigl((M(u),E(u)), \RR^c\bigr) \geq \tfrac{4}{3\sqrt{3}}M(Q_1) - M(u) \geq \tfrac14 M(Q_1).
$$
This completes the proof of part~(vii).

Lastly, we discuss the (strict) monotonicity relations (v) and (vi).  These are elementary from the structure of $D(u)$ and the fact that
\begin{equation}\label{dist mono}
\dist\big((m',e'), \RR^c\big) \geq \dist\big((m,e), \RR^c\big) \qtq{whenever} m'\leq m\qtq{and} e'\leq e,
\end{equation}
which follows from the monotonicity of $\Et(m)$.  Note that we do not claim strict monotonicity in \eqref{dist mono}.  It is not true; part of the boundary of $\RR^c$ is vertical.

Incidentally, lower semicontinuity of $\Et(m)$, which was shown in Theorem~\ref{T:MMR}, guarantees that $\RR^c$ is a closed set.  Thus the minimal distance is
actually achieved by a point in $\RR^c$.
\end{proof}

%%%%%%%%%%%%%%%%%%%%%%%%%%%%%%%%%%%%%%%%%%%%%%%%%%%%%%%%%%%%%%%%%%%%
%%%%%%%%%%%%%%%%%%%%%%%%%%%%%%%%%%%%%%%%%%%%%%%%%%%%%%%%%%%%%%%%%%%%%

\section{Small data scattering  and perturbation theory}\label{SEC:4}

As discussed in the introduction, Zhang \cite{Zhang} proved global well-posedness of \eqref{3-5} for initial data in $H^1(\R^3)$; moreover, she proved scattering for solutions whose mass is sufficiently small depending on their kinetic energy.  The goal of this section is to develop several results of a similar flavour that will be needed to run the induction on mass and energy argument of this paper.  We begin with a more explicit version of small data scattering:

\begin{proposition}[Small data scattering]\label{thm: Small data scattering}
There exists $\eta>0$ such that for $u_0\in H^1(\R^3)$ with $D(u_0)\leq \eta$, there exists a unique global solution $u\in C(\R; H^1(\R^3))$ to \eqref{3-5} with initial condition $u(0)=u_0$.  Moreover, the solution satisfies the global spacetime bound
\begin{equation}\label{Zsmall1}
\|u\|_{L^{10}_{t,x}(\R\times \R^3)}\les \|\nabla u_0\|_{L^2(\R^3)}.
\end{equation}
Consequently, the solution $u$ scatters, that is, there exist asymptotic states $u_{\pm}\in H^1(\R^3)$ such that
\begin{align*}
\lim_{t\to\pm\infty}\|u(t)-e^{it\Delta}u_{\pm}\|_{H^1(\R^3)}=0.
\end{align*}
\end{proposition}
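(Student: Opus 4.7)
The plan is a standard small-data contraction-mapping argument in Strichartz spaces; the observation that makes it applicable here is Proposition~\ref{P:all about D}(vii), which gives $\|u_0\|_{H^1_x}^2 \les D(u_0)\le \eta$. Setting $\delta := \|u_0\|_{H^1_x}$, the whole problem reduces to scattering for data of small $H^1$-norm.

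First I would look for $u$ as a fixed point of the Duhamel map
\begin{equation*}
\Phi(u)(t) := e^{it\Delta} u_0 - i\int_0^t e^{i(t-s)\Delta}\bigl[-|u|^2 u + |u|^4 u\bigr](s)\,ds
\end{equation*}
inside the complete metric ball $X := \{ u : \|u\|_{S^0(\R)}+\|\nabla u\|_{S^0(\R)}\leq A\delta\}$, equipped with the distance $d(u,v) := \|u-v\|_{S^0(\R)}$ for some absolute constant $A$ to be chosen large. By Lemma~\ref{LEM:Stri} the linear part contributes $\les\delta$. The two nonlinear terms are then controlled by H\"older combined with Sobolev: for the energy-critical quintic, the admissible pair $(10,\tfrac{30}{13})$ together with the Sobolev inequality $\|u\|_{L^{10}_{t,x}}\les \|\nabla u\|_{L^{10}_t L^{30/13}_x}$ yields
\begin{equation*}
\bigl\|\nabla(|u|^4 u)\bigr\|_{L^2_t L^{6/5}_x} \les \|u\|_{L^{10}_{t,x}}^4 \|\nabla u\|_{L^{10}_t L^{30/13}_x} \les \|\nabla u\|_{S^0}^5 \les \delta^5,
\end{equation*}
while for the $H^1$-subcritical cubic, the Sobolev embedding $H^1(\R^3)\hookrightarrow L^3(\R^3)$ together with the admissible pairs $(\infty,2)$ and $(2,6)$ gives
\begin{equation*}
\bigl\|\nabla(|u|^2 u)\bigr\|_{L^2_t L^{6/5}_x} \les \|u\|_{L^\infty_t L^3_x}^2 \|\nabla u\|_{L^2_t L^6_x} \les \|u\|_{L^\infty_t H^1_x}^2 \|\nabla u\|_{S^0} \les \delta^3.
\end{equation*}
For $\delta$ sufficiently small, these close both the self-mapping and contraction estimates on $X$, producing a unique fixed point $u\in X$ with $\|\nabla u\|_{S^0(\R)}\les\delta$; Sobolev embedding then yields \eqref{Zsmall1}, and since $(\infty,2)$ is admissible we have $u\in C_t H^1_x$. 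Uniqueness in the full class $C_t H^1_x$ follows by a standard short-time perturbation argument based on the same nonlinear estimates.

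Finally, for scattering I would define
\begin{equation*}
u_\pm := u_0 - i\int_0^{\pm\infty} e^{-is\Delta}\bigl[-|u|^2 u + |u|^4 u\bigr](s)\,ds \in H^1(\R^3),
\end{equation*}
where the tail integrals converge in $H^1(\R^3)$ by the same nonlinear Strichartz bounds applied on $[T,\pm\infty)$. Writing $u(t)-e^{it\Delta}u_\pm$ as the analogous Duhamel tail starting at $t$ and invoking Lemma~\ref{LEM:Stri} once more shows $\|u(t)-e^{it\Delta}u_\pm\|_{H^1}\to 0$ as $t\to\pm\infty$. The only mild subtlety is arranging Strichartz exponents so that both the $H^1$-subcritical cubic and the energy-critical quintic fit inside the same ball $X$ and each contributes a strictly positive power of $\delta$ beyond the linear bound; the rest is textbook.
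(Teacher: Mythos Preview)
Your proposal is correct and follows essentially the same approach as the paper: use Proposition~\ref{P:all about D}(vii) to convert $D(u_0)\le\eta$ into smallness of $\|u_0\|_{H^1_x}$, then run a contraction mapping for the Duhamel map in Strichartz spaces. The only cosmetic difference is in the H\"older splitting for the cubic term: the paper places $\langle\nabla\rangle(|u|^2u)$ in $L^{5/3}_tL^{30/23}_x$ via $\|u\|_{L^\infty_tL^2_x}\|u\|_{L^{10}_{t,x}}\|\langle\nabla\rangle u\|_{L^2_tL^6_x}$, whereas you place $\nabla(|u|^2u)$ in $L^2_tL^{6/5}_x$ via $\|u\|_{L^\infty_tL^3_x}^2\|\nabla u\|_{L^2_tL^6_x}$; both close identically.
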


\begin{proof}
The proof of existence involves a standard contraction mapping argument.  The main observation is that for $\eta$ sufficiently small, the hypothesis $D(u_0)\leq \eta$
together with Proposition~\ref{P:all about D} parts~(iii) and~(vii) implies
$$
\|u\|_{L_t^\infty L_x^2}^2 +  \|\nabla u\|_{L_t^\infty L_x^2}^2 \les \eta.
$$

With
\begin{equation*}
\Phi(u)(t):=e^{it\Delta}u_0-i\int_0^te^{i(t-t')\Delta}\big(|u|^4u-|u|^2u\big)(t')\, dt',
\end{equation*}
it is not difficult to see that $\Phi$ is a contraction on the ball $B\subset L^{\infty}_t{H}^1_x\cap L^2_t{H}^{1,6}_x\cap L^{10}_{t,x}$ of radius $C_0\eta^{1/2}$ for a sufficiently large absolute constant $C_0>0$.  The key estimates appear already in the proof that $\Phi$ maps this ball to itself: for $u\in B$,
\begin{align*}
\|\Phi (u) &\|_{L^{\infty}_t{H}^1_x\cap L^2_t{H}^{1,6}_x\cap L^{10}_{t,x}} \\
&\les \|\langle\nabla\rangle u_0\|_{L^2}+\|u\|^{4}_{L^{10}_{t,x}}\|\langle\nabla\rangle u\|_{L^2_tL^6_x}
	+\|u\|_{L^{\infty}_tL^2_x} \|u\|_{L^{10}_{t,x}}\|\langle\nabla\rangle u\|_{L^2_tL^6_x}\\
&\les \eta^{1/2}+(C_0\eta^{1/2})^5+ \eta^{1/2} (C_0\eta^{1/2})^2\leq C_0\eta^{1/2},
\end{align*}
provided $\eta$ is sufficiently small and $C_0$ is sufficiently large.

Proving scattering from finite global spacetime norms is standard; we omit the details.  Lastly, uniqueness in the larger class $C(\R; H^1(\R^3))$ can be proved by exploiting endpoint Strichartz estimates; see, for example, \cite{CKSTT, KOPV2011}.
\end{proof}

We will also need the following persistence of regularity result:

\begin{lemma}[Persistence of regularity]\label{LEM:small2}
Let $u\in C(\R; H^1(\R^3))$ be a solution to \eqref{3-5} such that $S:=\|u\|_{L^{10}_{t, x}(\R\times\R^3)} < \infty$.  Then for any $t_0\in \R$ and $k=0,1$ we have
$$
\big\| |\nabla|^k u\big\|_{S^0(\R)}\leq C\big(S, M(u)\big) \big \| |\nabla|^k u(t_0)\big\|_{L^2_x}.
$$
\end{lemma}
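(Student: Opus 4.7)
The plan is to carry out a standard splitting argument: partition $\R$ into finitely many intervals on which $\|u\|_{L^{10}_{t,x}}$ is small, derive a closed Strichartz-type estimate on each, and concatenate.

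First, fix a small parameter $\eta=\eta(M(u))>0$ to be chosen. Using $\|u\|_{L^{10}_{t,x}(\R\times\R^3)}=S<\infty$, partition $\R$ into $N \lesssim (S/\eta)^{10}$ consecutive intervals $\{I_j\}$ with $t_0$ as one of the partition points and $\|u\|_{L^{10}_{t,x}(I_j\times\R^3)}\leq \eta$ for each $j$. On a single interval $I_j=[t_j,t_{j+1}]$, the Duhamel formula combined with Lemma~\ref{LEM:Stri} gives
$$\||\nabla|^k u\|_{S^0(I_j)} \leq C\||\nabla|^k u(t_j)\|_{L^2_x} + C\||\nabla|^k(|u|^4 u)\|_{N^0(I_j)} + C\||\nabla|^k(|u|^2 u)\|_{N^0(I_j)}.$$

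For the quintic contribution, using the pointwise bound $|\nabla(|u|^4 u)|\lesssim |u|^4|\nabla u|$ (for $k=1$) and H\"older into the dual admissible pair $(2,6/5)$,
$$\||\nabla|^k(|u|^4 u)\|_{L^2_t L^{6/5}_x(I_j)} \lesssim \|u\|_{L^{10}_{t,x}(I_j)}^4\,\||\nabla|^k u\|_{L^{10}_t L^{30/13}_x(I_j)} \lesssim \eta^4\,\||\nabla|^k u\|_{S^0(I_j)},$$
since $(10,30/13)$ is Strichartz-admissible in three dimensions. For the cubic contribution, the key is to spread the three factors of $u$ over $L^\infty_t L^2_x$ (which supplies $M(u)^{1/2}$), $L^{10}_{t,x}$ (which supplies the smallness $\eta$), and one admissible Strichartz space. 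A direct H\"older calculation with target pair $(q',r')=(2,6/5)$, distributing one factor into each of $L^\infty_t L^2_x$, $L^{10}_{t,x}$, and $L^{5/2}_t L^{30/7}_x$ (admissible since $\tfrac{2}{5/2}+\tfrac{3}{30/7}=\tfrac{3}{2}$), yields
$$\||\nabla|^k(|u|^2 u)\|_{L^2_t L^{6/5}_x(I_j)} \lesssim M(u)^{1/2}\,\eta\,\||\nabla|^k u\|_{S^0(I_j)}.$$

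Choosing $\eta=\eta(M(u))$ small enough that $C\eta^4+CM(u)^{1/2}\eta\leq \tfrac12$ lets us absorb both nonlinear terms into the left-hand side, giving
$$\||\nabla|^k u\|_{S^0(I_j)}\leq 2C\,\||\nabla|^k u(t_j)\|_{L^2_x}.$$
Iterating outward from $t_0$ across the $N$ intervals, using $\||\nabla|^k u(t_{j+1})\|_{L^2_x}\leq \||\nabla|^k u\|_{S^0(I_j)}$ at each step, produces
$$\||\nabla|^k u\|_{S^0(\R)} \leq (2C)^{N+1}\,\||\nabla|^k u(t_0)\|_{L^2_x},$$
with $N=N(S,M(u))$, which is the desired bound.

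The main (and essentially only) obstacle is choosing H\"older exponents so that the cubic nonlinearity $|u|^2u$, which is neither mass- nor energy-critical, can nevertheless be made small on a short interval. This is achieved by letting one of the three copies of $u$ absorb $\|u\|_{L^\infty_t L^2_x}=M(u)^{1/2}$; without this use of mass conservation, the cubic term cannot be dominated by smallness of the $L^{10}_{t,x}$-norm alone, and this is precisely why the constant is allowed to depend on $M(u)$.
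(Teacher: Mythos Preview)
Your proof is correct and follows essentially the same approach as the paper's: partition $\R$ into $O((S/\eta)^{10})$ intervals with small $L^{10}_{t,x}$-norm, absorb the quintic and cubic nonlinearities via Strichartz and H\"older (using $\|u\|_{L^\infty_t L^2_x}=M(u)^{1/2}$ for the cubic term), and iterate. The only cosmetic difference is your choice of the dual admissible pair $(2,6/5)$ and the companion pairs $(10,30/13)$ and $(5/2,30/7)$, whereas the paper places $|\nabla|^k u$ in $L^2_tL^6_x$ and uses the corresponding dual pairs; both H\"older splittings are valid.
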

	
\begin{proof}
We divide $\R$ into $O\big((\frac{S}{\eps})^{10}\big)$ many subintervals $I_j=[t_j, t_{j+1}]$ such that
$$
\|u\|_{L^{10}_{t, x}(I_j\times\R^3)} \leq \eps,
$$
where $\eps > 0$ is a small constant to be chosen later.
Then by repeating the computation in the proof of Proposition~\ref{thm: Small data scattering}, we have
\begin{align*}
\big\||\nabla|^k u\|_{S^0(I_j)}
& \les \big\| |\nabla|^k u(t_j)\|_{L_x^2} + \|u\|_{L^{10}_{t, x}}^4 \big\||\nabla|^k u\|_{L^{2}_tL^{6}_x} + \| u\|_{L^\infty_tL^2_x}\| u\|_{L^{10}_{t, x}}
\big\||\nabla|^k u\|_{L^{2}_t L^{6}_x} \\
& \les \big\| |\nabla|^k u(t_j)\|_{L_x^2} + \eps \big(\eps^3 + M(u)^{1/2} \big) \big\||\nabla|^k u\|_{S^0(I_j)},
\end{align*}
where all spacetime norms are over $I_j\times\R^3$.  Choosing $\eps = \eps(M(u)) > 0$ sufficiently small, we obtain
$$
\big\||\nabla|^k u\|_{S^0(I_j)}\les  \big\| |\nabla|^k u(t_j)\|_{L_x^2} .
$$
Combining the subintervals $I_j$ yields the claim.
\end{proof}
	
Our last result in this section is a stability result for \eqref{3-5}.  This is essential for any induction on mass/energy argument.

\begin{proposition}[Perturbation theory]\label{Perturbation of {3-5}}
Let $\tilde{u}:I\times\R^3\to\C$ be a solution to the perturbed cubic-quintic NLS
\begin{equation*}
i\partial_t\tilde{u}+\Delta \tilde{u}=|\tilde{u}|^4\tilde{u}-|\tilde{u}|^2\tilde{u}+e
\end{equation*}
for some function $e$.  Suppose  that
\begin{align}
\|\tilde{u}\|_{L^{\infty}_tH^1_x(I\times\R^3)}& \leq E \label{eq: perturb1}\\
\|\tilde{u}\|_{L^{10}_{t,x}(Id\times\R^3)}& \leq L \label{eq: perturb2}
\end{align}
for some $E, L>0$.  Let $u_0\in H^1(\R^3)$ with $\|u_0\|_{L^2(\R^3)}\leq M$ for some $M>0$ and let $t_0\in I$.  There exists $\eps_0=\eps_0(E,L,M)>0$
such that if
\begin{align}
\| u_0 -\tilde u(t_0)\|_{\dot H^1_x} \leq\eps \label{eq: perturb3}\\
\|\nabla e\|_{L^{\frac{10}{7}}_{t,x}(I\times\R^3)}\leq\eps \label{eq: perturb4}
\end{align}
for $0<\eps<\eps_0$, then the unique global solution $u$ to \eqref{3-5} with $u(t_0)=u_0$ satisfies
\begin{align}\label{Zperturb3}
\|\nabla(u-\tilde{u})\|_{S^0(I)}\leq C(E,L,M)\eps,
\end{align}
where $C(E, L, M)$ is a non-decreasing function of $E, L$, and $M$.
\end{proposition}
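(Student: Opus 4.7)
The plan is a standard long-time perturbation argument: first bootstrap Strichartz regularity on $\tilde u$ itself, then partition $I$ into subintervals on which $\tilde u$ is small in a critical norm, and finally run a continuity argument for $w := u - \tilde u$ on each subinterval, iterating over the partition.

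First I would upgrade \eqref{eq: perturb2} to the \emph{a priori} bound $\|\langle\nabla\rangle \tilde u\|_{S^0(I)} \le C(E,L,M)$. This follows by rerunning the proof of Lemma~\ref{LEM:small2} for the perturbed equation: partition $I$ into $O((L/\eta_0)^{10})$ subintervals on which $\|\tilde u\|_{L^{10}_{t,x}} \le \eta_0$, apply Strichartz to $\tilde u$ with the extra forcing $e$ present, and absorb using \eqref{eq: perturb1}, \eqref{eq: perturb4}, and a sufficiently small $\eta_0 = \eta_0(M)$. Summing the resulting bounds over subintervals yields the claim.

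Next I would refine the partition to $I = \bigcup_{j=1}^{J} I_j$ with consecutive endpoints $t_j$ and
$$\|\tilde u\|_{L^{10}_{t,x}(I_j\times\R^3)} + \|\langle\nabla\rangle \tilde u\|_{L^2_t L^6_x(I_j\times\R^3)} + \|\nabla \tilde u\|_{L^{10/3}_{t,x}(I_j\times\R^3)} \le \eta,$$
where $\eta = \eta(E,M)$ is small (to be chosen) and $J = J(E,L,M)$. Setting $w := u - \tilde u$ and $F(v) := |v|^4 v - |v|^2 v$, the difference $w$ satisfies
$$i\partial_t w + \Delta w = \bigl[F(\tilde u + w) - F(\tilde u)\bigr] - e.$$
Applying \eqref{ZStri1} on $I_j$ starting from $t_j$ to $\nabla w$, together with Hölder bounds on $\nabla[F(\tilde u + w) - F(\tilde u)]$ of exactly the type used in the proof of Proposition~\ref{thm: Small data scattering} (the quintic piece estimated in $L^{10/7}_{t,x}$, the cubic piece in a dual Strichartz space with one factor carried in $L^\infty_t L^2_x$), we arrive schematically at
$$A_j \;\le\; C\|\nabla w(t_j)\|_{L^2_x} \;+\; C\bigl(\eta^4 + A_j^4\bigr)A_j \;+\; C(M+E)\bigl(\eta + A_j\bigr)A_j \;+\; C\eps,$$
where $A_j := \|\nabla w\|_{S^0(I_j)}$. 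The factor $M+E$ arises from $\|w\|_{L^\infty_t L^2_x} \le \|u\|_{L^\infty_t L^2_x} + \|\tilde u\|_{L^\infty_t L^2_x} \le M + E$, using mass conservation for $u$ together with \eqref{eq: perturb1}. Choosing $\eta = \eta(E,M)$ so that $C(M+E)\eta \le \tfrac14$ and then $\eps_0 = \eps_0(E,L,M)$ so small that the bootstrap closes on every $I_j$, a standard continuity argument gives $A_j \le 2C\|\nabla w(t_j)\|_{L^2_x} + 2C\eps$.

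Finally, iteration yields $\|\nabla w(t_{j+1})\|_{L^2_x} \le C'\|\nabla w(t_j)\|_{L^2_x} + C'\eps$, and together with \eqref{eq: perturb3} this implies $\|\nabla w(t_j)\|_{L^2_x} \lesssim (C')^j \eps$ for each $j$; summing over the $J$ intervals then produces \eqref{Zperturb3} with a constant depending only on $E$, $L$, and $M$. The hard part is the cubic nonlinearity $|u|^2u$, which is not critical at the $\dot H^1$ level, so its contribution to $\nabla[F(\tilde u + w) - F(\tilde u)]$ is not controlled by the $L^{10}_{t,x}$-smallness of $\tilde u$ alone. The remedy, following Proposition~\ref{thm: Small data scattering} and Lemma~\ref{LEM:small2}, is to place one factor in $L^\infty_t L^2_x$; this is precisely why the hypothesis $\|u_0\|_{L^2} \le M$ enters the statement, providing (via mass conservation for $u$) a uniform $L^2$ bound throughout $I$.
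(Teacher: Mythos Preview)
Your proposal is correct and follows essentially the same route as the paper: the paper invokes ``the usual combinatorial argument'' together with Lemma~\ref{LEM:small2} to reduce to the case where $\|\tilde u\|_{L^{10}_{t,x}} + \|\nabla\tilde u\|_{L^2_tL^6_x}$ is small (exactly your partition step), and then derives the same Strichartz-based inequality for $A(t)=\|\nabla w\|_{S^0}$ with the same structure of terms (including the $(M+E)$ factor coming from $\|w\|_{L^\infty_tL^2_x}\le M+E$), closing via continuity. Your explicit iteration over subintervals is precisely what the paper abbreviates as the ``usual combinatorial argument.''
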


\begin{proof}
Without loss of generality, we may assume $t_0=0\in I$.  By the usual combinatorial argument (see, for example, \cite{CKSTT}) and Lemma~\ref{LEM:small2}, it suffices to prove the proposition under the additional hypothesis
\begin{equation} \label{Zperturb1}
\|\tilde{u}\|_{L^{10}_{t,x}}+\|\nabla\tilde{u}\|_{L^2_tL^6_x}\leq \eta,
\end{equation}
where $\eta=\eta (M,E) > 0$ is a small constant to be chosen later.  From Theorem~\ref{T:Matador} we know that $u$ exists globally; the point is to prove the estimate
\eqref{Zperturb3}.

Let $w:=u-\tilde{u}$ and let $A(t):=\|w\|_{\dot S^1(I\cap [-t,t])}$.  Then by Strichartz, H\"older, \eqref{eq: perturb3},  \eqref{eq: perturb4}, and \eqref{Zperturb1}, we obtain
\begin{align*}
A(t)&\les \| u_0 -\tilde u(t_0)\|_{\dot H^1_x} + \big\|\nabla\big(|u|^4u  -|\tilde{u}|^4\tilde{u}\big)\big\|_{L^{\frac{10}{9}}_tL^{\frac{30}{17}}_x} + \big\| \nabla\big(|u|^2u-|\tilde{u}|^2\tilde{u}\big)\big\|_{L^{\frac{5}{3}}_tL^{\frac{30}{23}}_x} \\
&\quad +\|\nabla e\|_{L^{\frac{10}{7}}_{t,x}}\\
&\les \eps+ \|\nabla w\|_{L^2_tL^6_x} \big[\|\tilde{u}\|_{L^{10}_{t,x}}^4+ \|w\|_{L^{10}_{t,x}}^4\big]
+ \|\nabla\tilde{u}\|_{L^2_tL^6_x}\big[\|\tilde{u}\|_{L^{10}_{t,x}}^3\|w\|_{L^{10}_{t,x}} + \|w\|_{L^{10}_{t,x}}^4\big]\\
&\quad +\|\nabla w\|_{L^2_tL^6_x} \big[\|\tilde{u}\|_{L^{\infty}_tL^2_x} \|\tilde{u}\|_{L^{10}_{t,x}}+ \|w\|_{L^{\infty}_tL^2_x}\|w\|_{L^{10}_{t,x}}\big]\\
&\quad+ \|\nabla\tilde{u}\|_{L^2_tL^6_x}\|w\|_{L^{10}_{t,x}}\big[\|\tilde u\|_{L^{\infty}_tL^2_x}+\|w\|_{L^{\infty}_tL^2_x}\big]\\
&\les \eps + \eta^4 A(t) + \eta A(t)^4 + A(t)^5 + \eta (M +E) A(t) + (M+E) A(t)^2,
\end{align*}
where all spacetime norms are over $(I\cap [-t,t])\times\R^3$.  To obtain the last inequality above we also used \eqref{eq: perturb1} and the conservation of mass for \eqref{3-5} to estimate
$$
\|w\|_{L^{\infty}_tL^2_x}\leq\|\tilde{u}\|_{L^{\infty}_tL^2_x}+\|u\|_{L^{\infty}_tL^2_x}\leq E+\|u_0\|_{L^2}\leq E+M.
$$
Therefore, taking $\eta$ sufficiently small depending on $M$ and $E$ and then using the standard continuity argument to remove the restriction to $[-t,t]$, we derive \eqref{Zperturb3}.
\end{proof}

%%%%%%%%%%%%%%%%%%%%%%%%%%%%%%%%%%%%%%%%%%%%%%%%%%%%%%%%%%%%%%%%%%%%%%%%%%%%%%
%%%%%%%%%%%%%%%%%%%%%%%%%%%%%%%%%%%%%%%%%%%%%%%%%%%%%%%%%%%%%%%%%%%%%%%%%%%%%%

\section{Linear profile decomposition in $H^1(\R^3)$}
\label{SEC:5}

The goal of this section is to derive a concentration-compactness principle for the linear Schr\"odinger propagator for bounded sequences of initial data in the \emph{inhomogeneous} Sobolev space $H^1(\R^3)$.  Specifically, we prove that any bounded sequence in $H^1(\R^3)$ can be written as an asymptotically decoupling superposition of translations and rescalings of linear evolutions of fixed profiles plus a remainder whose linear evolution converges to zero in the $L^{10}_{t,x}$-norm.  The fact that $H^1(\R^3)$ is not a scale-invariant space makes the argument more complicated; in particular, limiting profiles may not belong to $H^1(\R^3)$.  On the other hand, working in $H^1(\R^3)$ guarantees that the profiles cannot live at arbitrarily large length scales; cf. $\ld_{\infty}<\infty$ in Proposition~\ref{propo: Inverse Strichartz}.

A linear profile decomposition for the linear Schr\"odinger propagator for bounded sequences in the \emph{homogeneous} Sobolev space $\dot H^1(\R^3)$ was first obtained by Keraani \cite{keraani-h1}, relying on an improved Sobolev inequality proved by G\'erard, Meyer, and Oru \cite{GerardMO}.  We should also note the influential precursor \cite{bahouri-gerard} which treated the wave equation.  A linear profile decomposition for the Schr\"odinger propagator for bounded sequences in $L^2(\R^d)$ can be found in \cite{BegoutVargas, carles-keraani, merle-vega}.

The approach we follow here mirrors that in \cite{ClayNotes, Visan:Oberwolfach}, rather than \cite{keraani-h1}.  In particular, we will build our linear profile decomposition based on an improved Strichartz inequality, which states that if the linear evolution is large in the  $L^{10}_{t,x}$-norm, then at least one frequency annulus contributes non-trivially.  The improved Strichartz inequality is used to find the correct scaling parameters in the linear profile decomposition.

\begin{lemma}[Improved Strichartz estimate]\label{lemma: improved Strichartz}
For $f\in\dot{H}^1(\R^3)$ we have
\begin{align}\label{Zimp1}
\|e^{it\Delta}f\|_{L^{10}_{t,x}(\R\times \R^3)}\lesssim \|f\|_{\dot{H}^1(\R^3)}^{\frac15}\sup_{N\in 2^{\Z}}\|e^{it\Delta}P_Nf\|_{L^{10}_{t,x}(\R\times \R^3)}^{\frac45}.
\end{align}
\end{lemma}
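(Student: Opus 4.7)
The approach I would take mirrors the arguments in \cite{ClayNotes} and \cite{Visan:Oberwolfach}: deduce \eqref{Zimp1} from a Littlewood--Paley decomposition combined with a bilinear refinement of the Strichartz inequality. Qualitatively, \eqref{Zimp1} asserts that if $e^{it\Delta}f$ is large in $L^{10}_{t,x}$ then at least one dyadic piece must contribute non-trivially, which is the ``inverse Strichartz'' structure needed for the concentration-compactness argument to follow.

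Writing $u := e^{it\Delta}f$, $u_N := e^{it\Delta}P_N f$, and $S := \sup_N \|u_N\|_{L^{10}_{t,x}}$, the goal is to establish $\|u\|_{L^{10}_{t,x}}^{10} \lesssim \|f\|_{\dot H^1}^2\, S^8$, from which \eqref{Zimp1} follows by taking tenth roots. First I would expand pointwise,
\[
|u|^{10}\le\Big(\sum_N |u_N|\Big)^{10}=\sum_{N_1,\dots,N_{10}}\prod_{i=1}^{10}|u_{N_i}|,
\]
and by symmetry restrict to $N_1\le\cdots\le N_{10}$. For each ordered tuple I would apply H\"older in spacetime with exponents $(5,10,\dots,10)$ to isolate the two extreme frequencies and to bound the eight intermediate factors by $S$:
\[
\int \prod_{i=1}^{10}|u_{N_i}|\,dx\,dt \le S^8\,\|u_{N_1}u_{N_{10}}\|_{L^5_{t,x}}.
\]
The central ingredient is then a bilinear $L^5_{t,x}$ estimate with quantitative gain in the frequency ratio: for $M\le N$,
\[
\|u_M u_N\|_{L^5_{t,x}} \lesssim (M/N)^{\delta}\,\|P_M f\|_{\dot H^1}\|P_N f\|_{\dot H^1}
\]
for some $\delta>0$. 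This one I would derive by interpolating Bourgain's $L^2_{t,x}$-bilinear Strichartz estimate (which carries the dimensional decay in $N$) against the pointwise Bernstein bound $\|u_N\|_{L^\infty_{t,x}}\lesssim N^{1/2}\|P_N f\|_{\dot H^1}$; after converting the two ingredients to homogeneous Sobolev units, a genuine positive power of $M/N$ survives.

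To close the argument I would sum over multi-indices. For fixed extremes $(N_1,N_{10})=(M,N)$, the number of intermediate ordered dyadic tuples is polynomial in $\log(N/M)$ and is absorbed into a small fraction of $(M/N)^\delta$; the remaining double sum $\sum_{M\le N}(M/N)^{\delta/2}\|P_M f\|_{\dot H^1}\|P_N f\|_{\dot H^1}$ is then dispatched by the Schur test and reduces to $\sum_N \|P_N f\|_{\dot H^1}^2\sim\|f\|_{\dot H^1}^2$. The main obstacle I anticipate is extracting the bilinear $L^5$ estimate with a clean $M/N$ gain and ensuring that the combinatorial factor from the eight intermediate frequencies is genuinely controlled; once the bilinear input is in place, everything else is routine dyadic bookkeeping and Cauchy--Schwarz.
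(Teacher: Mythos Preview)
Your proposal is correct but takes a different route from the paper's proof. The paper does not invoke any bilinear Strichartz estimate. Instead it begins with the Littlewood--Paley square function estimate
\[
\|e^{it\Delta}f\|_{L^{10}_{t,x}}^{10}\sim\Bigl\|\Bigl(\sum_N|u_N|^2\Bigr)^{1/2}\Bigr\|_{L^{10}_{t,x}}^{10}
\lesssim\sum_{N_1\le\cdots\le N_5}\int\prod_{j=1}^5|u_{N_j}|^2\,dx\,dt,
\]
so that only five (squared) dyadic pieces need to be handled. For each ordered tuple the paper applies H\"older to place one copy of $u_{N_1}$ in $L^\infty_{t,x}$, one copy of $u_{N_5}$ in $L^5_{t,x}$, and the remaining eight factors in $L^{10}_{t,x}$. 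The $L^\infty$ bound comes from Bernstein ($\|u_{N_1}\|_{L^\infty_{t,x}}\lesssim N_1^{3/2}\|f_{N_1}\|_{L^2}$) and the $L^5_{t,x}$ bound from the \emph{linear} Strichartz estimate at the admissible pair $(5,30/11)$ followed by Bernstein. After converting to $\dot H^1$ norms this yields the decay factor $(N_1/N_5)^{1/2}$, which absorbs the $(\log N_5/N_1)^3$ combinatorics from the intermediate frequencies, and Cauchy--Schwarz finishes the sum.

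By contrast, you expand into ten factors and obtain the off-diagonal decay from a bilinear $L^5_{t,x}$ estimate, which you propose to get by interpolating Bourgain's $L^2_{t,x}$ bilinear Strichartz with the $L^\infty$ Bernstein bound; this gives $(M/N)^{3/10}$ and the rest of your argument goes through. Your approach is valid and perhaps more systematic, but it imports a genuinely nonlinear tool (the bilinear restriction estimate), whereas the paper's argument is more elementary: the needed frequency decay is extracted solely from linear Strichartz, Bernstein, and the square function. The trade-off is that your method transports more directly to other settings where such bilinear estimates are already in hand.
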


\begin{proof}
By the square function estimate followed by the H\"older, Bernstein, Strichartz, and Cauchy--Schwarz inequalities, we obtain
\begin{align*}
\|e^{it\Delta}  f \|_{L^{10}_{t,x}}^{10}
&\sim \bigg\|\Big(\sum_{N\in 2^{\Z}}|e^{it\Delta}f_N|^2\Big)^{\frac12}\bigg\|_{L^{10}_{t,x}}^{10}\\
&\lesssim \sum_{N_1\leq\dots\leq N_5}\int_{\R}\int_{\R^3}\prod_{j = 1}^5 |e^{it\Delta}f_{N_j}|^2\,dx\,dt\\
&\lesssim \sum_{N_1\leq\dots\leq N_5} \|e^{it\Delta}f_{N_1}\|_{L^{\infty}_{t,x}}\|e^{it\Delta}f_{N_1}\|_{L^{10}_{t,x}}\bigg(\prod_{j = 2}^4 \|e^{it\Delta}f_{N_j}\|_{L^{10}_{t,x}}^2\bigg)\\
&\hphantom{XXXXX} \times \|e^{it\Delta}f_{N_5}\|_{L^{10}_{t,x}} \|e^{it\Delta}f_{N_5}\|_{L^{5}_{t,x}}\\
&\lesssim \sup_{N\in 2^{\Z}}\|e^{it\Delta}f_N\|_{L^{10}_{t,x}}^8 \sum_{N_1\leq\dots\leq N_5}
N_1^{\frac32}\|e^{it\Delta}f_{N_1}\|_{L^{\infty}_tL^2_x}N_5^{\frac12}\|e^{it\Delta}f_{N_5}\|_{L^5_tL^{\frac{30}{11}}_x}\\
&\lesssim \sup_{N\in 2^{\Z}}\|e^{it\Delta}f_N\|_{L^{10}_{t,x}}^8\sum_{N_1\leq N_5}\Big( \log \frac{N_5}{N_1}\Big)^3 N_1^{\frac32}\|f_{N_1}\|_{L^2_x}
N_5^\frac{1}{2}\|f_{N_5}\|_{L^2_x}\\
&\lesssim \sup_{N\in 2^{\Z}}\|e^{it\Delta}f_N\|_{L^{10}_{t,x}}^8\sum_{N_1\leq N_5}\Big(\frac{N_1}{N_5}\Big)^{\frac12-\eps}\|f_{N_1}\|_{\dot{H}_x^1}\|f_{N_5}\|_{\dot{H}_x^1}\\
&\lesssim \sup_{N\in 2^{\Z}}\|e^{it\Delta}f_N\|_{L^{10}_{t,x}(\R\times\R^3)}^8\|f\|_{\dot{H}_x^1}^2,
\end{align*}
for any $\eps>0$.  This completes the proof of the lemma.
\end{proof}

Our next result is an inverse Strichartz inequality, which says that if the linear evolution is large, it must in fact contain a bubble of concentration at some point in spacetime.

\begin{proposition}[Inverse Strichartz inequality]\label{propo: Inverse Strichartz}
Let $\{f_n\}_{n\in\mathbb{N}}\subset H^1(\R^3)$ be a sequence such that
\begin{align*}
\limsup_{n\to\infty}\|f_n\|_{H^1_x}=A<\infty \qtq{and} \liminf_{n\to\infty}\|e^{it\Delta}f_n\|_{L^{10}_{t,x}(\R\times\R^3)}=\eps>0.
\end{align*}
Passing to a subsequence if necessary, there exist
$$
\phi\in\dot{H}^1(\R^3),\quad \{\ld_n\}_{n\in\mathbb{N}}\subset (0,\infty), \quad \{t_n\}_{n\in\mathbb{N}}\subset\R, \qtq{and}  \{x_n\}_{n\in\mathbb{N}}\subset \R^3
$$
such that all of the following hold:  First $\ld_n\to\ld_{\infty}\in [0, \infty)$, and if $\ld_{\infty}>0$ then additionally $\phi\in L^2(\R^3)$.  Secondly,
\begin{align}\label{eq:weak conv}
\ld_n^{\frac12}\big(e^{it_n\Delta}f_n\big)(\ld_nx+x_n)\rightharpoonup \phi(x) \qtq{weakly in}
\begin{cases}
H^1(\R^3), &  \qtq{if} \ld_\infty > 0, \\
\dot{H}^1(\R^3), &  \qtq{if} \ld_\infty = 0.
\end{cases}
\end{align}
Furthermore, letting
\begin{align}
\phi_n(x):=
\begin{cases}
\ld_n^{-\frac12}e^{-it_n\Delta}\big[\phi\big(\tfrac{x-x_n}{\ld_n}\big) \big], & \qtq{if}\ld_{\infty}>0,\\
\ld_n^{-\frac12}e^{-it_n\Delta}  \big[\big(P_{\geq \lambda_n^\theta}\phi\big)\big(\tfrac{x-x_n}{\ld_n}\big)\big], & \qtq{if} \ld_{\infty}=0,
\end{cases}
\label{Zinv0a}
\end{align}
with $0<\theta<1$ fixed, the following decoupling statements hold:
\begin{align}\label{eq:decoupling dotH1}
& \lim_{n\to\infty}\Bigl[\|f_n\|_{\dot{H}^1_x}^2-\|f_n-\phi_n\|_{\dot{H}^1_x}^2\Bigr] =\|\phi\|_{\dot{H}^1_x}^2\gtrsim A^2\big(\tfrac{\eps}{A}\big)^{\frac{15}{4}}, \\
& \lim_{n\to\infty}\Bigl[\|f_n\|_{L^2_x}^2-\|f_n-\phi_n\|_{L^2_x}^2 -\|\phi_n\|_{L^2_x}^2\Bigr]=0. \label{eq:decoupling L2}
\end{align}
\end{proposition}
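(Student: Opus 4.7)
The plan is to locate a spacetime bubble of concentration at a frequency selected by the refined Strichartz estimate of Lemma~\ref{lemma: improved Strichartz}. After passing to a subsequence so that $\|f_n\|_{H^1_x}\le 2A$ and $\|e^{it\Delta}f_n\|_{L^{10}_{t,x}}\ge \eps/2$, that lemma furnishes for each $n$ a dyadic $N_n$ with
\[
\|e^{it\Delta}P_{N_n}f_n\|_{L^{10}_{t,x}(\R\times\R^3)} \ges A\bigl(\tfrac{\eps}{A}\bigr)^{5/4}.
\]
I would then interpolate this against the admissible Strichartz bound $\|e^{it\Delta}P_{N_n}f_n\|_{L^{10/3}_{t,x}}\les \|P_{N_n}f_n\|_{L^2_x}\les \min(1,N_n^{-1})A$, using $\|g\|_{L^{10}_{t,x}}\le\|g\|_{L^{10/3}_{t,x}}^{1/3}\|g\|_{L^\infty_{t,x}}^{2/3}$, to produce
\[
\|e^{it\Delta}P_{N_n}f_n\|_{L^\infty_{t,x}} \ges A\bigl(\tfrac{\eps}{A}\bigr)^{15/8}\max(1,N_n^{1/2}).
\]
Comparing with the trivial Bernstein upper bound $\|e^{it\Delta}P_{N_n}f_n\|_{L^\infty_{t,x}}\les N_n^{3/2}\|P_{N_n}f_n\|_{L^2_x}\les N_n^{3/2}A$ forces $\ld_n:=N_n^{-1}$ to be uniformly bounded above by $C(A/\eps)^{5/4}$, so after passing to a further subsequence $\ld_n\to\ld_\infty\in[0,\infty)$. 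I then choose $(t_n,x_n)\in\R\times\R^3$ at which $|P_{N_n}e^{it_n\Delta}f_n(x_n)|$ attains at least half its supremum.

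Next I would encode the bubble as a weak limit. Setting $g_n(x):=\ld_n^{1/2}(e^{it_n\Delta}f_n)(\ld_n x+x_n)$, unitary invariance gives $\|g_n\|_{\dot H^1_x}=\|f_n\|_{\dot H^1_x}\le 2A$, and when $\ld_\infty>0$ also $\|g_n\|_{L^2_x}\les_{\ld_\infty}A$; Banach--Alaoglu then supplies a further subsequence and a limit $\phi$ weakly in $\dot H^1_x$ (resp.\ in $H^1_x$ when $\ld_\infty>0$), verifying \eqref{eq:weak conv}. Let $K$ denote the convolution kernel of $P_1$ and set $\tilde K(z):=\overline{K(-z)}$, a Schwartz function lying in $\dot H^{-1}(\R^3)$ via $L^{6/5}\hookrightarrow\dot H^{-1}$. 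A change of variables using $N_n\ld_n=1$ yields the identity
\[
P_{N_n}e^{it_n\Delta}f_n(x_n) = \ld_n^{-1/2}\jb{\tilde K,g_n},
\]
hence $|\jb{\tilde K,g_n}|\ges\ld_n^{1/2}\max(1,N_n^{1/2})A(\eps/A)^{15/8}=\max(\ld_n^{1/2},1)A(\eps/A)^{15/8}\ges A(\eps/A)^{15/8}$ uniformly in $n$. Weak $\dot H^1_x$-convergence gives $\jb{\tilde K,g_n}\to\jb{\tilde K,\phi}$ (since $\dot H^1_x\hookrightarrow L^6_x$), and Cauchy--Schwarz against $\|\tilde K\|_{\dot H^{-1}_x}$ then produces $\|\phi\|_{\dot H^1_x}^2\ges A^2(\eps/A)^{15/4}$.

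To verify the decoupling identities for the profiles $\phi_n$ defined in \eqref{Zinv0a}, I would introduce the companion sequence $\tilde\phi_n(x):=\ld_n^{1/2}(e^{it_n\Delta}\phi_n)(\ld_n x+x_n)$, which by construction equals $\phi$ when $\ld_\infty>0$ and $P_{\ge\ld_n^\theta}\phi$ when $\ld_\infty=0$; in either case $\tilde\phi_n\to\phi$ strongly in $\dot H^1_x$ (in the truncated case because $\ld_n^\theta\to 0$ and $\phi\in\dot H^1_x$). The $\dot H^1_x$ decoupling then reduces to the identity
\[
\|g_n\|_{\dot H^1_x}^2-\|g_n-\tilde\phi_n\|_{\dot H^1_x}^2 = 2\Re\jb{g_n,\tilde\phi_n}_{\dot H^1_x} - \|\tilde\phi_n\|_{\dot H^1_x}^2,
\]
whose right-hand side converges to $\|\phi\|_{\dot H^1_x}^2$ by combining the weak convergence $g_n\rightharpoonup\phi$ with the strong convergence $\tilde\phi_n\to\phi$. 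The $L^2$ decoupling is identical when $\ld_\infty>0$ (using the additional weak $L^2$-convergence of $g_n$); when $\ld_\infty=0$ it follows from $|\jb{f_n,\phi_n}_{L^2_x}|\le\|f_n\|_{L^2_x}\|\phi_n\|_{L^2_x}\to 0$ together with the Bernstein bound
\[
\|\phi_n\|_{L^2_x}^2=\ld_n^2\|P_{\ge\ld_n^\theta}\phi\|_{L^2_x}^2\le\ld_n^{2-2\theta}\|\nabla\phi\|_{L^2_x}^2\longrightarrow 0.
\]
The main obstacle I anticipate is exactly the case $\ld_\infty=0$: there $\phi$ need not lie in $L^2$ and the profiles must live at arbitrarily fine spatial scales, so the frequency truncation $P_{\ge\ld_n^\theta}$ in \eqref{Zinv0a} is introduced precisely to ensure each $\phi_n\in H^1(\R^3)$, with the exponent $\theta\in(0,1)$ chosen to deliver both strong $\dot H^1$-convergence of $\tilde\phi_n$ and vanishing of $\|\phi_n\|_{L^2_x}$.
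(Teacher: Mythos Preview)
Your proposal is correct and follows essentially the same route as the paper: locate a dominant frequency via the improved Strichartz estimate, interpolate against the $L^{10/3}_{t,x}$ Strichartz bound to find a spacetime point of concentration, extract the profile $\phi$ as a weak limit of the rescaled sequence, certify nontriviality by pairing against the Littlewood--Paley kernel $k_1$, and obtain the decoupling identities from the Hilbert-space expansion together with $\|P_{\ge\ld_n^\theta}\phi-\phi\|_{\dot H^1_x}\to 0$. Your handling of the $L^2$ decoupling when $\ld_\infty=0$ (simply noting $\|\phi_n\|_{L^2_x}\les\ld_n^{1-\theta}\|\phi\|_{\dot H^1_x}\to 0$) is a mild streamlining of the paper's argument, which rewrites the same inner product as a $\dot H^1$ pairing against $\ld_n^2|\nabla|^{-2}P_{\ge\ld_n^\theta}\phi$ before invoking the identical Bernstein bound.
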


\begin{proof}
Passing to a subsequence, we may assume that
\begin{align}
\|e^{it\Delta}f_n\|_{L^{10}_{t,x}}\geq \tfrac{\eps}{2} \quad \text{and} \quad \|f_n\|_{H^1_x}\leq 2A
\label{Zinv1}
\end{align}
for all $n\in\mathbb{N}$.  By Lemma~\ref{lemma: improved Strichartz}, we have
\begin{align*}
\eps\les \|e^{it\Delta}f_n\|_{L^{10}_{t,x}} \les \|f_n\|_{\dot{H}^1_x}^{\frac15} \sup_{N}\|e^{it\Delta}P_Nf_n\|_{L^{10}_{t,x}}^{\frac45}
\les A^{\frac15}\sup_{N}\|e^{it\Delta}P_Nf_n\|_{L^{10}_{t,x}}^{\frac45}
\end{align*}
for all $n\in\mathbb{N}$.  Thus, for each $n \in \mathbb{N}$ there exists $N_n\in 2^\Z$ such that
\begin{equation}\label{Zinv2}
\|e^{it\Delta}P_{N_n}f_n\|_{L^{10}_{t,x}}\gtrsim \eps^{\frac54}A^{-\frac14}.
\end{equation}
On the other hand, by H\"older, Strichartz, and \eqref{Zinv1},
\begin{align}
\|e^{it\Delta}P_{N_n}f_n\|_{L^{10}_{t,x}}
\leq \|e^{it\Delta}P_{N_n}f_n\|_{L^{\infty}_{t,x}}^{\frac23}\|e^{it\Delta}P_{N_n}f_n\|_{L^{\frac{10}3}_{t,x}}^{\frac13}
&\les \|e^{it\Delta}P_{N_n}f_n\|_{L^{\infty}_{t,x}}^{\frac23}\|P_{N_n}f_n\|_{L^2_x}^{\frac13}\notag \\
&\les \|e^{it\Delta}P_{N_n}f_n\|_{L^{\infty}_{t,x}}^{\frac23}N_{n}^{-\frac13}A^{\frac13}.\label{Zinv3}
\end{align}
Combining \eqref{Zinv2} and \eqref{Zinv3}, we get
\begin{align} \label{Zinv4'}
N_n^{-\frac12}\|e^{it\Delta}P_{N_n}f_n\|_{L^{\infty}_{t,x}}\gtrsim \eps^{\frac{15}{8}}A^{-\frac{7}{8}}.
\end{align}
Therefore, there exist $t_n\in\R$ and $x_n\in\R^3$ such that
\begin{align} \label{Zinv4}
N_n^{-\frac12}\bigl|\bigl[e^{it_n\Delta}P_{N_n}f_n\bigr](x_n)\bigr|\gtrsim \eps^{\frac{15}{8}}A^{-\frac{7}{8}}.
\end{align}

We choose the spatial scales to be $\ld_n:=N_n^{-1}$.  We note that  $\{\ld_n\}_{n\in\mathbb{N}}$ is a bounded sequence; indeed, by \eqref{Zinv4'}, Bernstein's inequality, and \eqref{Zinv1},
\begin{align*}
\ld_n^{-\frac12}\eps^{\frac{15}{8}}A^{-\frac{7}{8}} \les \|e^{it_n\Delta}P_{\ld_n^{-1}}f_n\|_{L^{\infty}_x}\les \ld_n^{- \frac{3}{2}}\|e^{it_n\Delta}P_{\ld_n^{-1}}f_n\|_{L^2_x}
\les  \ld_n^{-\frac32}\|f_n\|_{L^2_x}\les\ld_n^{-\frac32}A.
\end{align*}
Thus, passing to a subsequence we may assume that $\lambda_n\to \lambda_\infty$, where either $0<\lambda_\infty\les (\frac A\eps)^{\frac{15}8}$ or $\lambda_\infty=0$.

It remains to find the profile $\phi$.  To this end, let $h_n(x):=\ld_n^{\frac12}(e^{it_n\Delta}f_n)(\ld_nx+x_n)$ as in LHS\eqref{eq:weak conv}.  By \eqref{Zinv1}, we have
\begin{align}\label{hn bdd}
\|h_n\|_{\dot{H}^1_x}=\|f_n\|_{\dot{H}^1_x}\leq 2A \qtq{for all} n\in\mathbb{N}.
\end{align}
Thus, passing to a subsequence, we can find $\phi\in\dot{H}^1(\R^3)$ such that $h_n\rightharpoonup \phi$ in $\dot{H}^1(\R^3)$.  Moreover, if $\ld_n\to\ld_{\infty}>0$, then $$
\|h_n\|_{L^2_x}=\ld_n^{-1}\|f_n\|_{L^2_x}\les\ld_{\infty}^{-1}A
$$
for all large enough $n$.  Thus by the uniqueness of weak limits and weak lower semicontinuity of the norm, we get that in this case $\phi\in H^1(\R^3)$.  This proves \eqref{eq:weak conv}.

As $\dot{H}^1_x$ is a Hilbert space, the weak convergence of $\{h_n\}$ to $\phi$ in $\dot H^1_x$ implies that
\begin{align}\label{Zinv5}
\|h_n\|_{\dot{H}^1_x}^2-\|h_n-\phi\|_{\dot{H}^1_x}^2-\|\phi\|_{\dot{H}^1_x}^2 =2\Re\jb{\phi, h_n-\phi}_{\dot H^1_x}\to 0.
\end{align}
Combining this with the fact that
$$
\|\phi- P_{\geq \lambda_n^\theta}\phi\|_{\dot H^1_x} \to 0 \qtq{when} \lambda_n\to 0
$$
and performing a change of variables, we obtain the equality in \eqref{eq:decoupling dotH1}.

To complete the proof of \eqref{eq:decoupling dotH1}, we must prove the lower bound on $\|\phi\|_{\dot{H}^1_x}$.  To this end, let $k_N$ denote the kernel of the operator $P_N$.  As $\hat{k}_{N_n}(\xi)= \hat{k}_1\big(\frac{\xi}{N_n}\big)$ and $\ld_n = N_n^{-1}$, we have $k_{N_n}(x) = \ld_n^{-3}k_1\big(\frac{x}{\ld_n}\big)$.  Thus
\begin{align*}
\jb{\phi,k_1}&=\lim_{n\to\infty}\int \ld_n^{\frac12}\big(e^{it_n\Delta}f_n\big)(\ld_n x+x_n)k_1(x)\,dx\\
&=\lim_{n\to\infty}\int \ld_n^{\frac12}\big(e^{it_n\Delta}f_n\big)(y)k_{N_n}(y - x_n)\, dy
=\lim_{n\to\infty}\ld_n^{\frac12}\big[e^{it_n\Delta}P_{N_n}f_n\big](x_n).
\end{align*}
Invoking \eqref{Zinv4}, we derive
\begin{equation}\label{eq: bound <phi,k1>}
|\jb{\phi,k_1}|\ges \eps^{\frac{15}8}A^{-\frac{7}{8}}.
\end{equation}
On the other hand,
\begin{align}\label{Zinv6}
|\jb{\phi,k_1}|\leq \|\phi\|_{L_x^6}\|k_1\|_{L_x^{6/5}}\les \|\phi\|_{\dot H^1_x}.
\end{align}
Combining \eqref{eq: bound <phi,k1>} and \eqref{Zinv6} completes the proof of \eqref{eq:decoupling dotH1}.

We now turn to \eqref{eq:decoupling L2}.  When $\ld_n\to\ld_{\infty}>0$ and $\phi\in H^1(\R^3)$, proceeding as in \eqref{Zinv5} we obtain
\begin{align*}
\|h_n\|_{L^2_x}^2-\|h_n-\phi\|_{L^2_x}^2-\|\phi\|_{L^2_x}^2\to 0;
\end{align*}
a simple change of variables now yields the claim.  When $\ld_n\to 0$, we write
\begin{align*}
\|f_n\|_{L^2_x}^2-\|f_n-\phi_n\|_{L^2_x}^2-\|\phi_n\|_{L^2_x}^2
&=2\Re \langle f_n-\phi_n,\phi_n\rangle_{L^2_x}\\
&=2\Re\big\langle h_n -P_{\geq \lambda_n^\theta}\phi, \ld_n^2 P_{\geq \lambda_n^\theta}\phi\big\rangle_{L^2_x}\\
&=2\Re\big\langle h_n-P_{\geq \lambda_n^\theta}\phi, \ld_n^2 |\nabla|^{-2}P_{\geq \lambda_n^\theta}\phi\big\rangle_{\dot H^1_x}.
\end{align*}
As $\ld_n\to 0$ and $0<\theta<1$, we have
\begin{align*}
\big\||\nabla|^{-2}\ld_n^2 P_{\geq \lambda_n^\theta}\phi\big\|_{\dot{H}^1_x} \les \ld_n^{2(1-\theta)}\|P_{\geq \lambda_n^\theta}\phi\|_{\dot{H}^1_x}
	\les \ld_n^{2(1-\theta)}\|\phi\|_{\dot{H}^1_x}\to 0,
\end{align*}
and so \eqref{eq:decoupling L2} follows from the boundedness of $\{h_n -P_{\geq \lambda_n^\theta} \phi\}_{n \in \mathbb{N}}$ in $\dot H^1(\R^3)$.

This completes the proof of the proposition.
\end{proof}

The following corollary allows us to assume that the scaling parameters $\ld_n$ and the temporal parameters $t_n$ in Proposition \ref{propo: Inverse Strichartz} satisfy simple dichotomies.

\begin{corollary} \label{COR:Zinv}
Passing to a further subsequence if necessary, we may choose the parameters $\{\ld_n\}_{n\in \mathbb N}$ and $\{t_n\}_{n\in \mathbb N}$ in Proposition~\ref{propo: Inverse Strichartz} such that
\begin{gather}\label{Zinv11}
\begin{aligned}
\textup{(i)} \qquad &\ld_n\equiv 1 \quad \text{or} \quad \ld_n\to 0 \\
\textup{(ii)} \qquad &\, t_n\equiv 0 \quad \text{or} \quad \frac{t_n}{\ld_n^2}\to\pm\infty.
\end{aligned}
\end{gather}
\end{corollary}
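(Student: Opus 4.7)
The plan is to normalize the two parameter sequences by performing two independent subsequence extractions, absorbing any bounded limiting value into the profile.

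First, for (i): from Proposition~\ref{propo: Inverse Strichartz} we already have $\lambda_n\to\lambda_\infty\in[0,\infty)$. If $\lambda_\infty=0$ there is nothing to do. If $\lambda_\infty>0$, then the proposition also provides $\phi\in H^1(\R^3)$, and I would redefine the parameters as $\tilde\lambda_n\equiv 1$, leave $x_n, t_n$ unchanged, and replace the profile by the rescaling $\tilde\phi(x):=\lambda_\infty^{-1/2}\phi(x/\lambda_\infty)\in H^1(\R^3)$. To verify
\begin{equation*}
(e^{it_n\Delta}f_n)(x+x_n)\rightharpoonup \tilde\phi(x)\quad\text{in } H^1(\R^3),
\end{equation*}
I would pair against $\psi\in C^\infty_c(\R^3)$ and change variables $y=\lambda_n x$, writing
\begin{equation*}
\int \psi(x)(e^{it_n\Delta}f_n)(x+x_n)\,dx=\lambda_n^{5/2}\!\int \bigl[\lambda_n^{1/2}(e^{it_n\Delta}f_n)(\lambda_n y+x_n)\bigr]\psi(\lambda_n y)\,dy,
\end{equation*}
then pass to the limit using the original weak convergence \eqref{eq:weak conv} and $\lambda_n\to\lambda_\infty$.

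Second, for (ii): working with the (possibly updated) $\lambda_n$, let $s_n:=t_n/\lambda_n^2$. Passing to a further subsequence, either $s_n\to\pm\infty$ (in which case (ii) holds as stated) or $s_n\to s_\infty\in\R$. In the latter case, I would set $\tilde t_n\equiv 0$ and replace the profile by $\tilde\phi:=e^{-is_\infty\Delta}\phi$, which lies in the relevant Sobolev space since $e^{is\Delta}$ is unitary on both $H^1$ and $\dot H^1$. The argument rests on the identity
\begin{equation*}
\lambda_n^{1/2}(e^{it_n\Delta}f_n)(\lambda_n x+x_n)=(e^{is_n\Delta}G_n)(x),\qquad G_n(x):=\lambda_n^{1/2}f_n(\lambda_n x+x_n),
\end{equation*}
combined with the strong continuity of $s\mapsto e^{is\Delta}$ and a standard weak-strong pairing
\begin{equation*}
\jb{G_n,\psi}=\jb{e^{is_n\Delta}G_n,\ e^{is_n\Delta}\psi}\longrightarrow \jb{\phi,\ e^{is_\infty\Delta}\psi}=\jb{e^{-is_\infty\Delta}\phi,\psi},
\end{equation*}
which gives $G_n\rightharpoonup e^{-is_\infty\Delta}\phi=\tilde\phi$ in the appropriate Sobolev space.

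The main (mild) subtlety is ensuring that the decoupling identities \eqref{eq:decoupling dotH1} and \eqref{eq:decoupling L2} persist after these reparametrizations. In step one, a short calculation shows that, after undoing the common unitary operations $e^{-it_n\Delta}$ and translation by $x_n$, the old bubble and the new bubble differ by $\lambda_n^{-1/2}\phi(\cdot/\lambda_n)-\lambda_\infty^{-1/2}\phi(\cdot/\lambda_\infty)$, which tends to zero in $H^1$ by continuity of dilations in Sobolev spaces; hence the decouplings transfer. In step two, the replacement of $\phi$ by $e^{-is_\infty\Delta}\phi$ is simply composition with an $L^2$- and $\dot H^1$-isometry and therefore preserves every norm appearing in \eqref{eq:decoupling dotH1}--\eqref{eq:decoupling L2}. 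Both points reduce to standard strong-continuity facts for the dilation and Schr\"odinger groups in the relevant spaces.
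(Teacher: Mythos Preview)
Your proof is correct and follows essentially the same approach as the paper: absorb a finite limit $\lambda_\infty>0$ into the profile via the dilation $D_{\lambda_\infty}$, and absorb a finite limit $s_\infty=\lim t_n/\lambda_n^2$ into the profile via $e^{-is_\infty\Delta}$, then check that the weak convergence and decouplings survive because the old and new bubbles differ by quantities tending to zero in the relevant norms. The paper argues this slightly more abstractly (strong convergence of $D_{\lambda_n}\to D_{\lambda_\infty}$ on $H^1$) and, in the $\lambda_n\to0$ case of step~(ii), makes explicit the Bernstein estimate $\|D_{\lambda_n}P_{\geq\lambda_n^\theta}g\|_{L^2}\lesssim\lambda_n^{1-\theta}\|g\|_{\dot H^1}$ needed to verify \eqref{eq:decoupling L2}; your final sentence correctly identifies this as a routine strong-continuity matter, though you might state the Bernstein step more explicitly since $\phi\notin L^2$ in that regime.
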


\begin{proof}
Let $\ld_n$ and $t_n$ denote the scale and temporal parameters from Proposition~\ref{propo: Inverse Strichartz}.  To ease notation, for $\lambda>0$ let $D_{\ld}$ denote the dilation operator
$$
(D_{\ld}f)(x):=\ld^{-\frac12}f(\ld^{-1}{x}).
$$

We first consider claim (i) in \eqref{Zinv11}.  If $\ld_n\to 0$, then there is nothing to prove.

Suppose therefore that $\ld_n\to\ld_{\infty}>0$.  In this case we may redefine $\ld_n\equiv 1$, provided we also replace the profile $\phi$ by
$\tilde \phi:=D_{\ld_\infty}\phi$.  Indeed, as $D_{\ld_n}$ and $D_{\ld_n}^{-1}$ converge strongly to $D_{\ld_{\infty}}$ and $D_{\ld_{\infty}}^{-1}$, respectively, as operators on $H^1(\R^3)$, all the requisite conclusions carry over.

We now turn to (ii).  Passing to a subsequence, we may assume that $\frac{t_n}{\ld_n^2}\to \tau_\infty\in [-\infty,\infty]$.  If $\tau_\infty = \pm\infty$,
then there is nothing to prove.  If $\tau_\infty\in\R$, then we may redefine $t_n\equiv 0$, provided we also replace the profile $\phi$ by
$\tilde \phi:= e^{-i\tau_\infty \Delta}\phi$.  Indeed, this is easily seen using the identity
\begin{equation*}
e^{-it_n\Delta}D_{\ld_n}\phi= D_{\ld_n}\big[e^{-i\frac{t_n}{\ld_n^2}\Delta}\phi\big]
\end{equation*}
and the strong continuity of $e^{it\Delta}$ in $H^1_x$.  To see \eqref{eq:decoupling L2} when $\lambda_n\to 0$, we also use Bernstein's inequality:
\begin{align*}
\big\|e^{-it_n\Delta}D_{\ld_n} P_{\geq \ld_n^\theta} \phi- D_{\ld_n}P_{\geq \ld_n^\theta} e^{-i\tau_\infty\Delta}\phi\big\|_{L^2_x}
& = \big\| D_{\ld_n}\big[e^{-i\frac{t_n}{\ld_n^2}\Delta} - e^{-i\tau_\infty\Delta}\big] P_{\geq \ld_n^\theta} \phi\big\|_{L^2_x}\\
& \les\ld_n^{1-\theta}\big\|\big[e^{-i\frac{t_n}{\ld_n^2}\Delta} - e^{-i\tau_\infty\Delta}\big] P_{\geq \ld_n^\theta} \phi\big\|_{\dot{H}^1_x} \\
&\lesssim \ld_n^{1-\theta} \|\phi\|_{\dot{H}^1_x}\to 0 \quad\text{as $n\to\infty$.}
\end{align*}

This completes the proof of the corollary.
\end{proof}

As explained in the introduction, to prove Theorem~\ref{thm: main} we will run an induction argument on both the mass and the energy.  To this end, we need to show that the masses and the energies corresponding to two distinct profiles decouple asymptotically.  The asymptotic decoupling of the kinetic energies and the masses will follow from \eqref{eq:decoupling dotH1} and \eqref{eq:decoupling L2}, respectively.  The asymptotic decoupling of the potential energies will follow from the following lemma.

\begin{lemma}\label{Cor: decoupling L4,L6}
Under the hypotheses of Proposition~\ref{propo: Inverse Strichartz}, we have
\begin{align}
&\lim_{n \to \infty}\Big[\|f_n\|_{L^6_x}^6-\|f_n- \phi_n\|_{L^6_x}^6-\|\phi_n\|_{L^6_x}^6\Big] = 0\label{eq:decoupling L6}, \\
&\lim_{n \to \infty}\Big[\|f_n\|_{L^4_x}^4-\|f_n- \phi_n\|_{L^4_x}^4-\|\phi_n\|_{L^4_x}^4\Big] = 0\label{eq:decoupling L4}.
\end{align}
\end{lemma}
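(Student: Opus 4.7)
The plan is to reduce both identities to the Brezis--Lieb lemma (Lemma~\ref{LEM:B-L}) by undoing the symmetries encoded in $\phi_n$. Using the dichotomies in Corollary~\ref{COR:Zinv}, I split into four cases according to whether $\lambda_n\equiv 1$ or $\lambda_n\to 0$, and whether $t_n\equiv 0$ or $t_n/\lambda_n^2\to\pm\infty$. In every case the rescaled sequence
\begin{equation*}
h_n(y):=\lambda_n^{\frac12}\bigl(e^{it_n\Delta}f_n\bigr)(\lambda_n y+x_n)
\end{equation*}
converges weakly -- in $H^1$ if $\lambda_\infty>0$, in $\dot H^1$ if $\lambda_n\to 0$ -- to $\phi$ by Proposition~\ref{propo: Inverse Strichartz}. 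In both cases $\{h_n\}$ is bounded in $H^1_\loc$ (using $\dot H^1\hookrightarrow L^6$ when $\lambda_n\to 0$), so Rellich--Kondrachov on balls together with a diagonal extraction yields (along a further subsequence) almost-everywhere convergence $h_n\to\phi$. The overall strategy is to apply Brezis--Lieb directly to $\{h_n\}$ in the two ``static-time'' cases, and to establish $\|\phi_n\|_{L^4},\|\phi_n\|_{L^6}\to 0$ via dispersion in the two ``time-escape'' cases.

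In the pure-translation case $(\lambda_n\equiv 1,\ t_n\equiv 0)$, translation leaves all three $L^p$-norms in each identity untouched, and Brezis--Lieb applied to $\{h_n\}=\{f_n(\cdot+x_n)\}\subset L^4\cap L^6$ delivers \eqref{eq:decoupling L4} and \eqref{eq:decoupling L6} directly. In the scaling case $(\lambda_n\to 0,\ t_n\equiv 0)$, the change of variables $x=\lambda_n y+x_n$ gives
\begin{equation*}
\|f_n\|_{L^p}^p=\lambda_n^{3-\frac{p}{2}}\|h_n\|_{L^p}^p,\qquad
\|\phi_n\|_{L^p}^p=\lambda_n^{3-\frac{p}{2}}\|P_{\geq\lambda_n^\theta}\phi\|_{L^p}^p,
\end{equation*}
and the analogous identity for $f_n-\phi_n$. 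For $p=6$ the prefactor is $1$; since $P_{\leq\lambda_n^\theta}\phi\to 0$ in $\dot H^1$ (by dominated convergence on the Fourier side) and hence in $L^6$ by Sobolev, the identity reduces to $\|h_n\|_{L^6}^6-\|h_n-\phi\|_{L^6}^6-\|\phi\|_{L^6}^6\to 0$, which is Brezis--Lieb for $\{h_n\}$. For $p=4$ the prefactor is $\lambda_n$; Bernstein applied dyadic-annulus-wise and summed gives $\|P_{\geq\lambda_n^\theta}\phi\|_{L^4}^4\lesssim\lambda_n^{-\theta}\|\phi\|_{\dot H^1}^4$, whence $\|\phi_n\|_{L^4}^4\lesssim\lambda_n^{1-\theta}\to 0$, and \eqref{eq:decoupling L4} then follows from the pointwise bound $\bigl||a|^4-|a-b|^4\bigr|\lesssim|a|^3|b|+|b|^4$, H\"older's inequality, and the boundedness of $\|f_n\|_{L^4}$.

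For the two time-escape cases, I exploit the intertwining identity
\begin{equation*}
e^{-it_n\Delta}\bigl[\lambda_n^{-\frac12}\psi\bigl(\tfrac{\cdot-x_n}{\lambda_n}\bigr)\bigr]
=\lambda_n^{-\frac12}\bigl(e^{-is_n\Delta}\psi\bigr)\bigl(\tfrac{\cdot-x_n}{\lambda_n}\bigr),\qquad s_n:=t_n/\lambda_n^2\to\pm\infty,
\end{equation*}
to write $\|\phi_n\|_{L^p}^p=\lambda_n^{3-\frac{p}{2}}\|e^{-is_n\Delta}\Pi_n\phi\|_{L^p}^p$ with $\Pi_n$ either the identity or $P_{\geq\lambda_n^\theta}$. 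Approximating $\phi$ in $\dot H^1$ by a Schwartz $\chi$ with $\|\phi-\chi\|_{\dot H^1}<\eps$, the dispersive bound $\|e^{-is_n\Delta}\chi\|_{L^p}\lesssim|s_n|^{-3(\frac12-\frac1p)}\|\chi\|_{L^{p'}}$ makes the $\chi$-contribution vanish as $|s_n|\to\infty$, while the residual involving $\phi-\chi$ is controlled by Sobolev (for $p=6$) and by interpolation with the $L^2$-bound (for $p=4$), yielding $\limsup\|\phi_n\|_{L^p}\lesssim\eps$. Sending $\eps\to 0$ gives $\|\phi_n\|_{L^4},\|\phi_n\|_{L^6}\to 0$, and both identities then follow from the elementary pointwise inequalities above combined with the uniform boundedness of $\|f_n\|_{L^4}$ and $\|f_n\|_{L^6}$.

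The main technical obstacle is the combined regime $\lambda_n\to 0$ with $s_n\to\pm\infty$: one cannot control $P_{\geq\lambda_n^\theta}\phi$ directly in $L^{p'}$ (such a bound would entail a divergent power of $\lambda_n^\theta$), so the dispersive argument must be executed in the rescaled time $s_n$ with Schwartz $\chi$ replacing $P_{\geq\lambda_n^\theta}\phi$, and the approximation error must be verified to produce the required $L^p$-smallness uniformly in the scaling parameter~$\lambda_n$.
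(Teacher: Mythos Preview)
Your argument is essentially the paper's own proof: Brezis--Lieb applied to the pulled-back sequence $h_n$ in the static-time cases, and $\|\phi_n\|_{L^p}\to 0$ via the dispersive estimate in the time-escape cases. One small correction is needed in the case $\lambda_n\equiv 1$, $t_n\to\pm\infty$, $p=4$: your ``interpolation with the $L^2$-bound'' gives
\[
\|e^{-it_n\Delta}(\phi-\chi)\|_{L^4}\lesssim\|\phi-\chi\|_{L^2}^{1/4}\|\phi-\chi\|_{\dot H^1}^{3/4},
\]
but approximating $\phi$ only in $\dot H^1$ does not control $\|\phi-\chi\|_{L^2}$ (and hence $\|\chi\|_{L^2}$ need not stay bounded as $\eps\to 0$). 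Since $\phi\in H^1$ whenever $\lambda_\infty>0$, the fix---which the paper makes explicit---is simply to approximate $\phi$ in $H^1$ rather than $\dot H^1$ in this sub-case, so that both factors above are $<\eps$. With that adjustment your proof is complete and matches the paper's.
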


\begin{proof}
Passing to a subsequence, we may assume that the conclusions of Corollary~\ref{COR:Zinv} hold.  As above, we will write $D_{\ld}$ for the dilation operator
$$
(D_{\ld}f)(x):=\ld^{-\frac12}f(\ld^{-1}{x}).
$$

We start by proving the decoupling of the $L^6_x$-norms.  First, we consider the case when $\frac{t_n}{\ld_n^2}\to\pm\infty$.
Let $\eps>0$ and choose $\psi\in\mathcal{S}(\R^3)$ such that
$$
\|\psi-\phi\|_{\dot H^1_x}\leq \eps.
$$
Let
\begin{align*}
\psi_n(x):=
\begin{cases}
e^{-it_n\Delta}\psi(x-x_n), & \qtq{if}\ld_n\equiv 1,\\
e^{-it_n\Delta}  [D_{\ld_n}P_{\geq \lambda_n^\theta}\psi](x-x_n), & \qtq{if} \ld_n\to 0.
\end{cases}
\end{align*}

By the dispersive estimate,
\begin{align*}
\|\psi_n\|_{L^6_x}
&\les \tfrac{1}{|t_n|}\big\|D_{\ld_n}\psi\big\|_{L^{\frac65}_x}\les \tfrac{\ld_n^2}{|t_n|}\|\psi\|_{L^{\frac65}_x}\to 0 \qtq{as} n\to \infty.
\end{align*}
On the other hand, by Sobolev embedding,
\begin{align*}
\big\|\phi_n-\psi_n\big\|_{L^6_x}\les \|\phi-\psi\|_{\dot{H}^1_x} \les \eps \quad \text{uniformly for } n\in\mathbb N.
\end{align*}
Thus, for $n$ large enough,
$$
\|\phi_n\|_{L_x^6} \les \eps
$$
and so, by the triangle inequality,
\begin{align*}
\Big|\|f_n\|_{L^6_x}-\|f_n- \phi_n\|_{L^6_x}\Big|\les \eps.
\end{align*}
As $\eps>0$ was arbitrary, we obtain \eqref{eq:decoupling L6} when $\frac{t_n}{\ld_n^2}\to\pm\infty$.

Next, we consider the case when $t_n\equiv 0$.  By \eqref{eq:weak conv}, $h_n(x):=[D_{\ld_n}^{-1}f_n](x+x_n)\rightharpoonup \phi(x)$ in $\dot H^1_x$.  As $\|h_n\|_{\dot H^1_x}= \|f_n\|_{\dot H^1_x}\les A$, invoking the Rellich--Kondrashov Theorem and passing to a subsequence we conclude that $h_n\to\phi$ in $L^2_x(K)$ for any compact set $K\subset\R^3$.  Using a diagonal argument and passing to a further subsequence, we deduce that $h_n \to\phi$ almost everywhere on  $\R^3$.  Thus by Lemma~\ref{LEM:B-L},
\begin{align*}
\|h_n\|_{L^6_x}^6 -\|h_n-\phi\|_{L^6_x}^6-\|\phi\|_{L^6_x}^6\to 0 \qtq{as} n\to \infty.
\end{align*}
Using also the fact that
$$
\|P_{\geq \lambda_n^\theta} \phi-\phi\|_{L_x^6} \to 0 \qtq{when} \ld_n\to 0
$$
and performing a simple change of variables, we obtain \eqref{eq:decoupling L6}.

Next we consider the decoupling of the $L^4_x$-norms.  When $\ld_n\to 0$, the proof is very simple; indeed, the claim follows from
\begin{align*}
\|\phi_n\|_{L^4_x}&\les \|\phi_n\|_{L_x^2}^{\frac14} \|\phi_n\|_{\dot H_x^1}^{\frac34}
\les \ld_n^{\frac14} \|P_{\geq \ld_n^\theta}\phi\|_{L_x^2}^{\frac14}\|\phi\|_{\dot H_x^1}^{\frac34}\les \lambda_n^{\frac{1-\theta}4}\|\phi\|_{\dot H_x^1} \to 0 \qtq{as} n\to \infty.
\end{align*}
When $\ld_n\equiv 1$, we argue as for \eqref{eq:decoupling L6}.  One small difference appears in the case when $t_n/\ld_n^2\to\pm\infty$; in this case, we choose $\psi\in\mathcal{S}(\R^3)$ to approximate $\phi$ in $H^1_x$ (rather than $\dot H^1_x$) and use the fact that by Gagliardo--Nirenberg, the $L^4_x$-norm is controlled by the $H^1_x$-norm.

This completes the proof of the lemma.
\end{proof}

We are now ready to prove the main result of this section.

\begin{theorem}[Linear profile decomposition]\label{thm:profile decomposition}
Let $\{f_n\}_{n\in\mathbb{N}}$ be a bounded sequence in $H^1(\R^3)$.  Passing to a subsequence if necessary, there exists $J^{\ast}\in\{0,1,2,\dots\}\cup\{\infty\}$ such that for each finite $1\leq j\leq J^{\ast}$ there exist
$$
\phi^j\in\dot{H}^1_x\setminus \{0\}, \quad \{\ld_n^j\}_{n\in\mathbb{N}}\subset (0,1], \quad \{t_n^j\}_{n\in\mathbb{N}}\subset \R, \quad \text{and} \quad \{x_n^j\}_{n\in\mathbb{N}}\subset\R^3,
$$
satisfying
\begin{align*}
\ld_n^j\equiv 1 \qtq{or} \ld_n^j\to 0  \qquad \qtq{and} \qquad t_n^j\equiv 0 \qtq{or} t_n^j\to\pm\infty.
\end{align*}
If $\ld_n^j\equiv 1$, then additionally $\phi^j\in L^2_x$.  Choosing $0<\theta<1$ and defining
\begin{align}\label{Zpro-1}
\phi_n^j(x):=
\begin{cases}
\big[e^{it_n^j\Delta}\phi^j\big](x-x_n^j), & \qtq{if}\ld_n^j\equiv 1,\\
(\ld_n^j)^{-\frac12}  \big[e^{it_n^j\Delta}P_{\geq (\lambda_n^j)^\theta}\phi^j\big]\big(\tfrac{x-x_n^j}{\ld_n^j}\big), & \qtq{if} \ld_n^j\to 0,
\end{cases}
\end{align}
for each finite $1\le J\leq J^{\ast}$ we have the decomposition
\begin{align}\label{E:LPD}
f_n=\sum_{j=1}^{J}\phi_n^j+w_n^J
\end{align}
and the following statements hold:
\begin{gather}
\lim_{J\to J^{\ast}}\limsup_{n\to\infty}\|e^{it\Delta}w_n^J\|_{L^{10}_{t,x}(\R\times \R^3)}=0, \label{eq: profile reminder}\\
e^{-it_n^j\Delta}\bigl[ (\ld_n^j)^{\frac12}w_n^J(\ld_n^jx+x_n^j) \bigr] \rightharpoonup 0 \quad \text{in } \dot H^1_x \text{ for all }1\leq j\leq J, \label{eq: profile weak conv}\\
\sup_{J}\lim_{n\to\infty}\Big[M(f_n)- \sum_{j=1}^JM(\phi_n^j)-M(w_n^J)\Big]=0, \label{eq: profile decoupling L2}\\
\sup_{J}\lim_{n\to\infty}\Big[E(f_n)- \sum_{j=1}^JE(\phi_n^j)-E(w_n^J)\Big]=0, \label{eq: profile decoupling H1}\\
\lim_{n\to \infty}\bigg[\frac{\ld_n^j}{\ld_n^k}+\frac{\ld_n^k}{\ld_n^{j}}+\frac{|x_n^j-x_n^k|^2}{\ld_n^j\ld_n^k}+\frac{\big|t_n^j(\ld_n^j)^2-t_n^k(\ld_n^k)^2\big|}{\ld_n^j\ld_n^k}\bigg] =  \infty \quad \text{for all  } j\neq k. \label{eq: profile orthogonality}
\end{gather}
\end{theorem}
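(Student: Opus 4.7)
\medskip

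The plan is to build the decomposition by iteratively extracting profiles from the residuals using the inverse Strichartz inequality (Proposition~\ref{propo: Inverse Strichartz}) together with the normalization afforded by Corollary~\ref{COR:Zinv}. Set $w_n^0 := f_n$ and $A:=\limsup_n\|f_n\|_{H^1_x}$. Having defined $w_n^{J-1}$, let $\eps_J:=\liminf_n\|e^{it\Delta}w_n^{J-1}\|_{L^{10}_{t,x}}$; if $\eps_J=0$ we set $J^\ast=J-1$ and stop, otherwise Proposition~\ref{propo: Inverse Strichartz} (after passing to a subsequence, using a diagonal argument across $J$) produces a profile $\phi^J\in\dot H^1_x$ and parameters $\lambda_n^J,t_n^J,x_n^J$, normalized as in Corollary~\ref{COR:Zinv}, and we define $\phi_n^J$ via \eqref{Zpro-1} and $w_n^J:=w_n^{J-1}-\phi_n^J$. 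Since $\lambda_n$ from Proposition~\ref{propo: Inverse Strichartz} was shown to satisfy $\lambda_n\lesssim (A/\eps)^{15/8}$, we may further rescale so that $\lambda_n^J\in(0,1]$ by absorbing the constant into $\phi^J$ (or, equivalently, by relabeling the dichotomy $\lambda_n\equiv 1$ versus $\lambda_n\to 0$).

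To establish \eqref{eq: profile reminder}, I would combine the quantitative lower bound
\[
\|\phi^J\|_{\dot H^1_x}^2\gtrsim A^2\bigl(\eps_J/A\bigr)^{15/4}
\]
from \eqref{eq:decoupling dotH1} with the telescoped $\dot H^1_x$-decoupling
\[
\lim_n\|w_n^{J-1}\|_{\dot H^1_x}^2-\lim_n\|w_n^J\|_{\dot H^1_x}^2=\|\phi^J\|_{\dot H^1_x}^2,
\]
which after summation gives $\sum_{J=1}^\infty\|\phi^J\|_{\dot H^1_x}^2\leq A^2$ and hence $\eps_J\to 0$. (An $L^2$ analogue of the telescoping from \eqref{eq:decoupling L2} keeps $\{w_n^J\}$ bounded in $H^1_x$ uniformly in $J$, which is needed to reapply Proposition~\ref{propo: Inverse Strichartz} at each stage.) The same telescopings yield \eqref{eq: profile decoupling L2}, while \eqref{eq: profile decoupling H1} will follow by combining the $\dot H^1_x$ and $L^2_x$ decouplings with the $L^4_x$ and $L^6_x$ decouplings from Lemma~\ref{Cor: decoupling L4,L6}, applied at each iterative step and summed. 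The weak convergence \eqref{eq: profile weak conv} at index $j$ (for the same iteration) is built in by construction via \eqref{eq:weak conv}; for indices $k<j$ it will be a consequence of the asymptotic orthogonality \eqref{eq: profile orthogonality}, which we treat next.

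The main obstacle is \eqref{eq: profile orthogonality}, which I would prove by strong induction on $J$. Suppose the orthogonality holds among $\phi^1,\dots,\phi^{J-1}$ but fails for some pair $(j,J)$ with $j<J$. Passing to a subsequence, $\lambda_n^J/\lambda_n^j\to\mu\in(0,\infty)$, $(x_n^J-x_n^j)/\sqrt{\lambda_n^j\lambda_n^J}\to y\in\R^3$, and $(t_n^J(\lambda_n^J)^2-t_n^j(\lambda_n^j)^2)/(\lambda_n^j\lambda_n^J)\to\tau\in\R$. Applying the unitary renormalization
\[
g_n(x):=(\lambda_n^j)^{1/2}\bigl(e^{it_n^j\Delta}w_n^{J-1}\bigr)(\lambda_n^j x+x_n^j)
\]
to both sides of $w_n^{J-1}=\phi_n^J+w_n^J$, the left-hand side converges weakly to zero by \eqref{eq: profile weak conv} at index $j$ (applied to $w_n^{J-1}=w_n^j-\sum_{i=j+1}^{J-1}\phi_n^i$, using the inductive orthogonality to discard the intermediate profiles), while the $\phi_n^J$ contribution converges weakly to a nonzero translate/rescale/time-translate of $\phi^J$, determined by $(\mu,y,\tau)$; since $w_n^J$ is also weakly null after the $j$-renormalization, this yields $\phi^J=0$, contradicting $\phi^J\in\dot H^1_x\setminus\{0\}$. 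The subtle point is that the dichotomies enforced by Corollary~\ref{COR:Zinv} must be compatible across the two frames (for instance, when $\lambda_n^j\equiv 1$ but $\lambda_n^J\to 0$, or when the temporal parameters diverge at different rates relative to their spatial scales); each such case has to be handled by invoking the appropriate strong continuity or dispersive estimate for $e^{it\Delta}$ on $\dot H^1_x$, in the same spirit as the proof of Corollary~\ref{COR:Zinv}.
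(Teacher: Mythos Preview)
Your proposal is correct and follows essentially the same approach as the paper: iterative extraction via the inverse Strichartz inequality with the normalization from Corollary~\ref{COR:Zinv}, telescoped decoupling in $\dot H^1_x$, $L^2_x$, $L^4_x$, and $L^6_x$ to obtain \eqref{eq: profile decoupling L2}--\eqref{eq: profile decoupling H1}, summability of $\|\phi^J\|_{\dot H^1_x}^2$ to force $\eps_J\to 0$, and a contradiction argument for \eqref{eq: profile orthogonality} based on the inductive orthogonality of the earlier profiles. The only cosmetic difference is that the paper runs the contradiction in the $k$-frame (showing $\phi^k=\wlim(k\text{-renorm})w_n^{k-1}=0$) whereas you run it in the $j$-frame (showing a nonzero weak limit on one side and zero on the other); since the $(j,J)$ parameters are assumed comparable, the two frames are intertwined by a strongly convergent operator and the arguments are equivalent.
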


\begin{proof}
To keep our formulas within margins, we will use operators $g_n^j$ defined by
$$
(g_n^j f)(x):= (\lambda_n^j)^{-\frac12}  f\bigl(\tfrac{x-x_n^j}{\lambda_n^j}\bigr) \qtq{or} [(g_n^j)^{-1} f](x):= (\lambda_n^j)^{\frac12}  f\bigl(\lambda_n^jx+x_n^j\bigr).
$$
With this notation, we have
\begin{align*}
\phi_n^j= \begin{cases}
g_n^je^{it_n^j\Delta}\phi^j, & \qtq{if}\ld_n^j\equiv 1,\\
g_n^j e^{it_n^j\Delta}P_{\geq (\lambda_n^j)^\theta}\phi^j, & \qtq{if} \ld_n^j\to 0.
\end{cases}
\end{align*}

To prove the theorem we will proceed inductively, extracting one bubble at a time.  To start, we set $J=0$ and $w_n^0:=f_n$.  Now suppose we have a decomposition up to level $J\geq 0$ obeying \eqref{eq: profile decoupling L2}, \eqref{eq: profile decoupling H1}, and the $j=J$ case of \eqref{eq: profile weak conv}.  (Conditions
\eqref{eq: profile reminder}, \eqref{eq: profile weak conv} with $j<J$, and \eqref{eq: profile orthogonality} will be verified at the end.)  Passing to a subsequence if necessary, we set
\begin{align*}
A_J:=\lim_{n\to\infty} \|w_n^J\|_{H^1_x} \qtq{and} \eps_J:=\lim_{n\to \infty} \|e^{it\Delta}w_n^J\|_{L_{t,x}^{10}}.
\end{align*}
If $\eps_J=0$, we stop and set $J^*=J$.  If not, we apply Proposition~\ref{propo: Inverse Strichartz} to $w_n^J$.  Thus, passing to a subsequence in $n$ we find $\phi^{J+1}$, $\{\lambda_n^{J+1}\}$, $\{t_n^{J+1}\}$, and $\{x_n^{J+1}\}$, with
$$
\ld_n^{J+1}\equiv 1 \qtq{or} \ld_n^{J+1}\to 0  \qquad \qtq{and} \qquad t_n^{J+1}\equiv 0 \qtq{or} t_n^{J+1}\to\pm\infty.
$$
Note that we renamed the time parameters given by Proposition~\ref{propo: Inverse Strichartz} so that $t_n^{J+1} := - \lambda_n^{-2} t_n$.
According to Proposition~\ref{propo: Inverse Strichartz}, the profile $\phi^{J+1}$ is defined as the weak limit
\begin{align*}
\phi^{J+1}&=\wlim_{n\to\infty}(g_n^{J+1})^{-1} \bigl[ e^{-it_n^{J+1}(\lambda_n^{J+1})^2\Delta}w_n^J\bigr]
=\wlim_{n\to\infty}e^{-it_n^{J+1}\Delta}[(g_n^{J+1})^{-1}w_n^J].
\end{align*}

Now define $w_n^{J+1}:=w_n^J-\phi_n^{J+1}$.  By the definition of $\phi^{J+1}$, we obtain \eqref{eq: profile weak conv} for $j=J+1$.  Moreover, from Proposition~\ref{propo: Inverse Strichartz} and Lemma~\ref{Cor: decoupling L4,L6} we also have
\begin{gather*}
\lim_{n\to\infty}\Bigl[M(w_n^J) - M(w_n^{J+1}) - M(\phi_n^{J+1})\Bigr]=0,\\
\lim_{n\to\infty}\Bigl[E(w_n^J) - E(w_n^{J+1}) - E(\phi_n^{J+1})\Bigr]=0.
\end{gather*}
Combining this with the inductive hypothesis gives \eqref{eq: profile decoupling L2} and \eqref{eq: profile decoupling H1} at the level $J+1$.

Passing to a further subsequence and using Proposition~\ref{propo: Inverse Strichartz}, we obtain
\begin{equation}\label{new a,eps}
A_{J+1}^2=\lim_{n\to\infty}\|w_n^{J+1}\|_{ H^1_x}^2\leq A_J^2.
\end{equation}
Passing to a further subsequence if necessary, let
$$
\eps_{J+1}:=\lim_{n\to \infty} \|e^{it\Delta}w_n^{J+1}\|_{L_{t,x}^{10}}.
$$
If $\eps_{J+1}=0$ we stop and set $J^*=J+1$; in this case, \eqref{eq: profile reminder} is automatic.  If $\eps_{J+1}>0$ we continue the induction.  If the algorithm does not terminate in finitely many steps, we set  $J^*=\infty$.  In this case, \eqref{eq: profile reminder} follows from the fact that $\eps_J\to 0$ as $J\to \infty$.  To demonstrate that $\eps_J\to 0$, we combine
\eqref{new a,eps} and \eqref{eq:decoupling dotH1} to obtain
$$
 \sum_J A_0^2\big(\tfrac{\eps_J}{A_0}\big)^{\frac{15}{4}}  \les  \sum_j \|\phi^j\|_{\dot H^1_x} ^2 \leq A_0^2.
$$
(Recall that $A_0=\limsup_n \|f_n\|_{H^1_x} <\infty.$)

Next we verify the asymptotic orthogonality condition \eqref{eq: profile orthogonality}; claim \eqref{eq: profile weak conv} with $j<J$ follows from a similar argument.  We argue by contradiction.  Assume \eqref{eq: profile orthogonality} fails to be true for some pair $(j,k)$.  Without loss of generality, we may assume that this is the first pair for which \eqref{eq: profile orthogonality} fails, that is, $j<k$ and \eqref{eq: profile orthogonality} holds for all pairs $(j,l)$ with $j<l<k$.  Passing to a subsequence, we may assume
\begin{align}\label{cg}
\tfrac{\lambda_n^j}{\lambda_n^k}\to \lambda_0\in (0,\infty), \quad \tfrac{x_n^j-x_n^k}{\sqrt{\lambda_n^j\lambda_n^k}}\to x_0, \qtq{and}
\tfrac{t_n^j(\lambda_n^j)^2-t_n^k(\lambda_n^k)^2}{\lambda_n^j\lambda_n^k}\to t_0.
\end{align}
From the inductive relation
\begin{align*}
w_n^{k-1}=w_n^j-\sum_{l=j+1}^{k-1}\phi_n^l
\end{align*}
and the definition of $\phi^k$, we obtain
\begin{align}\label{tp}
\phi^k(x)&=\wlim_{n\to\infty}e^{-it_n^k\Delta}[(g_n^k)^{-1}w_n^{k-1}]\notag\\
&=\wlim_{n\to\infty}e^{-it_n^k\Delta}[(g_n^k)^{-1}w_n^j]-\sum_{l=j+1}^{k-1} \wlim_{n\to \infty}e^{-it_n^k\Delta}[(g_n^k)^{-1}\phi_n^l].\end{align}
We will prove that these weak limits are all zero and so obtain a contradiction to the nontriviality of $\phi^k$.

We write
\begin{align*}
e^{-it_n^k\Delta}[(g_n^k)^{-1}w_n^j]
&=e^{-it_n^k\Delta}(g_n^k)^{-1}g_n^je^{it_n^j\Delta}[e^{-it_n^j\Delta}(g_n^j)^{-1}w_n^j]\\
&=(g_n^k)^{-1}g_n^je^{i\bigl(t_n^j-t_n^k\tfrac{(\lambda_n^k)^2}{(\lambda_n^j)^2}\bigr)\Delta}[e^{-it_n^j\Delta}(g_n^j)^{-1}w_n^j].
\end{align*}
Note that by \eqref{cg},
\begin{align*}
t_n^j-t_n^k\tfrac{(\lambda_n^k)^2}{(\lambda_n^j)^2}=\tfrac{t_n^j(\lambda_n^j)^2-t_n^k(\lambda_n^k)^2}{\lambda_n^j\lambda_n^k}\cdot\tfrac{\lambda_n^k}
{\lambda_n^j}\to \tfrac{t_0}{\lambda_0}.
\end{align*}
Using this together with \eqref{cg} we conclude that the adjoints of the operators
$$
(g_n^k)^{-1}g_n^je^{i\bigl(t_n^j-t_n^k\tfrac{(\lambda_n^k)^2}{(\lambda_n^j)^2}\bigr)\Delta}
$$
converge strongly in $\dot H^1_x$.  Combining this with the $J=j$ case of \eqref{eq: profile weak conv}, we obtain that the first term on RHS\eqref{tp} is zero.

To complete the proof of \eqref{eq: profile orthogonality}, it remains to show that the second term on RHS\eqref{tp} is zero.  For all $j<l<k$ we write
\begin{align*}
e^{-it_n^k\Delta}(g_n^k)^{-1}\phi_n^l
=(g_n^k)^{-1}g_n^je^{i\bigl(t_n^j-t_n^k\tfrac{(\lambda_n^k)^2}{(\lambda_n^j)^2}\bigr)\Delta}[e^{-it_n^j\Delta}(g_n^j)^{-1}\phi_n^l].
\end{align*}
Using the fact that
$$
\|\phi^l- P_{\geq (\ld_n^l)^\theta}\phi^l\|_{\dot H^1_x} \to 0 \qtq{when} \ld_n^l\to 0
$$
and arguing as for the first term on RHS\eqref{tp}, it suffices to show that
\begin{align*}
e^{-it_n^j\Delta}(g_n^j)^{-1}g_n^l e^{it_n^l\Delta}\phi^l\rightharpoonup 0 \quad\text{weakly in } \dot H^1_x.
\end{align*}
Using a density argument, this reduces to
\begin{align}\label{need11}
I_n:=e^{-it_n^j\Delta}(g_n^j)^{-1}g_n^le^{it_n^l\Delta}\phi\rightharpoonup 0 \quad\text{weakly in } \dot H^1_x,
\end{align}
for all $\phi\in C_c^\infty(\R^3)$.  Note that we can rewrite $I_n$ as follows:
\begin{align*}
I_n=\Bigl(\tfrac{\lambda_n^j}{\lambda_n^l}\Bigr)^{\frac12}\Bigl[e^{i\bigl(t_n^l-t_n^j\bigl(\tfrac{\lambda_n^j}
{\lambda_n^l}\bigr)^2\bigr)\Delta}\phi\Bigr]\Bigl(\tfrac{\lambda_n^j x+x_n^j- x_n^l}{\lambda_n^l}\Bigr).
\end{align*}

Recalling that \eqref{eq: profile orthogonality} holds for the pair $(j,l)$, we first prove \eqref{need11} when the scaling parameters are not comparable, that is,
\begin{align}\label{E:divg lambda}
\lim_{n\to\infty}\tfrac{\lambda_n^j}{\lambda_n^l}+\tfrac{\lambda_n^l}{\lambda_n^j}=\infty.
\end{align}
By the Cauchy--Schwarz inequality,
\begin{align*}
\bigl|\langle I_n, \psi\rangle_{\dot H^1_x}\bigr|
&\les \min\Bigl\{\|\Delta I_n\|_{L^2_x}\|\psi\|_{L^2_x}, \|I_n\|_{L^2_x}\|\Delta\psi\|_{L^2_x}\Bigr\}\\
&\les \min\Bigl\{\tfrac{\lambda_n^j}{\lambda_n^l}\|\Delta\phi\|_{L^2_x}\|\psi\|_{L^2_x}, \tfrac{\lambda_n^l}{\lambda_n^j}\|\phi\|_{L^2_x}\|\Delta\psi\|_{L^2_x}\Bigr\},
\end{align*}
which converges to zero as $n\to \infty$, for all $\psi\in C_c^\infty(\R^3)$.  This establishes \eqref{need11} when \eqref{E:divg lambda} holds.

Henceforth we may assume
\begin{align*}
\lim_{n\to \infty}\tfrac{\lambda_n^j}{\lambda_n^l}=\lambda_1\in (0,\infty).
\end{align*}
We now suppose that the time parameters diverge, that is,
\begin{align*}
\lim_{n\to \infty}\tfrac{|t_n^j(\lambda_n^j)^2-t_n^l(\lambda_n^l)^2|}{\lambda_n^j\lambda_n^l}=\infty;
\end{align*}
then we also have
\begin{align*}
\Bigl|t_n^l-t_n^j\Bigl(\tfrac{\lambda_n^j}{\lambda_n^l}\Bigr)^2\Bigr|
=\tfrac{|t_n^l(\lambda_n^l)^2-t_n^j(\lambda_n^j)^2|}{\lambda_n^l\lambda_n^j}\cdot\tfrac{\lambda_n^j}{\lambda_n^l}\to \infty \qtq{as} n\to\infty.
\end{align*}
Under this condition, \eqref{need11} follows from
\begin{align*}
\lambda_1^{\frac 12}\Bigl[e^{i\bigl(t_n^l-t_n^j\bigl(\tfrac{\lambda_n^j}{\lambda_n^l}\bigr)^2\bigr)\Delta}\phi\Bigr]\Bigl(\lambda_1 x+\tfrac{x_n^j-x_n^l}{\lambda_n^l}\Bigr)\rightharpoonup 0 \quad \text{weakly in } \dot H^1_x,
\end{align*}
which is an immediate consequence of the dispersive estimate.

Finally, we deal with the situation when
\begin{align*}
\tfrac{\lambda_n^j}{\lambda_n^l}\to \lambda_1\in(0, \infty), \quad \tfrac{t_n^l(\lambda_n^l)^2-t_n^j(\lambda_n^j)^2}{\lambda_n^j\lambda_n^l}\to t_1,
\qtq{but} \tfrac{|x_n^j-x_n^l|^2}{\lambda_n^j\lambda_n^l}\to \infty.
\end{align*}
Then we also have $t_n^l-t_n^j(\lambda_n^j)^2/(\lambda_n^l)^2\to \lambda_1t_1$.  In this case, the desired weak convergence follows from the easily proved assertion
\begin{align*}
\lambda_1^{\frac 12}e^{it_1\lambda_1\Delta}\phi(\lambda_1x+y_n)\rightharpoonup0  \quad \text{weakly in } \dot H^1_x,
\end{align*}
where
\begin{align*}
y_n:=\tfrac{x_n^j-x_n^l}{\lambda_n^l}=\tfrac{x_n^j-x_n^l}{\sqrt{\lambda_n^l\lambda_n^j}}\sqrt{\tfrac{\lambda_n^j}{\lambda_n^l}},
\end{align*}
which diverges to infinity as $n\to\infty$.

This completes the proof of the theorem.
\end{proof}

%%%%%%%%%%%%%%%%%%%%%%%%%%%%%%%%%%%%%%%%%%%%%%%%%%%%%%%%%%%%%%%%%%%%%%%%%%%%%%5
%%%%%%%%%%%%%%%%%%%%%%%%%%%%%%%%%%%%%%%%%%%%%%%%%%%%%%%%%%%%%%%%%%%%%%%%%%%%%%%%

\section{Embedding the quintic NLS inside the cubic-quintic NLS}\label{SEC:6}

The next major milestone in proving the existence of minimal blowup solutions is the development of a nonlinear profile decomposition.  To do this, we must associate to each profile in \eqref{E:LPD} a solution of the cubic-quintic NLS.  In this section, we consider those profiles for which $\ld_n^j\to 0$ as $n\to \infty$.  What is special about these profiles is that the
long-time behaviour of the associated solutions to \eqref{3-5} can be deduced from the main result in \cite{CKSTT}, namely:

\begin{theorem}[Spacetime bounds for the defocusing quintic NLS, \cite{CKSTT}]\label{T:Iteam}
Let $u_0\in \dot H^1(\R^3)$.  Then there exists a unique global solution $u$ to
\begin{align}\label{QNLS}
i\partial_t u + \Delta u= |u|^4 u
\end{align}
with initial data $u(0)=u_0$.  Moreover, the solution $u$ satisfies
$$
\|\nabla u\|_{S^0(\R)}\leq C\big(\|u_0\|_{\dot H^1_x}\big).
$$
\end{theorem}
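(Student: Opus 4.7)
The plan is to proceed by induction on the energy, following the strategy of \cite{CKSTT} (but which may be streamlined using the concentration-compactness framework of \cite{KenigMerle} and the interaction Morawetz bound of \cite{CKSTT}). First I would establish the baseline local theory: using the Strichartz inequalities from Lemma~\ref{LEM:Stri} applied at the level of $\nabla u$, the standard contraction mapping argument shows that \eqref{QNLS} is locally well-posed in $\dot H^1(\R^3)$. Moreover, a small-data theory entirely analogous to Proposition~\ref{thm: Small data scattering} (but for the scale-invariant quintic problem) yields: there exists $\eta > 0$ such that whenever $\|e^{it\Delta} u_0\|_{L^{10}_{t,x}} \leq \eta$ the solution exists globally with $\|\nabla u\|_{S^0(\R)} \lesssim \|u_0\|_{\dot H^1_x}$. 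Because the equation is defocusing and energy is conserved, the associated persistence-of-regularity and stability lemmas (cf.\ Lemma~\ref{LEM:small2} and Proposition~\ref{Perturbation of {3-5}}) transfer verbatim to \eqref{QNLS}.

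Next, I would set up the induction. Define
\begin{equation*}
L(E_0) := \sup\Bigl\{ \|u\|_{L^{10}_{t,x}(\R\times\R^3)} : u \text{ solves \eqref{QNLS} with } E_{\text{quintic}}(u) \leq E_0\Bigr\},
\end{equation*}
where $E_{\text{quintic}}(u) = \int \tfrac12|\nabla u|^2 + \tfrac16|u|^6\,dx$. The theorem is equivalent to $L(E_0) < \infty$ for every $E_0 < \infty$. Small-data theory gives $L(E_0) < \infty$ for $E_0$ small. Suppose toward a contradiction that there is a critical energy $E_c < \infty$ at which $L$ first becomes infinite. Then one can choose a sequence $u_n$ of solutions with $E_{\text{quintic}}(u_n) \to E_c$ and $\|u_n\|_{L^{10}_{t,x}} \to \infty$. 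Applying a linear profile decomposition in $\dot H^1_x$ (the homogeneous analogue of Theorem~\ref{thm:profile decomposition}, originally due to Keraani~\cite{keraani-h1}) together with nonlinear profile theory and the stability result, the same arguments that we will develop in Section~\ref{SEC:7} for \eqref{3-5} yield the existence of a minimal blowup solution $u_\infty$ with $E_{\text{quintic}}(u_\infty) = E_c$ and $\|u_\infty\|_{L^{10}_{t,x}(\R\times\R^3)} = \infty$, which is moreover almost periodic modulo the \emph{scaling and translation} symmetries of \eqref{QNLS}, i.e.\ there exist $\R^3$-valued $x(t)$ and $(0,\infty)$-valued $N(t)$ such that
\begin{equation*}
\bigl\{ N(t)^{-1/2}\, u_\infty\bigl(t, N(t)^{-1} x + x(t)\bigr) : t\in I \bigr\} \quad\text{is precompact in }\dot H^1(\R^3).
\end{equation*}

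The hard part is to preclude the existence of such a $u_\infty$. Here the scaling symmetry of \eqref{QNLS} forces one to handle three distinct scenarios classified by the behaviour of $N(t)$: a self-similar blowup regime, an inverse-cascade (finite-time) regime, and a soliton-like (global) regime. In each regime the key ingredient is the interaction Morawetz inequality,
\begin{equation*}
\iint_{\R\times\R^3} |u(t,x)|^4\,dx\,dt \lesssim \|u\|_{L^\infty_t \dot H^{1/2}_x}^2 \|u\|_{L^\infty_t L^2_x}^2,
\end{equation*}
(or the frequency-localized variant of \cite{CKSTT}), which provides a scale-invariant a priori bound whose right-hand side is incompatible with the concentration enforced by almost periodicity; the frequency localization is needed precisely because $\dot H^1$-solutions need not lie in $L^2$. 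Combining these Morawetz bounds with the compactness property to control tails, and invoking additional monotonicity formulas (e.g.\ for $N(t)$, as developed by Kenig--Merle and others) to rule out the cascade and self-similar regimes, one obtains the contradiction that closes the induction. The chief technical obstacle is the frequency-localized interaction Morawetz estimate: the standard Morawetz bound is critical with respect to $\dot H^{1/2}$, so in the energy-critical setting one must truncate to high frequencies and control the error, a delicate task that constitutes the technical core of \cite{CKSTT}.
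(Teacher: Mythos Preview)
The paper does not prove this theorem at all; it is quoted from \cite{CKSTT} as a black box and followed only by a brief remark on the existence of wave operators.  Your outline is a reasonable sketch of the argument in \cite{CKSTT} (and its concentration-compactness reformulations), but there is nothing in the paper to compare it against.
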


\begin{remark}
An easy consequence of Theorem~\ref{T:Iteam} is global spacetime bounds for solutions defined by their scattering states.  Specifically, given $u_+\in \dot H^1_x$ there exists a unique global solution $u$ to \eqref{QNLS} such that
$$
\|u(t) - e^{it\Delta}u_+\|_{\dot H^1_x}\to 0 \qtq{as} t\to \infty;
$$
moreover, the solution $u$ satisfies
$$
\|\nabla u\|_{S^0(\R)} \leq C\big( \|u_+\|_{\dot H^1_x}\big).
$$
A similar statement holds backward in time.
\end{remark}

This result is pertinent to us because profiles with $\ld_n^j\to 0$ are highly concentrated for $n$ large and so the cubic nonlinearity is relatively weak.  Indeed, this precisely corresponds to the energy-subcriticality of the cubic nonlinearity.  In this section we use a perturbative argument to show that this heuristic can be made rigorous.  The exact information we will extract is dictated by the needs of the proof of the Palais--Smale condition Proposition~\ref{prop: Palais-Smale}.

\begin{proposition}\label{thm: embedding}
Let $\{\ld_n\}_{n\in\mathbb{N}}\subset (0,\infty)$ be such that $\ld_n\to 0$, $\{t_n\}_{n\in \mathbb N}\subset \R$ such that either $t_n\equiv 0$ or
$t_n\to\pm\infty$, and let $\{x_n\}_{n\in\mathbb{N}}\subset \R^3$.  Given $\phi\in\dot{H}^1_x$, let
$\phi_n(x):=\ld_n^{-\frac12}[e^{it_n\Delta}P_{\geq \ld_n^\theta}\phi]\big(\frac{x-x_n}{\ld_n}\big)$ for some $0 < \theta < 1$.  Then for $n$ sufficiently large, the unique global solution $u_n$ to \eqref{3-5} with initial data $u_n(0)=\phi_n$ satisfies
\begin{align}\label{ZNLS0}
\|\nabla u_n\|_{S^0(\R)}\leq C\big(\| \phi\|_{\dot H^1_x}\big).
\end{align}
Furthermore, for every $\eps>0$, there exist $N_{\eps}\in \mathbb{N}$ and $\phi_\eps, \psi_{\eps}\in C^{\infty}_c(\R\times\R^3)$ such that
\begin{align}
\Big\|u_n(t,x)-\ld_n^{-\frac{1}{2}}\phi_{\eps}\big(\tfrac{t}{\ld_n^2}+t_n,\tfrac{x-x_n}{\ld_n}\big)\Big\|_{L^{10}_{t,x}(\R\times\R^3)}&<\eps,\label{eq: embedding}\\
\Big\|\nabla u_n(t,x)-\ld_n^{-\frac{3}{2}}\psi_{\eps}\big(\tfrac{t}{\ld_n^2}+t_n,\tfrac{x-x_n}{\ld_n}\big)\Big\|_{L^{\frac{10}{3}}_{t,x}(\R\times \R^3)}&<\eps, \label{eq: embedding_nabla}
\end{align}
for all $n\geq N_{\eps}$.
\end{proposition}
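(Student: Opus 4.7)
The plan is to apply the perturbation theory of Proposition~\ref{Perturbation of {3-5}} around an approximate solution constructed from the defocusing quintic NLS, exploiting the fact that after the concentrating rescaling the cubic nonlinearity of \eqref{3-5} carries an explicit factor of $\lambda_n$. First I would use Theorem~\ref{T:Iteam} to build a global quintic NLS solution $v\in C(\R;\dot H^1_x)$ associated with $\phi$: $v(0)=\phi$ when $t_n\equiv 0$, and $v$ scattering to $\phi$ at the appropriate infinity when $t_n\to\pm\infty$. Because the hypotheses of Proposition~\ref{Perturbation of {3-5}} require an $L^2$-bound on the approximate solution while $\phi$ may fail to lie in $L^2$, I would simultaneously construct $v_n$ in the same way, replacing $\phi$ with the frequency-truncated data/scattering state $P_{\geq\lambda_n^\theta}\phi$; stability for the quintic equation yields uniform bounds $\|\nabla v_n\|_{S^0(\R)}+\|v_n\|_{L^{10}_{t,x}}\leq C(\|\phi\|_{\dot H^1_x})$ together with $v_n\to v$ in these norms, while mass conservation plus Bernstein give $\|v_n\|_{L^\infty_t L^2_x}\les\lambda_n^{-\theta}\|\phi\|_{\dot H^1_x}$.

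Next I would define the approximate solution
$$\tilde u_n(t,x):=\lambda_n^{-\frac12}\,v_n\bigl(\tfrac{t}{\lambda_n^2}+t_n,\,\tfrac{x-x_n}{\lambda_n}\bigr),$$
which a direct computation shows satisfies the perturbed cubic-quintic equation $i\partial_t\tilde u_n+\Delta\tilde u_n=|\tilde u_n|^4\tilde u_n-|\tilde u_n|^2\tilde u_n+e_n$ with $e_n=|\tilde u_n|^2\tilde u_n$; in other words, the cubic term of \eqref{3-5} \emph{is} the error after modelling by the energy-critical quintic. Since admissible Strichartz norms and $L^{10}_{t,x}$ are invariant under $(t,x)\mapsto(\lambda_n^{-2}t+t_n,\lambda_n^{-1}(x-x_n))$, the uniform bounds on $v_n$ transfer directly to $\tilde u_n$; together with $\|\tilde u_n\|_{L^\infty_tL^2_x}=\lambda_n\|v_n\|_{L^\infty_tL^2_x}\les\lambda_n^{1-\theta}\|\phi\|_{\dot H^1_x}$, this delivers hypotheses \eqref{eq: perturb1}--\eqref{eq: perturb2} uniformly in $n$. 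A further rescaling calculation yields the key identity
$$\|\nabla e_n\|_{L^{10/7}_{t,x}(\R\times\R^3)}=\lambda_n\,\|\nabla(|v_n|^2 v_n)\|_{L^{10/7}_{t,x}(\R\times\R^3)},$$
and H\"older together with Sobolev--Strichartz (using $|v_n|^2\in L^\infty_t L^3_x$ from $\dot H^1\hookrightarrow L^6$ combined with an interpolation involving the possibly degenerating bound $\|v_n\|_{L^\infty_t L^2_x}\les\lambda_n^{-\theta}$) controls the right-hand side by a polynomial in $\lambda_n^{-\theta}$; choosing $\theta\in(0,1)$ small enough makes the net power of $\lambda_n$ positive and hence $\|\nabla e_n\|_{L^{10/7}_{t,x}}\to 0$.

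Initial data matching is immediate when $t_n\equiv 0$ since $\tilde u_n(0)=\phi_n$ identically; when $t_n\to\pm\infty$ the scattering definition of $v_n$ gives $\|v_n(t_n)-e^{it_n\Delta}P_{\geq\lambda_n^\theta}\phi\|_{\dot H^1_x}\to 0$, which rescales into $\|\tilde u_n(0)-\phi_n\|_{\dot H^1_x}\to 0$. Proposition~\ref{Perturbation of {3-5}} then produces a unique global solution $u_n$ to \eqref{3-5} with $\|\nabla(u_n-\tilde u_n)\|_{S^0(\R)}\to 0$, from which \eqref{ZNLS0} follows by the triangle inequality. For \eqref{eq: embedding}--\eqref{eq: embedding_nabla}, the convergences $v_n\to v$ in $L^{10}_{t,x}$ and $\nabla v_n\to\nabla v$ in $L^{10/3}_{t,x}$ reduce matters to approximating the fixed $v$ and $\nabla v$ by functions $\phi_\eps,\psi_\eps\in C_c^\infty(\R\times\R^3)$ in the respective norms, which is possible by density; the claim follows because both $L^{10}_{t,x}$ and $\|\nabla\cdot\|_{L^{10/3}_{t,x}}$ are exactly scaling-invariant under the transformation defining $\tilde u_n$ (the $\lambda_n^{-3/2}$ prefactor in \eqref{eq: embedding_nabla} accounting precisely for the extra derivative).

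The main technical obstacle is the H\"older bound on $\|\nabla(|v_n|^2 v_n)\|_{L^{10/7}_{t,x}}$: since $\phi$ is only controlled at the $\dot H^1$ level, any sub-critical Lebesgue control of $v_n$ must be assembled either from the mass (which deteriorates as $\lambda_n^{-\theta}$) or through Sobolev embedding applied to admissible Strichartz bounds on $\nabla v_n$, and $\theta\in(0,1)$ must be tuned so that the $\lambda_n$ prefactor dominates any resulting negative power. That this balance can always be struck is the manifestation, at the level of estimates, of the $\dot H^1$-subcritical character of the cubic nonlinearity, and is the fundamental reason the embedding scheme succeeds.
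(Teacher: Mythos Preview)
Your approach is essentially that of the paper: build the quintic-NLS solutions $v$ and $v_n$ (called $w$ and $w_n$ there), rescale $v_n$ to $\tilde u_n$, and apply Proposition~\ref{Perturbation of {3-5}} with error $e_n=|\tilde u_n|^2\tilde u_n$. Two points deserve tightening.

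First, $\theta\in(0,1)$ is \emph{given} in the hypothesis, not yours to choose. Your discussion of the error bound leaves open whether the positive net power of $\lambda_n$ can always be achieved. It can, and the paper's choice of H\"older exponents makes this explicit: writing $\nabla(|\tilde u_n|^2\tilde u_n)$ and estimating
\[
\|\nabla e_n\|_{L^{10/7}_{t,x}}\lesssim \|\nabla\tilde u_n\|_{L^{10/3}_{t,x}}\,\|\tilde u_n\|_{L^{10}_{t,x}}\,\|\tilde u_n\|_{L^{10/3}_{t,x}}
\]
(equivalently, $\lambda_n\|\nabla v_n\|_{L^{10/3}_{t,x}}\|v_n\|_{L^{10}_{t,x}}\|v_n\|_{L^{10/3}_{t,x}}$ in your variables) gives exactly $C(\|\phi\|_{\dot H^1_x})\,\lambda_n^{1-\theta}\to 0$ for \emph{every} $\theta\in(0,1)$. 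No interpolation with $L^\infty_tL^3_x$ is needed.

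Second, to get the factor $\|v_n\|_{L^{10/3}_{t,x}}\lesssim\lambda_n^{-\theta}$ you need more than mass conservation plus Bernstein (which only gives $L^\infty_tL^2_x$): you need the $S^0$-level persistence of regularity for the quintic equation (\cite[Lemma~3.12]{CKSTT} in the paper), which upgrades the $L^2$ bound on the data/scattering state to the full Strichartz norm $\|v_n\|_{S^0(\R)}\leq C(\|\phi\|_{\dot H^1_x})\lambda_n^{-\theta}$. With these two clarifications your argument coincides with the paper's.
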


\begin{proof}
As \eqref{3-5} is space-translation invariant, without loss of generality we may assume that $x_n\equiv0$.

As noted previously, to prove the proposition we will use a perturbative argument.  Specifically, we will construct a solution $\tilde u_n$ to the defocusing energy-critical NLS that
is an approximate solution to \eqref{3-5} with initial data asymptotically matching $\phi_n$.  This solution will have good spacetime bounds thanks to Theorem~\ref{T:Iteam}.  Then using the stability result Proposition~\ref{Perturbation of {3-5}}, we will deduce that for $n$ sufficiently large, the solution $u_n$ inherits the spacetime bounds of $\tilde u_n$, thus proving~\eqref{ZNLS0}.

If $t_n\equiv 0$, we define $w_n$ and $w$ to be the solutions to \eqref{QNLS} with initial data $w_n(0)=P_{\geq\ld_n^\theta}\phi$ and $w(0) = \phi$, respectively.  If $t_n\to \pm\infty$, we define $w_n$ and $w$ to be the solutions to \eqref{QNLS} which scatter in $\dot{H}^1_x$ to $e^{it\Delta}P_{\geq\ld_n^\theta}\phi$ and $e^{it\Delta}\phi$, respectively, as $t\to\pm \infty$.  By Theorem~\ref{T:Iteam}, we have
\begin{align*}
\|\nabla w_n\|_{S^0(\R)} + \|\nabla w\|_{S^0(\R)}\leq C\big(\|\phi\|_{\dot H^1_x}\big) \quad\text{uniformly in } n\in \mathbb N.
\end{align*}

The stability result \cite[Lemma~3.10]{CKSTT} allows us to deduce that
\begin{align*}
\lim_{n\to \infty}\|\nabla(w_n-w)\|_{S^0(\R)}=0.
\end{align*}
Indeed, if $t_n\equiv 0$ this follows from the observation
\begin{equation*}
\|w_n(0)-w(0)\|_{\dot{H}^1_x}=\|P_{\geq\ld_n^{\theta}}\phi-\phi\|_{\dot{H}^1_x}\to 0 \qtq{as} n\to \infty.
\end{equation*}
If instead $t_n\to\pm\infty$, to be able to apply \cite[Lemma~3.10]{CKSTT}, it suffices to observe that
$$
\lim_{n\to \infty}\lim_{t\to \pm\infty} \|w_n(t) - w(t)\|_{\dot H^1_x} =0,
$$
which follows from the construction of $w_n$ and $w$ and the fact that $\ld_n\to 0$:
\begin{align*}
&\lim_{n\to \infty}\lim_{t\to \pm\infty} \|w_n(t) - w(t)\|_{\dot H^1_x}\\
&\leq \lim_{n\to \infty}\lim_{t\to \pm\infty} \Bigl[\|w_n(t) - e^{it\Delta}P_{\geq \ld_n^\theta} \phi \|_{\dot H^1_x} +\|w(t) - e^{it\Delta}\phi \|_{\dot H^1_x}
+\|P_{\geq\ld_n^{\theta}}\phi-\phi\|_{\dot{H}^1_x}\Bigr]=0.
\end{align*}

Next, by the Bernstein inequality,
$$
\|P_{\geq \ld_n^\theta} \phi \|_{L_x^2}\les \ld_n^{-\theta}\|\phi \|_{\dot H^1_x}
$$
and so the persistence of regularity result \cite[Lemma~3.12]{CKSTT} implies that
\begin{equation}\label{wnL2}
\|w_n\|_{S^0(\R)}\leq C\big(\| \phi\|_{\dot H^1_x}\big)\ld_n^{-\theta}.
\end{equation}

We are now in a position to introduce the approximate solutions $\tilde u_n$ to \eqref{3-5}.  For $n\in \mathbb N$, we define
\begin{equation*}
\tilde{u}_n(t, x):=\ld_n^{-\frac{1}{2}}w_n\big(\tfrac{t}{\ld_n^2}+t_n,\tfrac{x}{\ld_n}\big).
\end{equation*}
As \eqref{QNLS} is time-translation and scaling invariant, $\tilde u_n$ is also a solution to \eqref{QNLS} and
$$
\|\nabla\tilde u_n\|_{S^0(\R)} = \|\nabla w_n\|_{S^0(\R)} \leq C\big(\|\phi\|_{\dot H^1_x}\big).
$$
Moreover, $\tilde u_n(0)$ asymptotically matches the initial data $u_n(0)=\phi_n$; indeed,
$$
\|\tilde{u}_n(0)-\phi_n\|_{\dot{H}^1_x}= \|w_n(t_n)-e^{it_n\Delta}P_{\geq \ld_n^\theta}\phi\|_{\dot{H}^1_x}\to 0 \qtq{as} n\to \infty.
$$
To invoke the stability result Proposition~\ref{Perturbation of {3-5}} and deduce claim \eqref{ZNLS0}, it thus remains to show that $\tilde u_n$ has bounded mass (so \eqref{eq: perturb1} holds) and that $\tilde u_n$ is an approximate solution to \eqref{3-5}, both as $n\to \infty$.  By \eqref{wnL2},
\begin{equation*}
\|\tilde{u}_n\|_{S^0(\R)}= \ld_n \|w_n\|_{S^0(\R)}\leq C\big(\| \phi\|_{\dot H^1_x}\big)\ld_n^{1-\theta}\to 0 \qtq{as} n\to \infty.
\end{equation*}
This bounds the mass of $\tilde u_n$ and provides the key tool for bounding $e:=|\tilde{u}_n|^2\tilde{u}_n$:
\begin{align*}
\|\nabla e\|_{L^{\frac{10}{7}}_{t,x}(\R\times\R^3)}
&\les \|\nabla \tilde{u}_n\|_{L^{\frac{10}{3}}_{t,x}(\R\times\R^3)}
\|\tilde{u}_n\|_{L^{10}_{t,x}(\R\times\R^3)}\|\tilde{u}_n\|_{L^{\frac{10}{3}}_{t,x}(\R\times\R^3)}\leq C\big(\|\phi\|_{\dot H^1_x})\ld_n^{1-\theta},
\end{align*}
which converges to zero as $n\to\infty$.  This completes the proof of \eqref{ZNLS0}.

Finally, we turn to \eqref{eq: embedding} and \eqref{eq: embedding_nabla}.  For $\eps>0$, approximate $w$ by $\phi_{\eps}, \psi_{\eps} \in C_c^{\infty}(\R\times\R^3)$ such that
\begin{align*}
\|w-\phi_{\eps}\|_{L^{10}_{t,x}(\R\times\R^3)} < \tfrac{\eps}{3} \qtq{and} \|\nabla w-\psi_{\eps}\|_{L^{\frac{10}{3}}_{t,x}(\R\times\R^3)}< \tfrac{\eps}{3}
\end{align*}
and take $n$ sufficiently large so that
\begin{align*}
\|u_n-\tilde u_n\|_{L_{t,x}^{10}\cap L_t^{\frac{10}3}H^{1,\frac{10}3}_x} < \tfrac{\eps}{3} \qtq{and} \|w_n-w\|_{L_{t,x}^{10}\cap L_t^{\frac{10}3}H^{1,\frac{10}3}_x} < \tfrac{\eps}{3}.
\end{align*}
The two claims now follow easily from the triangle inequality.
\end{proof}

%%%%%%%%%%%%%%%%%%%%%%%%%%%%%%%%%%%%%%%%%%%%%%%%%%%%%%%%%%%%%%%%%%%%%%%%%%%%%%%%%%%%%%%%%%%%%%%%%%5
%%%%%%%%%%%%%%%%%%%%%%%%%%%%%%%%%%%%%%%%%%%%%%%%%%%%%%%%%%%%%%%%%%%%%%%%%%%%%%%%%%%%%%%%5

\section{Existence of a minimal blowup solution} \label{SEC:7}

As mentioned in the introduction, Theorem~\ref{thm: main} is equivalent to $L(D)<\infty$ for all $0<D<\infty$, where $L(D)$ is defined as in \eqref{QL}.
From the small data theory Proposition~\ref{thm: Small data scattering}, we know that $L(D)<\infty$ for $D\ll 1$.  Indeed, using also Lemma~\ref{LEM:small2} and
\eqref{E:norm equiv} we have
\begin{align}\label{E:small data bounds}
\|u\|_{L_{t,x}^{10}(\R\times\R^3)}+\|\nabla u\|_{L_{t,x}^{\frac{10}3}(\R\times\R^3)}\les E(u)^{1/2}
\end{align}
for all global solutions $u$ with $D(u)\ll 1$.  Thus if Theorem~\ref{thm: main} failed, there would exist a critical $D_c<\infty$ such that
\begin{align}\label{E:induct}
L(D)<\infty \qtq{for} D<D_c \qquad \qtq{and} \qquad L(D)=\infty \qtq{for} D>D_c.
\end{align}

The goal of this section is to prove the existence of a minimal blowup solution to \eqref{3-5}, that is, a solution $u$ to \eqref{3-5} such that
\begin{equation*}
D_c(u) = D_c \quad \text{and}\quad \|u\|_{L^{10}_{t,x}([0,\infty)\times\R^3)}=\|u\|_{L^{10}_{t,x}((-\infty,0]\times\R^3)}=\infty.
\end{equation*}
To extract this minimal counterexample to Theorem~\ref{thm: main}, we will prove a Palais--Smale condition for an increasingly bad sequence of almost counterexamples to our theorem.

\begin{proposition}[A Palais--Smale condition]\label{prop: Palais-Smale}
Let $\{u_n\}_{n\in\mathbb{N}}$ be a sequence of global solutions to \eqref{3-5} and $\{t_n\}_{n\in\mathbb{N}}\subset\R$ be such that
$\lim_{n\to\infty} D(u_n)=D_c$ and
\begin{align}\label{ZPS0}
\lim_{n\to\infty}\|u_n\|_{L^{10}_{t,x}([t_n,\infty)\times\R^3)}=\lim_{n\to\infty}\|u_n\|_{L^{10}_{t,x}((-\infty,t_n]\times\R^3)}=\infty.
\end{align}
Passing to a subsequence, there exists $\{x_n\}_{n\in\mathbb{N}}\subset\R^3$ such that $\{u_n(t_n, \, \cdot\,  +x_n)\}_{n\in\mathbb{N}}$ converges in $H^1(\R^3)$.
\end{proposition}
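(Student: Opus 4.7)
\emph{The plan} is to follow the standard concentration--compactness scheme, adapted to the $H^1_x$-setting, using the linear profile decomposition of Section~\ref{SEC:5}, the nonlinear embedding Proposition~\ref{thm: embedding}, the inductive hypothesis \eqref{E:induct}, and the stability Proposition~\ref{Perturbation of {3-5}}. First, I would reduce to $t_n \equiv 0$ by time translation; since $D(u_n) \to D_c < \infty$, Proposition~\ref{P:all about D}(vii) provides a uniform $H^1_x$-bound on $\phi_n := u_n(0)$, so Theorem~\ref{thm:profile decomposition} yields (after passing to a subsequence) profiles $\{\phi^j\}$ with parameters $(\lambda_n^j, t_n^j, x_n^j)$ and a remainder $w_n^J$ obeying \eqref{eq: profile reminder}--\eqref{eq: profile orthogonality}. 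To each $\phi^j$ I would associate a nonlinear profile $v_n^j$: when $\lambda_n^j \to 0$, Proposition~\ref{thm: embedding} directly furnishes a global solution with $\|\nabla v_n^j\|_{S^0(\R)} \le C(\|\phi^j\|_{\dot H^1_x})$; when $\lambda_n^j \equiv 1$, global existence in $H^1_x$ follows from Theorem~\ref{T:Matador}, taking $v^j(0) = \phi^j$ if $t_n^j \equiv 0$, or letting $v^j$ scatter to $e^{it\Delta}\phi^j$ if $t_n^j \to \pm\infty$, and then setting $v_n^j(t,x) := v^j(t+t_n^j,\, x-x_n^j)$.

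Next, I would run induction on $D$ to force a single non-trivial profile. Combining \eqref{eq: profile decoupling L2}--\eqref{eq: profile decoupling H1} with the fact that small-scale profiles contribute vanishing $L^2_x$-mass, together with the non-negativity of energies ensured by \eqref{E:E>0}, and invoking the strict two-variable monotonicity of $D$ from Proposition~\ref{P:all about D}(v)--(vi), I would deduce the following dichotomy: either every $j$ obeys $D(v^j) < D_c$ strictly, or there is a unique profile $\phi^1$ with $D(v^1) = D_c$, all other profiles plus the remainder carrying vanishing mass and energy. In the first case, the inductive hypothesis \eqref{E:induct} controls the $\lambda_n^j \equiv 1$ profiles while Proposition~\ref{thm: embedding} controls the small-scale ones, yielding $\sup_n \sum_j \|v_n^j\|_{L^{10}_{t,x}} < \infty$; asymptotic orthogonality \eqref{eq: profile orthogonality} then allows me to turn $\tilde u_n := \sum_{j=1}^J v_n^j$ into an approximate solution of \eqref{3-5} whose error tends to zero in the norms \eqref{eq: perturb1}--\eqref{eq: perturb4}, and Proposition~\ref{Perturbation of {3-5}} produces a uniform bound on $\|u_n\|_{L^{10}_{t,x}(\R\times\R^3)}$, contradicting \eqref{ZPS0}.

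Hence only the second alternative survives. Proposition~\ref{P:all about D}(vi) together with the decoupling identities forces $M(w_n^1) \to 0$ and $E(w_n^1) \to 0$, so \eqref{E3/32} gives $\|w_n^1\|_{H^1_x} \to 0$; in particular every other profile is trivial. If $t_n^1 \to +\infty$, then by construction $v_n^1(t)$ asymptotes for $t \ge 0$ to the shifted linear evolution $e^{i(t+t_n^1)\Delta}\phi^1$; absolute continuity of the Strichartz norm makes its forward $L^{10}_{t,x}$-norm tend to zero, and Proposition~\ref{Perturbation of {3-5}} then bounds $\|u_n\|_{L^{10}_{t,x}([0,\infty)\times\R^3)}$ uniformly, contradicting \eqref{ZPS0}; the symmetric argument rules out $t_n^1 \to -\infty$. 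Therefore $t_n^1 \equiv 0$ and $\phi_n(\,\cdot\, + x_n^1) \to \phi^1$ in $H^1_x$, which is the asserted compactness with $x_n := -x_n^1$.

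\emph{The hard part} will be the dichotomy step. Because $D$ is neither linear nor additive in $(M, E)$ and because profiles with $\lambda_n^j \to 0$ carry $\dot H^1_x$-energy but contribute vanishing $L^2_x$-mass, one cannot naively partition the $D_c$-budget among the profiles; one must combine the two-variable strict monotonicity from Proposition~\ref{P:all about D}(v)--(vi) with the separate asymptotic decouplings of $M$ and $E$ in order to push every interacting profile strictly below $D_c$. A secondary technical point is verifying, uniformly in $n$ and using \eqref{eq: profile orthogonality}, that the superposition $\tilde u_n$ meets the hypotheses of Proposition~\ref{Perturbation of {3-5}}---in particular that the cross-terms arising from $|\tilde u_n|^2 \tilde u_n$ and $|\tilde u_n|^4 \tilde u_n$ have vanishing $N^0$-norm as $n \to \infty$.
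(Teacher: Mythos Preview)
Your plan follows the paper's approach closely and correctly identifies the two genuinely delicate points (the dichotomy and the cross-term control). There are, however, two concrete gaps you should patch.

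First, your approximate solution $\tilde u_n := \sum_{j=1}^J v_n^j$ omits the linear evolution of the remainder. With this choice, $u_n(0)-\tilde u_n(0) = w_n^J + o(1)$, and $\|w_n^J\|_{\dot H^1_x}$ is \emph{not} small in general; only $\|e^{it\Delta}w_n^J\|_{L^{10}_{t,x}}$ is. Thus hypothesis \eqref{eq: perturb3} of Proposition~\ref{Perturbation of {3-5}} fails. The fix (which is what the paper does) is to set $u_n^J := \sum_{j=1}^J v_n^j + e^{it\Delta}w_n^J$; then the initial data match up to $o(1)$ by \eqref{eq: approx v_n^j(0)}, and the price is an additional family of error terms involving $e^{it\Delta}w_n^J$ in the approximate-solution estimate. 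These are controlled using \eqref{eq: profile reminder}, together with a local-smoothing input (Lemma~\ref{L:Keraani3.7}) to handle products of the form $v_n^j\,\nabla e^{it\Delta}w_n^J$; this last point is more than a routine H\"older estimate and deserves to be flagged alongside the cross-terms you already mention.

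Second, in your single-profile branch you pass directly from $\|w_n^1\|_{H^1_x}\to 0$ to the discussion of $t_n^1\to\pm\infty$, without disposing of $\lambda_n^1\to 0$. This case must be ruled out separately: either observe that a concentrating profile carries vanishing mass, contradicting $M(\phi_n^1)\to M_0>0$, or (as the paper also notes) invoke Proposition~\ref{thm: embedding} to obtain global $L^{10}_{t,x}$ bounds on $v_n^1$ and then apply stability to contradict \eqref{ZPS0}.
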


\begin{proof}
By time-translation invariance, we may take $t_n \equiv 0$.

By Proposition~\ref{P:all about D}, we know that $u_n(0)$ is bounded in $H^1_x$ and consequently we may apply the linear profile decomposition Theorem~\ref{thm:profile decomposition}
to write
\begin{equation}\label{eq: linear profile decomp}
u_n(0)=\sum_{j=1}^J\phi_n^j+w_n^J.
\end{equation}
To prove the proposition, we must show that there exists a single profile (i.e., $J^*=1$), that $\lambda_n^1\equiv 1$, $t_n^1\equiv 0$, and that the error $w_n^1$ converges to zero in $H^1_x$.

Passing to a further subsequence if necessary, we may assume $M(u_n)\to M_0$ and $E(u_n)\to E_0$.  In particular, $D_c=D(M_0,E_0)$.

Our first step is to introduce {\it nonlinear} profiles $v^j_n$  associated to each $\phi^j_n$.  Fix $j\geq 1$.  If $\ld_n^j\equiv 1$ and $t_n^j\equiv 0$, then we define $v^j$ to be the global solution to \eqref{3-5} with initial data $v^j(0)=\phi^j$.  If $\ld_n^j\equiv 1$ and $t_n^j\to\pm\infty$ as $n\to\infty$, then we define $v^j$ to be the global solution to \eqref{3-5} which scatters forward/backward in time to $e^{it\Delta}\phi^j$ in $H^1_x$.  In both cases, we define the global solutions
$$
v_n^j(t,x):=v^j(t+t_n^j, x-x_n^j).
$$
If $\ld_n^j\to 0$ as $n\to\infty$, we define $v_n^j$ to be the global solution to \eqref{3-5} with initial data $v_n^j(0)= \phi_n^j$ given by Proposition~\ref{thm: embedding}.

As a consequence of our construction, in all cases above we have
\begin{equation}\label{eq: approx v_n^j(0)}
\big\|v_n^j(0)-\phi_n^j\|_{H^1_x}\to 0 \qtq{as} n\to\infty.
\end{equation}
Thus, by the decoupling of mass \eqref{eq: profile decoupling L2} and energy \eqref{eq: profile decoupling H1}, for each finite $J\leq J^*$ we obtain
\begin{align}
\limsup_{n\to\infty} \sum_{j=1}^{J}M(v_n^j)+M(w_n^{J})&\leq M_0 \label{eq: mass decoup}\\
\limsup_{n\to\infty} \sum_{j=1}^{J}E(v_n^j)+E(w_n^{J})&\leq E_0. \label{eq: sumable E}
\end{align}
Note that by \eqref{E:norm equiv}, the summands in \eqref{eq: sumable E} are non-negative (for $n$ large); indeed, as the profiles $\phi^j$ are non-trivial we actually have $\liminf_{n\to \infty}E(v_n^j)>0$.

To continue, we discuss separately the following two cases:
\begin{align*}
& \text{Case 1:} \quad \sup_j \limsup_{n\to\infty} M(v_n^j)=M_0  \qtq{and}  \sup_j \limsup_{n\to\infty} E(v_n^j)=E_0\\
& \text{Case 2:} \quad \sup_j \limsup_{n\to\infty}  M(v_n^j)< M_0  \qtq{or}  \sup_j \limsup_{n\to\infty}  E(v_n^j)< E_0.
\end{align*}

\textbf{Case 1:}  By \eqref{eq: mass decoup}, \eqref{eq: sumable E}, and the non-negativity of the summands in \eqref{eq: sumable E}, in this case there exists only one profile (i.e., $J^* = 1$) and the decomposition \eqref{eq: linear profile decomp} simplifies to
\begin{equation}\label{eq: p-s5}
u_n(0)=\phi_n+w_n \qtq{with} \lim_{n\to \infty} \|w_n\|_{H^1_x}=0.
\end{equation}

If $\ld_n \equiv 1$ and $t_n\equiv 0$, then we obtain the desired compactness conclusion.  We will show that all other scenarios contradict \eqref{ZPS0}.

Suppose that $\ld_n\equiv 1$  and $t_n\to\infty$ as $n \to \infty$; the case when $t_n\to-\infty$ can be treated similarly.  Using \eqref{eq: p-s5} together with Strichartz and the monotone convergence theorem, we deduce that
\begin{align*}
\big\|e^{it\Delta}u_n(0)\big\|_{L^{10}_{t,x}([0,\infty)\times \R^3)}
&\leq \big\|e^{it\Delta}\phi_n\big\|_{L^{10}_{t,x}([0,\infty)\times \R^3)} +\big\|e^{it\Delta}w_n\big\|_{L^{10}_{t,x}([0,\infty)\times \R^3)}\\
&\les \big\|e^{it\Delta}\phi\big\|_{L^{10}_{t,x}([t_n,\infty)\times \R^3)} + \|w_n\|_{H^1_x} \to 0 \qtq{as} n\to \infty.
\end{align*}
Thus, using the stability result Proposition~\ref{Perturbation of {3-5}} with  $u = u_n$ and $\tilde u = e^{it\Delta}u_n(0)$, we can deduce that $u_n$ has negligible
$L^{10}_{t,x}$-norm on $[0,\infty)\times\R^3$.  This contradicts \eqref{ZPS0}.

Finally, suppose that $\ld_n\to 0$ as $n\to\infty$.  From Proposition~\ref{thm: embedding} we have
\begin{equation*}
\|\nabla v_n\|_{S^0(\R)} \leq C\big( \|\phi\|_{\dot H^1_x}\big)
\end{equation*}
while by construction,
\begin{equation*}
\|u_n(0)-v_n(0)\|_{H^1_x}=\|w_n\|_{H^1_x}\to 0  \text{ as } n\to\infty.
\end{equation*}
Thus, by the stability result Proposition~\ref{Perturbation of {3-5}}, we derive a contradiction to \eqref{ZPS0}.

\smallskip

\textbf{Case 2:}  In this case there exists $\eps>0$ so that
\begin{align*}
\sup_j \limsup_{n\to\infty} M(v_n^j)\leq M_0-\eps \quad \text{or}\quad \sup_j \limsup_{n\to\infty} E(v_n^j)\leq E_0-\eps.
\end{align*}
Then for each finite $J\leq J^*$, we must have $M(v_n^j) \leq M_0-\eps/2$ or $E(v_n^j) \leq E_0-\eps/2$ for all $1\leq j\leq J$ and $n$ sufficiently large.
Thus, by Proposition~\ref{P:all about D}(vi) and the inductive hypothesis \eqref{E:induct}, we get
\begin{align*}
\|v_n^j\|_{L^{10}_{t,x}(\R \times \R^3)}\les_{D_c, \eps}1
\end{align*}
for all $1\leq j\leq J$ and $n$ sufficiently large.  In fact, by \eqref{E:small data bounds}, \eqref{eq: sumable E}, and the finiteness of $D_c$, we have
\begin{align}\label{ZPS7}
\|v_n^j\|_{L^{10}_{t,x}(\R \times \R^3)} + \|\nabla v_n^j\|_{L^{\frac{10}{3}}_{t,x}(\R\times\R^3)}\les_{D_c, \eps} E(v_n^j)^\frac{1}{2}.
\end{align}
Combining this with the persistence of regularity result Lemma~\ref{LEM:small2}, we obtain
\begin{align}\label{eq: gen bounds v_n^j}
\|v_n^j\|_{L^{\frac{10}{3}}_{t,x}(\R\times\R^3)} \les_{D_c, \eps}M(v_n^j)^{\frac12}
\end{align}
for all $1\leq j\leq J$ and $n$ sufficiently large.

Next we define
\begin{equation}\label{ZPS9}
u_n^J(t):=\sum_{j=1}^Jv_n^j(t)+e^{it\Delta}w_n^J.
\end{equation}
We will prove that $u_n^J$ are increasingly accurate approximate solutions to \eqref{3-5} with uniform global spacetime bounds and that $u_n^J(0)$ asymptotically matches the initial data $u_n(0)$.  Then by the stability result Proposition~\ref{Perturbation of {3-5}}, the solutions $u_n$ must satisfy uniform global spacetime bounds, which contradicts \eqref{ZPS0}.

A key step in the plan described above is to prove that the nonlinear profiles $v_n^j$ decouple asymptotically (as $n\to\infty$).  Indeed, as \eqref{3-5} is a nonlinear equation, the sum of solutions is no longer a solution; however, if the solutions $v_n^j$ decouple asymptotically, then we expect that $u_n^J$ will be an approximate solution for $n$ large.  We will show that the asymptotic decoupling of $v_n^j$ is a consequence of the asymptotic orthogonality relation \eqref{eq: profile orthogonality}.

\begin{lemma}[Asymptotic decoupling of nonlinear profiles]\label{lemma: v^jv^k to 0}
If $j\neq k$ we have
\begin{align}\label{ZPS9a}
\lim_{n\to\infty}\Big[\|v_n^jv_n^k \|_{L^5_{t,x}} + \|v_n^j\nabla v_n^k\|_{L^{\frac52}_{t,x}}+\|\nabla v_n^j\nabla v_n^k\|_{L^{\frac53}_{t,x}}
+\|v_n^jv_n^k\|_{L^{\frac53}_{t,x}}\Big]=0,
\end{align}
where all spacetime norms are over $\R\times\R^3$.
\end{lemma}

\begin{proof}
We first prove
\begin{equation}\label{ZPS15}
\lim_{n\to\infty}\|v_n^j\nabla v_n^k\|_{L^{\frac{5}{2}}_{t,x}(\R\times\R^3)}=0;
\end{equation}
the first and third terms in \eqref{ZPS9a} follow in a similar manner.  We will discuss the fourth summand afterwards.

Observe that for any small $\delta>0$ there exist $\phi^j_\delta, \psi^k_\delta\in C^{\infty}_c(\R\times\R^3)$ such that
\begin{align*}
\Big\|v^j_n(t,x) - (\ld_n^j)^{-\frac12}\phi_\delta^j \Big(\tfrac{t}{(\ld_n^j)^2}+t_n^j, \tfrac{x-x_n^j}{\ld_n^j}\Big)\Big\|_{L^{10}_{t,x}} &\leq \delta,\\
\Big\|\nabla v^k_n(t,x) - (\ld_n^k)^{-\frac32}\psi_\delta^k \Big(\tfrac{t}{(\ld_n^k)^2}+t_n^k, \tfrac{x-x_n^k}{\ld_n^k}\Big)\Big\|_{L^{\frac{10}{3}}_{t,x}}&\leq \delta,
\end{align*}
for all $n$ sufficiently large.  When the spatial scale converges to zero, this follows from Proposition~\ref{thm: embedding}; in the other case
it follows directly from the definition of $v_n^j$ and $v_n^k$ and \eqref{ZPS7}.
Thus, by H\"older's inequality together with \eqref{ZPS7},
\begin{align*}
\|v_n^j\nabla v_n^k\|_{L^{\frac{5}{2}}_{t,x}}
\les_{D_c,\eps} \delta+ \Big\|(\ld_n^j)^{-\frac12}\phi_\delta^j \Big(\tfrac{t}{(\ld_n^j)^2}+t_n^j, \tfrac{x-x_n^j}{\ld_n^j}\Big) (\ld_n^k)^{-\frac32}\psi_\delta^k \Big(\tfrac{t}{(\ld_n^k)^2}+t_n^k, \tfrac{x-x_n^k}{\ld_n^k}\Big)\Big\|_{L_{t,x}^{\frac52}}.
\end{align*}
As $\delta>0$ is arbitrary, to complete the proof of \eqref{ZPS15} it remains to show that
\begin{align}\label{to show decoup}
\lim_{n\to \infty}\Big\|(\ld_n^j)^{-\frac12}\phi_\delta^j \Big(\tfrac{t}{(\ld_n^j)^2}+t_n^j, \tfrac{x-x_n^j}{\ld_n^j}\Big) (\ld_n^k)^{-\frac32}\psi_\delta^k \Big(\tfrac{t}{(\ld_n^k)^2}+t_n^k, \tfrac{x-x_n^k}{\ld_n^k}\Big)\Big\|_{L_{t,x}^{\frac52}}=0.
\end{align}

If $\ld_n^j\equiv 1$ and $\ld_n^k\equiv 1$, then the asymptotic orthogonality condition \eqref{eq: profile orthogonality} implies that the supports of the two functions in \eqref{to show decoup} become disjoint in spacetime for $n$ large.  Thus \eqref{to show decoup} holds in this case.

If $\ld_n^j\to 0$ and $\ld_n^k\equiv 1$, then by H\"older's inequality we estimate
\begin{align*}
\Big\|(\ld_n^j)^{-\frac12}\phi_\delta^j \Big(\tfrac{t}{(\ld_n^j)^2}+t_n^j, \tfrac{x-x_n^j}{\ld_n^j}\Big) &\psi_\delta^k \big(t+t_n^k, x-x_n^k\big)\Big\|_{L_{t,x}^{\frac52}}\les (\ld_n^j)^{\frac32} \|\phi_\delta^j\|_{L_{t,x}^{\frac52}}\|\psi_\delta^k\|_{L_{t,x}^{\infty}},
\end{align*}
which converges to zero as $n\to \infty$.  This proves \eqref{to show decoup} in this case.

If $\ld_n^j\equiv1$ and $\ld_n^k\to 0$, then by H\"older's inequality we get
\begin{align*}
\Big\|\phi_\delta^j \big(t+t_n^j,x-x_n^j\big) (\ld_n^k)^{-\frac32}&\psi_\delta^k \Big(\tfrac{t}{(\ld_n^k)^2}+t_n^k, \tfrac{x-x_n^k}{\ld_n^k}\Big)\Big\|_{L_{t,x}^{\frac52}}
\les(\ld_n^k)^{\frac12} \|\phi_\delta^j\|_{L_{t,x}^{\infty}}\|\psi_\delta^k\|_{L_{t,x}^{\frac52}},
\end{align*}
which converges to zero as $n\to \infty$, thus proving \eqref{to show decoup} in this case.

It remains to treat the case when $\ld_n^j\to 0$ and $\ld_n^k\to 0$.  If $\frac{\ld_n^j}{\ld_n^k} + \frac{\ld_n^k}{\ld_n^j} \to \infty$, then we estimate
\begin{align*}
\Big\|&(\ld_n^j)^{-\frac12}\phi_\delta^j \Big(\tfrac{t}{(\ld_n^j)^2}+t_n^j, \tfrac{x-x_n^j}{\ld_n^j}\Big) (\ld_n^k)^{-\frac32}\psi_\delta^k \Big(\tfrac{t}{(\ld_n^k)^2}+t_n^k, \tfrac{x-x_n^k}{\ld_n^k}\Big)\Big\|_{L_{t,x}^{\frac52}}\\
&\quad\les \min\Big\{\Big(\tfrac{\ld_n^j}{\ld_n^k}\Big)^{\frac32} \|\phi_\delta^j\|_{L_{t,x}^{\frac52}}\|\psi_\delta^k\|_{L_{t,x}^{\infty}}, \Big(\tfrac{\ld_n^k}{\ld_n^j}\Big)^{\frac12} \|\phi_\delta^j\|_{L_{t,x}^{\infty}}\|\psi_\delta^k\|_{L_{t,x}^{\frac52}}\Big\}\to 0\qtq{as} n\to \infty.
\end{align*}
If instead $\frac{\ld_n^j}{\ld_n^k}\to \ld_0\in (0, \infty)$, we use a change of variables to write
\begin{align*}
\Big\|&(\ld_n^j)^{-\frac12}\phi_\delta^j  \Big(\tfrac{t}{(\ld_n^j)^2}+t_n^j, \tfrac{x-x_n^j}{\ld_n^j}\Big) (\ld_n^k)^{-\frac32}\psi_\delta^k \Big(\tfrac{t}{(\ld_n^k)^2}+t_n^k, \tfrac{x-x_n^k}{\ld_n^k}\Big)\Big\|_{L_{t,x}^{\frac52}}\\
&\quad=\Big(\tfrac{\ld_n^j}{\ld_n^k}\Big)^{\frac32} \Big\|\phi_\delta^j \big(t, x\big)\psi_\delta^k \Big(\Big(\tfrac{\ld_n^j}{\ld_n^k}\Big)^2\Big(t-\tfrac{t_n^j(\ld_n^j)^2-t_n^k(\ld_n^k)^2}{(\ld_n^j)^2}\Big), \tfrac{\ld_n^j}{\ld_n^k}\Big(x+\tfrac{x_n^j-x_n^k}{\ld_n^j}\Big)\Big)\Big\|_{L_{t,x}^{\frac52}}.
\end{align*}
Using the asymptotic orthogonality condition \eqref{eq: profile orthogonality}, we see that either the temporal or the spatial supports of the functions on the right-hand side above become disjoint for $n$ large.  This proves \eqref{to show decoup} in this case.

We now turn to the fourth limit in \eqref{ZPS9a}.  We want to show that
\begin{equation}\label{ZPS17}
\lim_{n\to\infty}\|v_n^jv_n^k\|_{L^{\frac{5}{3}}_{t,x}}=0.
\end{equation}
When $\ld_n^j\equiv 1$ and $\ld_n^k\equiv 1$, this follows in the manner shown above, using \eqref{eq: gen bounds v_n^j} in place of \eqref{ZPS7}.
Suppose now that $\ld_n^j\to 0$; the case when $\ld_n^k\to 0$ can be treated similarly.  By \eqref{eq: approx v_n^j(0)}, \eqref{eq: gen bounds v_n^j} and Bernstein,
\begin{align*}
\|v_n^j\|_{L^{\frac{10}{3}}_{t,x}(\R\times\R^3)}
&\les_{D_c,\eps} \|v_n^j(0)\|_{L^2_x} \les_{D_c,\eps} (\ld_n^j)^{1-\theta} \|\phi^j\|_{\dot H^1_x}+ \|v_n^j(0)-\phi_n^j\|_{L^2_x} \to 0
\end{align*}
as $n\to \infty$.  Combining this with H\"older's inequality and \eqref{eq: gen bounds v_n^j} yields \eqref{ZPS17} in this case.
\end{proof}

The next three lemmas show that $u_n^J$ obeys the hypotheses of the stability result Proposition~\ref{Perturbation of {3-5}} for $n$ large enough.

\begin{lemma}[Asymptotic agreement of the initial data]\label{L:data match}
For any finite $J\leq J^*$,
$$
\lim_{n\to \infty} \|u_n^J(0)- u_n(0)\|_{H^1_x}=0.
$$
\end{lemma}

\begin{proof}
The result follows immediately from \eqref{eq: linear profile decomp}, \eqref{eq: approx v_n^j(0)}, and  \eqref{ZPS9}.
\end{proof}

\begin{lemma}[Uniform global spacetime bounds]\label{L:ST bounds}
We have
\begin{align}\label{E:ST bounds}
\sup_J \limsup_{n\to\infty}\Big[\|u_n^J\|_{L_{t,x}^{10}(\R\times\R^3)} + \|u_n^J\|_{L_t^{\frac{10}3}H^{1, \frac{10}3}_x(\R\times\R^3)}\Big]\les_{D_c,\eps} 1.
\end{align}
\end{lemma}

\begin{proof}
Fix $J\leq J^*$.  By Strichartz, Lemma~\ref{lemma: v^jv^k to 0}, \eqref{ZPS7}, \eqref{E:norm equiv}, and \eqref{eq: sumable E}, we obtain
\begin{align*}
\|u_n^J\|_{L^{10}_{t,x}}^2
&\les \sum_{j=1}^J\|v_n^j\|_{L^{10}_{t,x}}^2 + \sum_{j\neq k}\|v_n^jv_n^k\|_{L^5_{t,x}}+\|w_n^J\|_{\dot H^1_x}^2\notag\\
&\les_{D_c,\eps} \sum_{j=1}^J E(v_n^j) + \sum_{j\neq k}o(1)+ E(w_n^J)
\les_{D_c,\eps}1+J^2o(1) \qtq{as} n\to \infty.
\end{align*}
Similarly,
\begin{align*}
\|\nabla u_n^J\|_{L^{\frac{10}{3}}_{t,x}}^2
&\les \sum_{j=1}^J\|\nabla v_n^j\|_{L^{\frac{10}{3}}_{t,x}}^2 + \sum_{j\neq k}\|\nabla v_n^j\nabla v_n^k\|_{L^{\frac{5}{3}}_{t,x}}^2 + \|w_n^J\|_{\dot H^1_x}^2\\
&\les_{D_c,\eps} \sum_{j=1}^J E(v_n^j) + \sum_{j\neq k} o(1)+ E(w_n^J)
\les_{D_c,\eps}1+J^2o(1) \qtq{as} n\to \infty.
\end{align*}
Thus,
$$
\sup_J \limsup_{n\to\infty}\Big[\|u_n^J\|_{L^{10}_{t,x}(\R\times\R^3)}+\|\nabla u_n^J\|_{L^{\frac{10}3}_{t,x}(\R\times\R^3)}\Big]\les_{D_c,\eps}1.
$$
Finally, using \eqref{eq: gen bounds v_n^j} in place of \eqref{ZPS7}, and \eqref{eq: mass decoup} in place of \eqref{eq: sumable E}, and arguing as above we also obtain
\begin{align*}
\sup_J \limsup_{n\to\infty}\|u_n^J\|_{L^{\frac{10}{3}}_{t,x}(\R\times\R^3)}\les_{D_c,\eps}1.
\end{align*}

This completes the proof of the lemma.
\end{proof}

\begin{lemma}[Approximate solution]\label{LEM:ZPS}
With $F(z):=|z|^4z-|z|^2z$, we have
\begin{equation}\label{eq: u_n^J approx solution}
\lim_{J \to J^*} \limsup_{n\to\infty}
\Big\|\nabla\Big[i\partial_t u_n^J+\Delta u_n^J-F(u_n^J)\Big]\Big\|_{L^{\frac{10}{7}}_{t,x}(\R\times\R^3)}=0.
\end{equation}
\end{lemma}

\begin{proof}
By \eqref{ZPS9}, we have
\begin{align}\label{ZPS12}
i\partial_t u_n^J+\Delta u_n^J -F(u_n^J) &=\sum_{j=1}^JF(v_n^j) -F(u_n^J) \notag\\
&= \sum_{j=1}^JF(v_n^j) - F\Big(\sum_{j=1}^Jv_n^j\Big) + F\big(u_n^J - e^{it\Delta}w_n^J\big) - F(u_n^J).
\end{align}

By H\"older's inequality, \eqref{ZPS7}, \eqref{eq: gen bounds v_n^j}, and Lemma~\ref{lemma: v^jv^k to 0}, we obtain
\begin{align*}
\Big\|\nabla\Big[F\Big(\sum_{j=1}^Jv_n^j\Big)-\sum_{j=1}^JF(v_n^j)\Big]\Big\|_{L^{\frac{10}{7}}_{t,x}}
&\lesssim_J \sum_{j\neq k}^J \Bigl[  \|v_n^k\|_{L^{10}_{t,x}}^3\|v_n^k\nabla v_n^j\|_{L^{\frac{5}{2}}_{t,x}}
+\|v_n^j\|_{L^{10}_{t,x}}^3\|v_n^k\nabla v_n^{j}\|_{L^{\frac{5}{2}}_{t,x}} \\
&\qquad \quad +\|v_n^k\|_{L^{\frac{10}{3}}_{t,x}}\|v_n^k\nabla v_n^j\|_{L^{\frac{5}{2}}_{t,x}}
+\|v_n^j\|_{L^{\frac{10}{3}}_{t,x}}\|v_n^k\nabla v_n^{j}\|_{L^{\frac{5}{2}}_{t,x}} \Bigr]\\
&\les_{J,D_c, \eps} o(1) \qtq{as} n\to \infty.
\end{align*}

It thus remains to prove
\begin{align}\label{eq: approx u_n^J 2}
\lim_{J\to J^{\ast}}\limsup_{n\to\infty}\Big\|\nabla\Big[F(u_n^J-e^{it\Delta}w_n^J)-F(u_n^J)\Big]\Big\|_{L^{\frac{10}{7}}_{t,x}(\R\times\R^3)}=0.
\end{align}
By the chain rule and H\"older's inequality, we estimate
\begin{align*}
&\Big\|\nabla\Big[F(u_n^J-e^{it\Delta}w_n^J)-F(u_n^J)\Big]\Big\|_{L^{\frac{10}{7}}_{t,x}}\\
&\lesssim \|e^{it\Delta}w_n^J\|^4_{L^{10}_{t,x}}\|\nabla e^{it\Delta}w_n^J\|_{L^{\frac{10}{3}}_{t,x}}
+\|e^{it\Delta}w_n^J\|^4_{L^{10}_{t,x}}\|\nabla u_n^J\|_{L^{\frac{10}{3}}_{t,x}}\\
&\quad+\|u_n^J\|_{L^{10}_{t,x}}^3\|u_n^J\nabla e^{it\Delta}w_n^J\|_{L^{\frac{5}{2}}_{t,x}}
+\|u_n^J\|_{L^{10}_{t,x}}^3\|e^{it\Delta}w_n^J\|_{L^{10}_{t,x}}\|\nabla u_n^J\|_{L^{\frac{10}{3}}_{t,x}}\\
&\quad+\|e^{it\Delta}w_n^J\|_{L^{10}_{t,x}}\|\nabla e^{it\Delta}w_n^J\|_{L^{\frac{10}{3}}_{t,x}}\|e^{it\Delta}w_n^J\|_{L^{\frac{10}{3}}_{t,x}}
+\|e^{it\Delta}w_n^J\|_{L^{10}_{t,x}}\|e^{it\Delta}w_n^J\|_{L^{\frac{10}{3}}_{t,x}}\|\nabla u_n^J\|_{L^{\frac{10}{3}}_{t,x}}\\
&\quad+\|u_n^J\|_{L^{\frac{10}{3}}_{t,x}}\|u_n^J\nabla e^{it\Delta}w_n^J\|_{L^{\frac{5}{2}}_{t,x}}
+\|u_n^J\|_{L^{\frac{10}{3}}_{t,x}}\|e^{it\Delta}w_n^J\|_{L^{10}_{t,x}}\|\nabla u_n^J\|_{L^{\frac{10}{3}}_{t,x}}.
\end{align*}
In view of \eqref{eq: profile reminder}, the Strichartz inequality together with \eqref{eq:  mass decoup} and \eqref{eq:  sumable E}, and Lemma~\ref{L:ST bounds}, to prove \eqref{eq: approx u_n^J 2} it suffices to show
\begin{equation}\label{ZPS18}
\lim_{J\to J^{\ast}}\limsup_{n\to\infty}\|u_n^J\nabla e^{it\Delta}w_n^J\|_{L^{\frac{5}{2}}_{t,x}(\R\times\R^3)}=0.
\end{equation}

By the triangle inequality, the Strichartz inequality combined with \eqref{eq:  sumable E}, and~\eqref{eq: profile reminder},
\begin{align*}
&\lim_{J\to J^{\ast}}\limsup_{n\to\infty}\|u_n^J\nabla e^{it\Delta}w_n^J\|_{L^{\frac{5}{2}}_{t,x}}\\
&\leq \lim_{J\to J^{\ast}}\limsup_{n\to\infty}\Big\|\Big(\sum_{j=1}^Jv_n^j\Big)\nabla e^{it\Delta}w_n^J\Big\|_{L^{\frac{5}{2}}_{t,x}}
+\lim_{J\to J^{\ast}}\limsup_{n\to\infty} \|e^{it\Delta}w_n^J\|_{L^{10}_{t,x}}\|\nabla e^{it\Delta}w_n^J\|_{L^{\frac{10}{3}}_{t,x}}\\
&\leq \lim_{J\to J^{\ast}}\limsup_{n\to\infty}\Big\|\Big(\sum_{j=1}^Jv_n^j\Big)\nabla e^{it\Delta}w_n^J\Big\|_{L^{\frac{5}{2}}_{t,x}}.
\end{align*}
By \eqref{ZPS7} and Lemma~\ref{lemma: v^jv^k to 0},
\begin{align*}
\Big\|\sum_{j=J'}^Jv_n^j\Big\|_{L^{10}_{t,x}}^2
\les \sum_{j=J'}^J\|v_n^j\|_{L^{10}_{t,x}}^2 + \sum_{j\neq k}\|v_n^jv_n^k\|_{L^5_{t,x}}
\les \sum_{j=J'}^J E(v_n^j) + \sum_{j\neq k}o(1) \qtq{as} n\to \infty.
\end{align*}
Thus, by \eqref{eq: sumable E}, for any $\eta>0$ there exists a finite $J'=J'(\eta)$ such that
$$
\limsup_{n\to\infty}\Big\|\sum_{j=J'}^Jv_n^j\Big\|_{L^{10}_{t,x}(\R\times\R^3)}\leq \eta
$$
for all $J\leq J^*$.  Combining this with H\"older, Strichartz, and \eqref{eq:  sumable E}, we see that
$$
\lim_{J\to J^{\ast}}\limsup_{n\to\infty}\Big\|\Big(\sum_{j=J'}^Jv_n^j\Big)\nabla e^{it\Delta}w_n^J\Big\|_{L^{\frac{5}{2}}_{t,x}(\R\times\R^3)}\les \eta.
$$
As $\eta>0$ is arbitrary, to prove \eqref{ZPS18} (and so \eqref{eq: approx u_n^J 2}), it remains to show
\begin{equation}\label{eq: remainder p-s}
\lim_{J\to J^{\ast}}\lim_{n\to\infty}\|v_n^j\nabla e^{it\Delta}w_n^J\|_{L^{\frac{5}{2}}_{t,x}(\R\times\R^3)}=0 \qtq{for all} 1\leq j< J'.
\end{equation}

We will present the details for \eqref{eq: remainder p-s} only for those $j$'s for which $\ld_n^j\to 0$ as $n\to\infty$; the proof for $j$'s with $\ld_n^j\equiv 1$ is analogous.  Fix therefore $1\leq j< J'$, such that $\ld_n^j\to 0$ as $n\to\infty$.  By Proposition~\ref{thm: embedding}, for any $\delta>0$ there exists
$\phi_{\delta}^j\in C_c^{\infty}(\R\times\R^3)$ with compact support $K_{\delta}^j$ such that
\begin{align*}
\Big\|v_n^j- (\ld_n^j)^{-\frac12}\phi_{\delta}^j\Big(\tfrac{t}{(\ld_n^j)^2}+t_n^j, \tfrac{x-x_n^j}{\ld_n^j}\Big)\Big\|_{L^{10}_{t,x}}\leq\delta.
\end{align*}
Let $\tilde w_n^J(t,x):= (\ld_n^j)^{1/2} [e^{it\Delta} w_n^J]( (\ld_n^j)^2(t-t_n^j), \ld_n^j x+ x_n^j)$; by Strichartz, \eqref{eq: sumable E}, and \eqref{E:norm equiv} we have
$$
\|\nabla \tilde w_n^J\|_{L_{t,x}^{\frac{10}3}} = \|\nabla e^{it\Delta} w_n^J\|_{L_{t,x}^{\frac{10}3}}\les_{D_c, \eps} 1 \qtq{and}
\|\tilde w_n^J\|_{L_{t,x}^{10}} = \|e^{it\Delta} w_n^J\|_{L_{t,x}^{10}}\les_{D_c, \eps} 1.
$$
Thus, by H\"older's inequality, commuting the dilation through the propagator, and applying Lemma~\ref{L:Keraani3.7},
\begin{align*}
\|v_n^j\nabla e^{it\Delta} w_n^J\|_{L^{\frac{5}{2}}_{t,x}}
&\les \delta \|\nabla e^{it\Delta} w_n^J\|_{L_{t,x}^{\frac{10}3}}+\big\| \phi^j_\delta \nabla \tilde w_n^J\big\|_{L^{\frac{5}{2}}_{t,x}}\\
&\les_{D_c, \eps}\delta + C(\delta) \|\nabla \tilde w_n^J\|_{L_{t,x}^{\frac{10}3}}^{\frac12} \|\nabla \tilde w_n^J\|_{L_{t,x}^2(K_\delta^j)}^{\frac12} \\
&\les_{D_c, \eps}\delta + C(\delta,|K_\delta^j|)\|e^{it\Delta}w_n^J\|_{L_{t,x}^{10}}^{\frac16} \|\nabla w_n^J\|_{L_x^2}^{\frac13}.
\end{align*}
As $\delta>0$ is arbitrary, \eqref{eq: remainder p-s} follows by invoking \eqref{eq: profile reminder}.
\end{proof}

Writing Duhamel's formula for $u_n^J$, an application of the Strichartz inequality combined with Lemmas~\ref{L:data match}, \ref{L:ST bounds}, and \ref{LEM:ZPS}, leads quickly to
$$
\limsup_{n\to \infty}\|u_n^J\|_{L_t^\infty H^1_x(\R\times\R^3)}\lesssim_{D_c,\eps}1 \quad\text{for all $J$ sufficiently large.}
$$
Thus $u_n^J$ satisfies the hypotheses of the stability result Proposition~\ref{Perturbation of {3-5}} for all $n$ and $J$ sufficiently large, which allows us to deduce that
$$
\|u_n\|_{L_{t,x}^{10}(\R\times\R^3)}\les_{D_c,\eps}1 \quad\text{for all $n$ sufficiently large.}
$$
This contradicts \eqref{ZPS0}, thus showing that Case~2 cannot occur.  This completes the proof of Proposition~\ref{prop:  Palais-Smale}.
\end{proof}

With the Palais--Smale condition in place, we are now ready to show the existence of a minimal blowup solution.

\begin{theorem}[Existence of a minimal blowup solution]\label{prop: existence min blowup sol}
Suppose that Theorem~\ref{thm: main} failed.  Then there exists $0<D_c<\infty$ and a global solution $u$ to \eqref{3-5} satisfying $D(u)=D_c$ that blows up in both time directions in the sense that
$$
\|u\|_{L^{10}_{t,x}([0,\infty)\times\R^3)}= \|u\|_{L^{10}_{t,x}((-\infty,0]\times\R^3)}=\infty.
$$
Moreover, $u$ is almost periodic in $H^1_x$ modulo translations, that is, the orbit $\{u(t): \,t\in \R\}$ is precompact in $H^1_x$ modulo translations.
\end{theorem}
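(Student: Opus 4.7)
The plan is to extract a minimal blowup solution via the Palais--Smale condition Proposition~\ref{prop: Palais-Smale} applied to a nearly-extremal sequence. By the failure hypothesis and \eqref{E:induct}, $0 < D_c < \infty$; moreover $L(D_c) = \infty$, so there exist solutions with $D(u) \leq D_c$ and arbitrarily large $L^{10}_{t,x}$-norm. Choosing such a sequence $\{u_n\}$ and passing to a subsequence, we may assume $D(u_n) \to D_c$ and $\|u_n\|_{L^{10}_{t,x}(\R\times\R^3)} \to \infty$. Since each $u_n$ has $D(u_n) < D_c$ eventually, \eqref{E:induct} gives $u_n \in L^{10}_{t,x}(\R\times\R^3)$, so the function $t \mapsto \|u_n\|_{L^{10}_{t,x}((-\infty,t]\times\R^3)}^{10}$ is continuous and runs from $0$ to $\|u_n\|_{L^{10}_{t,x}}^{10}$. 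By the intermediate value theorem there exists $t_n$ that bisects it, and then both $\|u_n\|_{L^{10}_{t,x}([t_n,\infty)\times\R^3)}$ and $\|u_n\|_{L^{10}_{t,x}((-\infty,t_n]\times\R^3)}$ tend to infinity as $n\to\infty$.

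Applying Proposition~\ref{prop: Palais-Smale} along a subsequence produces $\{x_n\} \subset \R^3$ and $u_\infty \in H^1(\R^3)$ with $u_n(t_n, \cdot + x_n) \to u_\infty$ in $H^1$. By continuity of $M$ and $E$ under $H^1$-convergence together with Proposition~\ref{P:all about D}, $D(u_\infty) = D_c$. Let $u \in C(\R; H^1)$ be the global solution to \eqref{3-5} with $u(0) = u_\infty$ supplied by Theorem~\ref{T:Matador}; conservation of $D$ under the flow gives $D(u) = D_c$.

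I would next verify that $u$ blows up in both time directions. Suppose for contradiction that $\|u\|_{L^{10}_{t,x}([0,\infty)\times\R^3)} = L < \infty$; the backward case is symmetric. Setting $\tilde u_n(t,x) := u_n(t + t_n, x + x_n)$, which is still a solution to \eqref{3-5} by translation invariance, we have $\tilde u_n(0) \to u(0)$ in $H^1$. The stability result Proposition~\ref{Perturbation of {3-5}}, applied with $\tilde u = u$ on $I = [0,\infty)$ and with $e \equiv 0$, then yields $\|\tilde u_n\|_{L^{10}_{t,x}([0,\infty)\times\R^3)} \leq C(L, M(u_\infty), E(u_\infty))$ for all $n$ sufficiently large. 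But $\|\tilde u_n\|_{L^{10}_{t,x}([0,\infty)\times\R^3)} = \|u_n\|_{L^{10}_{t,x}([t_n,\infty)\times\R^3)} \to \infty$ by construction, a contradiction.

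Finally, for almost periodicity modulo translations, I would apply Proposition~\ref{prop: Palais-Smale} once more, now to the constant sequence $u_n \equiv u$ with an arbitrary sequence $\{\tau_n\} \subset \R$. Because $u \in C(\R; H^1)$ has finite $L^{10}_{t,x}$-norm on every compact time interval (by the local well-posedness theory underlying Theorem~\ref{T:Matador}), the forward and backward blowup of $u$ forces $\|u\|_{L^{10}_{t,x}([\tau_n,\infty)\times\R^3)} = \|u\|_{L^{10}_{t,x}((-\infty,\tau_n]\times\R^3)} = \infty$ for every $n$. The Palais--Smale conclusion then extracts translates $\{y_n\}$ such that $u(\tau_n, \cdot + y_n)$ converges in $H^1$ along a subsequence; since $\{\tau_n\}$ was arbitrary, $\{u(t,\cdot) : t \in \R\}$ is precompact in $H^1$ modulo translations. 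The principal technical difficulty is the transfer-of-blowup step in the third paragraph: stability must be wielded carefully to force the infinite $L^{10}$-norm of the $u_n$ to survive into the limit $u$, and this is precisely why choosing $t_n$ to bisect the $L^{10}$-mass of each $u_n$ is essential.
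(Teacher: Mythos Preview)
Your proof is correct and follows the same approach as the paper: select a near-extremal sequence, bisect the $L^{10}$-norm to get two-sided blowup, apply Palais--Smale to extract a limit, use stability to show the limit blows up in both directions, and invoke Palais--Smale again on the orbit for almost periodicity. One small quibble: the assertion ``each $u_n$ has $D(u_n) < D_c$ eventually'' does not follow from $D(u_n) \leq D_c$ and $D(u_n) \to D_c$; however, the paper is equally informal about the existence of the median time, and the point is easily repaired (e.g., choose $u_n$ with finite but divergent $L^{10}$-norm, or note that local-in-time finiteness of the $L^{10}$-norm suffices for the bisection even when the global norm is infinite).
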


\begin{proof}
Suppose that Theorem \ref{thm: main} fails.   As discussed at the beginning of this section, this implies the existence of a critical $0<D_c<\infty$ such that
$$
L(D)<\infty \qtq{if} D<D_c \qquad \qtq{and} \qquad L(D)=\infty \qtq{if} D>D_c,
$$
where $L(D)$ is defined as in \eqref{QL}.  Thus, there exists a sequence of global solutions $\{u_n\}_{n\in\mathbb{N}}$ to \eqref{3-5} such that $D(u_n)\to D_c$ and
\begin{equation*}
\lim_{n\to\infty}\|u_n\|_{L^{10}_{t,x}(\R\times\R^3)}=\infty.
\end{equation*}
Let  $t_n\in\R$  be the `median' time of the $L^{10}_{t,x}$-norm of $u_n$ so that
$$
\|u_n\|_{L^{10}_{t,x}((-\infty, t_n] \times\R^3)}= \|u_n\|_{L^{10}_{t,x}([t_n, \infty) \times\R^3)} \to \infty \qtq{as} n\to \infty.
$$
As \eqref{3-5} is time-translation invariant, without loss of generality we may assume  that $t_n\equiv 0$.  Then
\begin{equation}\label{760}
\lim_{n\to\infty}\|u_n\|_{L^{10}_{t,x}((-\infty,0]\times\R^3)} =\lim_{n\to\infty}\|u_n\|_{L^{10}_{t,x}([0,\infty)\times\R^3)}=\infty.
\end{equation}
Invoking the Palais--Smale condition Proposition \ref{prop: Palais-Smale} and passing to a subsequence, we find $u_0\in H^1_x$ such that $u_n(0)$ converges to $u_0$ in $H^1_x$ modulo translations.  Using the space-translation invariance of \eqref{3-5} and modifying $u_n$ appropriately, we obtain that $u_n(0)\to u_0$ in $H^1_x$.  In particular, we have $D(u_0)=D_c$.

Let $u$ be the global solution to \eqref{3-5} with $u(0)=u_0$.  Combining \eqref{760} and the stability result Proposition~\ref{Perturbation of {3-5}}, we deduce that
$$
\|u\|_{L^{10}_{t,x}([0,\infty)\times\R^3)}= \|u\|_{L^{10}_{t,x}((-\infty,0]\times\R^3)}=\infty.
$$

Finally, we prove that $u$ is almost periodic in $H^1_x$ modulo translations.  Let  $\{u(\tau_n)\}_{n\in \mathbb N}$ be an arbitrary sequence in the orbit $\{u(t): \,t\in\R\}$.  As the $L^{10}_{t, x}$-norm of $u$ blows up both forward and backward in time and $u$ is locally in $L^{10}_{t,x}$, we have that
\begin{equation*}
\|u\|_{L^{10}_{t,x}((-\infty,\tau_n]\times\R^3)}=\|u\|_{L^{10}_{t,x}([\tau_n,\infty)\times\R^3)}=\infty.
\end{equation*}
The claim now follows by applying Proposition \ref{prop: Palais-Smale} to $\{u(\tau_n)\}$.

This completes the proof of the theorem.
\end{proof}

\begin{remark}\label{REM:XX}\rm
A simple consequence of the almost periodicity in $H^1_x$ (modulo translations) of the solution $u$ constructed in Theorem~\ref{prop: existence min blowup sol} together with Gagliardo--Nirenberg and Sobolev embedding inequalities is the following: there exists $x:\R\to\R^3$ so that for all $\eta>0$ there exists $C(\eta)>0$ such that
\begin{align}\label{Zmin2}
\sup_{t\in\R}\int_{|x-x(t)|>C(\eta)} |\nabla u(t,x)|^2+|u(t,x)|^6+|u(t,x)|^4+|u(t,x)|^2 \,dx\leq\eta.
\end{align}
\end{remark}

%%%%%%%%%%%%%%%%%%%%%%%%%%%%%%%%%%%%%%%%%%%%%%%%%%%%%%%%%%%%%%%%%%%%%%%%%%%%%%%%%%%%%%%%%%%%%%%%%%%%%%%%%%%%%%%%%%55
%%%%%%%%%%%%%%%%%%%%%%%%%%%%%%%%%%%%%%%%%%%%%%%%%%%%%%%%%%%%%%%%%%%%%%%%%%%%%%%%%%%%%%%%%%%%%%%%%%%%%%%%%%%%%%%%%%%%

\section{Impossibility of minimal blowup solutions}\label{SEC:8}

In this section, we complete the proof of Theorem~\ref{thm: main}.  The argument is similar to those in \cite{DHR08,Berbec}.

Arguing by contradiction, we saw that failure of Theorem~\ref{thm: main} implies the existence of a minimal blowup solution $u$ as in Theorem~\ref{prop: existence min blowup sol}.
By Proposition~\ref{P:all about D},  $D(u) = D_c < \infty$ implies that $V(u(t)) > 0$ for all $t\in \R$.  We will show that in fact the virial $V(u(t))$ is bounded away from zero uniformly for $t\in \R$; this will then be used to derive a contradiction to the fact that the solution $u$ is global in time.

We start by showing that the momentum of a minimal blowup solution must be zero.  This is intuitively clear since a traveling solution must spend energy in order to travel; however, a minimal blowup solution must use all its energy to drive the blowup.

\begin{proposition}\label{prop: P(u)=0}
Let $u$ be a minimal blowup solution as in Theorem~\ref{prop: existence min blowup sol}.  Then the momentum of $u$ must be zero, that is,
\begin{equation*}
P(u):= 2\Im\int_{\R^3}\overline{u(t,x)}\nabla u(t,x)\, dx=0.
\end{equation*}
\end{proposition}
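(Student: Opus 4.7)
The plan is a simple contradiction argument based on the Galilei symmetry: if $P(u) \neq 0$, one can boost $u$ to the rest frame of its centre of mass, producing a non-scattering global solution with strictly smaller $D$-value than $D_c$, in violation of the minimality of $u$.

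First I would recall that for any $\xi \in \R^3$, the Galilei-boosted function
$$v(t,x) := e^{i\xi \cdot x - i|\xi|^2 t}\, u(t, x - 2\xi t)$$
is again a global $H^1$ solution to \eqref{3-5}. Direct computation gives $M(v) = M(u)$, $P(v) = P(u) + 2\xi M(u)$, and $E(v) = E(u) + \tfrac{1}{2}\xi \cdot P(u) + \tfrac{1}{2}|\xi|^2 M(u)$. Moreover $|v(t,x)| = |u(t, x-2\xi t)|$, so $v$ and $u$ share identical $L^{10}_{t,x}$-norms on each of the half-lines $[0,\infty)\times\R^3$ and $(-\infty,0]\times\R^3$; in particular, $v$ inherits the two-sided blowup property from $u$.

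Assuming for contradiction that $P(u) \neq 0$, I would then choose $\xi := -P(u)/(2M(u))$, which makes $P(v) = 0$. Substituting this choice into the formula for $E(v)$ yields
$$E(v) = E(u) - \tfrac{|P(u)|^2}{8M(u)} < E(u),$$
while $M(v) = M(u)$. The explicit formula \eqref{eq: def D} shows that $D$ is a strictly decreasing function of $E$ when $M$ is held fixed (both the explicit $E$-term and the distance term in the denominator respond monotonically to decreasing $E$), so $D(v) < D(u) = D_c$.

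By the defining property of $D_c$ in \eqref{E:induct}, this forces $\|v\|_{L^{10}_{t,x}(\R\times\R^3)} \leq L(D(v)) < \infty$, contradicting the two-sided blowup of $v$ established above. The main obstacle is really just bookkeeping, namely verifying the transformation law for $E$ under a Galilei boost and extracting strict monotonicity of $D$ in the energy variable from \eqref{eq: def D}; both are routine.
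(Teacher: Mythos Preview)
Your argument is correct and matches the paper's proof essentially line for line: the paper also applies the Galilei boost with $\xi = -P(u)/(2M(u))$, obtains $E(\tilde u) = E(u) - |P(u)|^2/(8M(u)) < E(u)$ with $M(\tilde u)=M(u)$, and contradicts minimality via the strict monotonicity of $D$. The only cosmetic difference is that the paper cites Proposition~\ref{P:all about D} for $D(\tilde u) < D(u)$, whereas you extract the strict monotonicity directly from the formula~\eqref{eq: def D}; both justifications are valid.
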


\begin{proof}
For $\xi\in\R^3$, let $\tilde u$ be the global solution to \eqref{3-5} given by the Galilei boost
\begin{equation*}
\tilde{u}(t,x):=e^{ix\cdot\xi-it|\xi|^2}u(t,x-2\xi t).
\end{equation*}
Note that $M(\tilde{u})=M(u)$ and
\begin{equation*}
E(\tilde{u})=E(u)+\tfrac{1}{2}\xi\cdot P(u)+\tfrac{1}{2}|\xi|^2M(u).
\end{equation*}

If the momentum $P(u)$, which is a conserved quantity, is not zero, then taking $\xi:=-\frac{P(u)}{2M(u)}$ we obtain
\begin{equation*}
E(\tilde{u})=E(u)-\frac{|P(u)|^2}{8M(u)}<E(u).
\end{equation*}
Thus, by Proposition~\ref{P:all about D}, we must have that $D(\tilde{u})<D(u)=D_c$.  As
$$
\|\tilde{u}\|_{L^{10}_{t,x}([0, \infty)\times\R^3)}\!=\!\|u\|_{L^{10}_{t,x}([0, \infty)\times\R^3)}\!=\!\infty\!=\!\|u\|_{L^{10}_{t,x}((-\infty, 0]\times\R^3)}\!=\!\|\tilde{u}\|_{L^{10}_{t,x}((-\infty, 0]\times\R^3)},
$$
this contradicts the minimality of $u$ as a blowup solution.
\end{proof}

Relying on Proposition~\ref{prop: P(u)=0}, we next show that the spatial center function $x(t)$ from \eqref{Zmin2} is $o(|t|)$ as $t\to\pm\infty$.

\begin{proposition}[Control of $x(t)$]\label{prop: control x(t)}
Let $u$ be a minimal blowup solution as in Theorem~\ref{prop: existence min blowup sol}.  Then the spatial center function $x(t)$ from \eqref{Zmin2}
satisfies
\begin{equation}\label{Z71}
|x(t)|=o(|t|) \qtq{as} t\to\pm\infty.
\end{equation}
\end{proposition}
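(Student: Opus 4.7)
The plan is to argue by contradiction. Suppose \eqref{Z71} fails as $t\to+\infty$; the case $t\to-\infty$ follows by time reversal. Then there exist $\varepsilon_0>0$ and $t_n\to\infty$ with $|x(t_n)|\geq\varepsilon_0 t_n$. Because $x(\cdot)$ is only defined up to a fixed spatial translation in the almost-periodicity statement, I may arrange $x(0)=0$. If $|x|$ were bounded, $|x(t)|/t$ would tend to zero, so $\sup_{s\in[0,t]}|x(s)|\to\infty$; a standard selection argument then lets me additionally require that $|x(s)|\leq |x(t_n)|$ for every $s\in[0,t_n]$, at the cost of replacing $\varepsilon_0$ by $\varepsilon_0/2$.

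The central tool is the truncated center-of-mass
\[
X_R(t):=\int_{\R^3} x\,\chi_R(x)\,|u(t,x)|^2\,dx,
\]
where $\chi\in C_c^\infty(\R^3)$ is a radial bump with $\chi\equiv 1$ on $|x|\leq 1$ and $\chi\equiv 0$ on $|x|\geq 2$, and $\chi_R(x):=\chi(x/R)$. Writing $j(u):=2\Im(\bar u\,\nabla u)$, the continuity identity $\partial_t|u|^2+\nabla\cdot j(u)=0$ together with integration by parts yields
\[
X_R'(t)=\int_{\R^3} \chi_R(x)\,j(u)\,dx+\int_{\R^3} x\,(\nabla\chi_R)\cdot j(u)\,dx.
\]
By Proposition~\ref{prop: P(u)=0}, $\int j(u)\,dx=P(u)=0$, so the first integral equals $\int(\chi_R-1)\,j(u)\,dx$. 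Both remaining integrands are supported in $\{|x|\geq R\}$ and $|x||\nabla\chi_R|\lesssim 1$ there, so
\[
|X_R'(t)|\lesssim \int_{|x|\geq R}\bigl(|u(t,x)|^2+|\nabla u(t,x)|^2\bigr)\,dx.
\]

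Now fix $\eta>0$ small and set $R_n:=|x(t_n)|+C(\eta)+1$, with $C(\eta)$ as in \eqref{Zmin2}. For every $s\in[0,t_n]$ the record-time selection gives $|x(s)|\leq|x(t_n)|$, hence $\{|x|\geq R_n\}\subseteq\{|x-x(s)|\geq C(\eta)\}$, and \eqref{Zmin2} delivers $|X_{R_n}'(s)|\lesssim\eta$. Integrating in $s$ yields $|X_{R_n}(t_n)-X_{R_n}(0)|\lesssim\eta t_n$. On the other hand, splitting the integral defining $X_{R_n}(t_n)$ into the pieces $|x-x(t_n)|\leq C(\eta)$ (where $\chi_{R_n}\equiv 1$) and its complement, and using $|x-x(t_n)|\lesssim R_n\lesssim |x(t_n)|$ on $\supp\chi_{R_n}$ together with \eqref{Zmin2}, I obtain
\[
X_{R_n}(t_n)=x(t_n)\,M(u)+O\bigl(\eta\,|x(t_n)|+C(\eta)\,M(u)\bigr).
\]
An analogous computation, using $x(0)=0$, gives $|X_{R_n}(0)|\lesssim\eta\,|x(t_n)|+C(\eta)\,M(u)$. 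Combining these three estimates and dividing by $|x(t_n)|$, then using $|x(t_n)|\geq\varepsilon_0 t_n/2\to\infty$, I arrive at
\[
M(u)\lesssim \eta\bigl(1+\varepsilon_0^{-1}\bigr)+\tfrac{C(\eta)}{|x(t_n)|}\longrightarrow\eta\bigl(1+\varepsilon_0^{-1}\bigr),
\]
which contradicts $M(u)>0$ once $\eta$ is chosen small enough.

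The main obstacle is the choice of the truncation radius: $R_n$ must be large enough that $\chi_{R_n}$ does not cut off any bulk of $|u(s,\cdot)|^2$ for \emph{all} $s\in[0,t_n]$, yet small enough (comparable to $|x(t_n)|$) so the lower bound $|X_{R_n}(t_n)|\gtrsim|x(t_n)|$ is not swamped by the $O(\eta R_n)$ errors coming from the tail of $u$. The record-time selection is exactly what reconciles these two requirements. This is also where the hypothesis $P(u)=0$ is indispensable: without it, the leading term in $X_R'(t)$ would be $P(u)\cdot(1+o(1))$, forcing $X_R$ to grow linearly at a nonzero rate and destroying the above cancellation entirely.
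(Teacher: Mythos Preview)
Your proof is correct and follows essentially the same approach as the paper: a truncated center-of-mass argument exploiting $P(u)=0$ to get $|X_{R_n}'|\lesssim\eta$ on $[0,t_n]$, then comparing with the lower bound $|X_{R_n}(t_n)|\gtrsim M(u)|x(t_n)|$ coming from almost periodicity. The paper's version differs only in bookkeeping: it selects $T_n\in[0,t_n]$ with $\sup_{[0,t_n]}|x|\leq|x(T_n)|+1$ (avoiding any implicit continuity assumption on $x(\cdot)$) rather than your record-time selection, and it phrases the final contradiction as $|x(T_n)|-\tilde C(\eta)\lesssim\eta t_n$ versus $|x(T_n)|\gtrsim\delta t_n$ rather than dividing through by $|x(t_n)|$.
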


\begin{proof}
By the space-translation invariance of \eqref{3-5}, we may assume that $x(0)=0$.

We will prove \eqref{Z71} for $t\to \infty$; the argument in the negative time direction is similar.  We argue by contradiction.  Suppose that there exist $\delta>0$ and a sequence $t_n\to\infty$ such that
\begin{equation*}
|x(t_n)|>\delta t_n \quad \text{for all } n\geq 1.
\end{equation*}
We note that standard arguments show that $x(t)$ is bounded on compact sets; indeed, the local constancy property (cf. (5.36) in \cite[Lemma 5.18]{ClayNotes}) shows that $|x(t)|=O(t)$ for $t$ large.  Consequently, for $n\in \mathbb N$ there exists $T_n\in [0,t_n]$ such that
\begin{equation}\label{Z72}
\sup_{t\in[0,t_n]}|x(t)|\leq |x(T_n)|+1.
\end{equation}

Now let $\eta$ be a small parameter to be chosen later.  By \eqref{Zmin2},
\begin{equation}\label{eq: almost periodic}
\sup_{t\in\R}\int_{|x-x(t)|>C(\eta)}\big[|\nabla u(t,x)|^2+| u(t,x)|^2\big]\,dx\leq\eta.
\end{equation}
For $n\in \mathbb N$, we define
\begin{equation}\label{eq: R_n}
R_n:=C(\eta)+\sup_{t\in [0,T_n]}|x(t)|.
\end{equation}
Note that by construction, $C(\eta) \leq R_n\leq C(\eta)+|x(T_n)|+1$.

Next, with $\phi \in C^\infty_c(\R)$ such that
\begin{align}\label{Z72a}
\phi (r)=
\begin{cases}
1, \quad \text{if } r\leq 1,\\
0, \quad \text{if } r\geq 2,
\end{cases}
\end{align}
we define a truncated center of mass
\begin{equation*}
X_R(t):=\int_{\R^3}x\phi\big(\tfrac{|x|}{R}\big)|u(t,x)|^2\,dx \qtq{for} R>0.
\end{equation*}

By \eqref{eq: almost periodic} and \eqref{eq: R_n},
\begin{align}\label{Z73}
|X_{R_n}(0)|
&\leq \Big|\int_{|x|\leq C(\eta)}x\phi\big(\tfrac{x}{R_n}\big)|u(0,x)|^2\,dx\Big|+\Big|\int_{|x|\geq C(\eta)}x\phi\big(\tfrac{x}{R_n}\big)|u(0,x)|^2\,dx\Big| \notag\\
&\leq C(\eta)M(u)+2\eta R_n\leq C(\eta)\big(M(u) + 2\eta\big)+ 2\eta|x(T_n)|+ 2\eta.
\end{align}
On the other hand, by the triangle inequality, \eqref{eq: almost periodic}, and \eqref{eq: R_n}, we have
\begin{align}\label{Z74}
|X_{R_n}(T_n)|
&\geq |x(T_n)|M(u)-|x(T_n)|\Big|\int_{\R^3} \Big(1-\phi\big(\tfrac{x}{R_n}\big)\Big)|u(T_n,x)|^2\,dx\Big| \notag\\
&\qquad \qquad \qquad \quad\!-\Big|\int_{|x-x(T_n)|\leq C(\eta)}\big(x-x(T_n)\big)\phi\big(\tfrac{x}{R_n}\big)|u(T_n,x)|^2\,dx\Big|\notag\\
&\qquad \qquad \qquad \quad\!-\Big|\int_{|x-x(T_n)|\geq C(\eta)}\big(x-x(T_n)\big)\phi\big(\tfrac{x}{R_n}\big)|u(T_n,x)|^2\,dx\Big|\notag\\
&\geq \big(M(u) -\eta\big)|x(T_n)|-C(\eta)M(u)-\eta \big(2R_n+|x(T_n)|\big)\notag\\
&\geq \big(M(u)-4\eta\big) |x(T_n)|-C(\eta)\big(M(u)+2\eta\big)-2\eta.
\end{align}
Hence taking $\eta \leq \frac18M(u)$, \eqref{Z73} and \eqref{Z74} yield that
\begin{equation}\label{eq: X(T_n)-X(0)}
|X_{R_n}(T_n)-X_{R_n}(0)|\gtrsim_{M(u)} |x(T_n)|-\tilde C(\eta).
\end{equation}

On the other hand, a direct computation using Proposition~\ref{prop: P(u)=0} gives
\begin{align*}
\partial_tX_R(t)&=2\Im\int_{\R^3}\Big[\phi\big(\tfrac{|x|}{R}\big)-1\Big]\nabla u(t,x)\overline{u(t,x)}\,dx\\
&\quad+2\Im\int_{\R^3}\tfrac{x}{|x|R}\phi ' \big(\tfrac{x}{R}\big)\big(x\cdot \nabla u\big)(t,x)\overline{u(t,x)}\,dx.
\end{align*}
Therefore, by the Cauchy--Schwarz inequality, \eqref{eq: almost periodic}, and \eqref{eq: R_n}, we obtain
\begin{align*}
|\partial_tX_{R_n}(t)|\leq 6\eta \qtq{for all} t\geq 0
\end{align*}
and so, by the fundamental theorem of calculus,
\begin{equation}
|X_{R_n}(T_n)-X_{R_n}(0)|\leq 6\eta T_n.
\label{Z75}
\end{equation}

Combining \eqref{eq: X(T_n)-X(0)} and \eqref{Z75} with \eqref{Z72}, we obtain
\begin{align*}
|x(t_n)|-\tilde C(\eta)\les_{M(u)} 6\eta t_n.
\end{align*}
Recalling that $|x(t_n)|>\delta t_n$ and taking $\eta$ even smaller if necessary (depending on $M(u)$ and $\delta$), we derive a contradiction by letting $n\to \infty$.
\end{proof}

Our next result is a uniform in time lower bound for $V(u(t))$, whenever $u$ is a minimal blowup solution as in Theorem~\ref{prop: existence min blowup sol}.

\begin{proposition}
\label{prop: virial bounded away from zero}
Let $u$ be a minimal blowup solution as in Theorem~\ref{prop: existence min blowup sol}.  Then, there exits $\delta>0$
such that
\begin{equation*}
V(u(t))=\int_{\R^3} |\nabla u(t,x)|^2-\tfrac{3}{4}|u(t,x)|^4+|u(t,x)|^6 \, dx\geq \delta \qtq{for all} t\in \R.
\end{equation*}
\end{proposition}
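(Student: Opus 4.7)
The plan is to argue by contradiction using the almost periodicity of $u$ together with the variational characterization of $\Et$. Suppose no such $\delta>0$ exists. Then there is a sequence $\{t_n\}\subset\R$ with $V(u(t_n))\to 0$. By Theorem~\ref{prop: existence min blowup sol}, the orbit $\{u(t,\,\cdot\,-x(t)):t\in\R\}$ is precompact in $H^1(\R^3)$, so after passing to a subsequence we may assume
\[
u(t_n,\,\cdot\,-x(t_n))\to u_\infty \qquad\text{in } H^1(\R^3)
\]
for some $u_\infty\in H^1(\R^3)$.

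Next I would transfer the relevant information to $u_\infty$. Mass and energy are conserved by the flow and translation-invariant, so $M(u_\infty)=M(u)$ and $E(u_\infty)=E(u)$. Because the virial functional
\[
V(f)=\int_{\R^3}|\nabla f|^2+|f|^6-\tfrac34|f|^4\,dx
\]
is continuous on $H^1(\R^3)$ (each term is continuous with respect to the $H^1$ topology, using Sobolev embedding for the $L^4$ and $L^6$ norms) and is also translation-invariant, passing to the limit along the subsequence yields $V(u_\infty)=\lim_n V(u(t_n))=0$. In addition, $u_\infty\not\equiv 0$, since $M(u_\infty)=M(u)>0$ (note $D(u)=D_c>0$ by the small data theory Proposition~\ref{thm: Small data scattering}, so $M(u)>0$).

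Now I would invoke the variational definition of $\Et$ to reach a contradiction. By Proposition~\ref{P:all about D}(ii), $(M(u),E(u))\in\RR$, hence by the definition \eqref{R defn} of $\RR$ we have $E(u)<\Et(M(u))$. On the other hand, $u_\infty$ is a competitor in the variational problem \eqref{EV defn} at mass $m=M(u_\infty)=M(u)$, since $V(u_\infty)=0$ and $u_\infty\in H^1(\R^3)$; thus
\[
E(u)=E(u_\infty)\geq \Et(M(u_\infty))=\Et(M(u)),
\]
contradicting $E(u)<\Et(M(u))$.

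The main obstacle is essentially a matter of bookkeeping: one must verify that the almost-periodicity modulo translations really yields $H^1$-convergence (not merely convergence in a weaker topology) in order to pass $V$, $M$, and $E$ to the limit. This is precisely what Theorem~\ref{prop: existence min blowup sol} provides, so no genuine difficulty arises. The argument hinges on the strict separation of $E(u)$ from the virial threshold $\Et(M(u))$ that is built into the definition of the region $\RR$.
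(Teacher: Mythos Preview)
Your proof is correct and follows essentially the same approach as the paper: both argue by contradiction, extract an $H^1$-limit $u_\infty$ (the paper calls it $v_0$) along a sequence with $V(u(t_n))\to 0$ using almost periodicity, and then derive a contradiction from $V(u_\infty)=0$ with $(M(u_\infty),E(u_\infty))\in\RR$. The only cosmetic difference is that the paper cites Proposition~\ref{P:all about D}(iv) directly (i.e., $0<D(v_0)<\infty\Rightarrow V(v_0)>0$), whereas you unpack that statement via the definition of $\Et$; the content is the same.
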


\begin{proof}
Suppose that no such $\delta>0$ existed. Then there exists a sequence $\{t_n\}_{n\in\mathbb{N}}\subset\R$ such that $V(u(t_n))\to 0$.
By almost periodicity, there exists $v_0\in H^1_x$ such that $u(t_n)$ converges to $v_0$ in $H^1_x$ modulo translations.  Combining this with the continuity of the
functionals $D$ and $V$, we obtain that
$$
D(v_0)=D(u)=D_c\in(0,\infty) \qtq{and} V(v_0)=\lim_{n\to \infty} V(u(t_n))=0,
$$
which contradicts Proposition~\ref{P:all about D}(iv).
\end{proof}

We now have all the necessary ingredients to complete the proof of Theorem~\ref{thm: main}.

\begin{proof}[Proof of Theorem~\ref{thm: main}]
Arguing by contradiction, we assume that the conclusion of Theorem~\ref{thm: main} does not hold.  Then, by Theorem~\ref{prop: existence min blowup sol}, we can find an almost periodic minimal blowup solution $u$.  In the following, we will show that such a solution cannot exist.

With $\phi$ as in \eqref{Z72a} and $R\geq 1$, we define a truncation of the weighted momentum:
\begin{equation*}
A_R(t)=2\Im\int_{\R^3} \phi\big(\tfrac{|x|}{R}\big)\overline{u(t,x)}\,x\cdot \nabla u(t,x)\,dx.
\end{equation*}
By the Cauchy--Schwarz inequality and Proposition~\ref{P:all about D}, we obtain
\begin{equation}\label{eq: upper bound V_R}
|A_R(t)|\leq 4R\|u\|_{L^{\infty}_tL^2_x(\R\times\R^3)}\|\nabla u\|_{L^{\infty}_tL^2_x(\R\times\R^3)}\les_{D_c}R.
\end{equation}

On the other hand, a straightforward computation yields
\begin{align*}
\partial_t A_R(t)
&=-\int_{\R^3} \Big[\tfrac{8}{|x|R}\phi '\big(\tfrac{|x|}{R}\big)+\tfrac{7}{R^2}\phi ''\big(\tfrac{|x|}{R}\big)+\tfrac{|x|}{R^3}\phi '''\big(\tfrac{|x|}{R}\big)\Big]|u(t,x)|^2\,dx\\
&\quad +4\int_{\R^3}\phi \big(\tfrac{|x|}{R}\big) |\nabla u(t,x)|^2\,dx + 4\int_{\R^3} \tfrac{|x\cdot \nabla u(t,x)|^2}{|x|R}\phi '\big(\tfrac{|x|}{R}\big)\,dx\\
&\quad +4\int_{\R^3} \Big[\phi \big(\tfrac{|x|}{R}\big)+\tfrac{|x|}{3R}\phi '\big(\tfrac{|x|}{R}\big)\Big]|u(t,x)|^6\,dx\\
&\quad -4\int_{\R^3}\Big[\tfrac{3}{4}\phi \big(\tfrac{|x|}{R}\big)+\tfrac{|x|}{4R}\phi '\big(\tfrac{|x|}{R}\big)\Big]|u(t,x)|^4\,dx.
\end{align*}
Therefore,
\begin{align*}
\partial_t A_R(t)
& \geq 4\int_{\R^3} |\nabla u(t,x)|^2-\tfrac{3}{4}|u(t,x)|^4+|u(t,x)|^6 \,dx\\
&\quad+O\Big(\tfrac{1}{R^2}\int_{R\leq |x|\leq 2R}|u(t,x)|^2\,dx\Big)\\
&\quad +O\Big(\int_{|x|\geq R}\big[|\nabla u|^2+|u|^6+|u|^4\big](t,x)\,dx\Big)\\
&=4V(u(t))+O\Big(\int_{|x|\geq R}\big[|\nabla u|^2+|u|^6+|u|^4+|u|^2\big](t,x)\,dx\Big).
\end{align*}

To continue, we note that by Proposition~\ref{prop: virial bounded away from zero} there exists $\delta>0$ such that
\begin{equation}\label{eq: V(u)>delta}
V(u(t))\geq\delta \quad \text{for all } t\in\R.
\end{equation}
By \eqref{Zmin2}, for $\eta>0$ to be chosen later there exists $C(\eta)>0$ such that
\begin{align}\label{eq: amost periodic}
\sup_{t\in\R}\int_{|x-x(t)| >C(\eta)} |\nabla u(t,x)|^2+|u(t,x)|^6+|u(t,x)|^4+|u(t,x)|^2 \,dx\leq\eta.
\end{align}
Moreover, by Proposition~\ref{prop: control x(t)} we have that $|x(t)|=o(t)$ as $t\to\infty$ and so there exists $T_0=T_0(\eta)\in\R$ such that
\begin{equation}\label{eq: x(t)}
|x(t)|\leq \eta t \quad \text{for all } t\geq T_0.
\end{equation}

Now given $T_1>T_0$, let
\begin{equation}\label{eq: def of R}
R:=C(\eta)+\sup_{t\in [T_0,T_1]}|x(t)|.
\end{equation}
Note that $\{x:|x|\geq R\}\subset\{x:|x-x(t)|\geq C(\eta)\}$ for $t\in [T_0,T_1]$.  Therefore, using \eqref{eq: V(u)>delta} and \eqref{eq: amost periodic} and choosing $\eta$ sufficiently small depending on $\delta$, we obtain
\begin{align*}
\partial_t A_R(t)\geq \delta \quad\text{uniformly for } t\in [T_0,T_1].
\end{align*}
By the fundamental theorem of calculus combined with \eqref{eq: upper bound V_R}, \eqref{eq: x(t)}, and \eqref{eq: def of R}, we get
\begin{equation}
\delta(T_1-T_0) \leq A_R (T_1) - A_R(T_0) \les_{D_c} R\les_{D_c} C(\eta) + \eta T_1.
\label{Z76}
\end{equation}
Choosing  $\eta=\eta\big(\delta, D_c\big)$ sufficiently small and letting $T_1\to\infty$, we derive a contradiction.

This completes the proof  of Theorem \ref{thm: main}.
\end{proof}

%%%%%%%%%%%%%%%%%%%%%%%%%%%%%%%%%%%%%%%%%%%%%%%%%%%%%%%%%%%%%%%%%%%%%%%%%%%%%%%%%%%5
%%%%%%%%%%%%%%%%%%%%%%%%%%%%%%%%%%%%%%%%%%%%%%%%%%%%%%%%%%%%%%%%%%%%%%%%%%%%%%%%%%%%

\subsection*{Acknowledgements}
R.~K.~was partially supported by NSF grants DMS-1001531 and DMS-1265868.  T.~O.~acknowledges support from an AMS--Simons Travel Grant.  O. P. was supported by NSF grant under agreement DMS-1128155.  M.~V.~was partially supported by the Sloan Foundation and NSF grants DMS-0901166 and DMS-1161396.  The authors are grateful to the Hausdorff Institute of Mathematics and to the organizers of the program in Harmonic Analysis and Partial Differential Equations; they provided the perfect environment for us to meet and bring this
project to completion.  O. P. would like to thank Prof. M. Mari\c{s} for edifying discussions during her visit to Toulouse.
Any opinions, findings, and conclusions or recommendations expressed in this material are those of the authors and do not necessarily reflect the views of the
National Science Foundation.


\begin{thebibliography}{99}

\bibitem{Akahori} T. Akahori, S. Ibrahim, H. Kikuchi, and H. Nawa,
{\it Existence of a ground state and blow-up problem for a nonlinear Schr\"odinger equation with critical growth.}
Differential Integral Equations {\bf 25} (2012), no. 3--4, 383--402.

\bibitem{Akahori'} T. Akahori, S. Ibrahim, H. Kikuchi, and H. Nawa,
\textit{Existence of a ground state and scattering for a nonlinear Schr\"odinger equation with critical growth.}
Selecta Math. (N.S.) \textbf{19} (2013), no. 2, 545--609.
%\msn{3090237}

\bibitem{AkahoriNawa}
T. Akahori and H. Nawa,
\textit{Blowup and scattering problems for the nonlinear Schr\"odinger equations.}
Kyoto J. Math. \textbf{53} (2013), 629--672.
%\msn{3102564}

\bibitem{Anderson} D. L. T. Anderson, \textit{Stability of time-dependent particlelike solutions in nonlinear field theories. II.}
J. Math. Phys. \textbf{12} (1971), no. 6, 945--952.

\bibitem{bahouri-gerard} H. Bahouri and P. G\'{e}rard,
\emph{High frequency approximation of solutions to critical nonlinear wave equations.}
Amer. J. Math. \textbf{121} (1999), 131--175.

\bibitem{BegoutVargas} P. Begout and A. Vargas,
\emph{Mass concentration phenomena for the $L^2$-critical nonlinear Schr\"odinger equation.}
Trans. Amer. Math. Soc. \textbf{359} (2007), 5257--5282.

\bibitem{Berestycki}H. Berestycki and P.-L. Lions,
{\it Nonlinear scalar field equations, I, Existence of a ground state.}
Arch. Rational Mech. Anal. {\bf 82} (1983), no. 4, 313--345.

\bibitem{BGS} F.~B\'ethuel, P.~Gravejat, and J.-C.~Saut,
{\it Travelling waves for the Gross--Pitaevskii equation II.}
Commun. Math. Phys. \textbf{285} (2009), 567--651.

\bibitem{BO99}
J.~Bourgain,
{\it  Global well-posedness of defocusing critical nonlinear Schr\"odinger equation in the radial case.}
J. Amer. Math. Soc. {\bf 12} (1999), no. 1, 145--171.

\bibitem{BL}
H. Br{\'e}zis and E. Lieb,
\textit{A relation between pointwise convergence of functions and convergence of functionals.}
Proc. Amer. Math. Soc. \textbf{88} (1983), no. 3, 486--490. 
%\msn{0699419}

\bibitem{BrothersZiemer}
J. E.~Brothers and W. P.~Ziemer, \emph{Minimal rearrangements of Sobolev functions.}
J. Reine Angew. Math. \textbf{384} (1988), 153--179.
%\msn{0929981}

\bibitem{Grikurov} V. B. Buslaev and V. E. Grikurov,
{\it Simulation of instability of bright solitons for NLS with saturating nonlinearity.}
IMACS Journal Math. and Computers in Simulation {\bf 56} (2001), no. 6, 539--546.

\bibitem{carles-keraani}
R. Carles and S. Keraani, \emph{On the role of quadratic oscillations in nonlinear Schr\"odinger equation II. The $L^2$-critical case.}
Trans. Amer. Math. Soc. \textbf{359} (2007), 33--62.

\bibitem{Cazenave} T.~Cazenave, \textit{Semilinear Schr\"odinger equations.}
Courant Lecture Notes in Mathematics, \textbf{10}. American Mathematical Society, Providence, RI, 2003.

\bibitem{Clausius} R.~Clausius, \emph{On a mechanical theorem applicable to heat.}
Philosophical Magazine, Ser. 4 \textbf{40} (1870), 122--127.

\bibitem{CodLev}
E.~A. Coddington and N. Levinson, \textit{Theory of ordinary differential equations.} McGraw-Hill Book Company, Inc., New York-Toronto-London, 1955.
%\msn{0069338}

\bibitem{Coffman}
C. V. Coffman, \textit{Uniqueness of the ground state solution for $\Delta u-u+u^3=0$ and a variational characterization of other solutions.}
Arch. Rational Mech. Anal. \textbf{46} (1972), 81--95.
%\msn{0333489}

\bibitem{CKSTT}
J.~Colliander, M.~Keel, G.~Staffilani, H.~Takaoka, and T.~Tao,
{\it Global well-posedness and scattering for the energy-critical nonlinear Schr\"odinger equation in $\R^3$.}
Ann. of Math. (2) \textbf{167} (2008),  no. 3, 767--865.

\bibitem{Desyatnikov} A. Desyatnikov, A. Maimistov, and B. Malomed,
{\it Three-dimensional spinning solitons in dispersive media with the cubic-quintic nonlinearity.}
Phys. Rev. E. {\bf  61} (2000), no. 3, 3107--3113.

\bibitem{Dodson}
B. Dodson, \textit{Global well-posedness and scattering for the defocusing, $L^2$-critical nonlinear Schr\"odinger equation when $d\geq 3$.}
J. Amer. Math. Soc. \textbf{25} (2012), no. 2, 429--463.
%\msn{2869023}

\bibitem{DHR08} T.~Duyckaerts, J.~Holmer, and S.~Roudenko, \textit{Scattering for the non-radial 3D cubic nonlinear Schr\"odinger equation.}
Math. Res. Lett. \textbf{15} (2008), no. 6, 1233--1250.

\bibitem{DR10} T. Duyckaerts and S. Roudenko, \textit{Threshold solutions for the focusing 3D cubic Schr\"odinger equation.}
Rev. Mat. Iberoam. \textbf{26} (2010), no. 1, 1--56.
%\msn{2662148}

\bibitem{FXC11}
D. Fang, J. Xie, and T. Cazenave,
\textit{Scattering for the focusing energy-subcritical nonlinear Schr\"odinger equation.}
Sci. China Math. \textbf{54} (2011), 2037--2062.
%\msn{2838120}

\bibitem{GerardMO} P.~G\'erard, Y.~Meyer, and F.~Oru,
\emph{In\'egalit\'es de Sobolev pr\'ecis\'ees.} S\'eminaire \'E.D.P. (1996--1997), Exp. No. IV, 11pp.

\bibitem{PG}
P. G\'erard, {\it The Cauchy problem for the Gross--Pitaevskii equation.}
Ann. Inst. H. Poincar\'e Anal. Non Lin\'eaire {\bf 23} (2006), no. 5, 765--779.
%\msn{2259616}

\bibitem{GNN}
B.~Gidas, W. M. Ni, and L. Nirenberg, \textit{Symmetry of positive solutions of nonlinear elliptic equations in $\R^n$.} Mathematical analysis and applications, Part A, pp. 369--402,
Adv. in Math. Suppl. Stud., \textbf{7a}, Academic Press, New York-London, 1981.
%\msn{0634248}

\bibitem{GV}
J.~Ginibre and G.~Velo, \textit{Smoothing properties and retarded estimates for some dispersive evolution equations.}
Comm. Math. Phys. \textbf{144} (1992), no. 1, 163--188.
%\msn{1151250}

\bibitem{Ginzburg} V.L. Ginzburg, \textit{Theories of superconductivity (a few remarks).}
Helv. Phys. Acta \textbf{65} (1992), 173--186.

\bibitem{Glassey} R. T. Glassey, \emph{On the blowing up of solutions to the Cauchy problem for nonlinear Schr\"odinger equations.}
J. Math. Phys. \textbf{18} (1977), 1794--1797.
%\msn{0460850}

%\bibitem{Grillakis} M. Grillakis, J. Shatah, and W. Strauss,
%\textit{Stability theory of solitary waves in the presence of symmetry. I.}
%J. Funct. Anal. {\bf 74} (1987), no. 1, 160--197.

\bibitem{GNT3} S. Gustafson, K. Nakanishi, and T.~P. Tsai,
\textit{Scattering theory for the Gross--Pitaevskii equation in three dimensions.}
Commun. Contemp. Math. \textbf{11} (2009), no. 4, 657--707.

\bibitem{GNT4} S.~Gustafson, K.~Nakanishi, and T.~P.~Tsai,
\textit{Scattering for the Gross--Pitaevskii equation.}
Math. Res. Lett. \textbf{13} (2006), no. 2, 273--285.

\bibitem{IonPaus} A. D. Ionescu and B. Pausader,
\textit{The energy-critical defocusing NLS on $\mathbb{T}^3$.}
Duke Math. J. \textbf{161} (2012), no. 8, 1581--1612.
%\msn{2931275}

\bibitem{KeelTao}
M.~Keel and T.~Tao, {\it Endpoint Strichartz estimates.}
Amer. J. Math. \textbf{120} (1998), no. 5, 955--980.

\bibitem{KenigMerle}
C. Kenig and F. Merle,
\textit{Global well-posedness, scattering and blow-up for the energy-critical, focusing, non-linear Schr\"odinger equation in the radial case.}
Invent. Math. \textbf{166} (2006), no. 3, 645--675.

\bibitem{keraani-h1}
S. Keraani, \emph{On the defect of compactness for the Strichartz estimates for the Schr\"odinger equations.}
J. Differential Equations \textbf{175} (2001), no. 2, 353--392.
%\msn{1855973}

\bibitem{keraani-l2}
S. Keraani, \emph{On the blow-up phenomenon of the critical nonlinear Schr\"odinger equation.}
J. Funct. Anal. \textbf{235} (2006), 171--192.

\bibitem{KKSV:KdV}
R.~Killip, S. Kwon, S. Shao, and M.~Vi\c{s}an,
\textit{On the mass-critical generalized KdV equation.}
Discrete Contin. Dyn. Syst. \textbf{32} (2012), no. 1, 191--221.
%\msn{2837059}

\bibitem{KOPV2011}
R.~Killip, T.~Oh, O.~Pocovnicu, and M.~Vi\c{s}an,
\textit{Global well-posedness of the Gross--Pitaevskii and cubic-quintic nonlinear Schr\"odinger equations with non-vanishing boundary conditions.}
Math. Res. Lett. \textbf{19} (2012), no. 5, 969--986.
%\msn{3039823}

\bibitem{KSV:2DKG}
R. Killip, B. Stovall, and M. Visan, \emph{Scattering for the cubic Klein--Gordon equation in two space dimensions.}
Trans. Amer. Math. Soc. \textbf{364} (2012), no. 3, 1571--1631.
%\msn{2869186}

\bibitem{Berbec}
R. Killip and M. Vi\c{s}an, \emph{The focusing energy-critical nonlinear Schr\"odinger equation in dimensions five and higher.}
Amer. J. Math. \textbf{132} (2010), no.~2, 361--424.

\bibitem{KV:Gopher} R. Killip and M. Vi\c{s}an, \emph{Global well-posedness and scattering for the defocusing quintic NLS in three dimensions.}
Anal. PDE \textbf{5} (2012), no. 4, 855--885.
%\msn{3006644}

\bibitem{ClayNotes} R. Killip and M. Vi\c{s}an, {\it Nonlinear Schr\"odinger Equations at Critical Regularity.}
Evolution equations, 325--437,  Clay Math. Proc., 17, Amer. Math. Soc., Providence, RI, 2013.
%msn{3098643}

\bibitem{KVZ:Ob} Rowan Killip, Monica Vi\c{s}an, and Xiaoyi Zhang,
\textit{Quintic NLS in the exterior of a strictly convex obstacle.}
Preprint \texttt{arXiv:1208.4904}.

\bibitem{Visan:Oberwolfach} H. Koch, D. Tataru, and M. Vi\c{s}an,
\textit{Dispersive equations and nonlinear waves.  Generalized Korteweg--de Vries, nonlinear Schr\"odinger, wave and Schr\"odinger maps.}
Oberwolfach Seminars, \textbf{45}, Birkh\"auser, Basel, 2014.

\bibitem{Kolodner}
I.~I.~Kolodner, \textit{Heavy rotating string---a nonlinear eigenvalue problem.}
Comm. Pure Appl. Math. \textbf{8} (1955), 395--408.
%\msn{0071605}

\bibitem{Kwong}
M.~K. Kwong, \textit{Uniqueness of positive solutions of $\Delta u-u+u^p=0$ in $\R^n$.}
Arch. Rational Mech. Anal. \textbf{105} (1989), no. 3, 243--266.
%\msn{0969899}

\bibitem{Sulem} B. J. LeMesurier, G. Papanicolaou, C. Sulem, and P.-L. Sulem,
{\it Focusing and multi-focusing solutions of the nonlinear Schr\"odinger equation.} Phys. D {\bf 31} (1988), no. 1, 78--102.

\bibitem{Lions} P.-L. Lions, \textit{The concentration-compactness principle in the calculus of variations. The locally compact case. I.}
Ann. Inst. H. Poincar\'e Anal. Non Lin\'eaire \textbf{1} (1984), no. 2, 109--145.
%\msn{0778970}

\bibitem{Maris_Existence of TW} M.~Mari\c{s},
{\it Travelling waves for nonlinear Schr\"odinger equations with nonzero conditions at infinity.}
Ann. of Math. (2) \textbf{178} (2013), no. 1, 107--182.
%\msn{3043579}

\bibitem{McLeod}
K.~McLeod, \textit{Uniqueness of positive radial solutions of $\Delta u+f(u)$ in $\R^n$. II.}
Trans. Amer. Math. Soc. \textbf{339} (1993), no. 2, 495--505.
%\msn{1201323}

\bibitem{MerleRaphael} F. Merle, P. Rapha\"el, \textit{Blow up of the critical norm for some radial $L^2$ super critical nonlinear Schr\"odinger equations.}
Amer. J. Math. \textbf{130} (2008), no. 4, 945--978.
%\msn{2427005}

\bibitem{merle-vega}
F. Merle and L. Vega, \emph{Compactness at blow-up time for $L^2$ solutions of the critical nonlinear Schr\"odinger equation in $2D$.}
Int. Math. Res. Not. \textbf{8} (1998), 399--425.

\bibitem{Miao} C. Miao, G. Xu, and L. Zhao,
{\it The dynamics of the 3D radial NLS with the combined terms.}
Comm. Math. Phys. \textbf{318} (2013), no. 3, 767--808.
%\msn{3027584}

\bibitem{Mihalache2000} D. Mihalache, D. Mazilu, L.-C. Crasovan, B. A. Malomed, and F. Lederer,
{\it Three-dimensional spinning solitons in the cubic-quintic nonlinear medium.} Phys. Rev. E {\bf 61} (2000), no. 6, 7142--7145.

\bibitem{Mihalache2002} D. Mihalache, D. Mazilu, L.-C. Crasovan, I. Towers, A. V. Buryak, B. A. Malomed, and L. Torner,
{\it Stable spinning solitons in three dimensions.} Phys. Rev. Lett. {\bf 88} (2002), no. 7, 4pp..

\bibitem{PayneSattinger} L. E. Payne and D. H. Sattinger,
\textit{Saddle points and instability of nonlinear hyperbolic equations.}
Israel J. Math. \textbf{22} (1975), no. 3--4, 273--303.
%\msn{0402291}

\bibitem{RS4}
M. Reed and B. Simon,
\textit{Methods of modern mathematical physics. IV. Analysis of operators.} Academic Press, New York--London, 1978.
%\msn{0493421}

\bibitem{SerrinTang2000} J. Serrin and M. Tang,
{\it Uniqueness of ground states for quasilinear elliptic equations.} Indiana Univ. Math. J. {\bf 49} (2000), no. 3, 897--923.

%\bibitem{ShatahCMP} J. Shatah, \textit{Stable standing waves of nonlinear Klein-Gordon equations.}
%Comm. Math. Phys. \textbf{91} (1983), no. 3, 313--327.
%\msn{0723756}

\bibitem{Shatah} J. Shatah and W. Strauss,
{\it Instability of nonlinear bound states.} Comm. Math. Phys. {\bf 100} (1985), no. 2, 173--190.

\bibitem{Strichartz} R.~S.~Strichartz,
\emph{Restrictions of Fourier transforms to quadratic surfaces and decay of solutions of wave equations.} Duke Math. J. \textbf{44} (1977), no. 3, 705--714.

\bibitem{SSulem} C. Sulem and  P.-L. Sulem,
\emph{The nonlinear Schr\"odinger equation. Self-focusing and wave collapse.}
Applied Mathematical Sciences, \textbf{139}. Springer-Verlag, New York, 1999.
%\msn{1696311}

\bibitem{TaoVisanZhang} T. Tao, M. Vi\c{s}an, and X. Zhang, {\it The nonlinear Schr\"odinger equation with combined power-type nonlinearities.}
Comm. Partial Differential Equations {\bf 32} (2007), no. 7--9, 1281--1343.

\bibitem{VlasovPT} S.~N.~Vlasov, V.~A.~Petrishchev, and V.~I.~Talanov,
\emph{Averaged description of wave beams in linear and nonlinear media (the method of moments).}
Radiophys. Quantum Electron. \textbf{14} (1971), 1062--1070.


\bibitem{Weinstein} M. Weinstein, {\it Nonlinear Schr\"odinger equations and sharp interpolation estimates.}
Comm. Math. Phys. {\bf 87} (1982/83), no. 4, 567--576.

\bibitem{Weinstein'} M. Weinstein, {\it Modulational stability of ground states of nonlinear Schr\"odinger equations.}
SIAM J. Math. Anal. \textbf{16} (1985), no. 3, 472--491.
%\msn{0783974}

\bibitem{Yajima}  K. Yajima, \textit{Existence of solutions for Schr\"odinger evolution equations.}
Comm. Math. Phys. \textbf{110} (1987), no. 3, 415--426.
%\msn{0891945}

\bibitem{Zhang} X. Zhang, \textit{On the Cauchy problem of 3-D energy-critical Schr\"odinger equations with subcritical perturbations.}
J. Differential Equations \textbf{230} (2006), no. 2, 422--445.
%\msn{2271498}

\end{thebibliography}
\end{document}